\documentclass[12pt,letterpaper,twoside,leqno]{amsart}

\usepackage[T1]{fontenc}
\usepackage[utf8]{inputenc}
\usepackage{times}

\usepackage[
  letterpaper,
  left=1.2in,
  right=1.2in,
  top=1in,
  textheight=8.68in,
  headheight=8pt,
  headsep=14pt,
  footskip=12pt
]{geometry}

\usepackage{enumitem}
\usepackage{xcolor}
\usepackage{booktabs}
\usepackage{graphicx}
\usepackage{etoolbox}

\usepackage{amsmath,amssymb,amsthm,mathtools}
\numberwithin{equation}{section}

\usepackage[
  colorlinks=true,
  linkcolor=blue,
  citecolor=red,
  urlcolor=blue
]{hyperref}

\usepackage[
  nameinlink,
  noabbrev,
  capitalise
]{cleveref}

\title{%
  Sign-changing solutions for the critical Neumann problem
  in the three-dimensional unit ball
}

\author{Fei Wu}

\address{%
  Fei Wu, Department of Mathematics, Faculty of Science,
  University of Macau, Taipa, Macau
}

\date{October 2, 2026\ (Last Typeset)}

\subjclass[2020]{%
  Primary 35J20; Secondary 35J25, 35J61
}

\keywords{%
  Critical exponent, Lin--Ni problem,
  Neumann boundary conditions, sign-changing solutions
}

\theoremstyle{plain}
\newtheorem{theorem}{Theorem}[section]
\newtheorem{lemma}[theorem]{Lemma}
\newtheorem{proposition}[theorem]{Proposition}

\theoremstyle{definition}

\theoremstyle{remark}
\newtheorem{remark}[theorem]{Remark}

\AddToHook{env/thebibliography/begin}{%
  \small
}

\makeatletter
\patchcmd{\thebibliography}
  {\labelsep .5em\relax}
  {\labelsep 5pt\relax}
  {}
  {%
    \PackageWarning{paperstyle}{%
      Bibliography label spacing was not patched%
    }%
  }
\makeatother

\allowdisplaybreaks[1]

\setlist[enumerate]{%
  label=\textup{(\roman*)},
  leftmargin=*
}

\setlist[itemize]{%
  leftmargin=*
}

\newcommand{\eps}{\varepsilon}

\DeclareMathOperator{\supp}{supp}

\crefname{equation}{equation}{equations}
\crefname{section}{Section}{Sections}
\crefname{appendix}{Appendix}{Appendices}

\definecolor{draftblue}{RGB}{36,82,128}

\begin{document}

\begin{abstract}
\noindent We consider the existence of nonradial sign-changing solutions to the problem
\[
\Delta u-\mu u+|u|^4u=0
\]
with zero Neumann boundary conditions in the unit ball for an arbitrary fixed parameter $\mu>0$.
We prove that, for every $\mu>0$, this problem admits infinitely many
distinct nonradial sign-changing solutions. More precisely, for every
sufficiently large even integer $K$, we construct a solution by gluing
$K$ symmetrically arranged, suitably transformed copies of a sign-changing
entire solution of the critical equation in $\mathbb{R}^3$.
The construction combines an inner--outer gluing scheme with a
finite-dimensional reduction. A detailed expansion of the reduced energy
allows us to locate an interior critical point in the admissible
parameter region and thereby complete the construction.
\end{abstract}

\maketitle

\tableofcontents
\clearpage

\section{Introduction}
\label{sec:introduction}

Let $B_1$ be the unit ball in $\mathbb{R}^3$. We study sign-changing
solutions of the critical Neumann problem
\begin{equation}
\label{eq:problem}
\begin{cases}
\Delta u-\mu u+u^5=0 & \text{in }B_1,\\
\partial_\nu u=0 & \text{on }\partial B_1,
\end{cases}
\end{equation}
where $\mu>0$ and $\nu$ is the outward unit normal. Our aim is to
construct infinitely many nonradial nodal solutions for each fixed
value of $\mu$.

Problem~\eqref{eq:problem} belongs to the class of equations underlying
the Lin--Ni conjecture. On a smooth bounded domain
$\Omega\subset\mathbb{R}^N$, $N\geq3$, Lin, Ni, and
Takagi~\cite{LinNiTakagi1988} proved that, if
$1<p<(N+2)/(N-2)$ and $\mu$ is sufficiently small, the only positive
solution of $\Delta u-\mu u+u^p=0$ with homogeneous Neumann boundary
conditions is $u\equiv\mu^{1/(p-1)}$. Lin and Ni~\cite{LinNi1988}
asked whether this conclusion remains valid at the critical exponent
$p=(N+2)/(N-2)$.

The answer depends on the dimension and the domain. In dimension three,
Zhu~\cite{Zhu1999} and Wei and Xu~\cite{WeiXu2005} established the
conjecture for smooth bounded convex domains. In dimension five, Rey
and Wei~\cite{ReyWei2005} constructed positive solutions with any
prescribed number of interior concentration points as $\mu\to0$.
The smallness threshold for $\mu$ in this construction depends on the
number of concentration points.

Concentration phenomena for Neumann equations have also been studied
through variational and perturbative methods. Ni, Pan, and
Takagi~\cite{NiPanTakagi1992} analyzed the singular behavior of
least-energy solutions at the critical Sobolev exponent.
For related singularly perturbed Neumann problems, Gui and
Wei~\cite{GuiWei1999} constructed solutions with several interior
peaks by reducing the problem to the choice of their locations.
These studies provide a framework for understanding how concentration
is affected by the boundary and by interactions between peaks.

For a fixed coefficient, Wang, Wei, and Yan~\cite{WangWeiYan2010}
proved the existence of infinitely many positive solutions in suitable
symmetric nonconvex domains in every dimension $N\geq3$. They subsequently
established the same multiplicity result in symmetric domains including
balls for $N\geq4$~\cite{WangWeiYan2011}. These solutions have unbounded
energy, and the number of bubbles serves as the large parameter.
The use of an increasing number of peaks to obtain solutions with
unbounded energy also appears in the work of Wei and
Yan~\cite{WeiYan2010} on prescribed scalar curvature.
In the three-dimensional ball, however, the uniqueness results above
exclude nonconstant positive solutions when $\mu$ is small. This leads
us to consider multiplicity in the nodal class.

Related constructions in dimension three provide further motivation.
Del Pino, Musso, Rom\'an, and Wei~\cite{DelPinoMussoRomanWei2019}
constructed positive interior bubbling solutions, while
Salazar~\cite{Salazar2019} obtained sign-changing bubbling solutions,
including two-bubble solutions in thin annuli. In these works, the
coefficient approaches a distinguished positive value determined by
Green-function data.

For the Dirichlet Brezis--Nirenberg equation
$\Delta v+\lambda v+|v|^{4/(N-2)}v=0$~\cite{BrezisNirenberg1983},
the loss of compactness at the critical exponent has led to several
approaches to multiplicity. In dimensions $N\geq7$, Devillanova and
Solimini~\cite{DevillanovaSolimini2002} obtained infinitely many
solutions using concentration estimates. Schechter and
Zou~\cite{SchechterZou2010} proved the existence of infinitely many
sign-changing solutions for every fixed $\lambda>0$ in the same
dimensional range, using estimates of the Morse indices of nodal
solutions.

Another approach is to construct concentrating solutions and determine
their parameters through Green-function expansions. Rey~\cite{Rey1990} studied the connection between concentration
phenomena and the regular part of the Green function, while
Druet~\cite{Druet2002} investigated the relation between the existence
of positive solutions in dimension three and the associated
Robin function. Musso and Salazar~\cite{MussoSalazar2018} constructed positive
solutions with several interior concentration points as $\lambda$
approaches a suitable value determined by the Green function.
For the three-dimensional ball, Sun, Wei, and
Yang~\cite{SunWeiYangFull} constructed infinitely many nonradial
sign-changing solutions for every $\lambda>0$, resolving the question
recalled by Brezis~\cite{Brezis2023}. Their related
work~\cite{SunWeiYangSmall} treats the small-parameter regime.

The choice of the entire profile is central to these constructions.
The classification results of Caffarelli, Gidas, and
Spruck~\cite{CaffarelliGidasSpruck1989} and Chen and
Li~\cite{ChenLi1991} identify the positive entire solutions of the
critical Yamabe equation as translations and dilations of the standard
bubble. In the nodal class, Ding~\cite{Ding1986} used variational
methods to obtain infinitely many finite-energy solutions.
Del Pino, Musso, Pacard, and
Pistoia~\cite{DelPinoMussoPacardPistoia2011} subsequently gave a
construction with a prescribed arrangement of positive and negative
peaks. Their profiles have a positive central bubble surrounded by
negative bubbles placed along a regular polygon.
Musso and Wei~\cite{MussoWei2015} proved nondegeneracy results for
these nodal profiles, identifying the kernel of the linearized
operator with the directions generated by the symmetries of the
equation. This makes the profiles suitable for a finite-dimensional
reduction after imposing orthogonality conditions.

Our construction uses these entire nodal profiles and follows the
gluing strategy developed in \cite{SunWeiYangFull}.
Conformal transformations near a regular point of the nodal set
control the leading far-field term of the profile, which is
particularly important in dimension three. We project the resulting
profiles onto the Neumann problem in a fundamental sector and solve
the correction equation by an inner-outer decomposition.
The remaining parameters are then selected through the reduced energy.

Our main result is the following.

\begin{theorem}
\label{thm:main}
For every $\mu>0$, there exists an even integer $K_0=K_0(\mu)\geq2$
such that, for every even integer $K\geq K_0$, problem~\eqref{eq:problem}
admits a nonradial sign-changing solution
$u_K\in C^2(\overline{B_1})$. Writing
$x=(z,x_3)\in\mathbb{C}\times\mathbb{R}$, the solution can be chosen
to satisfy
\begin{equation*}
\begin{aligned}
u_K(e^{4\pi i/K}z,x_3)&=u_K(z,x_3),\\
u_K(e^{2\pi i/K}\overline z,x_3)&=u_K(z,x_3),\\
u_K(z,-x_3)&=u_K(z,x_3).
\end{aligned}
\end{equation*}
In particular, for every fixed $\mu>0$, problem~\eqref{eq:problem}
has infinitely many distinct nonradial sign-changing solutions.
\end{theorem}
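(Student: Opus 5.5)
The construction is a Lyapunov--Schmidt gluing in the spirit of \cite{SunWeiYangFull}, adapted to the Neumann condition. Fix a nondegenerate sign-changing entire solution $Q$ of $\Delta Q+|Q|^4Q=0$ in $\mathbb{R}^3$ from the family of \cite{DelPinoMussoPacardPistoia2011}. By \cite{MussoWei2015} its bounded kernel is spanned by the infinitesimal generators of translations, dilation, rotations and Kelvin-type conformal transformations. We first compose $Q$ with a conformal map that sends a regular point of the nodal set of $Q$ to infinity. The resulting profile $U$ still solves the critical equation, but its far-field behaviour $U(y)\sim c\,|y|^{-1}$ now has a leading coefficient $c$ that we can tune, including its sign. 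This flexibility is what compensates for the absence of positive-solution bubbling in the 3D ball.

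For even $K$ we place points $\xi_j=r\,(e^{2\pi i j/K},0)$, $j=1,\dots,K$, with $r=1-\rho$ close to the boundary. At each point we put a copy $\lambda^{-1/2}U_j((x-\xi_j)/\lambda)$, with alternating orientation, so that all three symmetries of \cref{thm:main} are respected. The parameters are $\lambda$ (small) and $\rho$, both scaled as negative powers of $K$ to be fixed later. We then project onto the Neumann problem in the fundamental sector $\{|\arg z|<\pi/K\}$. Concretely, we add the regular part of the Neumann Green function of $-\Delta+\mu$ in $B_1$, which captures the boundary reflection. The symmetry class kills all kernel directions except those compatible with the symmetry, essentially the radial position and the scaling of one profile.

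For the error $\phi$ we use an inner--outer decomposition $\phi=\sum_j\eta_j\lambda^{-1/2}\psi_j((x-\xi_j)/\lambda)+\Psi$.
\begin{enumerate}
\item The inner problem is solved using the linear theory for $\Delta+5|U|^4$ with orthogonality conditions, which is available by nondegeneracy.
\item The outer problem $\Delta\Psi-\mu\Psi=\dots$ with Neumann data is solved in weighted $L^\infty$ spaces. Here the coercivity coming from $\mu>0$ and the Neumann Green function estimates are used.
\item A contraction mapping in the symmetric class then produces $\phi$, up to Lagrange multipliers $c_\ell$ in the remaining kernel directions.
\end{enumerate}
The smallness of the error comes from $K$ large, not from $\mu$, so every $\mu>0$ is allowed.

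It remains to make the $c_\ell$ vanish. This is equivalent to finding a critical point of the reduced energy $F_K(\lambda,\rho)=E(\text{approx}+\phi)$. The main obstacle is the expansion of $F_K$ up to terms that strictly dominate the reduction error. Schematically,
\[
F_K=K\Bigl[A_0+\lambda\bigl(a\,H_\mu(\rho)+b\sum_{j\neq1}G_\mu(\xi_1,\xi_j)\bigr)+O(\lambda^{2})\Bigr].
\]
Here $a,b$ depend on the tuned coefficient $c$. The Robin-type term behaves like $1/\rho$ near the boundary, and the interaction sum like $K\log K$ or $K/\rho$ in the alternating configuration. The dependence on $\mu$ enters only through lower-order smooth terms. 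We choose the sign of $c$ so that the boundary self-interaction and the mutual interaction compete. This yields a nondegenerate interior maximum or minimum in $\rho$, and the scaling then yields a critical point in $\lambda$, for instance a min--max located in a compact admissible box.

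We would verify the expansion with explicit estimates of the Neumann Green function near $\partial B_1$, obtained via the image charge $x/|x|^2$ plus smooth corrections. We would then check that the derivative of the remainder is $o$ of the leading terms uniformly on the box, so that degree stability gives a critical point. The resulting $u_K$ is $C^2$ by elliptic regularity. It is sign-changing because $U$ is. It is nonradial and distinct for distinct $K$ because its energy is of order $K$, which gives infinitely many solutions.
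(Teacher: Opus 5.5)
\textbf{Where the proposal breaks.} Your overall architecture (Kelvin-inverted nodal profile, even reflection across the flat faces of a sector, inner--outer gluing, reduced energy) matches the paper. The decisive step, locating a critical point of the reduced energy, is misidentified and would fail as written.

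First, $c$ is not a free design parameter. Moving the inversion point off the nodal curve is itself a kernel direction (the $\tau$-derivatives of the inverted family $\mathcal U_{\mathcal P}$). The symmetry group of the theorem is generated by reflections across the flat faces of the sector, so the stabilizer of a single bubble is trivial apart from $x_3\mapsto-x_3$. Hence all six even kernel elements $z_0,\dots,z_5$ survive, not just ``radial position and scaling.'' The paper reduces in $(\eps,\xi,a,d,\alpha_b,\widehat\beta)$, and $c=q(\widehat\xi)$ must be solved for, not chosen.

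Second, the monopole terms in your expansion are proportional to $c^2$, so the sign of $c$ is irrelevant. With Neumann conditions and even reflections, the boundary images reinforce the mutual interactions rather than oppose them. Per bubble the term is $2\pi\eps q(\widehat\xi)^2[H_N^e(b,b)-\gamma(b,b)-3K/\mu]$, where $H_N^e(b,b)\approx-\frac K\pi\log\frac4d$, $-\gamma(b,b)\approx-\frac K\pi\log\frac{2K}{\pi}$ and $-3K/\mu$ are all negative. So your order-$\lambda$ term (your $\lambda$ is the paper's $\eps$) is $-\lambda c^2X$ with $X\gtrsim K\log K$. For $c$ of order one it has no critical point in $\lambda$. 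In the $c$-direction it is maximized near $c=0$, where it vanishes. Indeed the paper's maximizer is $a_*=-3\eps/(10d\log K)$, so $c=O(K^{-7/3}/\log K)$.

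\textbf{The missing mechanism.} What you need is the balance that takes over once $c\approx0$.
\begin{enumerate}
\item The term $\frac\mu2\int Q_A^2$ becomes the localized quantity $2\pi\mu\eps^2C_*(\xi)$. Here $C_*(\xi)=\operatorname{P.V.}\int_{\mathbb R^3}q(x+\xi)^2(4\pi|x|^4)^{-1}\,dx>0$ is finite precisely because $q(\xi)=0$.
\item The leading interaction is dipole--dipole, $-2\pi b_0\eps^3K^3$ with $b_0=\zeta(3)|\nabla q(\xi)|^2/(4\pi^3)$, coming from $S_3^+(K)\sim2\zeta(3)K^3/\pi^3$.
\item Balancing these two gives $\eps_*=\frac{2\mu C_*(\xi)}{3b_0(\xi)}K^{-3}$. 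So $\mu$ enters at leading order and is indispensable; without it the energy would be monotone in $\eps$. This contradicts your remark that $\mu$ only enters through lower-order terms.
\item The boundary distance is fixed only at the next order, $K^{-20/3}$. There the dipole image term $-b_1\eps^3Kd^{-2}$ and the term $-3b_0\eps^3dK^3$ have opposite monotonicity in $d$. The latter comes from the factor $\rho^{-3}$ in the mutual interactions, $\rho$ being the radius of the centre. This gives $d_*\sim K^{-2/3}$.
\item The parameter $a$ and the two angles are fixed at order $K^{-20/3}/\log K$.
\end{enumerate}
Using this hierarchy requires an energy expansion with error $o(K^{-20/3}/\log K)$, uniformly on a six-dimensional box. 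It also requires a reduction error below that level; the paper obtains $O(\delta^{-8}K^{-22/3})$. Your $O(\lambda^2)$ remainder absorbs every term that determines the parameters.

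A minor point: energy of order $K$ does not by itself show nonradiality. The paper instead compares the values of $u_K$ at two points with the same radius.
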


For small $\mu$, this family coexists with the unique positive solution
$u\equiv\mu^{1/4}$. Theorem~\ref{thm:main} thus gives a nodal
counterpart in the three-dimensional ball to the positive multiplicity
results of Wang, Wei, and Yan~\cite{WangWeiYan2011}. The coefficient
$\mu$ stays fixed, while $K$ tends to infinity and the concentration
centers approach the equatorial circle from inside the ball.

In Section~\ref{sec:outline}, we describe the construction and the main
steps of the proof.
Section~\ref{sec:correction} treats the linear
problem and the nonlinear correction, and the proof of Theorem~\ref{thm:main} is
completed in Section~\ref{sec:existence}. The appendices contain the
auxiliary interaction estimates, reduced energy and trigonometric sums.
\section{Outline of the proof}
\label{sec:outline}

Our construction follows closely the approach of Sun, Wei, and
Yang~\cite{SunWeiYangFull} to the Brezis--Nirenberg problem in
dimension three. We use their framework based on entire nodal
profiles, Kelvin inversion near the nodal set, and an inner--outer
gluing argument combined with finite-dimensional reduction. The
present work adapts this scheme to the Neumann problem. Before we start, we declare the constant C is independent of $\delta$ and $K$.

We fix a nondegenerate solution $q$, even in its third coordinate, of
\begin{equation*}
\Delta q+|q|^4q=0\quad\text{in }\mathbb{R}^3,
\qquad q\in D^{1,2}(\mathbb{R}^3).
\end{equation*}
Ding~\cite{Ding1986} proved the existence of infinitely many entire
nodal solutions. We use a profile constructed by del Pino, Musso,
Pacard, and Pistoia~\cite{DelPinoMussoPacardPistoia2011}, with the
nondegeneracy property of Musso and Wei~\cite{MussoWei2015}. 

The integer involved in the construction of $q$ is chosen sufficiently
large and then kept fixed throughout the paper. We record the properties
of this profile that will be used later.

\begin{lemma}[Properties of the profile]
\label{lem:nodal-profile}
The profile $q$ can be chosen to have the following properties.
All constants below depend only on the fixed profile and on the order of differentiation.

\begin{enumerate}
\item \textup{(Symmetry and decay.)}
The function $q$ belongs to $C^\infty(\mathbb R^3)$, is even in $y_3$,
and satisfies
\begin{equation*}
q(y)=\frac1{|y|}q\!\left(\frac{y}{|y|^2}\right),
\qquad y\ne0.
\end{equation*}
For every nonnegative integer $\ell$,
\begin{equation*}
|\nabla^\ell q(y)|\leq C_\ell\langle y\rangle^{-1-\ell},
\qquad y\in\mathbb R^3,
\qquad \langle y\rangle=(1+|y|^2)^{1/2}.
\end{equation*}
\item \textup{(A regular planar nodal curve.)}
There exists a nonempty compact smooth embedded curve
\[
\Gamma\subset\{y\in\mathbb R^3:q(y)=0,\ y_3=0\},
\]
without boundary, such that
\begin{equation}
\label{eq:q-nodal-transversality}
0<c_\Gamma\leq |\nabla q(\xi)|\leq C_\Gamma,
\qquad \xi\in\Gamma.
\end{equation}

\item \textup{(Nondegeneracy in the even subspace.)}
Let $L_q=\Delta+5q^4$ and
\[
D^{1,2}_{\mathrm{even}}(\mathbb R^3)
=\{v\in D^{1,2}(\mathbb R^3):
v(y_1,y_2,y_3)=v(y_1,y_2,-y_3)\}.
\]
Define
\begin{equation*}
\begin{aligned}
z_0&=\tfrac12q+y\cdot\nabla q,
&z_3&=y_1\partial_2q-y_2\partial_1q,\\
z_1&=\partial_1q,
&z_4&=|y|^2\partial_1q-2y_1z_0,\\
z_2&=\partial_2q,
&z_5&=|y|^2\partial_2q-2y_2z_0.
\end{aligned}
\end{equation*}
These six functions are linearly independent and
\begin{equation*}
\{v\in D^{1,2}_{\mathrm{even}}(\mathbb R^3):L_qv=0\}
=\operatorname{span}\{z_0,\ldots,z_5\}.
\end{equation*}
They satisfy the estimate
\[
|z_j(y)|+\langle y\rangle|\nabla z_j(y)|
\leq C\langle y\rangle^{-1},
\qquad j=0,\ldots,5.
\]
\end{enumerate}
\end{lemma}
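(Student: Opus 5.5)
The plan is to take for $q$ the del Pino--Musso--Pacard--Pistoia profile~\cite{DelPinoMussoPacardPistoia2011} built with a large integer $k$. It is a positive standard bubble $U(y)=3^{1/4}(1+|y|^2)^{-1/2}$ at the origin, minus $k$ scaled negative bubbles centered at the vertices of a regular $k$-gon in the plane $\{y_3=0\}$, plus a small correction. The correction is obtained by a fixed-point argument in a space of functions invariant under the symmetry group $G$. This group is generated by the rotation of angle $2\pi/k$ in $(y_1,y_2)$, the reflection $y_2\mapsto-y_2$, the reflection $y_3\mapsto-y_3$, and the Kelvin inversion $y\mapsto y/|y|^2$ (with weight $|y|^{-1}$). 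Since the approximate solution is built from $G$-invariant pieces and the fixed point is sought in the $G$-invariant class, $q$ is even in $y_3$ and satisfies the stated Kelvin identity; this is recorded in the construction of~\cite{DelPinoMussoPacardPistoia2011}. Smoothness follows from elliptic regularity for $q\in D^{1,2}$, using that $|q|^4q$ is $C^{4}$ and bootstrapping. For decay, use local boundedness of $q$ near the origin together with the Kelvin identity: for $|y|\ge1$, $q(y)=|y|^{-1}q(y/|y|^2)$ with $q$ smooth near $0$. Differentiating this identity $\ell$ times and using that each derivative of $y/|y|^2$ costs a factor $|y|^{-1}$ gives $|\nabla^\ell q|\le C_\ell|y|^{-1-\ell}$.

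For (iii), invoke Musso--Wei~\cite{MussoWei2015}. For $k$ large, the bounded kernel of $L_q$ in $D^{1,2}$ is spanned by the $3N+?$ generators coming from the conformal group: $z_0$ (dilation), the three translations $\partial_iq$, the three rotations, and the three Kelvin-translations $|y|^2\partial_iq-2y_iz_0$. These are exactly the infinitesimal generators of $\mathrm{O}(4,1)$ acting on solutions. Restricting to functions even in $y_3$ eliminates the generators odd in $y_3$. These are $\partial_3q$, $|y|^2\partial_3q-2y_3z_0$, and the rotations $y_3\partial_iq-y_i\partial_3q$ for $i=1,2$, each of which is odd because $q$ is even. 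The six listed functions remain and are even. Their linear independence follows because $q$ has only the symmetries in $G$: a linear relation would produce a one-parameter conformal family fixing $q$, and such families are excluded by the explicit structure of $q$ (for example, from the large-$k$ asymptotics, the derivatives of $q$ are dominated by distinct bubble contributions). The bound $|z_j|+\langle y\rangle|\nabla z_j|\le C\langle y\rangle^{-1}$ follows from (i). The $|y|^2\partial_iq$ terms are the only delicate ones, and they are $O(|y|^{-1})$ only because the Kelvin symmetry produces a cancellation with $2y_iz_0$. To check this, I would compute at infinity via $q=|y|^{-1}q(y/|y|^2)$: $z_4$ is the Kelvin transform of $\mp\partial_1q$, hence decays like $|y|^{-1}$.

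For (ii), I would use the explicit asymptotics. In the plane $y_3=0$, $q\approx U-\sum_j\lambda^{-1/2}U((y-\xi_j)/\lambda)$ with small $\lambda$. Near each $\xi_j$, on a circle of radius of order $\lambda$ around $\xi_j$ where the negative bubble matches $U(\xi_j)$, $q$ changes sign. The implicit function theorem, applied in polar coordinates around $\xi_j$, gives a smooth closed curve $\Gamma$ in $\{q=0,y_3=0\}$ on which the radial derivative is bounded below; in three dimensions the nodal set there is a near-sphere, whose equator is $\Gamma$. Compactness then gives $0<c_\Gamma\le|\nabla q|\le C_\Gamma$ on $\Gamma$. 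The main obstacle is this quantitative transversality, which needs the $C^1$ smallness of the correction term near $\xi_j$ at scale $\lambda$. I would extract it from the weighted norm estimates in~\cite{DelPinoMussoPacardPistoia2011}, first trying their weighted $L^\infty$ bound on the correction together with interior Schauder rescaling.
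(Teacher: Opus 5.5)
Your treatment of (i) and (iii) follows the paper's route, which takes all three properties from \cite{DelPinoMussoPacardPistoia2011,MussoWei2015,SunWeiYangFull}. Symmetry comes from the invariant class of the construction, decay from differentiating the Kelvin identity, and the even kernel from discarding the Musso--Wei generators odd in $y_3$. Your remark that $z_4=|y|^{-1}(\partial_1q)(y/|y|^2)$ is the right way to get the $\langle y\rangle^{-1}$ bound. One minor slip: $C^\infty$ regularity comes from the nonlinearity being the polynomial $q^5$. A $C^4$ nonlinearity would only give finite regularity.

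The gap is in (ii), and it has two parts.
\begin{enumerate}
\item \emph{The matching scale is wrong.} At $|y-\xi_j|\sim\lambda$ the $j$-th negative bubble has size $\lambda^{-1/2}$. Its tail $3^{1/4}\lambda^{1/2}|y-\xi_j|^{-1}$ reaches $U(\xi_j)\approx 3^{1/4}2^{-1/2}$ only at $|y-\xi_j|\approx(2\lambda)^{1/2}$.
\item \emph{There is no positive background to match against.} In dimension three $\lambda$ is of order $(k\log k)^{-2}$. The other $k-1$ negative bubbles therefore contribute at $\xi_j$ about $3^{1/4}\lambda^{1/2}\sum_{i\ne j}|\xi_i-\xi_j|^{-1}\approx 3^{1/4}\pi^{-1}\lambda^{1/2}k\log k$, which is of order one, the same as $U(\xi_j)$. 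The equation fixing $\lambda$ makes this exact at leading order. The per-bubble reduced energy is proportional to $\lambda^{1/2}U(\xi_j)-\tfrac12 3^{1/4}\lambda\sum_{i\ne j}|\xi_i-\xi_j|^{-1}$ (equivalently, project onto the dilation kernel of the $j$-th bubble). Its critical point says that $U(\xi_j)$ equals the sum of the other bubbles at $\xi_j$. So the total field felt by the $j$-th bubble vanishes at $\xi_j$ to leading order, and your implicit-function argument in polar coordinates about $\xi_j$ has nothing to work with.
\end{enumerate}

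At distances comparable to the spacing $2\pi/k$, the self term and the variation of the array potential are both of size $O(1/\log k)$. The nodal set is therefore decided collectively at that scale, and it depends on the residual background at that precision. Heuristically, at the midpoint of the arc between $\xi_j$ and $\xi_{j+1}$ one finds $q\approx(\text{residual at }\xi_j)-2\log2\,U(\xi_j)/\log k<0$. If so, the planar negative set near the circle is an annulus containing all the $\xi_j$, bounded by two curves at distance of order $1/k$. Your small circles around each $\xi_j$ are then not where $q$ vanishes.

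A direct proof of (ii) would need an expansion of $q$ at scale $1/k$, with $C^1$ control of the correction to accuracy $o(1/\log k)$. The weighted $L^\infty$ bounds you plan to use do not obviously give this. Otherwise, take (ii) from the literature, as the paper does.
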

These properties follow from the results
in~\cite{DelPinoMussoPacardPistoia2011,MussoWei2015,SunWeiYangFull}.
Our objective is to construct
solutions with a large number of such bubbles. For this purpose, we need
to determine their locations, their concentration scale, and the small
changes in their profiles. We shall use the number of copies, $K$, as a large parameter to
determine them. Now the problem is where to put the bubbles. Following the approach of Sun, Wei, and Yang~\cite{SunWeiYangFull},
we first construct a solution in a fundamental sector and then
extend it to the whole ball by even reflections across the lateral
faces of the sector. We place the bubble inside the sector but near its spherical boundary.
As the bubble approaches the boundary, the distance to its reflected
image decreases and the boundary interaction becomes stronger.
In our Neumann construction, even reflections allow the leading
boundary contributions of the different copies to reinforce one
another, whereas the odd reflections in~\cite{SunWeiYangFull}
produce cancellations. To control this stronger boundary effect,
we keep the bubble farther from the spherical boundary and allow
its center to approach the boundary more slowly as $K\to\infty$ than theirs. The other parameters are only slightly adjusted to accommodate the change in distance. 

To be more precise,
let $K$ be a large even integer and set $\theta_0=\pi/K$. Let $G_K$ be the dihedral group of order $K$ generated by reflections
across the two flat faces of $\Sigma_K$. For
$x=(z,x_3)\in\mathbb{C}\times\mathbb{R}$, define
\[
R_\theta(z,x_3)=(e^{i\theta}z,x_3),
\qquad S_K(z,x_3)=(e^{2i\theta_0}\overline z,x_3).
\]
We work in the symmetry class
\begin{equation*}
\mathcal H_K=
\left\{u\in H^1(B_1):
\begin{aligned}
&u\circ R_{4\theta_0}=u,\quad u\circ S_K=u,\\
&u(z,-x_3)=u(z,x_3)
\end{aligned}
\right\}.
\end{equation*}
A fundamental sector for the planar symmetries is
\begin{equation*}
\Sigma_K=
\left\{(z,x_3)\in B_1:z\ne0,\ |\arg z|<\theta_0\right\}.
\end{equation*}
Reflection across its flat faces generates $K$ congruent sectors.
For a function $v$ on $\Sigma_K$, even in $x_3$, let $\mathcal E_Kv$
denote its extension by these reflections.

The center of the profile in $\Sigma_K$ is written as
\begin{equation*}
b=(\rho e^{i\alpha_b},0),\qquad
d=\frac{1-\rho^2}{2\rho},\qquad
\rho=\sqrt{1+d^2}-d.
\end{equation*}

For $\xi\in\Gamma$, define
\[
\nu_q(\xi):=\frac{\nabla q(\xi)}{|\nabla q(\xi)|}.
\]
This is well defined by \eqref{eq:q-nodal-transversality}. Choose $\theta_*(\xi)$ modulo $2\pi$ so that
$R_{\theta_*(\xi)}^T\nabla q(\xi)=|\nabla q(\xi)|e_1$, and put
$\beta=\theta_*(\xi)+\widehat\beta$. We use the six parameters
\[
A=(\eps,\xi,a,d,\alpha_b,\widehat\beta),
\qquad \eps>0,\quad \xi\in\Gamma,
\]
where $a$ is a normal displacement from the nodal curve and
$\widehat\beta$ is the orientation correction. Define
\begin{equation*}
Q_A(x)=\frac{\eps^{1/2}}{|x-b|}
q\!\left(\xi+a\nu_q(\xi)
+\frac{\eps R_\beta(x-b)}{|x-b|^2}\right).
\end{equation*}
The value at $x=b$ is understood by smooth continuation.
This is an entire solution of the critical equation. 
The sum
\begin{equation*}
Q_A^e(x)=
\sum_{j=0}^{K/2-1}
\left[Q_A(R_{4j\theta_0}x)
      +Q_A(R_{4j\theta_0}S_Kx)\right]
\end{equation*}
records the $K$ copies and their orientations. It has the symmetries
of $\mathcal H_K$ and satisfies the Neumann condition on the flat
faces of the sectors.

We now specify the parameter region. For a small fixed
$\delta\in(0,1)$, let $\mathcal A_K$ consist of the parameters satisfying
\begin{equation}
\label{eq:outline-parameters}
\begin{aligned}
&\eps K^3\in[\delta,\delta^{-1}],\quad
dK^{2/3}\in[\delta,\delta^{-1}],\quad \xi\in\Gamma, \quad|a|\leq\delta^{-2}K^{-7/3}(\log K)^{-1},\\
&|\alpha_b|\leq\delta^{-1/2}K^{-4/3}(\log K)^{-1/2},\qquad
|\widehat\beta|\leq\delta^{-1/2}K^{-1/3}(\log K)^{-1/2}.
\end{aligned}
\end{equation}
For fixed $\mu$, we choose $\delta$ sufficiently small,
and then take $K$ sufficiently large and even. All limits below
are taken in this order.

To construct the solution,
we first define the approximation solution $P Q_A$ as the unique weak solution of
\begin{equation}
\label{eq:outline-projection}
\begin{cases}
\Delta P Q_A-\mu P Q_A+Q_A^5=0 & \text{in }\Sigma_K,\\
\partial_\nu P Q_A=0 & \text{on }\partial\Sigma_K.
\end{cases}
\end{equation}
The boundary condition is imposed on both the spherical and flat
parts of $\partial\Sigma_K$. The projection estimates proved in
Appendix~\ref{app:interactions} give
\begin{equation}
\label{eq:outline-projection-error}
\|P Q_A-Q_A\|_{L^\infty(\Sigma_K)}
\leq C\delta^{-1/2}\eps^{3/2}K^2.
\end{equation}
We first construct a solution in the fundamental sector.

\begin{theorem}
\label{thm:outline-construction}
For every fixed $\mu>0$, there exist $\delta\in(0,1)$ and
$K_0\in\mathbb N$ such that, for every even $K\geq K_0$, one can
find $A_K\in\operatorname{int}\mathcal A_K$ and a sign-changing
solution $v_K\in H^1(\Sigma_K)\cap L^\infty(\Sigma_K)$ of
\begin{equation*}
\begin{cases}
\Delta v_K-\mu v_K+v_K^5=0 & \text{in }\Sigma_K,\\
\partial_\nu v_K=0 & \text{on }\partial\Sigma_K.
\end{cases}
\end{equation*}
The solution is even in $x_3$ and has the form
\begin{equation*}
v_K=P Q_{A_K}+\phi_{A_K},
\qquad
\eps_K^{1/2}
\|\phi_{A_K}\|_{L^\infty(\Sigma_K)}
\longrightarrow0
\quad\text{as }K\to\infty.
\end{equation*}
\end{theorem}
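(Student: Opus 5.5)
We follow the Lyapunov--Schmidt scheme of \cite{SunWeiYangFull}, adapted to the Neumann setting in the sector. Writing $v=PQ_A+\phi$, the equation becomes
\[
L_A\phi:=\Delta\phi-\mu\phi+5|PQ_A|^4\phi=-E_A-N_A(\phi),
\qquad
E_A=(PQ_A)^5-Q_A^5,
\]
with $\partial_\nu\phi=0$ on $\partial\Sigma_K$ and $N_A$ collecting the terms that are at least quadratic in $\phi$. Let $Z_{j,A}$, $j=0,\dots,5$, be the kernel elements $z_j$ of Lemma~\ref{lem:nodal-profile}(iii), transported by the same Kelvin inversion, rotation and dilation that produce $Q_A$, and cut off near the bubble. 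First we solve the projected problem
\[
L_A\phi=-E_A-N_A(\phi)+\sum_j c_j Q_A^4Z_{j,A},
\qquad
\int_{\Sigma_K}Q_A^4Z_{j,A}\phi=0 .
\]
For this we use weighted $L^\infty$ norms centred at $b$, at the natural scale $\eps$. The linear theory is obtained by an inner--outer decomposition. The inner part is handled by the nondegeneracy of $q$ in the even class, which is Lemma~\ref{lem:nodal-profile}(iii). The outer part uses the Neumann Green function of $-\Delta+\mu$ on $\Sigma_K$, extended by even reflections. Because of the even reflections, the images of the bubble add up rather than cancel, so the outer estimate must absorb the sum over the $K$ copies. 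The weights therefore have to be tuned to the distance $d\sim K^{-2/3}$ from the sphere. The linear operator is invertible with bounds uniform in $A\in\mathcal A_K$, and this is proved by a contradiction/blow-up argument.

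The error $E_A$ is controlled by \eqref{eq:outline-projection-error}. Together with $|Q_A|^4\lesssim \eps^{-2}$ near $b$, this gives a weighted norm of size $o(1)\eps^{-1/2}$ after choosing $\delta$ small and $K$ large. A contraction mapping then yields $\phi_A$ with $\eps^{1/2}\|\phi_A\|_\infty\to0$, depending $C^1$ on $A$. The next step is to show that the coefficients $c_j(A)$ vanish exactly when $A$ is a critical point of the reduced energy
\[
F(A)=J(PQ_A+\phi_A),
\qquad
J(v)=\tfrac12\int_{\Sigma_K}(|\nabla v|^2+\mu v^2)-\tfrac16\int_{\Sigma_K}v^6 .
\]
This follows by the standard argument, using the near-orthogonality of $\partial_AP Q_A$ and $Q_A^4Z_{j,A}$.

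The main obstacle is expanding $F$ and finding an interior critical point in $\mathcal A_K$. We would use the appendix interaction estimates to write $F(A)=c_0+\Phi(\eps,d,a,\alpha_b,\widehat\beta,\xi)+$ lower order. The leading term of $\Phi$ combines three contributions:
\begin{enumerate}
\item the self-interaction through the regular part of the Neumann Green function, of order $\eps/d$ times the Kelvin far-field coefficient;
\item the sum of interactions with the $K$ reflected copies, $\eps\sum_{j}|b-b_j|^{-1}\sim \eps K\log K$, plus the boundary images;
\item the $\mu$-term, of order $\mu\eps^2\cdot(\text{size})$, and the linear terms in $a$ that come from moving off the nodal curve $\Gamma$.
\end{enumerate}
The scalings $\eps\sim K^{-3}$ and $d\sim K^{-2/3}$ are chosen so that the relevant terms balance. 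We would then show that $(\eps K^3,dK^{2/3})$ has a nondegenerate max--min, or min, structure in the box $[\delta,\delta^{-1}]^2$. The variables $a$, $\alpha_b$ and $\widehat\beta$ enter through coercive quadratic terms, with the stated boxes being the natural sizes; this gives a minimum in those directions. The dependence on $\xi\in\Gamma$ is handled by compactness, taking a maximum or minimum over the compact curve. A degree argument, or linking, on the product box then gives a critical point in the interior, since the boundary values are dominated by the leading terms. The most delicate point is to verify the sign of the combined boundary contribution: the Neumann images reinforce one another, so one must check that they still produce a genuine extremum at $d\sim K^{-2/3}$ rather than pushing the bubble to the boundary. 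This would be done using the trigonometric sums of the appendix.

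Finally, the solution is sign-changing because $PQ_A$ takes both signs with size $\eps^{-1/2}$ near $b$, while $\phi$ is $o(\eps^{-1/2})$. Evenness in $x_3$ is inherited from the symmetry class in which the whole construction is carried out.
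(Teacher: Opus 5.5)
Your reduction scheme is essentially the paper's: inner--outer splitting, nondegeneracy of $q$ in the even class, a contraction, $C^1$ dependence on $A$, critical points of $\Phi_K$ killing the multipliers, and the sign change via points where $q$ has opposite signs. The gap is in the step you yourself call the main obstacle. You never identify the leading part of the reduced energy, and what you say about it is off.

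\textbf{The terms that fix the scales.} Because $\xi\in\Gamma$ lies on the nodal set, $q(\widehat\xi)=O(a)$. The monopole interactions you list (of orders $\eps/d$ and $\eps K\log K$) therefore enter only as $-\frac{5}{3\pi}\eps K\log K\,|\nabla q(\xi)|^2a^2-\frac1\pi\eps^2Kd^{-1}|\nabla q(\xi)|^2a$. The terms that fix the scales are the dipole--dipole interactions in $\eps^3w^T\nabla^2_{x,p}(H_N^e-\gamma)(b,b)w$:
\begin{itemize}
\item the $K$ copies give $-b_0\eps^3K^3(1+3d)$, from $S_3^+(K)$ and $\rho^{-3}\approx1+3d$;
\item the Neumann images give $-b_1\eps^3Kd^{-2}$, from $S_3^+(K,d)$ and $S_5^+(K,d)$.
\end{itemize}
These compete with $+\mu\eps^2C_*(\xi)$, where $C_*(\xi)>0$ precisely because $q(\xi)=0$. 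Only this balance yields $\eps\sim K^{-3}$ and $d^3\sim K^{-2}$. Your ``delicate point'' is settled by the competition between $-b_1\eps^3Kd^{-2}$, which penalizes small $d$, and $-3b_0\eps^3dK^3$, the curvature correction, which penalizes large $d$.

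\textbf{Concavity and the maximum argument.} The resulting expansion \eqref{eq:main-boundary-energy-expansion} is concave in every variable. It has a maximum, not a minimum, in $a$, $\alpha_b$, $\widehat\beta$, and a maximum, not a max--min, in $(\eps,d)$. The paper therefore simply maximizes $\Phi_K$ over the compact set $\mathcal A_K$. It excludes each face by comparison with an explicit interior point: $\eps_*=\frac{2\mu C_*}{3b_0}K^{-3}$, $a_*=-\frac{3\eps}{10d\log K}$, $d_*=(2b_1/(3b_0))^{1/3}K^{-2/3}$, and zero angles. Since $\Gamma$ has no boundary, $\xi$ needs no separate treatment.

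\textbf{Why a degree argument cannot close.} Suppose the box components of $\nabla\Phi_K$ do not vanish on $\Gamma\times\partial(\text{box})$. Then you may homotope the $\xi$-component to a nowhere-vanishing tangent field on the circle $\Gamma$, so the degree is zero and detects nothing. The alternative ``degree in the box, then extremize over $\xi$'' needs a nondegenerate $C^1$ family of box critical points. That requires second-derivative control, which the $C^1$ reduction does not provide. You need the variational (maximum) structure, which only the explicit expansion reveals.

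\textbf{Quantitative precision of $\phi$.} The gains at the faces are as small as $\delta^2K^{-20/3}/\log K$. So you need $\Phi_K(A)=J_{\Sigma_K}(PQ_A)+O(\delta^{-8}K^{-22/3})$ uniformly on $\mathcal A_K$. This requires the rescaled bound $\|\phi\|_\infty+\sum_j|\lambda_j|\le C\delta^{-4}K^{-11/3}$. Your bound $\eps^{1/2}\|\phi_A\|_\infty=o(1)$ gives only an $o(1)$ energy error, which swamps the entire $O(K^{-6})$ parameter dependence of $\Phi_K$.
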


The homogeneous Neumann condition on the flat faces allows us
to extend a sector solution to the whole ball by even reflection.

\begin{theorem}[Even reflection]
\label{thm:outline-reflection}
Let $K\geq2$ be even. If $v\in H^1(\Sigma_K)$ is a weak solution of
\begin{equation}
\label{eq:outline-sector-equation}
\begin{cases}
\Delta v-\mu v+v^5=0 & \text{in }\Sigma_K,\\
\partial_\nu v=0 & \text{on }\partial\Sigma_K,
\end{cases}
\end{equation}
then its even extension $u=\mathcal E_Kv$ is a weak solution of
\begin{equation}
\label{eq:outline-reflected-equation}
\begin{cases}
\Delta u-\mu u+u^5=0 & \text{in }B_1,\\
\partial_\nu u=0 & \text{on }\partial B_1.
\end{cases}
\end{equation}
If $v$ is even in $x_3$, then $u\in\mathcal H_K$.
\end{theorem}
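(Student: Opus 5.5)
The plan is to verify the weak formulation directly. Each reflected sector is handled by a change of variables, and the contributions of the $K$ sectors are summed. The $K$ congruent sectors $g(\Sigma_K)$, $g\in G_K$, are pairwise disjoint open sets covering $B_1$ up to a null set (the flat faces and the $x_3$-axis). On $g(\Sigma_K)$ we set $u=v\circ g^{-1}$; since $K$ is even, this is consistent with the generators $R_{4\theta_0}$ and $S_K$ of the symmetry class. Each $g$ is a linear isometry that preserves $B_1$ and $\partial B_1$.

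\textbf{Step 1: $u\in H^1(B_1)$.} Across each flat face $F$ shared by two adjacent sectors, the two pieces are related by the reflection in $F$. The standard even-reflection lemma then applies: the traces on $F$ from both sides coincide, so the distributional gradient has no singular part on $F$. Hence $u\in H^1$ of the union of the two closed sectors, with $\|u\|_{H^1(B_1)}^2=K\|v\|_{H^1(\Sigma_K)}^2$. The axis $\{z=0\}$ has zero capacity in $\mathbb{R}^3$, so it is removable for $H^1$. If $v\in L^6$, then $u^5\in L^{6/5}$, so the nonlinear term makes sense against $H^1$ test functions.

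\textbf{Step 2: testing.} Let $\varphi\in H^1(B_1)$ and form the symmetrization
\[
\tilde\varphi=\frac1K\sum_{g\in G_K}\varphi\circ g .
\]
The functional $\ell(\varphi)=\int_{B_1}(\nabla u\cdot\nabla\varphi+\mu u\varphi-u^5\varphi)$ is invariant under each $g$ because $u\circ g=u$. Therefore $\ell(\varphi)=\ell(\tilde\varphi)$. Since $\tilde\varphi$ is $G_K$-invariant, we get
\[
\ell(\tilde\varphi)=K\int_{\Sigma_K}\bigl(\nabla v\cdot\nabla\tilde\varphi+\mu v\tilde\varphi-v^5\tilde\varphi\bigr)=0 .
\]
The last equality holds because $v$ is a weak Neumann solution on $\Sigma_K$ and $\tilde\varphi|_{\Sigma_K}\in H^1(\Sigma_K)$ is an admissible test function; the weak Neumann formulation allows arbitrary $H^1$ test functions. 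Thus $u$ is a weak solution of \eqref{eq:outline-reflected-equation}, and the boundary condition on $\partial B_1$ is encoded in the same formulation.

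\textbf{Step 3: symmetry.} By construction $u\circ R_{4\theta_0}=u$ and $u\circ S_K=u$, since both maps lie in $G_K$; this uses $K$ even, so that the rotations by $4\theta_0$ together with $S_K$ generate the same group as the face reflections. Every $g\in G_K$ commutes with $x_3\mapsto -x_3$, so evenness of $v$ in $x_3$ carries over to $u$. Hence $u\in\mathcal H_K$.

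\textbf{Main obstacle.} The only delicate point is Step 1: checking that the pieces glue to an $H^1$ function with no jump, and that the group generated by the face reflections indeed has order $K$ with sectors tiling the ball. I would handle the gluing by approximating $v$ with functions smooth up to the faces and using continuity of traces. The group identification is an elementary check on the dihedral group: the two faces meet at angle $2\theta_0=2\pi/K$.
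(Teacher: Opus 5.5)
Your proposal is correct and follows the paper's argument. You change variables on each reflected sector and test the weak equation for $v$ with the symmetrized function $\sum_{g\in G_K}\zeta\circ g$ restricted to $\Sigma_K$; your $1/K$ normalization and the $G_K$-invariance of the functional are equivalent bookkeeping. You also supply the $H^1$ gluing details (matching traces across the faces, removability of the axis) that the paper simply states as clear.

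One small correction to Step 3: $R_{4\theta_0}$ and $S_K$ generate the same group as the two face reflections for every $K$, since $R_{4\theta_0}$ is the composition of the reflections $z\mapsto e^{\pm 2i\theta_0}\overline z$. Evenness of $K$ is needed for a different reason: it makes that group have order $K$, so that $\Sigma_K$ is a fundamental domain and $\mathcal E_K v$ is well defined.
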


\begin{proof}
Clearly,
$u=\mathcal E_Kv\in H^1(B_1)$.

For any $\zeta\in H^1(B_1)$, define
\[
\zeta_K(x)=\sum_{g\in G_K}\zeta(gx),
\qquad x\in\Sigma_K.
\]
Then $\zeta_K\in H^1(\Sigma_K)$. Changing variables in each
reflected sector and using the weak equation for $v$, we obtain
\[
\begin{aligned}
\int_{B_1}
\bigl(\nabla u\cdot\nabla\zeta+\mu u\zeta-u^5\zeta\bigr)\,dx
&=
\int_{\Sigma_K}
\bigl(\nabla v\cdot\nabla\zeta_K
      +\mu v\zeta_K-v^5\zeta_K\bigr)\,dx=0.
\end{aligned}
\]
By standard elliptic estimation, $u$ admits a representation as the classical solution of \eqref{eq:outline-reflected-equation}. The symmetry assertions
follow from the definition of $\mathcal E_K$.
\end{proof}

It therefore remains to prove Theorem~\ref{thm:outline-construction}.
The proof is based on the method of localized energy, as
in~\cite{WangWeiYan2011}. Associated with the Neumann problem
in $\Sigma_K$ is the energy functional
\begin{equation*}
J_{\Sigma_K}(v)=
\frac12\int_{\Sigma_K}(|\nabla v|^2+\mu v^2)\,\mathrm{d}x
-\frac16\int_{\Sigma_K}|v|^6\,\mathrm{d}x,
\qquad v\in H^1(\Sigma_K).
\end{equation*}
We look for solutions of the form
\[
v=P Q_A+\phi_A,
\]
where $\phi_A$ is small in suitable norms after rescaling.
For each fixed $A\in\mathcal A_K$, we first solve a nonlinear
projected problem for $\phi_A$. We then seek a critical point
of the reduced energy functional
\begin{equation*}
\Phi_K(A)=J_{\Sigma_K}(P Q_A+\phi_A)
\end{equation*}
in the interior of the configuration space $\mathcal A_K$.
At such a point, the coefficients of the kernel terms vanish,
and $P Q_A+\phi_A$ solves the Neumann problem in $\Sigma_K$.
Its even extension is then a solution on the whole ball,
with energy $K\Phi_K(A)$.

\section{Finite-dimensional reduction}
\label{sec:correction}

In this section, we carry out the finite-dimensional reduction. We first introduce the rescaled variables
\begin{equation*}
y=\frac{x}{\eps},\qquad b_\eps=\frac b\eps,\qquad
\Omega_\eps=\eps^{-1}\Sigma_K,\qquad
\bar\eps=\eps K,
\end{equation*}
and write
\begin{equation}
\label{eq:fdr-profiles}
\begin{aligned}
U(y)&=\eps^{1/2}Q_A(\eps y)
 =\frac1{|y-b_\eps|}
 q\!\left(\xi+a\nu_q(\xi)
       +\frac{R_\beta(y-b_\eps)}{|y-b_\eps|^2}\right),\\
V(y)&=\eps^{1/2}P Q_A(\eps y).
\end{aligned}
\end{equation}
For functions $v$ and $h$ on a domain $D\subset\mathbb R^3$
and a center $c\in\mathbb R^3$, define
\begin{align*}
\|v\|_{*,c}
&=\sup_{y\in D}\langle y-c\rangle |v(y)|
 +\sup_{y\in D}\langle y-c\rangle^2|\nabla v(y)|,
\\
\|h\|_{**,c}
&=\sup_{y\in D}\langle y-c\rangle^{3+2\sigma}|h(y)|,
\end{align*}
where $\langle z\rangle=(1+|z|^2)^{1/2}$, $\sigma$ is a fix constant in $(0,\frac12)$.
\label{subsec:gluing}
By the rescaling notations introduced before, \eqref{eq:outline-projection} becomes 
\begin{equation*}
\begin{cases}
\Delta V-\mu\eps^2 V+U^5=0 &\text{in }\Omega_\eps,\\
\partial_\nu V=0 &\text{on }\partial\Omega_\eps.
\end{cases}
\end{equation*}
The estimate~\eqref{eq:outline-projection-error} gives
\begin{equation}
\label{eq:fdr-projection-bound}
\|V-U\|_{L^\infty(\Omega_\eps)}
\leq C\delta^{-5/2}K^{-4}.
\end{equation}
We seek a correction $\phi$ such that $V+\phi$ solves the rescaled equation \eqref{eq:outline-sector-equation}. Set
\begin{equation}
\label{eq:fdr-operator-error}
\begin{aligned}
L\phi&=\Delta\phi-\mu\eps^2\phi+5V^4\phi,\quad E=V^5-U^5,\quad N(\phi)=(V+\phi)^5-V^5-5V^4\phi.
\end{aligned}
\end{equation}
The equation for $\phi$ is therefore
\begin{equation}
\label{eq:fdr-correction-equation}
L\phi=-E-N(\phi)\quad\text{in }\Omega_\eps,
\qquad \partial_\nu\phi=0\quad\text{on }\partial\Omega_\eps.
\end{equation}

Let $\eta_0\in C^\infty([0,\infty))$ satisfy $0\leq\eta_0\leq1$,
$\eta_0=1$ on $[0,1]$ and $\eta_0=0$ on $[2,\infty)$. Fix
$r\in(0,1/32)$ sufficiently small, independently of $\delta$, and define
\[
\eta(y)=\eta_0\!\left(\frac{\bar\eps|y-b_\eps|}{r}\right),
\qquad
\chi(y)=\eta_0\!\left(\frac{\bar\eps|y-b_\eps|}{2r}\right),
\qquad D_K=B_{4r/\bar\eps}(b_\eps).
\]
Moreover,
\begin{equation*}
\chi\eta=\eta,\qquad
|\nabla\eta|+|\nabla\chi|\leq C\bar\eps,\qquad
|\Delta\eta|+|\Delta\chi|\leq C\bar\eps^2.
\end{equation*}
We decompose the correction as
\begin{equation}
\label{eq:fdr-splitting}
\phi=\chi\phi_{\mathrm{in}}+\phi_{\mathrm{out}},
\end{equation}
where $\chi\phi_{\mathrm{in}}$ is extended by zero outside $D_K$.
It is sufficient to solve the coupled system
\begin{equation}
\label{eq:fdr-inner-unprojected}
\begin{cases}
L\phi_{\mathrm{in}}
 =-\eta\bigl[E+N(\chi\phi_{\mathrm{in}}+\phi_{\mathrm{out}})
                   +5V^4\phi_{\mathrm{out}}\bigr]
 &\text{in }D_K,\\
\phi_{\mathrm{in}}=0 &\text{on }\partial D_K,
\end{cases}
\end{equation}
and
\begin{equation}
\label{eq:fdr-outer}
\begin{cases}
\bigl(\Delta-\mu\eps^2+5(1-\eta)V^4\bigr)\phi_{\mathrm{out}}=-(1-\eta)\bigl[E+N(\chi\phi_{\mathrm{in}}+\phi_{\mathrm{out}})\bigr]
       -\mathcal C(\phi_{\mathrm{in}})
 &\text{in }\Omega_\eps,\\
\partial_\nu\phi_{\mathrm{out}}=0 &\text{on }\partial\Omega_\eps,
\end{cases}
\end{equation}
where
\[
\mathcal C(v)=2\nabla\chi\cdot\nabla v+(\Delta\chi)v.
\]

\subsection{The Outer Problem}
\label{subsec:outer}
We first deal with the outer equation \eqref{eq:fdr-outer}. The idea is basically to rewrite the equation into a fixed-point problem and apply the contraction mapping theorem. We first recall a lemma from existing literature.
\begin{lemma}
\label{lem:neumann-pointwise}
For $h\in L^\infty(\Omega_\eps)$, let
$\psi\in H^1(\Omega_\eps)$ be the unique weak solution of
\[
\begin{cases}
-\Delta\psi+\mu\eps^2\psi=h
    &\text{in }\Omega_\eps,\\
\partial_\nu\psi=0
    &\text{on }\partial\Omega_\eps.
\end{cases}
\]
Then $\psi$ has a continuous representative satisfying
\begin{equation}
\label{eq:neumann-pointwise}
\begin{aligned}
|\psi(y)|
&\leq C\sum_{g\in G_K}
       \int_{\Omega_\eps}
       \frac{|h(z)|}{|y-gz|}\,\mathrm{d}z
\end{aligned}
\end{equation}
\end{lemma}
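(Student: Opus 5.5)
The plan is to reduce the statement to a Green-function bound for the Neumann problem on the whole rescaled ball, and then fold that bound back onto the sector by the reflection group $G_K$.

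\textbf{Step 1: extend to the ball.} Let $B_\eps=\eps^{-1}B_1$. Since the boundary condition on the flat faces of $\Omega_\eps$ is homogeneous Neumann, I extend $h$ evenly by $G_K$ to a function $\tilde h=\mathcal E_K h$ on $B_\eps$. Let $\tilde\psi$ be the unique $H^1(B_\eps)$ solution of $-\Delta\tilde\psi+\mu\eps^2\tilde\psi=\tilde h$ with Neumann data on $\partial B_\eps$. Uniqueness and the $G_K$-invariance of the equation make $\tilde\psi$ $G_K$-invariant. Hence its restriction to $\Omega_\eps$ is a weak solution of the sector problem; this is the same test-function folding as in the proof of Theorem~\ref{thm:outline-reflection}, with $\zeta_K=\sum_g\zeta\circ g$. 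By uniqueness on $\Omega_\eps$ (coercivity from $\mu\eps^2>0$), $\psi=\tilde\psi|_{\Omega_\eps}$.

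\textbf{Step 2: Green function bound on the ball.} Next I write $\tilde\psi(y)=\int_{B_\eps}G_\eps(y,w)\tilde h(w)\,dw$, where $G_\eps$ is the Green function of $-\Delta+\mu\eps^2$ with Neumann condition on $B_\eps$. By scaling, $G_\eps(y,w)=\eps^{-1}G_1(\eps y,\eps w)$, where $G_1$ is the Neumann Green function of $-\Delta+\mu$ on $B_1$. The key estimate is
\[
0<G_1(x,x')\le \frac{C}{|x-x'|}+\frac{C}{\bigl|\,|x'|x-x'/|x'|\,\bigr|}+C,
\qquad x,x'\in B_1,
\]
with $C$ depending only on $\mu$. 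This follows from the explicit Neumann function of the ball, where the regular part is the Kelvin-image term plus bounded (logarithmic-type, but controlled) terms. A simpler route is to note that $\mu>0$ gives a maximum principle and compare with the Neumann function of $-\Delta+\mu$.

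The Kelvin-image term needs care. Since $\bigl|\,|x'|x-x'/|x'|\,\bigr|\ge c|x-x'|$ for $x,x'\in B_1$ (the reflected point lies outside the ball), it is bounded by $C/|x-x'|$. The additive constant satisfies $C\le C/|x-x'|$ because $|x-x'|\le2$. Rescaling therefore gives $0<G_\eps(y,w)\le C/|y-w|$, with $C$ independent of $\eps$ and of $K$.

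\textbf{Step 3: fold back to the sector.} For $y\in\Omega_\eps$,
\[
|\psi(y)|\le \sum_{g\in G_K}\int_{g\Omega_\eps}\frac{C|\tilde h(w)|}{|y-w|}\,dw
=C\sum_{g\in G_K}\int_{\Omega_\eps}\frac{|h(z)|}{|y-gz|}\,dz,
\]
using the substitution $w=gz$ and $\tilde h(gz)=h(z)$. This is \eqref{eq:neumann-pointwise}. Continuity of $\psi$ follows from standard elliptic regularity, since $h\in L^\infty$ and the boundary of the ball is smooth, away from the flat faces; alternatively, it comes directly from the representation $\tilde\psi=G_\eps*\tilde h$.

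\textbf{Main obstacle.} The hard step is the uniform bound $G_1(x,x')\le C/|x-x'|$ up to the boundary, uniformly as both points approach $\partial B_1$. I would establish it by writing $G_1=\Gamma_\mu(x-x')+\Gamma_\mu(|x'|x-x'/|x'|)+H$, where $\Gamma_\mu$ is the fundamental solution of $-\Delta+\mu$. The remainder $H$ solves an equation with bounded Neumann data and an $L^p$ right-hand side for some $p>3/2$, so it is bounded, uniformly in $x'$, by elliptic estimates. This reasoning is the standard one used in \cite{SunWeiYangFull,WangWeiYan2011}.
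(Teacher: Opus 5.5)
Your proof follows the paper's route. You extend evenly by $G_K$ to $B_{\eps^{-1}}$, identify $\psi$ with the restriction of the ball solution, use the pointwise Neumann Green-function bound $|G(y,w)|\le C/|y-w|$, and unfold over the $K$ sectors. The paper simply cites that Green bound from Wang--Wei--Yan. Your overall structure is correct; the errors are in your justification of the Green bound, and neither changes the conclusion.

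\textbf{The scaling factor.} The correct scaling is $G_\eps(y,w)=\eps\,G_1(\eps y,\eps w)$, not $\eps^{-1}G_1(\eps y,\eps w)$. With $\eps^{-1}$ you would get $C\eps^{-2}/|y-w|$. Only the factor $\eps$ yields the $\eps$-independent bound you state.

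\textbf{The remainder $H$.} In your last paragraph you claim that $H=G_1-\Gamma_\mu(x-x')-\Gamma_\mu(|x'|x-x'/|x'|)$ has bounded Neumann data and is bounded uniformly in $x'$. This is false. On $|x|=1$ one has $\bigl|\,|x'|x-x'/|x'|\,\bigr|=|x-x'|=:r$. The normal derivatives of the two subtracted terms then add up to $-e^{-\sqrt\mu r}(1+\sqrt\mu r)/(4\pi r)$, which is unbounded as $x'\to\partial B_1$. Consequently $H$ is logarithmically singular when $x$ and $x'$ approach the same boundary point. This is exactly the term $\log\frac{2}{A+B}$ in the paper's $H_N$, where $A=1-x\cdot x'$ and $B=\bigl|\,|x'|x-x'/|x'|\,\bigr|$.

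Your displayed bound is still true, but only because this logarithm is absorbed into $C/|x-x'|$. Indeed, $A>0$ and $B^2-|x-x'|^2=(1-|x|^2)(1-|x'|^2)\ge0$, so $\log\frac{2}{A+B}\le\log\frac{2}{|x-x'|}\le\frac{C}{|x-x'|}$.

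\textbf{Passing from $-\Delta$ to $-\Delta+\mu$.} As written, ``compare with the Neumann function of $-\Delta+\mu$'' is circular. The comparison idea works if you compare with the Laplacian instead. Let $N_0$ be the explicit Neumann function of $-\Delta$ on $B_1$, normalized by $-\Delta_xN_0(\cdot,x')=\delta_{x'}-|B_1|^{-1}$. It satisfies $N_0\le C/|x-x'|$ by the estimate above and is bounded below uniformly in $x'$. Hence $N_0+M$ is a supersolution of $-\Delta+\mu$ for a large constant $M$. The maximum principle then gives $0<G_1\le N_0+M\le C/|x-x'|$.
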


\begin{proof}
Let $\widetilde h$ and $\widetilde\psi$ be the even
reflection extensions of $h$ and $\psi$ to
$B_{\eps^{-1}}$. The argument of Theorem~\ref{thm:outline-reflection} shows that
$\widetilde\psi$ is the weak
solution of
\[
\begin{cases}
-\Delta\widetilde\psi+\mu\eps^2\widetilde\psi
    =\widetilde h
    &\text{in }B_{\eps^{-1}},\\
\partial_\nu\widetilde\psi=0
    &\text{on }\partial B_{\eps^{-1}}.
\end{cases}
\]
Elliptic regularity gives a continuous representative.
By the Green function estimate in~\cite{WangWeiYan2011}, we obtain
\[
\begin{aligned}
|\psi(y)|
\leq C\int_{B_{\eps^{-1}}}
       \frac{|\widetilde h(z)|}{|y-z|}\,\mathrm{d}z=C\sum_{g\in G_K}
       \int_{\Omega_\eps}
       \frac{|h(z)|}{|y-gz|}\,\mathrm{d}z.
\end{aligned}
\]
\end{proof}

To describe the parameter dependence on a fixed outer domain,
we regard $\eps$ as a fixed reference scale and introduce
\[
\mathcal P=(P_0,\ldots,P_5)
=(\Lambda,\vartheta,\tau_1,\tau_2,c_1,c_2),
\qquad
\tau=(\tau_1,\tau_2,0),\quad c=(c_1,c_2,0),
\]
and the auxiliary family
\[
\mathcal U_{\mathcal P}(y)
=
\frac{\Lambda^{1/2}}{|y-c|}
q\!\left(
\tau+\frac{\Lambda R_\vartheta(y-c)}{|y-c|^2}
\right).
\]
For the current configuration $A$, we take
\[
\mathcal P_A
=(1,\beta,\tau_1,\tau_2,b_{\eps,1},b_{\eps,2}),
\qquad
\tau=\xi+a\nu_q(\xi),
\]
so that $\mathcal U_{\mathcal P_A}=U$.
The value at $y=c$ is understood by smooth continuation.
For $\mathcal P$ in a sufficiently small open neighborhood of
$\mathcal P_A$, let $V_{\mathcal P}$ be the unique weak solution of
\begin{equation}
\label{eq:fdr-parameter-projection}
\begin{cases}
\Delta V_{\mathcal P}-\mu\eps^2V_{\mathcal P}
+\mathcal U_{\mathcal P}^5=0
&\text{in }\Omega_\eps,\\
\partial_\nu V_{\mathcal P}=0
&\text{on }\partial\Omega_\eps.
\end{cases}
\end{equation}
Define
\begin{equation*}
\begin{aligned}
E_{\mathcal P}
=V_{\mathcal P}^5-\mathcal U_{\mathcal P}^5,\quad
N_{\mathcal P}(s)
=(V_{\mathcal P}+s)^5-V_{\mathcal P}^5
-5V_{\mathcal P}^4s,
\end{aligned}
\end{equation*}
and
\begin{equation*}
\begin{aligned}
\eta_{\mathcal P}(y)
&=\eta_0\!\left(\frac{\bar\eps|y-c|}{r}\right),
&\qquad
\chi_{\mathcal P}(y)
&=\eta_0\!\left(\frac{\bar\eps|y-c|}{2r}\right),\\
D_{K,\mathcal P}
&=B_{4r/\bar\eps}(c),
&
\mathcal C_{\mathcal P}(v)
&=2\nabla\chi_{\mathcal P}\cdot\nabla v
+(\Delta\chi_{\mathcal P})v.
\end{aligned}
\end{equation*}
For an input $v$ defined on
the whole domain $\Omega_\eps$, the corresponding outer
problem is
\begin{equation*}
\left\{
\begin{aligned}
(\Delta-\mu\eps^2)\psi
&+(1-\eta_{\mathcal P})
\bigl[E_{\mathcal P}
+N_{\mathcal P}(\chi_{\mathcal P}v+\psi)
+5V_{\mathcal P}^4\psi\bigr]+\mathcal C_{\mathcal P}(v)=0
&&\text{in }\Omega_\eps,\\
\partial_\nu\psi&=0
&&\text{on }\partial\Omega_\eps.
\end{aligned}
\right.
\end{equation*}
At $\mathcal P=\mathcal P_A$, we recover
\[
V_{\mathcal P_A}=V,\quad E_{\mathcal P_A}=E,\quad
N_{\mathcal P_A}=N,\quad
\eta_{\mathcal P_A}=\eta,\quad
\chi_{\mathcal P_A}=\chi,\quad
\mathcal C_{\mathcal P_A}=\mathcal C.
\]
All parameter derivatives are taken with $\eps$, $K$, and $r$
fixed, with the six components of $\mathcal P$ regarded as
independent variables, and are then evaluated at $\mathcal P_A$.
In the following estimates, we work at the fixed configuration
$\mathcal P_A$. We now introduce the following Banach spaces.
\[
\begin{aligned}
\mathcal X_A
&=\left\{
v\in C^1(\overline{D_K}):
v|_{\partial D_K}=0,\quad
v(y_1,y_2,y_3)=v(y_1,y_2,-y_3)
\right\},\\
\mathcal Y_A
&=\left\{
\psi\in L^{\infty}(\Omega_\eps):
\psi(y_1,y_2,y_3)=\psi(y_1,y_2,-y_3)
\right\},
\end{aligned}
\]
equipped with the norms $\|\cdot\|_{*,b_\eps}$ and
$\|\cdot\|_\infty$, respectively. Set
\[
\begin{aligned}
\mathcal V_A
&=\left\{v\in\mathcal X_A:
\|v\|_{*,b_\eps}\leq M_0\delta^{-4}K^{-11/3}\right\},\\
\mathcal B_A
&=\left\{\psi\in\mathcal Y_A:
\|\psi\|_\infty\leq C_1\delta^{-5}K^{-14/3}\right\}.
\end{aligned}
\]
where $M_0$ and $C_1$ are constants that will be determined later.
Let $T_\eps$ denote the inverse of $\Delta-\mu\eps^2$ with
homogeneous Neumann boundary conditions on $\Omega_\eps$.
For fixed $v\in\mathcal V_A$, define
\begin{equation}
\label{eq:fdr-outer-map}
T(\psi)=T_\eps\!\left[
-(1-\eta)\bigl(E+N(\chi v+\psi)+5V^4\psi\bigr)
-\mathcal C(v)
\right],
\qquad \psi\in\mathcal B_A.
\end{equation}
The outer problem~\eqref{eq:fdr-outer} is equivalent to
the fixed-point equation $\psi=T(\psi)$.

\begin{lemma}
\label{lem:outer-map-invariant}
For every fixed $M_0>0$, there exist
$C_1>0$, independent of $\delta$ and $K$, and $K_0(\delta)$, such
that for every even $K\geq K_0(\delta)$, every
$A\in\mathcal A_K$, and every $v\in\mathcal V_A$,
\[
T(\mathcal B_A)\subseteq\mathcal B_A.
\]
\end{lemma}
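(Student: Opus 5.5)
We estimate $\|T(\psi)\|_\infty$ term by term using the pointwise bound of Lemma~\ref{lem:neumann-pointwise}. That lemma reduces everything to bounding
\[
\sum_{g\in G_K}\int_{\Omega_\eps}\frac{|h(z)|}{|y-gz|}\,dz
\]
for each piece $h$ of the right-hand side of \eqref{eq:fdr-outer-map}. Evenness in $y_3$ is preserved, since every ingredient ($U$, $V$, $\eta$, $\chi$, $v$, $\psi$) is even in $y_3$ and $T_\eps$ commutes with $y_3\mapsto-y_3$. So $T(\psi)\in\mathcal Y_A$, and only the sup bound $C_1\delta^{-5}K^{-14/3}$ needs checking.

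\textbf{Step 1: the basic convolution estimate.} Suppose $|h(z)|\le \langle z-b_\eps\rangle^{-m}$ on the support region. Sum over the $K$ reflected copies of $b_\eps$, which lie at mutual distances $\gtrsim |j|/\bar\eps$ in rescaled variables, since $\rho\approx1$ and $\theta_0=\pi/K$. Then the $G_K$-sum behaves like a periodic sum over a ring of radius $\eps^{-1}$.

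\textbf{Step 2: the pieces of $h$.}

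\emph{The commutator $\mathcal C(v)$.} It is supported in the annulus $r/\bar\eps\le|z-b_\eps|\le 4r/\bar\eps$. There $|\nabla\chi|\le C\bar\eps$, $|\Delta\chi|\le C\bar\eps^2$, and $\|v\|_{*,b_\eps}\le M_0\delta^{-4}K^{-11/3}$ gives $|v|\lesssim \bar\eps M_0\delta^{-4}K^{-11/3}$ and $|\nabla v|\lesssim\bar\eps^2 M_0\delta^{-4}K^{-11/3}$. Hence $|\mathcal C(v)|\lesssim\bar\eps^3M_0\delta^{-4}K^{-11/3}$ on a set of volume $\sim\bar\eps^{-3}$. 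Convolving with $1/|y-gz|$ over one annulus gives at most $\bar\eps^3\cdot\bar\eps^{-2}=\bar\eps$ times that amplitude. Summing the $K$ copies gives a harmonic-type sum $\sum_j \bar\eps/(j\bar\eps+\dots)\sim\log K$, or a factor $K\bar\eps$ for distant copies. In both cases the outcome is $\lesssim M_0\delta^{-4}K^{-11/3}\bar\eps\log K$. Since $\bar\eps=\eps K\sim\delta^{\pm1} K^{-2}$, this is $\ll\delta^{-5}K^{-14/3}$, and the constant is independent of $M_0$ once $K\ge K_0(\delta,M_0)$.

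\emph{The error $(1-\eta)E$.} Write $E=V^5-U^5$. Away from $b_\eps$, $|U|\lesssim\langle z-b_\eps\rangle^{-1}\lesssim\bar\eps$, and \eqref{eq:fdr-projection-bound} gives $|V-U|\le C\delta^{-5/2}K^{-4}$. So $|E|\lesssim (|U|^4+|V-U|^4)|V-U|$. Integrating $\langle z-b_\eps\rangle^{-4}\delta^{-5/2}K^{-4}$ against $1/|y-z|$ over $|z-b_\eps|\ge r/\bar\eps$ gives $\lesssim \delta^{-5/2}K^{-4}\bar\eps\log$. The $K$ copies contribute a further factor controlled by $\sum_j (j/\bar\eps)^{-2}\cdot$volume terms. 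The region where $|U|\lesssim |V-U|$ is, after rescaling, the whole $\Omega_\eps$ of volume $\eps^{-3}$; there $|E|\lesssim\delta^{-25/2}K^{-20}$, and the Green function integral over $B_{\eps^{-1}}$ is $\lesssim\eps^{-2}$. This yields $\delta^{-25/2}K^{-20}\delta^{-2}K^{6}$, which is negligible. The dominant contribution should be of order $\delta^{-5}K^{-14/3}$ up to a constant. This fixes $C_1$ as twice that constant.

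\emph{The linear term $(1-\eta)5V^4\psi$ and the nonlinearity $N(\chi v+\psi)$.} The linear term is at most $C\bar\eps^4\|\psi\|_\infty$ pointwise on the outer region, with decay $\langle z-b_\eps\rangle^{-4}$ from $U^4$. Its convolution gives $\lesssim \bar\eps\log K\,\|\psi\|_\infty\ll\|\psi\|_\infty$. The nonlinearity satisfies $|N(s)|\lesssim |V|^3s^2+|s|^5$. With $|\chi v|+|\psi|\lesssim K^{-11/3}\delta^{-4}$, this is smaller still.

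\textbf{Main obstacle.} The delicate point is the sharp bound on $T_\eps[(1-\eta)E]$. It must produce exactly the $K^{-14/3}$ scale with a constant independent of $\delta$ beyond the stated $\delta^{-5}$. This requires using the refined structure of $V-U$ from Appendix~\ref{app:interactions}, namely that it is essentially a harmonic-type correction of size $\eps^{1/2}$ times the Robin/reflection interactions, rather than only the crude sup bound \eqref{eq:fdr-projection-bound}. It also requires summing the $K$ reflected copies carefully, since the boundary distance is $d\sim K^{-2/3}$. I would first try splitting the $G_K$-sum into the near copies, with $|j|\le \bar\eps^{-1}d$, and the far copies, estimating the far ones by a Riemann-sum comparison with a line integral along the equator.
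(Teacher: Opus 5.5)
Your bounds for the commutator, the linear term and the nonlinearity are fine. Your commutator bound is in fact sharper than the paper's. You keep $\sup_y$ outside the sum over $G_K$ and use that the images $gb_\eps$ are spaced by $\sim\bar\eps^{-1}$ along a circle, which gives $\|T_\eps\mathcal C(v)\|_\infty\le C\bar\eps\log K\,\|v\|_{*,b_\eps}$. The paper instead bounds each of the $K$ annular integrals by its worst case $C\bar\eps^{-2}$, and obtains $C\eps K^2\|v\|_{*,b_\eps}\le C\delta^{-1}K^{-1}\|v\|_{*,b_\eps}\le CM_0\delta^{-5}K^{-14/3}$.

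The gap is the $E$ term. You assert that $T_\eps[(1-\eta)E]$ is the dominant contribution, of exact size $\delta^{-5}K^{-14/3}$, and that bounding it needs the fine structure of $V-U$ and a near/far splitting of the reflected copies. You then leave it unresolved. Your choice of $C_1$ as ``twice that constant'' therefore rests on an estimate you never prove, and the claim is false in any case. Nothing delicate happens here. On $\supp(1-\eta)$ one has $|U|+|V|\le C\bar\eps$, so \eqref{eq:fdr-projection-bound} gives $|(1-\eta)E|\le C\bar\eps^4\delta^{-5/2}K^{-4}$. The weak maximum principle $\|T_\eps f\|_\infty\le\mu^{-1}\eps^{-2}\|f\|_\infty$ then yields
\[
\|T_\eps[(1-\eta)E]\|_\infty\le C\eps^{2}K^{4}\cdot\delta^{-5/2}K^{-4}\le C\delta^{-9/2}K^{-6},
\]
which is $o(\delta^{-5}K^{-14/3})$. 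Your own identity-copy computation already points this way; no refined structure of $V-U$ and no Riemann-sum comparison is needed.

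In the paper, the only source term of size $\delta^{-5}K^{-14/3}$ is $T_\eps\mathcal C(v)$ under the crude bound. That is why $C_1$ is taken larger than $4C(M_0+1)$. The linear and nonlinear pieces are then absorbed as $\bigl[C\delta^{-2}K^{-2}+CC_1\delta^{-6}K^{-14/3}+CC_1^4\delta^{-22}K^{-38/3}\bigr]C_1\delta^{-5}K^{-14/3}$ for $K\ge K_0(\delta)$.

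With your sharper commutator estimate and the maximum-principle bound above, every source term is $o(\delta^{-5}K^{-14/3})$. So any fixed $C_1>0$ works once $K$ is large depending on $\delta$ and $M_0$. Replacing your ``main obstacle'' paragraph with the maximum-principle estimate above closes the proof.
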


\begin{proof}
Fix $v\in\mathcal V_A$ and $\psi\in\mathcal B_A$.
The source defining $T(\psi)$ is bounded and even in $y_3$.
The Neumann inverse preserves this symmetry by uniqueness.

The weak maximum principle gives
\[
\|T_\eps f\|_\infty
\leq \mu^{-1}\eps^{-2}\|f\|_\infty.
\]
The profile decay and the projection estimate imply
\[
\|V-U\|_\infty\leq C\delta^{-5/2}K^{-4},
\qquad
|U|+|V|\leq C\bar\eps
\quad\text{on }\supp(1-\eta).
\]
Combining the parameter factors before estimating them,
we obtain, for $j=3,4$,
\[
\begin{aligned}
\bigl\|T_\eps\bigl[(1-\eta)(|U|+|V|)^j f\bigr]\bigr\|_\infty
&\leq C\eps^{j-2}K^j\|f\|_\infty\leq C\delta^{2-j}K^{6-2j}\|f\|_\infty.
\end{aligned}
\]
Since $|E|\leq C(|U|+|V|)^4|V-U|$, these estimates yield
\[
\begin{aligned}
\|T_\eps[(1-\eta)E]\|_\infty
\leq C\delta^{-9/2}K^{-6},\quad
\|T_\eps[5(1-\eta)V^4\psi]\|_\infty
\leq C\delta^{-2}K^{-2}\|\psi\|_\infty.
\end{aligned}
\]
To estimate $T_\eps(\mathcal C(v))$, we apply
Lemma~\ref{lem:neumann-pointwise} with $h=-\mathcal C(v)$.
Recall that
\[
\supp \mathcal C(v)\subset
\left\{z:\frac{2r}{\bar\eps}\leq|z-b_\eps|\leq\frac{4r}{\bar\eps}\right\}.
\]
On this annulus, the cutoff estimates and the definition
of $\|\cdot\|_{*,b_\eps}$ give
\[
\begin{aligned}
|\mathcal C(v)(z)|
&\leq 2|\nabla\chi(z)|\,|\nabla v(z)|+|\Delta\chi(z)|\,|v(z)|\leq C\bar\eps^3\|v\|_{*,b_\eps}.
\end{aligned}
\]
Therefore, we obtain
\begin{equation*}
\begin{aligned}
\|T_\eps(\mathcal C(v))\|_\infty
&\leq C\sup_{y\in\Omega_\eps}
       \sum_{g\in G_K}\int_{\Omega_\eps}
       \frac{|\mathcal C(v)(z)|}{|y-gz|}\,\mathrm dz\\
&\leq C\bar\eps^3\|v\|_{*,b_\eps}
       \sum_{g\in G_K}\sup_{y\in\Omega_\eps}
       \int_{B_{4r/\bar\eps}(g b_\eps)}
       \frac{\mathrm dz}{|y-z|}\\
&\leq C\delta^{-1}K^{-1}\|v\|_{*,b_\eps}.
\end{aligned}
\end{equation*}

On $\supp(1-\eta)$,
\[
\begin{aligned}
|\chi v+\psi|
&\leq C\delta^{-1}K^{-2}\|v\|_{*,b_\eps}+\|\psi\|_\infty
\end{aligned}
\]
Using
\[
|N(t)|\leq C\bigl(|V|^3|t|^2+|t|^5\bigr),
\]
the estimate with $j=3$, and the maximum principle, we obtain
\[
\begin{aligned}
\|T_\eps[(1-\eta)N(\chi v+\psi)]\|_\infty
&\leq C\delta^{-1}
 \bigl(\delta^{-1}K^{-2}\|v\|_{*,b_\eps}+\|\psi\|_\infty\bigr)^2+C\delta^{-2}K^6
\bigl(\delta^{-1}K^{-2}\|v\|_{*,b_\eps}+\|\psi\|_\infty\bigr)^5.
\end{aligned}
\]
We now choose $C_1$. Since $v\in\mathcal V_A$,
the estimates give
\[
\begin{aligned}
\|T_\eps[(1-\eta)E+\mathcal C(v)]\|_\infty
\leq C(M_0+1)\delta^{-5}K^{-14/3}.
\end{aligned}
\]
Fix
\[
C_1>\max\{1,M_0,4C(M_0+1)\},
\]
where $C$ is the constant in the preceding estimate.
Then
\[
\|T_\eps[(1-\eta)E+\mathcal C(v)]\|_\infty
\leq\frac{C_1}{4}\delta^{-5}K^{-14/3}.
\]

For $\psi\in\mathcal B_A$, we have
\[
\begin{aligned}
\delta^{-1}K^{-2}\|v\|_{*,b_\eps}+\|\psi\|_\infty
&\leq (M_0K^{-1}+C_1)\delta^{-5}K^{-14/3}\leq 2C_1\delta^{-5}K^{-14/3}.
\end{aligned}
\]
Consequently,
\[
\begin{aligned}
\|T_\eps[(1-\eta)N(\chi v+\psi)]\|_\infty
&\leq CC_1^2\delta^{-11}K^{-28/3}
       +CC_1^5\delta^{-27}K^{-52/3}.
\end{aligned}
\]
Combining the preceding estimates yields
\[
\begin{aligned}
\|T(\psi)\|_\infty
\leq\Bigl[
&\frac14+C\delta^{-2}K^{-2}
+CC_1\delta^{-6}K^{-14/3}+CC_1^4\delta^{-22}K^{-38/3}
\Bigr]C_1\delta^{-5}K^{-14/3}.
\end{aligned}
\]
With $C_1$ fixed, increase $K_0(\delta)$ so that
\[
C\bigl(
\delta^{-2}K^{-2}
+C_1\delta^{-6}K^{-14/3}
+C_1^4\delta^{-22}K^{-38/3}
\bigr)\leq\frac14.
\]
It follows that
\[
\|T(\psi)\|_\infty
\leq\frac{C_1}{2}\delta^{-5}K^{-14/3}.
\]
We conclude that
$T(\psi)\in\mathcal B_A$.
\end{proof}
\begin{lemma}
\label{lem:outer-map-contraction}
Let $C_1$ be chosen as in
Lemma~\ref{lem:outer-map-invariant}.
After increasing $K_0(\delta)$ if necessary, for every
even $K\geq K_0(\delta)$, every $A\in\mathcal A_K$,
and every fixed $v\in\mathcal V_A$, the map
$T:\mathcal B_A\to\mathcal B_A$ satisfies
\begin{equation}
\label{eq:fdr-outer-contraction}
\|T(\psi_1)-T(\psi_2)\|_\infty
\leq\frac12\|\psi_1-\psi_2\|_\infty,
\qquad \psi_1,\psi_2\in\mathcal B_A.
\end{equation}
\end{lemma}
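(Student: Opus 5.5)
The contraction follows from the same estimates used in the proof of Lemma~\ref{lem:outer-map-invariant}. Fix $v\in\mathcal V_A$ and $\psi_1,\psi_2\in\mathcal B_A$. The terms $(1-\eta)E$ and $\mathcal C(v)$ do not depend on $\psi$, so they cancel in the difference, and linearity of $T_\eps$ gives
\[
T(\psi_1)-T(\psi_2)
=-T_\eps\Bigl[(1-\eta)\bigl(5V^4(\psi_1-\psi_2)
+N(\chi v+\psi_1)-N(\chi v+\psi_2)\bigr)\Bigr].
\]
We estimate the two pieces separately, using only the weighted maximum-principle bound
\[
\bigl\|T_\eps\bigl[(1-\eta)(|U|+|V|)^j f\bigr]\bigr\|_\infty
\le C\delta^{2-j}K^{6-2j}\|f\|_\infty,\qquad j=3,4,
\]
together with the plain bound $\|T_\eps f\|_\infty\le\mu^{-1}\eps^{-2}\|f\|_\infty$.

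The linear piece is handled by the case $j=4$ of the bound, already used in Lemma~\ref{lem:outer-map-invariant}:
\[
\bigl\|T_\eps[5(1-\eta)V^4(\psi_1-\psi_2)]\bigr\|_\infty\le C\delta^{-2}K^{-2}\|\psi_1-\psi_2\|_\infty .
\]

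For the nonlinear piece, write $t_i=\chi v+\psi_i$. Since $N(t)=(V+t)^5-V^5-5V^4t$, we have $N'(t)=5[(V+t)^4-V^4]$. The mean value theorem then gives the pointwise Lipschitz bound
\[
|N(t_1)-N(t_2)|\le C\bigl(|V|^3(|t_1|+|t_2|)+|t_1|^4+|t_2|^4\bigr)|t_1-t_2|,
\]
with $t_1-t_2=\psi_1-\psi_2$. On $\supp(1-\eta)$ the proof of Lemma~\ref{lem:outer-map-invariant} showed $|t_i|\le 2C_1\delta^{-5}K^{-14/3}$.
\begin{itemize}
\item For the term with $|V|^3$, apply the bound with $j=3$ and $f=(|t_1|+|t_2|)(\psi_1-\psi_2)$. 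This gives a contribution of at most $CC_1\delta^{-6}K^{-14/3}\|\psi_1-\psi_2\|_\infty$.
\item For the quartic term, use $\|T_\eps f\|_\infty\le\mu^{-1}\eps^{-2}\|f\|_\infty$ together with $\eps^{-2}\le\delta^{-2}K^6$. This gives at most $CC_1^4\delta^{-22}K^{-38/3}\|\psi_1-\psi_2\|_\infty$.
\end{itemize}

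Collecting the three contributions,
\[
\|T(\psi_1)-T(\psi_2)\|_\infty\le C\bigl(\delta^{-2}K^{-2}+C_1\delta^{-6}K^{-14/3}+C_1^4\delta^{-22}K^{-38/3}\bigr)\|\psi_1-\psi_2\|_\infty .
\]
Here $C_1$ is already fixed, independently of $\delta$ and $K$. Since every term in the bracket tends to $0$ as $K\to\infty$ for fixed $\delta$, we increase $K_0(\delta)$ so that the bracket times $C$ is at most $\tfrac12$.

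The only point requiring care is bookkeeping. The Lipschitz estimate must hold uniformly over all $A\in\mathcal A_K$ and $v\in\mathcal V_A$, and the bound on $|\chi v|$ on $\supp(1-\eta)$ relies on $\langle y-b_\eps\rangle\gtrsim r/\bar\eps$ there; both were verified in the invariance lemma. This completes the proof of \eqref{eq:fdr-outer-contraction}. By Banach's fixed point theorem, $T$ then has a unique fixed point in $\mathcal B_A$.
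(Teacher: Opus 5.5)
Your proposal is correct and follows the paper's proof essentially line by line. The shared steps are: the same splitting of $T(\psi_1)-T(\psi_2)$ into the $5V^4$ piece and the $N(t_1)-N(t_2)$ piece, and the same pointwise Lipschitz bound with a $|V|^3$ term and a quartic term, estimated via the $j=3$ weighted bound and the plain maximum principle. This yields the same bracket $\delta^{-2}K^{-2}+C_1\delta^{-6}K^{-14/3}+C_1^4\delta^{-22}K^{-38/3}$, which is made at most $\tfrac12$ by enlarging $K_0(\delta)$.
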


\begin{proof}
The proof is almost the same as before, we display some important intermediate results. Let $\psi_1,\psi_2\in\mathcal B_A$ and put
$t_i=\chi v+\psi_i$, $i=1,2$.
Since $v$ is fixed, we have
\[
T(\psi_1)-T(\psi_2)
=-T_\eps\!\left[
(1-\eta)\bigl(
5V^4(\psi_1-\psi_2)+N(t_1)-N(t_2)
\bigr)\right].
\]
Easy to derive that
\[
\begin{aligned}
|N(t_1)-N(t_2)|
\leq C\bigl(
C_1\delta^{-5}K^{-14/3}|V|^3+C_1^4\delta^{-20}K^{-56/3}
\bigr)|\psi_1-\psi_2|.
\end{aligned}
\]
Using the maximum principle from
the proof of Lemma~\ref{lem:outer-map-invariant}, we obtain
\[
\begin{aligned}
\|T_\eps[(1-\eta)(N(t_1)-N(t_2))]\|_\infty\leq C\bigl(
C_1\delta^{-6}K^{-14/3}
+C_1^4\delta^{-22}K^{-38/3}
\bigr)\|\psi_1-\psi_2\|_\infty.
\end{aligned}
\]
Moreover,
\[
\|T_\eps[5(1-\eta)V^4(\psi_1-\psi_2)]\|_\infty
\leq C\delta^{-2}K^{-2}\|\psi_1-\psi_2\|_\infty.
\]
Consequently,
\[
\begin{aligned}
\|T(\psi_1)-T(\psi_2)\|_\infty
\leq C\bigl(
&\delta^{-2}K^{-2}
+C_1\delta^{-6}K^{-14/3}+C_1^4\delta^{-22}K^{-38/3}
\bigr)\|\psi_1-\psi_2\|_\infty.
\end{aligned}
\]
With $C_1$ fixed, increase $K_0(\delta)$ so that
\[
C\bigl(
\delta^{-2}K^{-2}
+C_1\delta^{-6}K^{-14/3}
+C_1^4\delta^{-22}K^{-38/3}
\bigr)\leq\frac12.
\]
This proves~\eqref{eq:fdr-outer-contraction}.
\end{proof}

\begin{theorem}[The outer problem]
\label{thm:fdr-outer}
Fix $M_0>0$ independently of $\delta$, and choose $C_1$
as in Lemma~\ref{lem:outer-map-invariant}.
For every even $K\geq K_0(\delta)$, every
$A\in\mathcal A_K$, and every $v\in\mathcal V_A$,
problem~\eqref{eq:fdr-outer} with $\phi_{\mathrm{in}}=v$
has a unique weak solution in $\mathcal B_A$,
denoted by $\phi_{\mathrm{out}}=\Psi_A(v)$.
Moreover,
\begin{equation}
\label{eq:fdr-outer-bound}
\begin{aligned}
\|\Psi_A(v)\|_\infty
&\leq C\bigl(
\delta^{-1}K^{-1}\|v\|_{*,b_\eps}
+\delta^{-9/2}K^{-6}
\bigr).
\end{aligned}
\end{equation}
For any $v_1,v_2\in\mathcal V_A$,
\begin{equation}
\label{eq:fdr-outer-lipschitz}
\|\Psi_A(v_1)-\Psi_A(v_2)\|_\infty
\leq C\delta^{-1}K^{-1}\|v_1-v_2\|_{*,b_\eps}.
\end{equation}
\end{theorem}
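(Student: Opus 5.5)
The plan is to combine Lemma~\ref{lem:outer-map-invariant} and Lemma~\ref{lem:outer-map-contraction} through the Banach fixed point theorem. Then we re-read their estimates to get the quantitative bounds. Fix $v\in\mathcal V_A$. Since $\mathcal B_A$ is a closed ball in the Banach space $\mathcal Y_A$ (even $L^\infty$ functions), it is a complete metric space. By Lemma~\ref{lem:outer-map-invariant}, $T$ maps it into itself, and by Lemma~\ref{lem:outer-map-contraction} it is a $\tfrac12$-contraction. So there is a unique fixed point $\psi=\Psi_A(v)\in\mathcal B_A$. Because $T_\eps$ is the weak Neumann inverse of $\Delta-\mu\eps^2$, the fixed-point identity $\psi=T(\psi)$ says exactly that $\psi$ is a weak solution of \eqref{eq:fdr-outer} with $\phi_{\mathrm{in}}=v$. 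Uniqueness in $\mathcal B_A$ holds because any weak solution in $\mathcal B_A$ is a fixed point of $T$.

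For \eqref{eq:fdr-outer-bound}, write $\psi=T(\psi)$ and split the source into three pieces. The first is the linear-in-$\psi$ term $5(1-\eta)V^4\psi$, which contributes at most $C\delta^{-2}K^{-2}\|\psi\|_\infty\le\tfrac14\|\psi\|_\infty$ for $K$ large. The second is the nonlinear term. We bound it as in the invariance proof, but keep the quadratic structure explicit:
\[
\|T_\eps[(1-\eta)N(\chi v+\psi)]\|_\infty\le C\delta^{-1}\bigl(\delta^{-1}K^{-2}\|v\|_{*,b_\eps}+\|\psi\|_\infty\bigr)^2+C\delta^{-2}K^{6}\bigl(\cdots\bigr)^5 .
\]
Since the bracket is at most $2C_1\delta^{-5}K^{-14/3}$, this is at most $o(1)\bigl(\delta^{-1}K^{-2}\|v\|_{*,b_\eps}+\|\psi\|_\infty\bigr)$. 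Here $o(1)\to0$ as $K\to\infty$ for fixed $\delta$, so it is absorbed as $\tfrac14\|\psi\|_\infty+C\delta^{-1}K^{-1}\|v\|_{*,b_\eps}$. The third piece is the inhomogeneous part, bounded by the two separate estimates already proved: $\|T_\eps[(1-\eta)E]\|_\infty\le C\delta^{-9/2}K^{-6}$, and $\|T_\eps\mathcal C(v)\|_\infty\le C\delta^{-1}K^{-1}\|v\|_{*,b_\eps}$, which comes from Lemma~\ref{lem:neumann-pointwise}. Absorbing $\tfrac12\|\psi\|_\infty$ to the left gives \eqref{eq:fdr-outer-bound}.

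For \eqref{eq:fdr-outer-lipschitz}, set $\psi_i=\Psi_A(v_i)$ and subtract the fixed-point equations:
\[
\psi_1-\psi_2=-T_\eps\bigl[(1-\eta)\bigl(5V^4(\psi_1-\psi_2)+N(\chi v_1+\psi_1)-N(\chi v_2+\psi_2)\bigr)+\mathcal C(v_1-v_2)\bigr].
\]
The $\mathcal C$ term is linear in $v$, so it contributes $C\delta^{-1}K^{-1}\|v_1-v_2\|_{*,b_\eps}$. For the $N$ difference we use $|N(t_1)-N(t_2)|\le C(|V|^3|t_i|+|t_i|^4)|t_1-t_2|$. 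We also use $|t_1-t_2|\le|\chi(v_1-v_2)|+|\psi_1-\psi_2|$, where $|\chi(v_1-v_2)|\le C\delta^{-1}K^{-2}\|v_1-v_2\|_{*,b_\eps}$ on $\supp(1-\eta)$ because $|y-b_\eps|\ge r/\bar\eps$ there. Together with the maximum-principle bound from Lemma~\ref{lem:outer-map-invariant}, this yields a small factor times $\|\psi_1-\psi_2\|_\infty$ plus a small factor times $\delta^{-1}K^{-2}\|v_1-v_2\|_{*,b_\eps}$. Absorbing the $\psi$-term finishes the proof.

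The main obstacle I expect is bookkeeping rather than ideas. We must check that the coefficient multiplying $\|v\|_{*,b_\eps}$ in the nonlinear estimates really is at most $C\delta^{-1}K^{-1}$, with $C$ independent of $\delta$, after $K_0(\delta)$ is enlarged. The point is that every power of $\delta^{-1}$ coming from the radii $C_1\delta^{-5}K^{-14/3}$ is beaten by negative powers of $K$, which is legitimate only because $K\ge K_0(\delta)$.
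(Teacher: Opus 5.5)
Your proposal is correct and takes the same route as the paper: Banach's fixed-point theorem on $\mathcal B_A$ via Lemmas~\ref{lem:outer-map-invariant} and~\ref{lem:outer-map-contraction}, followed by rerunning the same source estimates for $E$, $\mathcal C(v)$, $5(1-\eta)V^4\psi$ and the nonlinear term. In those estimates you absorb the $\psi$-dependent pieces using $K\geq K_0(\delta)$, which yields \eqref{eq:fdr-outer-bound} and \eqref{eq:fdr-outer-lipschitz}; this supplies the bookkeeping that the paper only describes as ``basically identical to the previous calculations.''
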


\begin{proof}
Lemma~\ref{lem:outer-map-invariant}, Lemma~\ref{lem:outer-map-contraction}, and the Banach fixed-point theorem yield the existence and the uniqueness of $\Psi_A(v)$. The estimates \eqref{eq:fdr-outer-bound} and \eqref{eq:fdr-outer-lipschitz} are basically identical to the previous calculations.
\end{proof}

We next record the differentiability of the outer solution with
respect to the inner function.

\begin{lemma}
\label{lem:fdr-outer-C1}
Under the assumptions of Theorem~\ref{thm:fdr-outer}, fix an
admissible configuration $A$. The map $v\mapsto\Psi_A(v)$
has a $C^1$ extension to an open neighborhood of each
$v_0\in\mathcal V_A$ in $\mathcal X_A$, with values in
$\mathcal Y_A$. Moreover, for every $v\in\mathcal V_A$
and $h\in\mathcal X_A$,
\begin{equation}
\label{eq:fdr-outer-v-derivative-bound}
\|D_v\Psi_A(v)[h]\|_\infty
\leq C\delta^{-1}K^{-1}\|h\|_{*,b_\eps},
\end{equation}
where $C$ is independent of $\delta$ and $K$.
\end{lemma}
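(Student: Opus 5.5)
We will obtain $\Psi_A$ from the implicit function theorem applied to the fixed-point map, rather than by differentiating the contraction iterates. Define
\[
\mathcal F(v,\psi)=\psi-T_\eps\!\left[-(1-\eta)\bigl(E+N(\chi v+\psi)+5V^4\psi\bigr)-\mathcal C(v)\right],
\]
viewed as a map from $\mathcal X_A\times\mathcal Y_A$ to $\mathcal Y_A$. By Theorem~\ref{thm:fdr-outer}, $\mathcal F(v,\Psi_A(v))=0$ for $v\in\mathcal V_A$.

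The first step is to check that $\mathcal F$ is $C^1$ on an open set containing the pairs $(v_0,\Psi_A(v_0))$.
\begin{enumerate}
\item The map $v\mapsto\mathcal C(v)$ is bounded linear from $(\mathcal X_A,\|\cdot\|_{*,b_\eps})$ into $L^\infty$, supported in the annulus where $\chi$ varies. Composing with $T_\eps$ gives a bounded linear map into $\mathcal Y_A$, by the estimate in the proof of Lemma~\ref{lem:outer-map-invariant}.
\item The map $(v,\psi)\mapsto(1-\eta)N(\chi v+\psi)$ is a superposition with the polynomial $s\mapsto (V+s)^5-V^5-5V^4s$. Its coefficients are bounded on $\supp(1-\eta)$, and $\chi v$ is bounded in $L^\infty$ by $\|v\|_{*,b_\eps}$. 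A polynomial Nemytskii operator is $C^\infty$ on $L^\infty$, so this map is smooth from $\mathcal X_A\times\mathcal Y_A$ to $L^\infty$.
\item $T_\eps$ is bounded linear on $L^\infty$ and preserves evenness in $y_3$.
\end{enumerate}
Together these give $\mathcal F\in C^1$. Its partial derivatives are
\[
D_\psi\mathcal F(v,\psi)[k]=k+T_\eps\!\left[(1-\eta)\bigl(5V^4k+N'(\chi v+\psi)k\bigr)\right],
\]
\[
D_v\mathcal F(v,\psi)[h]=T_\eps\!\left[(1-\eta)N'(\chi v+\psi)\chi h+\mathcal C(h)\right],
\]
where $N'(t)=5(V+t)^4-5V^4$.

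The key step is the invertibility of $D_\psi\mathcal F$ at $(v,\Psi_A(v))$, together with a uniform bound on its inverse. The bounds behind Lemma~\ref{lem:outer-map-contraction} give
\[
\|T_\eps[(1-\eta)(5V^4+N'(t))k]\|_\infty\le C\bigl(\delta^{-2}K^{-2}+C_1\delta^{-6}K^{-14/3}+C_1^4\delta^{-22}K^{-38/3}\bigr)\|k\|_\infty\le\tfrac12\|k\|_\infty.
\]
Here $|N'(t)|\le C(|V|^3|t|+|t|^4)$ and $|t|\le 2C_1\delta^{-5}K^{-14/3}$. The same bound holds with a slightly larger constant for all $(v,\psi)$ in a small $\mathcal X_A\times\mathcal Y_A$-neighborhood of such a pair. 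Hence $D_\psi\mathcal F=I+\mathcal K$ with $\|\mathcal K\|\le\frac12$, and a Neumann series gives $\|(D_\psi\mathcal F)^{-1}\|\le2$.

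The implicit function theorem then produces, near each $v_0\in\mathcal V_A$, a $C^1$ map $\widetilde\Psi$ defined on an open neighborhood in $\mathcal X_A$ with $\mathcal F(v,\widetilde\Psi(v))=0$. This map agrees with $\Psi_A$ on $\mathcal V_A$ near $v_0$, because $\Psi_A$ is continuous by \eqref{eq:fdr-outer-lipschitz} and the solution is locally unique. This gives the $C^1$ extension. Its derivative is
\[
D_v\Psi_A(v)[h]=-(D_\psi\mathcal F)^{-1}D_v\mathcal F[h].
\]

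For the bound \eqref{eq:fdr-outer-v-derivative-bound}, use $\|(D_\psi\mathcal F)^{-1}\|\le 2$. The estimate of $T_\eps(\mathcal C(v))$ in Lemma~\ref{lem:outer-map-invariant}, applied to $h$, gives $\|T_\eps\mathcal C(h)\|_\infty\le C\delta^{-1}K^{-1}\|h\|_{*,b_\eps}$. On $\supp(1-\eta)\cap\supp\chi$ we have $|\chi h|\le C\bar\eps\|h\|_{*,b_\eps}$. The maximum-principle estimate then yields
\[
\|T_\eps[(1-\eta)N'(t)\chi h]\|_\infty\le C\delta^{-1}K^{-2}\,o(1)\,\|h\|_{*,b_\eps},
\]
which is dominated by the previous term.

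The only delicate point is the uniformity of the constants. The neighborhood can depend on $v_0$, but the bound must use only the a priori smallness $|t|\le 2C_1\delta^{-5}K^{-14/3}$. This holds uniformly on $\mathcal V_A\times\mathcal B_A$, so $C$ is independent of $\delta$ and $K$.
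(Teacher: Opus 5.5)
Your proof is correct. Its first half is the paper's argument. Both apply the implicit function theorem to $\mathcal F(v,\psi)=\psi-T(v,\psi)$ and obtain invertibility of $D_\psi\mathcal F$, with inverse norm at most $2$, from the contraction bound $\tfrac12$ of Lemma~\ref{lem:outer-map-contraction}. Both then identify the local solution with $\Psi_A$. The paper does this by keeping the implicit solution inside $\mathcal B_A$ and using uniqueness there. You use continuity of $\Psi_A$ from \eqref{eq:fdr-outer-lipschitz} together with the local uniqueness in the implicit function theorem, which is equally valid.

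The genuine difference is in how you get the bound \eqref{eq:fdr-outer-v-derivative-bound}.
\begin{itemize}
\item The paper never estimates $D_v\mathcal F$. It reads the bound off the Lipschitz estimate \eqref{eq:fdr-outer-lipschitz} via difference quotients at interior points of $\mathcal V_A$. At boundary points, where $v+sh$ may leave $\mathcal V_A$, it needs a separate approximation argument using continuity of the derivative of the local extension.
\item You compute $D_v\Psi_A=-(D_\psi\mathcal F)^{-1}D_v\mathcal F$. You then bound $\|T_\eps\mathcal C(h)\|_\infty\le C\delta^{-1}K^{-1}\|h\|_{*,b_\eps}$ and show the term $T_\eps[(1-\eta)N'(t)\chi h]$ is of lower order.
\end{itemize}
Your route costs one extra, routine estimate. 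In exchange it treats interior and boundary points of $\mathcal V_A$ in the same way and needs no approximation step. It also gives the same bound at any point of the local extension where $|\chi v+\psi|\le 2C_1\delta^{-5}K^{-14/3}$ on $\supp(1-\eta)$.
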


\begin{proof}
We write $T(v,\psi)$ for the map in
\eqref{eq:fdr-outer-map}, displaying its dependence on both
$v$ and $\psi$. Define
\[
\mathcal F(v,\psi):=\psi-T(v,\psi).
\]
For fixed $A$, the maps $v\mapsto\chi v$ and
$v\mapsto\mathcal C(v)$, extended by zero, are bounded linear
maps from $\mathcal X_A$ to $L^\infty(\Omega_\eps)$.
Since the nonlinearity is a polynomial and $T_\eps$ is bounded
and linear, $T$ is $C^1$ on
$\mathcal X_A\times\mathcal Y_A$.
Fix $v_0\in\mathcal V_A$ and put
$\psi_0=\Psi_A(v_0)$. By the proof of
Lemma~\ref{lem:outer-map-invariant},
\[
\|\psi_0\|_\infty
\leq\frac{C_1}{2}\delta^{-5}K^{-14/3},
\]
so $\psi_0$ lies in the interior of $\mathcal B_A$.
Lemma~\ref{lem:outer-map-contraction} gives
\[
\|D_\psi T(v_0,\psi_0)\|_{\mathcal L(\mathcal Y_A)}
\leq\frac12.
\]
Hence
\[
D_\psi\mathcal F(v_0,\psi_0)
=I-D_\psi T(v_0,\psi_0)
\]
is invertible, with inverse norm at most $2$.
The implicit function theorem therefore gives a $C^1$ map
$v\mapsto\psi(v)$ on an open neighborhood of $v_0$ such that
\[
\mathcal F(v,\psi(v))=0,
\qquad \psi(v_0)=\psi_0.
\]
After shrinking this neighborhood, $\psi(v)$ remains in
$\mathcal B_A$. By uniqueness in
Theorem~\ref{thm:fdr-outer}, it agrees with $\Psi_A(v)$
whenever $v\in\mathcal V_A$ in this neighborhood.
Finally, the derivative bound follows from
\eqref{eq:fdr-outer-lipschitz}.
If $v$ is in the interior of $\mathcal V_A$, then for
every $h\in\mathcal X_A$ and sufficiently small nonzero $s$,
\[
\left\|
\frac{\Psi_A(v+sh)-\Psi_A(v)}{s}
\right\|_\infty
\leq C\delta^{-1}K^{-1}\|h\|_{*,b_\eps}.
\]
Letting $s\to0$ gives
\eqref{eq:fdr-outer-v-derivative-bound}.
For $v$ on the boundary of $\mathcal V_A$, the same estimate
follows by approximation from its interior and the continuity
of the derivative of the local $C^1$ extension.
\end{proof}

We now study the parameter dependence of the outer solution
on fixed spaces. Fix the reference configuration
$\mathcal P_A$, put $c^0=b_\eps$, and keep $\eps$, $K$,
and $r$ fixed. We use the Banach space
\begin{equation*}
\mathcal X_0
:=\left\{
v_0\in C^1(\overline{\Omega_\eps}):
v_0(y_1,y_2,y_3)=v_0(y_1,y_2,-y_3)
\right\},
\end{equation*}
equipped with the fixed norm
\begin{equation}
\label{eq:fdr-ambient-star-norm}
\|v_0\|_{*,c^0}
:=\sup_{y\in\Omega_\eps}\langle y-c^0\rangle|v_0(y)|
 +\sup_{y\in\Omega_\eps}\langle y-c^0\rangle^2
 |\nabla v_0(y)|.
\end{equation}
For $v_0\in\mathcal X_0$ and $\psi\in\mathcal Y_A$, define
\begin{equation}
\label{eq:fdr-joint-outer-map}
\begin{aligned}
T(\mathcal P,v_0,\psi)
=-T_\eps\Bigl[&(1-\eta_{\mathcal P})
\bigl(E_{\mathcal P}
+N_{\mathcal P}(\chi_{\mathcal P}v_0+\psi)
+5V_{\mathcal P}^4\psi\bigr)+\mathcal C_{\mathcal P}(v_0)\Bigr].
\end{aligned}
\end{equation}
In a parameter partial derivative of this map, $v_0(y)$
and $\psi(y)$ are held fixed on $\Omega_\eps$.
The outer solution $\Psi_{\mathcal P}(v_0)$ is the
fixed point of $T(\mathcal P,v_0,\cdot)$.
\begin{proposition}
\label{prop:fdr-outer-parameter-dependence}
Fix an admissible configuration $A$ and
$v_0\in\mathcal X_0$ with
$\|v_0\|_{*,c^0}\leq M_0\delta^{-4}K^{-11/3}$.
Near $(\mathcal P_A,v_0)$, the outer solution
$\Psi_{\mathcal P}(v_0)$ is a $C^1$ function of
$(\mathcal P,v_0)$ with values in $\mathcal Y_A$.
If $\phi_{\mathrm{in}}(\mathcal P)$ is a $C^1$ family in
$\mathcal X_0$ and
\[
\phi_{\mathrm{out}}(\mathcal P)
=\Psi_{\mathcal P}(\phi_{\mathrm{in}}(\mathcal P)),
\]
then, provided the inner input at $\mathcal P_A$ satisfies
the above star-norm bound, we have, at this point,
\begin{equation}
\label{eq:fdr-outer-total-parameter-bound}
\|\partial_{P_j}\phi_{\mathrm{out}}\|_\infty
\leq C\delta^{-1}K^{-1}
\|\partial_{P_j}\phi_{\mathrm{in}}\|_{*,c^0}
+C\delta^{-3}K^{-2}.
\end{equation}
\end{proposition}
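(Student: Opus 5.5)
We apply the implicit function theorem to
\[
\mathcal F(\mathcal P,v_0,\psi):=\psi-T(\mathcal P,v_0,\psi)
\]
on $\mathbb R^6\times\mathcal X_0\times\mathcal Y_A$, with $T$ as in \eqref{eq:fdr-joint-outer-map}, exactly as in Lemma~\ref{lem:fdr-outer-C1}. Differentiating the fixed-point identity then gives
\[
\partial_{P_j}\phi_{\mathrm{out}}
=(I-D_\psi T)^{-1}\bigl[\partial_{P_j}T+D_{v_0}T[\partial_{P_j}\phi_{\mathrm{in}}]\bigr].
\]
The factor $(I-D_\psi T)^{-1}$ has norm at most $2$ by Lemma~\ref{lem:outer-map-contraction}. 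By the same computation as in Theorem~\ref{thm:fdr-outer} and Lemma~\ref{lem:fdr-outer-C1}, the term $D_{v_0}T[h]$ is bounded by $C\delta^{-1}K^{-1}\|h\|_{*,c^0}$. This comes from $\|T_\eps\mathcal C(h)\|_\infty\le C\delta^{-1}K^{-1}\|h\|_*$ together with the smaller contribution of $N_{\mathcal P}'$. It produces the first term in \eqref{eq:fdr-outer-total-parameter-bound}.

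For $C^1$ regularity in $\mathcal P$, it suffices to check that $\mathcal P\mapsto\mathcal U_{\mathcal P},V_{\mathcal P},\eta_{\mathcal P},\chi_{\mathcal P}$ are $C^1$ into $L^\infty$, or into $W^{2,\infty}$ for the cutoffs.
\begin{itemize}
\item $\mathcal U_{\mathcal P}$ is smooth in $\mathcal P$ away from $y=c$, and near $c$ it is smooth after Kelvin inversion. Its parameter derivatives are pointwise combinations of the kernel functions $z_0,\dots,z_5$ transported by the inversion, hence bounded by $C\langle y-c\rangle^{-1}$.
\item $V_{\mathcal P}$ depends linearly on $\mathcal U_{\mathcal P}^5$ through the Neumann inverse $T_\eps$, so $\partial_{P_j}V_{\mathcal P}=T_\eps[-5\mathcal U^4\partial_{P_j}\mathcal U]$.
\item The cutoffs depend smoothly on $c$ only.
\end{itemize}
Since $T$ is polynomial in these quantities and $T_\eps$ is bounded linear, the joint $C^1$ property follows.

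The main work is the bound $\|\partial_{P_j}T\|_\infty\le C\delta^{-3}K^{-2}$, where $v_0$ and $\psi$ are held fixed. We split $\partial_{P_j}T$ into three groups.

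\emph{(a) Derivatives of $E_{\mathcal P}$, $V_{\mathcal P}^4$ and $N_{\mathcal P}$ on $\operatorname{supp}(1-\eta)$.} We write
\[
\partial E_{\mathcal P}=5V^4\partial V-5U^4\partial U
=5V^4\partial(V-U)+5(V^4-U^4)\partial U.
\]
We need an estimate $\|\partial_{P_j}(V_{\mathcal P}-\mathcal U_{\mathcal P})\|_\infty\le C\delta^{-5/2}K^{-4}$, or at worst a modest loss over \eqref{eq:fdr-projection-bound}. This is obtained by differentiating the projection equation: $\partial(V-U)$ solves a Neumann problem with source $\mu\eps^2\partial U$ and boundary datum $-\partial_\nu\partial U$. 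The Appendix~\ref{app:interactions} argument then applies, because $\partial U$ has the same decay as $U$. Combining this with $|U|+|V|\le C\bar\eps$ on $\operatorname{supp}(1-\eta)$ and the $j=3,4$ maximum-principle bounds of Lemma~\ref{lem:outer-map-invariant}, we get contributions of order $\delta^{-9/2}K^{-6}$. The terms involving $\partial(V^4)\psi$ and $\partial N_{\mathcal P}$ carry an extra factor $\|\psi\|_\infty$ or $|\chi v_0+\psi|^2$, so they are smaller.

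\emph{(b) Derivatives of the cutoffs.} These matter only for the center parameters $c_1,c_2$. We have $\partial_c\eta=O(\bar\eps)$ on an annulus of radius $\sim r/\bar\eps$, where $|E|+|N|+|V^4\psi|$ is tiny, so these terms are negligible.

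\emph{(c) The term $\mathcal C_{\mathcal P}(v_0)$.} Differentiating in $c$ gives
\[
2\nabla\partial_c\chi\cdot\nabla v_0+(\Delta\partial_c\chi)v_0,
\]
whose coefficients are of size $\bar\eps^2$ and $\bar\eps^3$ on the annulus. Lemma~\ref{lem:neumann-pointwise} then yields, as in the estimate of $T_\eps(\mathcal C(v))$,
\[
C\bar\eps\,\delta^{-1}K^{-1}\cdot\bar\eps^{-1}\|v_0\|_*\cdot(\text{annulus factor}).
\]
A careful count gives at most $C\delta^{-1}K^{-1}\|v_0\|_{*,c^0}\le CM_0\delta^{-5}K^{-14/3}$.

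All three groups are dominated by $C\delta^{-3}K^{-2}$. The main obstacle is group (a), namely obtaining the differentiated projection estimate for $\partial_{P_j}(V_{\mathcal P}-\mathcal U_{\mathcal P})$. For $\Lambda$ and $c$ derivatives, the derivative $\partial U$ decays only like $\langle y-c\rangle^{-1}$, and the center derivative in the physical variable introduces a factor $\eps^{-1}$. We therefore first try the crude route: bound $\partial(V-U)$ in $L^\infty$ by the Green representation with the reflected sources, accepting losses in $\delta$. The generous target $\delta^{-3}K^{-2}$ leaves room for such losses, since any such loss is then multiplied by the small $\bar\eps^{3}$ weights from $(1-\eta)V^4$.
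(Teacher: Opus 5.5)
Your proposal is correct and is essentially the paper's proof: the implicit function theorem for $\psi-T(\mathcal P,v_0,\psi)$ with $\|(I-D_\psi T)^{-1}\|\le 2$, the input term controlled by $T_\eps\mathcal C(h)$, negligible cutoff terms, and a maximum-principle bound on $\partial_{P_j}T$. The one exception is the refined estimate $\|\partial_{P_j}(V_{\mathcal P}-\mathcal U_{\mathcal P})\|_\infty\le C\delta^{-5/2}K^{-4}$ that you first aim for in group (a). It is false: unlike $U$, whose monopole coefficient is the small $q(\widehat\xi)$, the $\tau$-derivatives of $\mathcal U_{\mathcal P}$ have monopole tails $\partial_iq(\widehat\xi)\,|y-c|^{-1}$ with $|\nabla q(\widehat\xi)|\ge c>0$, and flux balance then forces a Neumann correction of order $\eps K\gg K^{-4}$.

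Your crude fallback is exactly what the paper uses, and it suffices without any decomposition of $\partial E_{\mathcal P}$. On $\operatorname{supp}(1-\eta)$, Lemma~\ref{lem:appA-parameter-projection} gives $|\partial_{P_j}V_{\mathcal P}|\le C(\langle y-b_\eps\rangle^{-1}+\eps K^2)\le CK\bar\eps$. Multiplying by the weight $|V|^4\le C\bar\eps^4$ (the fourth power, not $\bar\eps^3$, is what closes the margin) and by the factor $\mu^{-1}\eps^{-2}$ yields $C\eps^3K^6\le C\delta^{-3}K^{-3}$.
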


\begin{proof}
We first check differentiability of the map in
\eqref{eq:fdr-joint-outer-map} on the fixed spaces
$\mathcal X_0$ and $\mathcal Y_A$.
By Lemma~\ref{lem:appA-parameter-projection}, the map
$\mathcal P\mapsto V_{\mathcal P}$ is $C^1$ into
$L^\infty(\Omega_\eps)$, as is
$\mathcal P\mapsto\mathcal U_{\mathcal P}$.
Clearly, $T$ is jointly $C^1$ in
$(\mathcal P,v_0,\psi)$.

Let $\psi_0=\Psi_{\mathcal P_A}(v_0)$.
As noted above, the proofs of
Lemmas~\ref{lem:outer-map-invariant} and
\ref{lem:outer-map-contraction} apply to the present ambient
inputs and give
\[
\|\psi_0\|_\infty
\leq\frac{C_1}{2}\delta^{-5}K^{-14/3},
\qquad
\|D_\psi T(\mathcal P_A,v_0,\psi_0)\|
_{\mathcal L(\mathcal Y_A)}\leq\frac12.
\]
Define
\[
\mathcal F(\mathcal P,v,\psi)
:=\psi-T(\mathcal P,v,\psi).
\]
Then $D_\psi\mathcal F=I-D_\psi T$ is invertible at
$(\mathcal P_A,v_0,\psi_0)$, with inverse norm at most $2$.
The implicit function theorem gives a $C^1$ solution map on
an open neighborhood of $(\mathcal P_A,v_0)$.
After shrinking the neighborhood, its values remain in
$\mathcal B_A$, and uniqueness identifies it with the
outer solution. Moreover,
\begin{equation}
\label{eq:fdr-outer-parameter-identity}
\partial_{P_j}\Psi_{\mathcal P}(v_0)
=(I-D_\psi T)^{-1}\partial_{P_j}T,
\end{equation}
where the operators are evaluated at
$(\mathcal P,v_0,\Psi_{\mathcal P}(v_0))$.

We next estimate $\partial_{P_j}T$ at $\mathcal P_A$,
keeping $v_0$ and $\psi\in\mathcal B_A$ fixed.
Set $t=\chi v_0+\psi$, we have
\[
\|v_0\|_{*,c^0}\leq C\delta^{-4}K^{-11/3},
\qquad
\|\psi\|_\infty
\leq C_1\delta^{-5}K^{-14/3},
\].
On $\operatorname{supp}(1-\eta)$, the preceding estimates
and Lemma~\ref{lem:appA-parameter-projection} give
\[
\begin{aligned}
|U|+|V|+|t|
\leq C\delta^{-1}K^{-2},\quad|\partial_{P_j}\mathcal U_{\mathcal P}|
\leq C\delta^{-1}K^{-2},\quad
|\partial_{P_j}V_{\mathcal P}|
\leq C\delta^{-1}K^{-1}.
\end{aligned}
\]
Only the center derivatives act on the cutoffs. Their
supports are contained in the corresponding transition
annuli, where
\[
|\partial_{P_j}\eta_{\mathcal P}|\leq C\bar\eps,
\qquad
|\partial_{P_j}(\chi_{\mathcal P}v_0)|
\leq C\bar\eps^2\|v_0\|_{*,c^0}.
\]
The identity
\[
\begin{aligned}
&E_{\mathcal P}
+N_{\mathcal P}(\chi_{\mathcal P}v_0+\psi)
+5V_{\mathcal P}^4\psi=(V_{\mathcal P}+\chi_{\mathcal P}v_0+\psi)^5
-\mathcal U_{\mathcal P}^5
-5V_{\mathcal P}^4\chi_{\mathcal P}v_0
\end{aligned}
\]
therefore yields
\[
\begin{aligned}
&\Bigl\|\partial_{P_j}\Bigl\{
(1-\eta_{\mathcal P})
\bigl[E_{\mathcal P}
+N_{\mathcal P}(\chi_{\mathcal P}v_0+\psi)
+5V_{\mathcal P}^4\psi\bigr]
\Bigr\}\Bigr\|_\infty\leq CK\bar\eps^5.
\end{aligned}
\]
The maximum principle bounds the contribution
of this source to $\partial_{P_j}T$ by
\(
C\delta^{-3}K^{-3}.
\)
For the $\mathcal C$ term, the derivatives in
$\Lambda$, $\vartheta$, and $\tau$ vanish when $v_0$ is
fixed. For $i=1,2$, the cutoff bounds give
\[
|\nabla\partial_{c_i}\chi_{\mathcal P}|
\leq C\bar\eps^2,
\qquad
|\Delta\partial_{c_i}\chi_{\mathcal P}|
\leq C\bar\eps^3.
\]
On their supports,
$|v_0|\leq C\bar\eps\|v_0\|_{*,c^0}$ and
$|\nabla v_0|\leq C\bar\eps^2\|v_0\|_{*,c^0}$.
Consequently, the maximum principle implies
\begin{equation*}
\begin{aligned}
\|T_\eps[\partial_{c_i}\mathcal C_{\mathcal P}(v_0)]\|_\infty
\leq C\eps^{-2}\bar\eps^4\|v_0\|_{*,c^0}
\leq C\delta^{-6}K^{-17/3}.
\end{aligned}
\end{equation*}
Combining the two source estimates, we find
\[
\|\partial_{P_j}T\|_\infty
\leq C\bigl(\delta^{-3}K^{-3}
+\delta^{-6}K^{-17/3}\bigr)
\leq C\delta^{-3}K^{-2},
\]
uniformly for $\psi\in\mathcal B_A$.
In particular, evaluating at
$\psi=\Psi_{\mathcal P_A}(v_0)$ and using
\eqref{eq:fdr-outer-parameter-identity}, we obtain
\begin{equation}
\label{eq:fdr-outer-parameter-bound}
\left\|
\left.\partial_{P_j}\Psi_{\mathcal P}(v_0)
\right|_{\mathcal P=\mathcal P_A}
\right\|_\infty
\leq C\delta^{-3}K^{-2}.
\end{equation}

The input derivative estimate follows from
Lemma~\ref{lem:fdr-outer-C1}.
Since its proof does not use a boundary condition on the
input, the same estimate applies to $\mathcal X_0$:
\begin{equation}
\label{eq:fdr-ambient-input-derivative-bound}
\|D_v\Psi_{\mathcal P_A}(v_0)[h]\|_\infty
\leq C\delta^{-1}K^{-1}\|h\|_{*,c^0},
\qquad h\in\mathcal X_0.
\end{equation}
Finally, for a $C^1$ family of inputs, the chain rule
on the fixed spaces reads
\begin{equation*}
\begin{aligned}
\partial_{P_j}\phi_{\mathrm{out}}
={}&(\partial_{P_j}\Psi_{\mathcal P})
(\phi_{\mathrm{in}})
+D_v\Psi_{\mathcal P}(\phi_{\mathrm{in}})
       [\partial_{P_j}\phi_{\mathrm{in}}].
\end{aligned}
\end{equation*}
Evaluating at $\mathcal P_A$ and applying
\eqref{eq:fdr-outer-parameter-bound} and
\eqref{eq:fdr-ambient-input-derivative-bound} proves
\eqref{eq:fdr-outer-total-parameter-bound}.
\end{proof}
\begin{remark}
It's worthwhile to mention that when the inner problem is written in centered coordinates,
we put
\[
\widetilde\phi_{\mathrm{in}}(\mathcal P,s)
=\phi_{\mathrm{in}}(\mathcal P,c+s).
\]
For a differentiable family with the required
regularity, its derivative at fixed $y$ is related to the
derivative at fixed $s$ by
\begin{equation*}
\begin{aligned}
\partial_{P_j}\phi_{\mathrm{in}}(\mathcal P,y)
={}(\partial_{P_j}\widetilde\phi_{\mathrm{in}})(\mathcal P,y-c)-\nabla_s\widetilde\phi_{\mathrm{in}}(\mathcal P,y-c)
       \cdot\partial_{P_j}c.
\end{aligned}
\end{equation*}
\end{remark}

\subsection{The Linear Problem}
\label{subsec:linear}
By Theorem~\ref{thm:fdr-outer}, for each
$\phi_{\mathrm{in}}\in\mathcal V_A$, the outer equation has
a unique solution
$\phi_{\mathrm{out}}=\Psi_A(\phi_{\mathrm{in}})$ in
$\mathcal B_A$.
Substituting this expression into
\eqref{eq:fdr-inner-unprojected}, we reduce the gluing problem
to the following problem on the inner ball:
\begin{equation}
\label{eq:fdr-reduced-inner}
\begin{cases}
L\phi_{\mathrm{in}}
=-\eta\bigl[E
+N(\chi\phi_{\mathrm{in}}+\Psi_A(\phi_{\mathrm{in}}))
+5V^4\Psi_A(\phi_{\mathrm{in}})\bigr]
&\text{in }D_K,\\
\phi_{\mathrm{in}}=0
&\text{on }\partial D_K.
\end{cases}
\end{equation}
Once a solution $\phi_{\mathrm{in}}\in\mathcal V_A$ is found,
the correction
\[
\phi=\chi\phi_{\mathrm{in}}+\Psi_A(\phi_{\mathrm{in}})
\]
solves \eqref{eq:fdr-correction-equation}.

To solve \eqref{eq:fdr-reduced-inner}, we first consider a
linear problem with a prescribed right-hand side $h$.
The linear operator is the one introduced in
\eqref{eq:fdr-operator-error}. To handle the kernel directions associated with the parameters,
define
\begin{equation}
\label{eq:fdr-kernel-functions}
Z_j(y)
:=\left.\partial_{P_j}\mathcal U_{\mathcal P}(y)
\right|_{\mathcal P=\mathcal P_A},
\qquad
\widehat Z_j(y):=\chi(y)Z_j(y),
\qquad j=0,\ldots,5,
\end{equation}
where $\mathcal U_{\mathcal P}$ and the coordinates $P_j$
are those introduced in the preceding subsection, and $\chi$
is the cutoff already used in \eqref{eq:fdr-splitting}.
Since
$\Delta\mathcal U_{\mathcal P}+\mathcal U_{\mathcal P}^5=0$
in $\mathbb R^3$, differentiation gives
\begin{equation*}
\Delta Z_j+5U^4Z_j=0
\quad\text{in }\mathbb R^3,
\qquad j=0,\ldots,5.
\end{equation*}
These functions are even in $y_3$. We impose orthogonality
to their cutoffs to obtain uniform estimates for the
projected linear problem.

More precisely, given $h$ even in $y_3$ with
$\|h\|_{**,b_\eps}<\infty$, we seek
$\varphi\in\mathcal X_A$ and coefficients
$\lambda_0,\ldots,\lambda_5\in\mathbb R$ satisfying
\begin{equation}
\label{eq:fdr-linear-problem}
\begin{cases}
\Delta\varphi-\mu\eps^2\varphi+5V^4\varphi
=h+\displaystyle\sum_{j=0}^5\lambda_jV^4\widehat Z_j
&\text{in }D_K,\\[3pt]
\varphi=0
&\text{on }\partial D_K,\\[3pt]
\displaystyle\int_{D_K}V^4\varphi\widehat Z_j\,dy=0
&j=0,\ldots,5.
\end{cases}
\end{equation}
\begin{theorem}
\label{thm:fdr-linear}
There exist a constant $C>0$, independent of $\delta$ and $K$,
and $K_0(\delta)$ such that, for every even
$K\geq K_0(\delta)$, every configuration $A$ satisfying
\eqref{eq:outline-parameters}, and every $h$ even in $y_3$
with $\|h\|_{**,b_\eps}<\infty$, problem
\eqref{eq:fdr-linear-problem} admits a unique weak solution
\[
(\varphi,\lambda_0,\ldots,\lambda_5)
\in\mathcal X_A\times\mathbb R^6.
\]
Moreover,
\begin{equation}
\label{eq:fdr-linear-estimate}
\|\varphi\|_{*,b_\eps}+\sum_{j=0}^5|\lambda_j|
\leq C\|h\|_{**,b_\eps}.
\end{equation}
\end{theorem}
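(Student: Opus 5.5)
The plan is the standard argument for projected linear problems: first prove the a priori estimate \eqref{eq:fdr-linear-estimate} by contradiction and blow-up, then obtain existence and uniqueness from the Fredholm alternative. I would begin with the Lagrange multipliers. Multiply the equation in \eqref{eq:fdr-linear-problem} by $\widehat Z_k$ and integrate over $D_K$. Since $\widehat Z_k=\chi Z_k$ vanishes near $\partial D_K$ and $\varphi=0$ there, integration by parts gives
\[
\int_{D_K}\varphi\bigl(\Delta\widehat Z_k-\mu\eps^2\widehat Z_k+5V^4\widehat Z_k\bigr)
=\int_{D_K}h\widehat Z_k+\sum_j\lambda_j\int_{D_K}V^4\widehat Z_j\widehat Z_k .
\]
The Gram matrix $\int V^4\widehat Z_j\widehat Z_k$ is close to $\int U^4Z_jZ_k\,dy$, using \eqref{eq:fdr-projection-bound} and the decay $|Z_j|\le C\langle y-b_\eps\rangle^{-1}$. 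After the change of variables $y\mapsto$ the Kelvin-type inversion defining $U$, this matrix is a fixed invertible matrix, because the $Z_j$ correspond to $z_0,\dots,z_5$ of Lemma~\ref{lem:nodal-profile}, which are linearly independent. The left side is small times $\|\varphi\|_{*,b_\eps}$. Indeed, $\Delta\widehat Z_k+5V^4\widehat Z_k=5(V^4-U^4)\widehat Z_k+2\nabla\chi\cdot\nabla Z_k+(\Delta\chi)Z_k$, and each term is $o(1)$ when tested against $\langle y\rangle^{-1}$ weights. The term $\mu\eps^2\widehat Z_k$ integrated against $\langle y\rangle^{-2}$ over $|y|\lesssim \bar\eps^{-1}$ gives $\eps^2\bar\eps^{-1}=\eps/K\to0$. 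Also $|\int h\widehat Z_k|\le C\|h\|_{**}$ since $3+2\sigma+1>3$. Hence $|\lambda|\le C\|h\|_{**}+o(1)\|\varphi\|_*$.

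Next I would prove $\|\varphi\|_{*}\le C\|h\|_{**}$ by contradiction. Suppose there are sequences $K_n\to\infty$, configurations $A_n$, and solutions with $\|\varphi_n\|_*=1$ and $\|h_n\|_{**}\to0$. Then $\lambda_n\to0$, so the full right side $g_n$ has $\|g_n\|_{**}\to0$. The first step is to treat the weighted exterior region. Write $\Delta\varphi_n-\mu\eps^2\varphi_n=g_n-5V^4\varphi_n$ with Dirichlet data on $D_K$. Compare with the barrier $\langle y-b_\eps\rangle^{-1}$, or use the Green representation of the Dirichlet Laplacian on the ball; the term $-\mu\eps^2$ only helps by the maximum principle. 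Since $|V^4|\le C\langle y-b_\eps\rangle^{-4}$ up to the $O(K^{-4})$ error, this gives
\[
\langle y-b_\eps\rangle|\varphi_n(y)|\le C\bigl(\|g_n\|_{**}+\sup_{|y-b_\eps|\le R}|\varphi_n|\bigr)+o_R(1).
\]
The gradient part of the norm follows from this by rescaled interior and boundary gradient estimates on dyadic annuli. Therefore $\sup_{B_R(b_\eps)}|\varphi_n|\ge c>0$ for some fixed $R$.

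Translate by $b_\eps$ and use that $\xi_n\to\xi_\infty$, $a_n\to0$, $\widehat\beta_n\to0$, and $\beta_n\to\theta_*(\xi_\infty)$ along a subsequence. Then $U$ converges locally uniformly to a fixed Kelvin-inverted and rotated copy of $q$, and $D_K$ exhausts $\mathbb R^3$. The $\varphi_n$ converge in $C^1_{\mathrm{loc}}$ to a function $\varphi_\infty$ with $|\varphi_\infty|\le C\langle y\rangle^{-1}$, even in $y_3$, which solves the limiting linearized equation. The limit lies in $D^{1,2}$ by the gradient bound. It inherits the orthogonality conditions by dominated convergence, since $U^4Z_j\varphi$ decays like $\langle y\rangle^{-6}$. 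Nondegeneracy in the even subspace, Lemma~\ref{lem:nodal-profile}(iii), transported through the conformal map, gives $\varphi_\infty\in\operatorname{span}\{Z_j\}$. Orthogonality then forces $\varphi_\infty=0$, which contradicts $\sup_{B_R}|\varphi_n|\ge c$. One must check that the Kelvin transform is an isometry of $D^{1,2}$ and maps the kernel of $L_q$ to the span of $\partial_{P_j}\mathcal U$. This holds by construction of the parameters.

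For existence, I would work in $H^1_0(D_K)$ restricted to functions even in $y_3$ and orthogonal to the $V^4\widehat Z_j$. There I would write the problem as $\varphi+\mathcal K\varphi=\tilde h$, where $\mathcal K$ is compact because $V^4$ is bounded and $D_K$ is bounded. The a priori estimate with $h=0$ gives uniqueness, and the Fredholm alternative then gives existence. For $h$ with finite $\|\cdot\|_{**}$ norm, $h\in L^2(D_K)$, so this applies directly, and elliptic regularity puts $\varphi\in C^1(\overline{D_K})$.

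The main obstacle is the exterior weighted estimate uniformly in $K$. The potential $5V^4$ must be controlled on all of $D_K$, whose radius is $4r/\bar\eps\sim K^2$, while the decay of $U$ is tied to the inversion center. The projection error $|V-U|\le C\delta^{-5/2}K^{-4}$ must not destroy the $\langle y\rangle^{-4}$ bound of $V^4$ relative to the barrier. I would handle this by noting that $|V|^4\le C(\langle y\rangle^{-4}+K^{-16})$, with $K^{-16}\cdot K^{4}\ll \langle y\rangle^{-1}\cdot\langle y\rangle^{-2}$ type comparisons on $D_K$, so this term is absorbed.
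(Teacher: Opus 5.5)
Your argument is correct and takes essentially the paper's route. The paper's proof simply defers to Steps~1--2 of \cite[Proposition~6.1]{SunWeiYangFull}, followed by the same Fredholm argument, with the projected profile replaced by $V$ and the zeroth-order term by $-\mu\eps^2$; your proposal spells out exactly this (multiplier bound, blow-up contradiction using even-subspace nondegeneracy, Fredholm alternative).

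Two points are worth tightening, neither a real gap. First, the Gram matrix $\int U^4Z_jZ_k$ is not literally fixed: it depends continuously on $(\widehat\xi,\beta)$, and it is uniformly invertible by compactness of $\Gamma$. Second, to get $C$ independent of $\delta$, the contradiction sequence should let $\delta_n$ vary, with $K_n$ taken large enough relative to $\delta_n$ that all error terms tend to zero.
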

\begin{proof}
The proof follows Steps~1 and~2 of the proof of
\cite[Proposition~6.1]{SunWeiYangFull}, with the projected
profile replaced by $V$ and the zeroth-order term by
$-\mu\eps^2$.
The perturbation estimates required in the a priori
argument follow from \eqref{eq:outline-parameters},
\eqref{eq:fdr-projection-bound}, and $0<\sigma<1/2$.
The same Fredholm argument then gives existence and
uniqueness, together with \eqref{eq:fdr-linear-estimate}.
The result for bounded right-hand sides follows by
approximation and elliptic regularity.
\end{proof}

We denote the resulting linear solution operator by
$\mathfrak{G}_A$, so that $\varphi=\mathfrak{G}_A(h)$ and
\[
\|\mathfrak{G}_A(h)\|_{*,b_\eps}\leq C\|h\|_{**,b_\eps}.
\]
\subsection{The Nonlinear Problem}
\label{subsec:nonlinear}
Before we start, we first record an estimate that
will be used later in this section.

\begin{lemma}
\label{lem:fdr-neumann-divergence}
Under \eqref{eq:outline-parameters}, let
$F$ is a map from $\Omega_\eps$ to $\mathbb{R}^3$ and belongs to $L^\infty(\Omega_\eps) $ satisfy
$\operatorname{supp}F\Subset\Omega_\eps$.
Then the problem
\[
\begin{cases}
(\Delta-\mu\eps^2)w=\operatorname{div}F
&\text{in }\Omega_\eps,\\
\partial_\nu w=0
&\text{on }\partial\Omega_\eps
\end{cases}
\]
has a unique weak solution
$w\in H^1(\Omega_\eps)\cap L^\infty(\Omega_\eps)$.
Writing $w=T_\eps[\operatorname{div}F]$, we have
\begin{equation}
\label{eq:fdr-neumann-divergence-inverse}
\|T_\eps[\operatorname{div}F]\|_\infty
\leq C\eps^{-1}\|F\|_\infty,
\end{equation}
\end{lemma}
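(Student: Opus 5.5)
The plan is to treat the problem by duality with the Neumann Green function of $-\Delta+\mu\eps^2$ on the reflected ball. That Green function becomes $\eps$-independent after scaling, because $\mu\eps^2$ rescales to the fixed coefficient $\mu$ on $B_1$.

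\emph{Existence and uniqueness.} Look for $w\in H^1(\Omega_\eps)$ with
\[
\int_{\Omega_\eps}\bigl(\nabla w\cdot\nabla\zeta+\mu\eps^2w\zeta\bigr)\,dy=\int_{\Omega_\eps}F\cdot\nabla\zeta\,dy
\qquad\text{for all }\zeta\in H^1(\Omega_\eps).
\]
Because $\operatorname{supp}F\Subset\Omega_\eps$, no boundary term appears, so this is the weak form of the stated Neumann problem. The bilinear form is coercive on $H^1(\Omega_\eps)$ since $\mu\eps^2>0$ with $\eps$ fixed. The right-hand side is a bounded functional, as $F\in L^\infty$ with compact support lies in $L^2$. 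Lax--Milgram then gives a unique weak solution.

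\emph{Reflection to the ball.} As in Theorem~\ref{thm:outline-reflection} and Lemma~\ref{lem:neumann-pointwise}, extend $w$ evenly to $\widetilde w$ on $B_{\eps^{-1}}$. For the vector field, set $\widetilde F(gz)=g F(z)$ for $g\in G_K$, where $g$ acts as an orthogonal matrix; this is the covariant rule, under which $\operatorname{div}$ commutes with the reflections. Since $\operatorname{supp}F$ stays away from the flat faces, no interface terms arise. Summing the test function over $G_K$ as in the proof of Theorem~\ref{thm:outline-reflection} shows that $\widetilde w$ is the weak Neumann solution of $(\Delta-\mu\eps^2)\widetilde w=\operatorname{div}\widetilde F$ in $B_{\eps^{-1}}$, and $\|\widetilde F\|_\infty=\|F\|_\infty$.

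\emph{Green function representation and the estimate.} Let $G_1(x,x')$ be the Neumann Green function of $-\Delta+\mu$ on $B_1$. For this fixed operator and fixed smooth domain, the standard estimates (as used in \cite{WangWeiYan2011}) give
\[
|\nabla_{x'}G_1(x,x')|\le C|x-x'|^{-2}
\qquad\text{up to the boundary.}
\]
The Green function on $B_{\eps^{-1}}$ is $G_\eps(y,z)=\eps\,G_1(\eps y,\eps z)$, so
\[
|\nabla_zG_\eps(y,z)|=\eps^2|\nabla G_1(\eps y,\eps z)|\le C|y-z|^{-2}.
\]
Integrating by parts against the compactly supported $\widetilde F$, which is first done for smooth $F$ and then extended by density, gives
\[
\widetilde w(y)=\int_{B_{\eps^{-1}}}\nabla_zG_\eps(y,z)\cdot\widetilde F(z)\,dz .
\]
Hence
\[
|w(y)|\le C\|F\|_\infty\int_{B_{\eps^{-1}}}\frac{dz}{|y-z|^2}\le C\|F\|_\infty\int_{B_{2\eps^{-1}}(0)}\frac{dz}{|z|^2}=C\eps^{-1}\|F\|_\infty .
\]
This shows both $w\in L^\infty$ and \eqref{eq:fdr-neumann-divergence-inverse}.

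The main obstacle is the boundary-uniform gradient bound for $G_1$. If a direct citation is unavailable, I would obtain it by writing $G_1$ as the fundamental solution plus a regular part corrected near $\partial B_1$ by the Kelvin image point $x'/|x'|^2$. The sign of the image term is chosen so that the normal derivative of the leading singularity cancels. One then checks that the remainder solves a Neumann problem with bounded data, which gives the $|x-x'|^{-2}$ bound. The approximation step from smooth to $L^\infty$ data for $F$ is routine, because the right-hand side of the estimate depends only on $\|F\|_\infty$ and the solutions converge in $H^1$.
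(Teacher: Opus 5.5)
Your proof is correct. It shares its first two steps with the paper's proof: Lax--Milgram, then the covariant reflection $\widetilde F(gx)=gF(x)$ to pass to $B_{\eps^{-1}}$. The $L^\infty$ bound, however, is obtained differently.

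\textbf{The paper's route.} The paper rescales to $B_1$, where the equation becomes $(\Delta-\mu)\widehat w=\eps^{-1}\operatorname{div}\widehat F$. It then invokes a De Giorgi (Stampacchia) $L^\infty$ estimate for divergence-form data in $L^q$ with $q>3$. There the factor $\eps^{-1}$ is simply the scaling of the divergence, and $\|\widehat F\|_{L^4(B_1)}\le C\|F\|_\infty$.

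\textbf{Your route.} You write $\widetilde w(y)=\int\nabla_zG_\eps(y,z)\cdot\widetilde F(z)\,dz$ and use the $\eps$-independent kernel bound $|\nabla_zG_\eps(y,z)|\le C|y-z|^{-2}$. The factor $\eps^{-1}$ then comes from $\int_{B_{\eps^{-1}}}|y-z|^{-2}\,dz$.

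\textbf{Comparison.} The paper's argument uses only energy methods and no information on the Green function, so it would work unchanged on any smooth bounded domain. Yours needs the boundary-uniform gradient bound for the Neumann Green function, which is a heavier input. In exchange it gives the pointwise estimate $|w(y)|\le C\sum_{g\in G_K}\int_{\Omega_\eps}|F(z)|\,|y-gz|^{-2}\,dz$, the divergence-form analogue of Lemma~\ref{lem:neumann-pointwise}. That estimate keeps track of where $F$ is supported. For $F$ supported in $B_{4r/\bar\eps}(b_\eps)$, summing over the well-separated reflected copies improves $\eps^{-1}$ to $\bar\eps^{-1}$, although the paper never needs this.

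\textbf{A caveat on your fallback derivation of the kernel bound.} After subtracting the Kelvin image, the remainder does not have bounded Neumann data. On $\partial B_1$ one has $B(x,x')=|x-x'|$ and $\partial_\nu\bigl(|x-x'|^{-1}+B(x,x')^{-1}\bigr)=-|x-x'|^{-1}$. This is why $H_N$ in \eqref{eq:appC-HN-explicit} contains the additional term $\log\frac{2}{B+A}$. The gradient of that term is $O(B^{-1})$, and $B(x,x')\ge|x-x'|$ in $B_1$ because $B^2-|x-x'|^2=(1-|x|^2)(1-|x'|^2)\ge0$. Hence $|\nabla G_1(x,x')|\le C|x-x'|^{-2}$ still holds, with the correction due to $\mu$ of lower order, and your argument survives. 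You should, however, build the bound on this explicit kernel rather than on a remainder with bounded data.
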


\begin{proof}
Existence and uniqueness in $H^1(\Omega_\eps)$ follow
from the Lax--Milgram theorem.
Extend $w$ and $F$ to $B_{\eps^{-1}}$ by
$\widetilde w(gx)=w(x)$ and $\widetilde F(gx)=gF(x)$
for $x\in\Omega_\eps$ and $g\in G_K$. The reflection argument in
Theorem~\ref{thm:outline-reflection} shows that
\[
\begin{cases}
(\Delta-\mu\eps^2)\widetilde w
=\operatorname{div}\widetilde F
&\text{in }B_{\eps^{-1}},\\
\partial_\nu\widetilde w=0
&\text{on }\partial B_{\eps^{-1}}
\end{cases}
\]
in the weak sense. Moreover,
$\|\widetilde F\|_\infty=\|F\|_\infty$.

Set
\[
\widehat w(x)=\widetilde w(x/\eps),
\qquad
\widehat F(x)=\widetilde F(x/\eps),
\qquad x\in B_1.
\]
Then
\[
\begin{cases}
(\Delta-\mu)\widehat w
=\eps^{-1}\operatorname{div}\widehat F
&\text{in }B_1,\\
\partial_\nu\widehat w=0
&\text{on }\partial B_1.
\end{cases}
\]
A Diorgi iteration argument shows
give
\[
\begin{aligned}
\|w\|_{L^\infty(\Omega_\eps)}
=\|\widehat w\|_{L^\infty(B_1)}
\leq C\eps^{-1}\|\widehat F\|_{L^4(B_1)}
\leq C\eps^{-1}\|F\|_{L^\infty(\Omega_\eps)}.
\end{aligned}
\]
This proves \eqref{eq:fdr-neumann-divergence-inverse}.
\end{proof}
We now use Theorem~\ref{thm:fdr-linear} to solve the
projected nonlinear inner problem.
For $v\in\mathcal V_A$, set
\begin{equation*}
\overline N_A(v)
:=N\bigl(\chi v+\Psi_A(v)\bigr)+5V^4\Psi_A(v).
\end{equation*}
We seek $\phi_{\mathrm{in}}\in\mathcal V_A$ and coefficients
$\lambda_0,\ldots,\lambda_5\in\mathbb R$ satisfying
\begin{equation}
\label{eq:fdr-nonlinear-problem}
\begin{cases}
L\phi_{\mathrm{in}}
=-\eta\bigl[E+\overline N_A(\phi_{\mathrm{in}})\bigr]
+\displaystyle\sum_{j=0}^5\lambda_jV^4\widehat Z_j
&\text{in }D_K,\\[3pt]
\phi_{\mathrm{in}}=0
&\text{on }\partial D_K,\\[3pt]
\displaystyle\int_{D_K}V^4\phi_{\mathrm{in}}\widehat Z_j\,dy=0
&j=0,\ldots,5.
\end{cases}
\end{equation}
By the definition of $\mathfrak{G}_A$, this problem is
equivalent to the fixed-point equation
\begin{equation*}
\phi_{\mathrm{in}}
=-\mathfrak{G}_A\!\left(
\eta\bigl[E+\overline N_A(\phi_{\mathrm{in}})\bigr]
\right).
\end{equation*}

\begin{theorem}
\label{thm:fdr-nonlinear}
Choose $M_0$ sufficiently large,
independently of $\delta$ and $K$.
There exist a constant $C>0$, independent of $\delta$ and
$K$, and $K_0(\delta)$ such that, for every even
$K\geq K_0(\delta)$ and every configuration $A$ satisfying
\eqref{eq:outline-parameters}, problem
\eqref{eq:fdr-nonlinear-problem} admits a unique solution
\[
(\phi_{\mathrm{in}},\lambda_0,\ldots,\lambda_5)
\in\mathcal V_A\times\mathbb R^6.
\]
Moreover,
\begin{equation}
\label{eq:fdr-nonlinear-estimate}
\|\phi_{\mathrm{in}}\|_{*,b_\eps}
+\sum_{j=0}^5|\lambda_j|
\leq C\delta^{-4}K^{-11/3}.
\end{equation}
\end{theorem}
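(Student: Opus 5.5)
We will solve the fixed-point equation
\[
\phi_{\mathrm{in}}=\mathcal S_A(\phi_{\mathrm{in}}):=-\mathfrak G_A\bigl(\eta[E+\overline N_A(\phi_{\mathrm{in}})]\bigr)
\]
by the contraction mapping theorem on $\mathcal V_A$, with the norm $\|\cdot\|_{*,b_\eps}$. The coefficients $\lambda_j$ are then those produced by Theorem~\ref{thm:fdr-linear}. The estimate \eqref{eq:fdr-nonlinear-estimate} will follow from \eqref{eq:fdr-linear-estimate} once we control $\|\eta[E+\overline N_A(v)]\|_{**,b_\eps}$. The evenness in $y_3$ is preserved because every ingredient ($V$, $\chi$, $\eta$, $\Psi_A$, $\mathfrak G_A$) respects it.

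\textbf{Step 1: the error.} On $\supp\eta$ we have $|y-b_\eps|\le 2r/\bar\eps$. There $|E|\le C|V|^4|V-U|+C|V-U|^5$, and $|U|\le C\langle y-b_\eps\rangle^{-1}$ by Lemma~\ref{lem:nodal-profile}. Using \eqref{eq:fdr-projection-bound} we get
\[
\langle y-b_\eps\rangle^{3+2\sigma}|\eta E|\le C\delta^{-5/2}K^{-4}\sup_{|y-b_\eps|\le 2r/\bar\eps}\langle y-b_\eps\rangle^{2\sigma-1}+\text{(higher order)},
\]
which is $O(\delta^{-5/2}K^{-4})$ since $\sigma<1/2$. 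This is better than the target. The bound $\delta^{-4}K^{-11/3}$ must therefore come from the finer structure of $V-U$ near $b_\eps$. Appendix~\ref{app:interactions} should give $V-U=\text{const}+O(\text{gradient})$, where the constant part has size up to $\delta^{-1}\bar\eps\sim\delta^{-4}K^{-2}\cdot(\cdots)$. I will bound $\|\eta E\|_{**,b_\eps}\le C_2\delta^{-4}K^{-11/3}$ with $C_2$ independent of $\delta$ and $K$, using whatever refined pointwise form of $V-U$ the appendix provides. I then choose $M_0\ge 4CC_2$, where $C$ is the constant in \eqref{eq:fdr-linear-estimate}.

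\textbf{Step 2: the nonlinear terms.} For $v\in\mathcal V_A$ write $t=\chi v+\Psi_A(v)$. By Theorem~\ref{thm:fdr-outer}, $\|\Psi_A(v)\|_\infty\le C\delta^{-5}K^{-14/3}$, so on $\supp\eta$
\[
|t|\le C\|v\|_{*,b_\eps}\langle y-b_\eps\rangle^{-1}+C\delta^{-5}K^{-14/3}.
\]
We use $|N(t)|\le C(|V|^3t^2+|t|^5)$ and $|V|\le C\langle y-b_\eps\rangle^{-1}$. The quadratic piece contributes $C\|v\|_*^2\sup\langle\cdot\rangle^{2\sigma-2}\le CM_0^2\delta^{-8}K^{-22/3}$. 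The piece $5V^4\Psi_A(v)$ is weighted by $\langle y-b_\eps\rangle^{-4+3+2\sigma}$, which is bounded, so it contributes $C\delta^{-5}K^{-14/3}$. Both are $o(\delta^{-4}K^{-11/3})$ for $K\ge K_0(\delta)$. Cross terms with the constant $\|\Psi_A\|_\infty$ weighted by $\langle\cdot\rangle^{3+2\sigma}$ are handled the same way, since the inner ball has radius $\sim\bar\eps^{-1}$ and hence $\langle\cdot\rangle^{2\sigma}\le CK^{4\sigma}$, which is harmless against the spare powers of $K$. It follows that $\mathcal S_A(\mathcal V_A)\subset\mathcal V_A$.

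\textbf{Step 3: contraction.} For $v_1,v_2\in\mathcal V_A$ we estimate
\[
|\overline N_A(v_1)-\overline N_A(v_2)|\le C(|V|^3|t_1|+|V|^3|t_2|+|t|^4)|t_1-t_2|+5V^4|\Psi_A(v_1)-\Psi_A(v_2)|.
\]
Combined with \eqref{eq:fdr-outer-lipschitz}, this gives a Lipschitz constant $C(M_0\delta^{-4}K^{-11/3}+\delta^{-1}K^{-1})K^{4\sigma\cdot(\cdot)}\le 1/2$ for $K$ large. Banach's theorem then gives existence and uniqueness in $\mathcal V_A$.

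\textbf{Main obstacle.} The hardest step is Step 1. The crude sup bound on $V-U$ loses the weight, so one must show that the leading part of $V-U$ near $b_\eps$ is nearly constant (a harmonic-type correction from the Neumann/image terms) and track exactly how $d\sim K^{-2/3}$ enters, in order to get $K^{-11/3}$ with the stated $\delta$-power. If needed, I would absorb the constant part using the $V^4$-weight together with the orthogonality structure, and invoke the appendix interaction expansions.
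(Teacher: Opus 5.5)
Your proposal is correct and is essentially the paper's argument: a contraction for $v\mapsto-\mathfrak G_A(\eta[E+\overline N_A(v)])$ on $\mathcal V_A$. As in the cited Proposition~6.2 of Sun--Wei--Yang, the error is controlled by \eqref{eq:fdr-projection-bound}, the nonlinear and coupling terms by \eqref{eq:fdr-outer-bound} and \eqref{eq:fdr-outer-lipschitz}, and the $\lambda_j$ with the estimate come from \eqref{eq:fdr-linear-estimate}. The ``main obstacle'' you describe is illusory, since your own crude bound $\|\eta E\|_{**,b_\eps}\le C\delta^{-5/2}K^{-4}$ already lies below $\delta^{-4}K^{-11/3}$ (the ratio is $\delta^{3/2}K^{-1/3}$), so the radius of $\mathcal V_A$ is simply not sharp and no finer structure of $V-U$ is needed; also note that in the contraction step the term $5V^4(\Psi_A(v_1)-\Psi_A(v_2))$ carries the weight $\langle y-b_\eps\rangle^{2\sigma-1}\le 1$, so its Lipschitz factor is $C\delta^{-1}K^{-1}$ with no $K^{4\sigma}$ loss attached (such a loss would not be small when $\sigma>1/4$).
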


\begin{proof}
We follow the contraction mapping argument for
\cite[Proposition~6.2]{SunWeiYangFull}.
In the present setting, the required estimates follow from
\eqref{eq:fdr-projection-bound},
\eqref{eq:fdr-outer-bound}, and
\eqref{eq:fdr-outer-lipschitz}.
The Banach fixed-point theorem therefore yields the unique
solution $\phi_{\mathrm{in}}\in\mathcal V_A$.
The corresponding coefficients $\lambda_j$ are uniquely
determined by the linear problem, and
\eqref{eq:fdr-linear-estimate} gives
\eqref{eq:fdr-nonlinear-estimate}.
\end{proof}

We next study the parameter dependence of the inner solution.
As in the outer problem, set $s = y-c$, we develop the following notations,
\[
\widetilde\phi_{\mathrm{in}}(\mathcal P,s)
=\phi_{\mathrm{in}}(\mathcal P,c+s),\qquad
\widetilde U_{\mathcal P}(s)=\mathcal U_{\mathcal P}(c+s),
\qquad
\widetilde V_{\mathcal P}(s)=V_{\mathcal P}(c+s).
\]
The cutoffs become the fixed functions
\[
\eta_*(s)=\eta_0\!\left(\frac{\bar\eps|s|}{r}\right),
\qquad
\chi_*(s)=\eta_0\!\left(\frac{\bar\eps|s|}{2r}\right).
\]
For $\mathcal P$ near $\mathcal P_A$, define
\[
Z_{\ell,\mathcal P}(y)
:=\partial_{P_\ell}\mathcal U_{\mathcal P}(y),
\qquad
\widetilde Z_{\ell,\mathcal P}(s)
:=\chi_*(s)Z_{\ell,\mathcal P}(c+s),
\qquad \ell=0,\ldots,5.
\]
At the reference configuration, we write
\[
\widetilde Z_\ell(s)
:=\widetilde Z_{\ell,\mathcal P_A}(s)
=\widehat Z_\ell(b_\eps+s),
\qquad \ell=0,\ldots,5,
\]
where $\widehat Z_\ell$ is defined in
\eqref{eq:fdr-kernel-functions}.

Parameter derivatives of the centered quantities below
are taken at fixed $s$.

\begin{proposition}
\label{prop:fdr-inner-parameter-dependence}
Under the assumptions of Theorem~\ref{thm:fdr-nonlinear},
the inner solution and its coefficients extend locally
near $\mathcal P_A$ to a $C^1$ family
\[
\mathcal P\longmapsto
\bigl(\widetilde\phi_{\mathrm{in}}(\mathcal P),
\lambda_0(\mathcal P),\ldots,\lambda_5(\mathcal P)\bigr)
\]
with values in the fixed inner space on
$B_{4r/\bar\eps}(0)$ and $\mathbb R^6$.
At $\mathcal P=\mathcal P_A$,
\begin{equation}
\label{eq:fdr-inner-parameter-bound}
\|\partial_{P_j}\widetilde\phi_{\mathrm{in}}\|_{*,0}
+\sum_{\ell=0}^5|\partial_{P_j}\lambda_\ell|
\leq C\delta^{-1}K^{-1},
\qquad j=0,\ldots,5.
\end{equation}
\end{proposition}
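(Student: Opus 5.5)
We will obtain the $C^1$ dependence from the implicit function theorem applied to the centered fixed-point problem on the fixed ball $B_{4r/\bar\eps}(0)$. The derivative bound \eqref{eq:fdr-inner-parameter-bound} then comes from differentiating the projected equation and applying Theorem~\ref{thm:fdr-linear}.

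In the variable $s=y-c$ the unknowns are $(\widetilde\phi,\lambda)$ in the fixed space $\widetilde{\mathcal X}\times\mathbb R^6$ on $B_{4r/\bar\eps}(0)$, with zero boundary values and evenness in $s_3$. The cutoffs $\eta_*$ and $\chi_*$ are fixed. The problem is to find $(\widetilde\phi,\lambda)$ with
\[
\Delta\widetilde\phi-\mu\eps^2\widetilde\phi+5\widetilde V_{\mathcal P}^4\widetilde\phi
+\eta_*\bigl[\widetilde E_{\mathcal P}+\widetilde{\overline N}_{\mathcal P}(\widetilde\phi)\bigr]
-\sum_\ell\lambda_\ell\widetilde V_{\mathcal P}^4\widetilde Z_{\ell,\mathcal P}=0,
\qquad
\int\widetilde V_{\mathcal P}^4\widetilde\phi\,\widetilde Z_{\ell,\mathcal P}\,ds=0 .
\]
Here $\widetilde{\overline N}_{\mathcal P}$ is built from the outer solution $\Psi_{\mathcal P}$, which takes as input the uncentered function $\phi(y)=\widetilde\phi(y-c)$. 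Let $\mathcal G(\mathcal P,\widetilde\phi,\lambda)$ denote this pair of equations, with values in the weighted $\|\cdot\|_{**,0}$ space times $\mathbb R^6$. We will check that $\mathcal G$ is $C^1$ near $(\mathcal P_A,\widetilde\phi_{\mathrm{in}},\lambda)$, using three facts:
\begin{enumerate}
\item the $C^1$ dependence of $\mathcal P\mapsto V_{\mathcal P},\mathcal U_{\mathcal P}$ from Lemma~\ref{lem:appA-parameter-projection};
\item Proposition~\ref{prop:fdr-outer-parameter-dependence} for the outer solution, combined with the Remark relating derivatives at fixed $y$ and at fixed $s$;
\item the polynomial structure of $N$.
\end{enumerate}
The partial derivative in $(\widetilde\phi,\lambda)$ at $\mathcal P_A$ equals the linear operator of \eqref{eq:fdr-linear-problem} plus a small perturbation. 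That perturbation comes from $D\overline N_A$: by \eqref{eq:fdr-outer-v-derivative-bound} and the smallness of $\phi_{\mathrm{in}}$ and $\Psi_A$ it has operator norm $o(1)$. Theorem~\ref{thm:fdr-linear} makes the main part invertible with uniformly bounded inverse, so the implicit function theorem gives the local $C^1$ family. By uniqueness in Theorem~\ref{thm:fdr-nonlinear}, this family agrees with the constructed solution.

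For the bound, differentiate the equation in $P_j$ at fixed $s$. The pair $(\partial_{P_j}\widetilde\phi_{\mathrm{in}},\partial_{P_j}\lambda)$ then solves \eqref{eq:fdr-linear-problem}, up to the small perturbation above, with right-hand side
\[
h_j=-20\widetilde V^3(\partial_{P_j}\widetilde V)\widetilde\phi_{\mathrm{in}}
-\eta_*\partial_{P_j}\widetilde E
-\eta_*\partial_{P_j}\widetilde{\overline N}
+\sum_\ell\lambda_\ell\,\partial_{P_j}\bigl(\widetilde V^4\widetilde Z_{\ell}\bigr).
\]
There is also an inhomogeneous orthogonality datum,
\[
\int\widetilde\phi_{\mathrm{in}}\,\partial_{P_j}\bigl(\widetilde V^4\widetilde Z_\ell\bigr)\,ds .
\]
We remove this datum by subtracting a combination $\sum_\ell\gamma_\ell\widetilde Z_\ell$. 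The $\gamma_\ell$ are determined through the invertible Gram matrix of the $\widetilde Z_\ell$, and $|\gamma_\ell|\le C\|\widetilde\phi_{\mathrm{in}}\|_{*,0}$.

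It remains to estimate each term of $h_j$ in $\|\cdot\|_{**,0}$.
\begin{enumerate}
\item The first and last terms use \eqref{eq:fdr-nonlinear-estimate} together with $|\partial_{P_j}\widetilde V|\le C\langle s\rangle^{-1}$, which holds since $\partial_{P_j}\widetilde U=Z_j(c+\cdot)$ decays like $\langle s\rangle^{-1}$ by Lemma~\ref{lem:nodal-profile}. This gives a bound $C\delta^{-4}K^{-11/3}$.
\item The term $\partial_{P_j}\widetilde E$ satisfies $|\partial_{P_j}\widetilde E|\le C\langle s\rangle^{-4}\bigl(|\partial_{P_j}(V-U)|+\langle s\rangle^{-1}|V-U|\bigr)$. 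Using Lemma~\ref{lem:appA-parameter-projection}, this is at most $C\delta^{-1}K^{-1}\langle s\rangle^{-4}$. On the support of $\eta_*$ one has $\langle s\rangle\le C\bar\eps^{-1}$, so $\langle s\rangle^{-4}\le C\langle s\rangle^{-3-2\sigma}$ there, and the weight $\langle s\rangle^{3+2\sigma}$ is absorbed.
\item For $\partial_{P_j}\widetilde{\overline N}$, we insert \eqref{eq:fdr-outer-total-parameter-bound} and \eqref{eq:fdr-outer-bound}. This produces a term $C\delta^{-1}K^{-1}\|\partial_{P_j}\widetilde\phi_{\mathrm{in}}\|_{*,0}$ multiplied by the small factor $\|V^4\|_{**}\cdot o(1)$, which is absorbed on the left, and a term $C\delta^{-3}K^{-2}$.
\end{enumerate}
Collecting these, Theorem~\ref{thm:fdr-linear} gives $\|\partial_{P_j}\widetilde\phi_{\mathrm{in}}\|_{*,0}+|\partial_{P_j}\lambda|\le C\delta^{-1}K^{-1}$ once $K\ge K_0(\delta)$.

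The main obstacle is the term $5V^4\Psi$ inside $\overline N$. The factor $\partial_{P_j}\Psi$ has only an $L^\infty$ bound, namely $C\delta^{-1}K^{-1}\|\partial\widetilde\phi\|+C\delta^{-3}K^{-2}$. Multiplying by $V^4\sim\langle s\rangle^{-4}$ yields $**$-norm $\le C\delta^{-1}K^{-1}\|\partial\widetilde\phi\|_{*,0}+C\delta^{-3}K^{-2}$, so the first piece is absorbable. For the second piece, $\delta^{-3}K^{-2}\le\delta^{-1}K^{-1}$ once $K$ is large relative to $\delta$, which closes the estimate.

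A second point needs care: the change of variables in the Remark. It introduces $\nabla_s\widetilde\phi_{\mathrm{in}}\cdot\partial_{P_j}c$ when the centered inner function is fed into the uncentered outer map. This term is controlled by $\|\widetilde\phi_{\mathrm{in}}\|_{*,0}$, which is $O(\delta^{-4}K^{-11/3})$, so it is harmless.
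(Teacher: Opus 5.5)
There is a genuine gap, and it sits exactly at the point you flag as needing care: moving the center $c$ translates the centered inner function.

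You obtain $C^1$ dependence of $\mathcal G$ by composing Proposition~\ref{prop:fdr-outer-parameter-dependence} with the Remark, that is, by feeding the outer map the uncentered input $\widetilde\phi(\cdot-c)$. That proposition requires a $C^1$ family in $\mathcal X_0$, whose norm contains $\sup_y\langle y-c^0\rangle^2|\nabla v_0(y)|$. The map $c\mapsto\widetilde\phi(\cdot-c)$ is differentiable into $\mathcal X_0$ only if $\widetilde\phi$ has controlled second derivatives. This fails for a general element of the $C^1$ space $\widetilde{\mathcal X}$, and the implicit function theorem needs $C^1$ dependence on a whole neighborhood, not only at the solution.

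The same defect breaks your last paragraph. The term $\nabla_s\widetilde\phi_{\mathrm{in}}\cdot\partial_{P_j}c$ enters \eqref{eq:fdr-outer-total-parameter-bound} through its $\|\cdot\|_{*,c^0}$ norm, which contains $\sup_s\langle s\rangle^2|\nabla_s^2\widetilde\phi_{\mathrm{in}}(s)|$. So it is \emph{not} controlled by $\|\widetilde\phi_{\mathrm{in}}\|_{*,0}$. This second derivative is produced by the cutoff term $\mathcal C(v)=2\nabla\chi\cdot\nabla v+(\Delta\chi)v$ in the outer equation, which differentiates its input. It can only be avoided by exploiting the structure of that term.

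The missing idea is the one the paper introduces for this purpose. Keep the centered input $v$ fixed and note that $\mathcal C_{\mathcal P}\bigl(v^{\mathrm{ext}}(\cdot-c)\bigr)=\mathcal C_*(v)(\cdot-c)$ is a translate of a fixed continuous, compactly supported function. Its difference quotient in $c_i$ equals $-\operatorname{div}\bigl(e_i\int_0^1\mathcal C_*(v)(\cdot-c-\theta te_i)\,d\theta\bigr)$. Lemma~\ref{lem:fdr-neumann-divergence} then makes $c\mapsto T_\eps[\mathcal C_*(v)(\cdot-c)]$ a $C^1$ map into $L^\infty$, with
$\|\partial_{c_i}T_\eps[\mathcal C_*(v)(\cdot-c)]\|_\infty\le C\eps^{-1}\bar\eps^3\|v\|_{*,0}\le C\delta^{-6}K^{-20/3}$.
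This uses only first derivatives of $v$. The other translation effect, through $\chi_{\mathcal P}v^{\mathrm{ext}}(\cdot-c)$ inside $N$, needs only $\nabla v$.

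Evaluating the outer solution at $c+s$ also produces the term $\nabla\phi_{\mathrm{out}}(b_\eps+s)\cdot\partial_{P_j}c$, which you omit. An interior gradient estimate bounds it by $C\delta^{-5}K^{-14/3}$.

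Two smaller repairs are needed as well. First, $\mathcal G$ does not map $\widetilde{\mathcal X}\times\mathbb R^6$ boundedly into the $\|\cdot\|_{**,0}$ space, because of $\Delta\widetilde\phi$. Compose with a fixed inverse, as the paper does with $\mathfrak G_A^\sharp$ frozen at $\mathcal P_A$, or use a graph norm. Second, the bound $\|\partial_{P_j}(\widetilde V_{\mathcal P}-\widetilde U_{\mathcal P})\|_\infty\le C\eps K^2\le C\delta^{-1}K^{-1}$ comes from Lemmas~\ref{lem:appA-projection-error-derivative} and~\ref{lem:appA-projection-error-gradient}, not from Lemma~\ref{lem:appA-parameter-projection}. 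With these changes, your term-by-term bookkeeping does yield the stated $C\delta^{-1}K^{-1}$.
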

\begin{proof}
We first establish differentiability by applying the
implicit function theorem on fixed Banach spaces.
We write the inner equation and its six orthogonality
conditions as a single equation
$\mathcal F(v,\boldsymbol\lambda,\mathcal P)=0$
through the linear inverse. We then prove that
$\mathcal F$ is jointly $C^1$ and that its derivative
in $(v,\boldsymbol\lambda)$ is invertible at the
constructed solution.

Let $\widetilde{\mathcal X}_A$ be the space of
$C^1$ functions on $\overline{B_{4r/\bar\eps}(0)}$,
even in $s_3$ and with zero boundary values, equipped
with $\|\cdot\|_{*,0}$.
Write $\boldsymbol\lambda=(\lambda_0,\ldots,\lambda_5)$
and use the product norm
\[
\|(v,\boldsymbol\lambda)\|
_{\widetilde{\mathcal X}_A\times\mathbb R^6}
:=\|v\|_{*,0}+\sum_{\ell=0}^5|\lambda_\ell|.
\]
We relate the centered inner inputs to the outer space
$\mathcal X_0$. For $v\in\widetilde{\mathcal X}_A$,
choose a fixed bounded linear extension
$v^{\mathrm{ext}}\in C^1(\mathbb R^3)$, preserving the
symmetry in $s_3$, such that
\begin{equation*}
\begin{aligned}
v^{\mathrm{ext}}&=v
\quad\text{on }B_{4r/\bar\eps}(0),\\
\sup_{s\in\mathbb R^3}\langle s\rangle
|v^{\mathrm{ext}}(s)|
&+\sup_{s\in\mathbb R^3}\langle s\rangle^2
|\nabla v^{\mathrm{ext}}(s)|
\leq C\|v\|_{*,0}.
\end{aligned}
\end{equation*}
For $|c-c^0|\leq1$, the inequality
$\langle y-c^0\rangle\leq2\langle y-c\rangle$
shows that the translated extension, restricted to
$\Omega_\eps$, belongs to $\mathcal X_0$ and satisfies
\[
\|v^{\mathrm{ext}}(\,\cdot-c)\|_{*,c^0}
\leq C\|v\|_{*,0}.
\]
The norm on the left is the fixed outer input norm
in \eqref{eq:fdr-ambient-star-norm}.
Clearly, the outer solution is
independent of the choice of extension.
Define
\[
\widetilde\Psi_{\mathcal P}(v)(s)
:=\Psi_{\mathcal P}
\bigl(v^{\mathrm{ext}}(\,\cdot-c)\bigr)(c+s).
\]
Thus the actual inner and outer solutions satisfy
\[
\phi_{\mathrm{out}}(\mathcal P,c+s)
=\widetilde\Psi_{\mathcal P}
\bigl(\widetilde\phi_{\mathrm{in}}(\mathcal P)\bigr)(s).
\]
In a parameter partial derivative of
$\widetilde\Psi_{\mathcal P}(v)$, the centered input
$v(s)$ is held fixed; both its translated extension
and the evaluation point $c+s$ vary with the center.

Keep the coefficients $\widetilde V$ and
$\widetilde Z_\ell$ fixed at $\mathcal P_A$.
For $\|f\|_{**,0}<\infty$ and
$\boldsymbol m=(m_0,\ldots,m_5)\in\mathbb R^6$,
define $\mathfrak G_A^\sharp(f,\boldsymbol m)
=(w,\boldsymbol\nu)$ by
\[
\begin{cases}
(\Delta_s-\mu\eps^2+5\widetilde V^4)w
=f+\displaystyle\sum_{\ell=0}^5\nu_\ell
\widetilde V^4\widetilde Z_\ell
&\text{in }B_{4r/\bar\eps}(0),\\
w=0
&\text{on }\partial B_{4r/\bar\eps}(0),\\
\displaystyle\int_{B_{4r/\bar\eps}(0)}
\widetilde V^4\widetilde Z_\ell w\,ds=m_\ell
&\ell=0,\ldots,5,
\end{cases}
\]
where $f$ and $w$ are even in $s_3$.
Theorem~\ref{thm:fdr-linear} gives this inverse for
$\boldsymbol m=0$.
For general $\boldsymbol m$, subtracting a suitable linear combination reduces
the problem to zero moments.
Theorem~\ref{thm:fdr-linear} then gives
\begin{equation}
\label{eq:fdr-inner-augmented-inverse}
\|\mathfrak G_A^\sharp(f,\boldsymbol m)\|
_{\widetilde{\mathcal X}_A\times\mathbb R^6}
\leq C\left(\|f\|_{**,0}
+\sum_{\ell=0}^5|m_\ell|\right).
\end{equation}

For parameters and inputs near the reference solution,
this outer solution is defined by the same contraction
argument as in Theorem~\ref{thm:fdr-outer}.
We define
\[
\begin{aligned}
\mathcal F(v,\boldsymbol\lambda,\mathcal P)
:={}&
\begin{pmatrix}v\\ \boldsymbol\lambda\end{pmatrix}
+\mathfrak G_A^\sharp
\begin{pmatrix}
\begin{aligned}
&5(\widetilde V_{\mathcal P}^4-\widetilde V^4)v+\eta_*\Bigl[
(\widetilde V_{\mathcal P}+\chi_*v
+\widetilde\Psi_{\mathcal P}(v))^5
-\widetilde U_{\mathcal P}^5
-5\widetilde V_{\mathcal P}^4\chi_*v\Bigr]\\
&-\displaystyle\sum_{\ell=0}^5\lambda_\ell
\bigl(\widetilde V_{\mathcal P}^4
\widetilde Z_{\ell,\mathcal P}
-\widetilde V^4\widetilde Z_\ell\bigr)
\end{aligned}\\[6pt]
\left(
\displaystyle\int_{B_{4r/\bar\eps}(0)}
\bigl(\widetilde V_{\mathcal P}^4
\widetilde Z_{\ell,\mathcal P}
-\widetilde V^4\widetilde Z_\ell\bigr)v\,ds
\right)_{\ell=0}^5
\end{pmatrix}.
\end{aligned}
\]
In particular, Theorem~\ref{thm:fdr-nonlinear} gives
\[
\mathcal F\bigl(
\widetilde\phi_{\mathrm{in}}(\mathcal P_A),
\boldsymbol\lambda(\mathcal P_A),\mathcal P_A\bigr)=0.
\]

We next prove that $\mathcal F$ is jointly $C^1$.
We first justify the parameter dependence of the outer
solution for $C^1$ inputs. Define the centered term, extended by zero
outside the inner ball, by
\[
\mathcal C_*(v)
:=2\nabla\chi_*\cdot\nabla v+(\Delta\chi_*)v.
\]
It is a continuous function with compact support for
$v\in\widetilde{\mathcal X}_A$, and
\[
\mathcal C_{\mathcal P}
\bigl(v^{\mathrm{ext}}(\,\cdot-c)\bigr)
=\mathcal C_*(v)(\,\cdot-c).
\]
For $i=1,2$ and $t\ne0$, the fundamental theorem
of calculus applied to test functions gives, in
the sense of distributions,
\[
\begin{aligned}
&\frac1t\Bigl[
\mathcal C_*(v)(\,\cdot-c-te_i)
-\mathcal C_*(v)(\,\cdot-c)\Bigr]=-\operatorname{div}\left(
e_i\int_0^1\mathcal C_*(v)
(\,\cdot-c-\theta te_i)\,d\theta\right).
\end{aligned}
\]
Here $e_i$ is the $i$th coordinate vector.
Since $\mathcal C_*(v)$ is uniformly continuous,
the average on the right converges uniformly to
$\mathcal C_*(v)(\,\cdot-c)$ as $t\to0$.
Applying $T_\eps$ and using
\eqref{eq:fdr-neumann-divergence-inverse}, we obtain
the derivative in $L^\infty$,
\[
\partial_{c_i}
T_\eps[\mathcal C_*(v)(\,\cdot-c)]
=-T_\eps\!\left[
\operatorname{div}\bigl(e_i\mathcal C_*(v)(\,\cdot-c)\bigr)
\right].
\]
The same estimate shows that this derivative is
continuous in $(c,v)$.
Moreover, for $h\in\widetilde{\mathcal X}_A$,
\[
\begin{aligned}
&\bigl\|T_\eps[
\mathcal C_*(h)(\,\cdot-c')
-\mathcal C_*(h)(\,\cdot-c)]\bigr\|_\infty\leq C\eps^{-1}|c'-c|
\|\mathcal C_*(h)\|_\infty.
\end{aligned}
\]
Since $\mathcal C_*$ is bounded and linear from
$\widetilde{\mathcal X}_A$ into $C^0$, this also
gives continuity in operator norm of the derivative
with respect to $v$.
Thus this term is jointly $C^1$
after applying $T_\eps$. The cutoff products and the polynomial terms of the
outer fixed-point map are jointly $C^1$ into
$L^\infty$, by the $C^1$ regularity of the input
and Lemma~\ref{lem:appA-parameter-projection}.
The estimates in the outer unknown
therefore give a local $C^1$ outer solution by
the implicit function theorem.
Consequently, $(\mathcal P,v)\mapsto
\widetilde\Psi_{\mathcal P}(v)$ is locally $C^1$
into $L^\infty(B_{4r/\bar\eps}(0))$.

The profile formula and Lemma~\ref{lem:appA-parameter-projection}
give $C^1$ dependence
of $\widetilde U_{\mathcal P}$,
$\widetilde V_{\mathcal P}$, and
$\widetilde Z_{\ell,\mathcal P}$ on the fixed inner ball. Multiplication and the six moment integrals are
bounded multilinear maps in these spaces.
Since $\mathfrak G_A^\sharp$ is bounded and linear,
the formula for $\mathcal F$ now proves that it is
jointly $C^1$ on an open neighborhood of the
reference solution.

We next verify invertibility of the derivative in
the unknowns. At $\mathcal P=\mathcal P_A$, the
coefficient differences in the definition of
$\mathcal F$ vanish. Differentiation in
$(v,\boldsymbol\lambda)$ therefore gives
\[
\begin{aligned}
D_{(v,\boldsymbol\lambda)}\mathcal F
[h,\boldsymbol\nu]=
\begin{pmatrix}h\\ \boldsymbol\nu\end{pmatrix}
+\mathfrak G_A^\sharp
\begin{pmatrix}
\begin{aligned}
&5\eta_*\Bigl[
(\widetilde V+\chi_*v
+\widetilde\Psi_{\mathcal P_A}(v))^4
-\widetilde V^4\Bigr]\chi_*h\\
&+5\eta_*(\widetilde V+\chi_*v
+\widetilde\Psi_{\mathcal P_A}(v))^4
D_v\widetilde\Psi_{\mathcal P_A}(v)[h]
\end{aligned}\\[4pt]
0
\end{pmatrix}.
\end{aligned}
\]
At $v=\widetilde\phi_{\mathrm{in}}(\mathcal P_A)$,
Lemma~\ref{lem:fdr-outer-C1} gives
\[
\|D_v\widetilde\Psi_{\mathcal P_A}(v)[h]\|_\infty
\leq C\delta^{-1}K^{-1}\|h\|_{*,0}.
\]
The preceding pointwise bounds show that the first
source term in the derivative is bounded by
\[
C\bigl(\delta^{-4}K^{-11/3}
+\delta^{-6}K^{-8/3}\bigr)
\langle s\rangle^{-5}\|h\|_{*,0},
\]
and the second by
\[
C\delta^{-1}K^{-1}
\langle s\rangle^{-4}\|h\|_{*,0}.
\]
Since $0<\sigma<1/2$, multiplication by the
double-star weight preserves both bounds.
Together with \eqref{eq:fdr-inner-augmented-inverse},
this yields
\[
\begin{aligned}
&\|D_{(v,\boldsymbol\lambda)}\mathcal F-I\|
_{\mathcal L(\widetilde{\mathcal X}_A\times\mathbb R^6)}
\leq
C\bigl(\delta^{-4}K^{-11/3}
+\delta^{-6}K^{-8/3}
+\delta^{-1}K^{-1}\bigr)
\leq\frac12
\end{aligned}
\]
Hence,
$D_{(v,\boldsymbol\lambda)}\mathcal F$ is invertible,
with inverse norm at most $2$.

We have verified both hypotheses of the implicit
function theorem at
$(\widetilde\phi_{\mathrm{in}}(\mathcal P_A),
\boldsymbol\lambda(\mathcal P_A),\mathcal P_A)$.
It follows that the inner solution and its six
coefficients form a local $C^1$ family in
$\widetilde{\mathcal X}_A\times\mathbb R^6$.
By uniqueness, this family agrees with the previously
constructed solution for nearby admissible parameters.
Differentiating $\mathcal F=0$, we obtain
\begin{equation}
\label{eq:fdr-inner-parameter-identity}
\begin{pmatrix}
\partial_{P_j}\widetilde\phi_{\mathrm{in}}\\
(\partial_{P_j}\lambda_\ell)_{\ell=0}^5
\end{pmatrix}
=-\bigl[D_{(v,\boldsymbol\lambda)}\mathcal F\bigr]^{-1}
\partial_{P_j}\mathcal F.
\end{equation}
We now estimate $\partial_{P_j}\mathcal F$ at
$\mathcal P_A$, keeping
$v=\widetilde\phi_{\mathrm{in}}(\mathcal P_A)$ and
$\boldsymbol\lambda=\boldsymbol\lambda(\mathcal P_A)$
fixed. All parameter derivatives below are evaluated
at $\mathcal P_A$.
The chain rule gives
\[
\begin{aligned}
\partial_{P_j}
(\widetilde V_{\mathcal P}-\widetilde U_{\mathcal P})(s)
={}&\bigl[\partial_{P_j}(V_{\mathcal P}
-\mathcal U_{\mathcal P})\bigr](c+s)+\nabla(V_{\mathcal P}-\mathcal U_{\mathcal P})(c+s)
\cdot\partial_{P_j}c.
\end{aligned}
\]
Lemmas~\ref{lem:appA-projection-error-derivative} and
\ref{lem:appA-projection-error-gradient} therefore imply
\[
\|\partial_{P_j}
(\widetilde V_{\mathcal P}-\widetilde U_{\mathcal P})
\|_\infty\leq C\eps K^2.
\]
Together with the profile formula, this gives
\[
|\partial_{P_j}\widetilde U_{\mathcal P}(s)|
\leq C\langle s\rangle^{-1},
\qquad
|\partial_{P_j}\widetilde V_{\mathcal P}(s)|
\leq C\bigl(\langle s\rangle^{-1}+\eps K^2\bigr).
\]
Using
\[
\begin{aligned}
\partial_{P_j}
(\widetilde V_{\mathcal P}^5-\widetilde U_{\mathcal P}^5)
={}&5\widetilde V^4
\partial_{P_j}(\widetilde V_{\mathcal P}
-\widetilde U_{\mathcal P})+5(\widetilde V^4-\widetilde U^4)
\partial_{P_j}\widetilde U_{\mathcal P},
\end{aligned}
\]
and \eqref{eq:fdr-projection-bound}, we obtain
\[
\left|\partial_{P_j}
(\widetilde V_{\mathcal P}^5-\widetilde U_{\mathcal P}^5)(s)
\right|
\leq C\eps K^2\langle s\rangle^{-4}.
\]
Since $0<\sigma<1/2$, it follows that
\begin{equation}
\label{eq:fdr-inner-error-parameter-bound}
\left\|\eta_*\partial_{P_j}
(\widetilde V_{\mathcal P}^5-\widetilde U_{\mathcal P}^5)
\right\|_{**,0}
\leq C\eps K^2.
\end{equation}

We next estimate $\partial_{P_j}\widetilde\Psi_{\mathcal P}(v)$
with the centered input $v$ fixed.
On the support of the derivatives of $\chi_*$,
the starred norm and the cutoff bounds give
\[
\|\mathcal C_*(v)\|_\infty
\leq C\bar\eps^3\|v\|_{*,0}.
\]
By Lemma~\ref{lem:fdr-neumann-divergence} and the
distributional differentiation formula established above,
\[
\left\|\partial_{c_i}
T_\eps[\mathcal C_*(v)(\,\cdot-c)]\right\|_\infty
\leq C\eps^{-1}\bar\eps^3\|v\|_{*,0},
\qquad i=1,2.
\]
The other parameter derivatives of this term vanish.

For the polynomial part of the outer fixed-point map,
keep $\psi\in\mathcal B_A$ fixed when differentiating.
On $\operatorname{supp}(1-\eta)$, the profile estimates
and Lemma~\ref{lem:appA-projection-error-derivative} give
\[
\begin{aligned}
|\mathcal U_{\mathcal P}|+|V_{\mathcal P}|
+|\chi_{\mathcal P}v^{\mathrm{ext}}(\,\cdot-c)+\psi|
&\leq C\bar\eps,\\
|\partial_{P_j}\mathcal U_{\mathcal P}|
+|\partial_{P_j}V_{\mathcal P}|
&\leq CK\bar\eps,\\
|\partial_{P_j}\eta_{\mathcal P}|
&\leq C\bar\eps,\\
\left|\partial_{P_j}
\bigl[\chi_{\mathcal P}v^{\mathrm{ext}}(\,\cdot-c)\bigr]\right|
&\leq C\bar\eps^2\|v\|_{*,0}.
\end{aligned}
\]
The cutoff derivatives vanish for the noncenter parameters.
Using the identity for the polynomial source in the
outer equation, we obtain
\[
\begin{aligned}
&\Bigl\|\partial_{P_j}\Bigl\{
(1-\eta_{\mathcal P})\Bigl[
E_{\mathcal P}
+N_{\mathcal P}\bigl(
\chi_{\mathcal P}v^{\mathrm{ext}}(\,\cdot-c)+\psi\bigr)
+5V_{\mathcal P}^4\psi\Bigr]\Bigr\}\Bigr\|_\infty\leq CK\bar\eps^5.
\end{aligned}
\]
The maximum principle and the inverse bound
$\|(I-D_\psi T)^{-1}\|\leq2$ from the outer
contraction argument therefore yield
\[
\begin{aligned}
\left\|\partial_{P_j}
\bigl[\Psi_{\mathcal P}
(v^{\mathrm{ext}}(\,\cdot-c))\bigr]\right\|_\infty
&\leq C\bigl(
\eps^{-2}K\bar\eps^5
+\eps^{-1}\bar\eps^3\|v\|_{*,0}\bigr)\leq C\bigl(
\delta^{-3}K^{-3}
+\delta^{-6}K^{-20/3}\bigr).
\end{aligned}
\]
To account for the evaluation at $c+s$, we also
estimate the spatial gradient of the outer solution.
The outer equation and the preceding pointwise bounds give
\[
\begin{aligned}
\|\Delta\phi_{\mathrm{out}}\|_\infty
\leq{}&C\Bigl(
\eps^2\delta^{-5}K^{-14/3}
+\bar\eps^4\delta^{-5/2}K^{-4}
+\bar\eps^3\delta^{-4}K^{-11/3}
+\bar\eps^4\delta^{-5}K^{-14/3}\Bigr)
\leq{}C\delta^{-5}K^{-14/3}.
\end{aligned}
\]
Since $B_2(y)\subset\Omega_\eps$ for every $y\in D_K$,
the interior gradient estimate implies
\[
\begin{aligned}
\|\nabla\phi_{\mathrm{out}}\|_{L^\infty(D_K)}
&\leq C\bigl(
\|\phi_{\mathrm{out}}\|_\infty
+\|\Delta\phi_{\mathrm{out}}\|_\infty\bigr)\leq C\delta^{-5}K^{-14/3}.
\end{aligned}
\]
By the definition of $\widetilde\Psi_{\mathcal P}$,
\[
\begin{aligned}
\partial_{P_j}\widetilde\Psi_{\mathcal P}(v)(s)
={}&\Bigl[\partial_{P_j}\bigl\{
\Psi_{\mathcal P}(v^{\mathrm{ext}}(\,\cdot-c))
\bigr\}\Bigr](b_\eps+s)+\nabla\phi_{\mathrm{out}}(\mathcal P_A,b_\eps+s)
\cdot\partial_{P_j}c.
\end{aligned}
\]
Consequently,
\begin{equation}
\label{eq:fdr-centered-outer-fixed-input-bound}
\begin{aligned}
\|\partial_{P_j}\widetilde\Psi_{\mathcal P}(v)\|_\infty
&\leq C\bigl(
\delta^{-3}K^{-3}
+\delta^{-6}K^{-20/3}
+\delta^{-5}K^{-14/3}\bigr)\leq C\delta^{-3}K^{-2}
\end{aligned}
\end{equation}

We now differentiate the nonlinear source in
$\mathcal F$, keeping $v$ fixed, and separate
$\partial_{P_j}(\widetilde V_{\mathcal P}^5
-\widetilde U_{\mathcal P}^5)$.
In the remaining terms, the contributions linear in
$\chi_*v$ multiplying
$\partial_{P_j}\widetilde V_{\mathcal P}$ cancel.
Since $\langle s\rangle\leq C\bar\eps^{-1}$ on the
inner ball, the remaining contributions involving this
derivative have double-star norm bounded by
\[
\begin{aligned}
&C\delta^{-5}K^{-14/3}
\bigl(1+\eps K^2\bar\eps^{-2\sigma}\bigr)
+C\bigl(\delta^{-4}K^{-11/3}
+\delta^{-6}K^{-8/3}\bigr)^2\leq C\eps K^2
\end{aligned}
\]
Together with \eqref{eq:fdr-inner-error-parameter-bound}
and the decay of $\widetilde V+\chi_*v
+\widetilde\Psi_{\mathcal P_A}(v)$, this yields
\begin{equation}
\label{eq:fdr-inner-nonlinearity-parameter-bound}
\begin{aligned}
\left\|\eta_*\partial_{P_j}\Bigl[
(\widetilde V_{\mathcal P}+\chi_*v
+\widetilde\Psi_{\mathcal P}(v))^5
-\widetilde U_{\mathcal P}^5
-5\widetilde V_{\mathcal P}^4\chi_*v
\Bigr]\right\|_{**,0}\leq C\eps K^2
+C\|\partial_{P_j}\widetilde\Psi_{\mathcal P}(v)\|_\infty.
\end{aligned}
\end{equation}

Finally, the profile formula and the projection
derivative estimate give
\[
\left|\partial_{P_j}
(\widetilde V_{\mathcal P}^4
\widetilde Z_{\ell,\mathcal P})(s)\right|
\leq C\bigl(\langle s\rangle^{-5}
+\eps K^2\langle s\rangle^{-4}\bigr).
\]
It follows that
\[
\begin{aligned}
\left\|20\widetilde V^3
(\partial_{P_j}\widetilde V_{\mathcal P})v
\right\|_{**,0}+\sum_{\ell=0}^5|\lambda_\ell|
\left\|\partial_{P_j}
(\widetilde V_{\mathcal P}^4
\widetilde Z_{\ell,\mathcal P})\right\|_{**,0}+\sum_{\ell=0}^5
\left|\int_{B_{4r/\bar\eps}(0)}
\partial_{P_j}
(\widetilde V_{\mathcal P}^4
\widetilde Z_{\ell,\mathcal P})v\,ds\right|
\leq C\delta^{-4}K^{-11/3}.
\end{aligned}
\]
where we use the integrability of
$\langle s\rangle^{-6}$ and $\langle s\rangle^{-5}$
in $\mathbb R^3$.
Since $\mathfrak G_A^\sharp$ is fixed,
\eqref{eq:fdr-inner-augmented-inverse},
\eqref{eq:fdr-centered-outer-fixed-input-bound}, and
\eqref{eq:fdr-inner-nonlinearity-parameter-bound} imply
\[
\begin{aligned}
\|\partial_{P_j}\mathcal F\|
_{\widetilde{\mathcal X}_A\times\mathbb R^6}
&\leq C\delta^{-1}K^{-1}.
\end{aligned}
\]
Combining this estimate with
\eqref{eq:fdr-inner-parameter-identity} and the inverse
bound gives
\[
\|\partial_{P_j}\widetilde\phi_{\mathrm{in}}\|_{*,0}
+\sum_{\ell=0}^5|\partial_{P_j}\lambda_\ell|
\leq C\delta^{-1}K^{-1},
\qquad j=0,\ldots,5.
\]
This proves \eqref{eq:fdr-inner-parameter-bound}.
\end{proof}

\section{Proof of the Main Result}
\label{sec:existence}

The construction in Section~3 reduces the problem to
choosing the parameters so that the six coefficients in
the projected inner equation vanish. More precisely,
Theorem~\ref{thm:fdr-nonlinear} gives, for each admissible
configuration $A$, an inner correction $\phi_{\mathrm{in}}$
and coefficients $\lambda_0(A),\ldots,\lambda_5(A)$ satisfying
\begin{equation*}
\begin{cases}
L\phi_{\mathrm{in}}
=-\eta\bigl[E+\overline N_A(\phi_{\mathrm{in}})\bigr]
+\displaystyle\sum_{j=0}^5\lambda_j(A)V^4\widehat Z_j
&\text{in }D_K,\\
\phi_{\mathrm{in}}=0
&\text{on }\partial D_K,\\
\displaystyle\int_{D_K}
V^4\phi_{\mathrm{in}}\widehat Z_j\,dy=0,
&j=0,\ldots,5.
\end{cases}
\end{equation*}
Set $\phi_{\mathrm{out}}=\Psi_A(\phi_{\mathrm{in}})$ and,
returning to the original variables, define
\begin{equation*}
\phi_A(x)
:=\eps^{-1/2}
\bigl(\chi\phi_{\mathrm{in}}+\phi_{\mathrm{out}}\bigr)
\left(\frac{x}{\eps}\right),
\qquad x\in\Sigma_K.
\end{equation*}
If
\begin{equation}
\label{eq:main-coefficient-equations}
\lambda_j(A)=0,\qquad j=0,\ldots,5,
\end{equation}
then the equations we derived before imply that
$PQ_A+\phi_A$ solves the Neumann problem on $\Sigma_K$. It therefore remains to find an admissible configuration
satisfying \eqref{eq:main-coefficient-equations}. To choose these parameters, we consider the reduced energy
introduced in Section~2,
\begin{equation*}
\Phi_K(A):=J_{\Sigma_K}(PQ_A+\phi_A).
\end{equation*}
The following lemma shows that an interior critical point
of $\Phi_K$ gives the required configuration.

\begin{lemma}
\label{lem:main-critical-point}
For sufficiently large even $K\geq K_0(\delta)$, let
$A_*\in\operatorname{int}\mathcal A_K$ be a critical point
of $\Phi_K$. Then
\(
\lambda_j(A_*)=0  \text{  for  } j=0,\ldots,5.
\)
\end{lemma}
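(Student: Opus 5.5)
The plan is the standard localized-energy argument. We first express the derivatives of $\Phi_K$ through the coefficients $\lambda_j$, and then show that the resulting $6\times6$ linear system is invertible.

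\textbf{Step 1: first variation.} Set $u_A=PQ_A+\phi_A$ and work in rescaled variables. Since $\eps$ is itself a parameter, we choose coordinates in which the six parameters of $A$ correspond, through a smooth change of variables with nonsingular Jacobian at fixed $K$, to the components $P_0,\dots,P_5$ of $\mathcal P$. Because $J$ is invariant under the scaling $v\mapsto\eps^{1/2}v(\eps\,\cdot)$ up to the $\mu\eps^2$ term, it suffices to compute $\partial_{P_j}$ of the rescaled energy
\[
I(w)=\tfrac12\int_{\Omega_\eps}(|\nabla w|^2+\mu\eps^2w^2)-\tfrac16\int_{\Omega_\eps}|w|^6 .
\]
Let $w=V+\phi$ with $\phi=\chi\phi_{\mathrm{in}}+\phi_{\mathrm{out}}$. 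By the inner and outer equations, $w$ satisfies the Neumann problem with error exactly
\[
\Delta w-\mu\eps^2w+w^5=\sum_\ell\lambda_\ell\,\chi V^4\widehat Z_\ell .
\]
This follows from summing $\chi\times$\eqref{eq:fdr-nonlinear-problem} and \eqref{eq:fdr-outer}; the commutator terms $\mathcal C$ cancel, and $\chi\eta=\eta$. Here $\chi\widehat Z_\ell$ is supported in $D_K$. Integrating by parts, with the Neumann condition killing the boundary terms, gives
\[
\partial_{P_j}\Phi=-\sum_\ell\lambda_\ell\int_{D_K}\chi V^4\widehat Z_\ell\,\partial_{P_j}(V+\phi)\,dy .
\]
Differentiability is supplied by Lemma~\ref{lem:appA-parameter-projection} and Propositions~\ref{prop:fdr-outer-parameter-dependence} and~\ref{prop:fdr-inner-parameter-dependence}.

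\textbf{Step 2: the matrix.} Write $\partial_{P_j}\Phi=-\sum_\ell M_{j\ell}\lambda_\ell$. Using $\partial_{P_j}V=Z_j+\partial_{P_j}(V-U)$, we split
\[
M_{j\ell}=\int V^4\chi^2 Z_jZ_\ell+\int\chi V^4\widehat Z_\ell\bigl[\partial_{P_j}(V-U)+\partial_{P_j}\phi\bigr].
\]
The leading part tends to the Gram matrix $\int_{\mathbb R^3}U^4Z_jZ_\ell$, up to errors controlled by \eqref{eq:fdr-projection-bound} and the decay $|Z_j|\le C\langle y-b_\eps\rangle^{-1}$, since $U^4Z_jZ_\ell=O(\langle y\rangle^{-6})$ is integrable. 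By conformal invariance this Gram matrix equals, after the change of variables, the matrix of the $z_j$ composed with a nonsingular linear map. It is positive definite because the $z_j$ of Lemma~\ref{lem:nodal-profile}(iii) are linearly independent and $q^4>0$ off the nodal set.

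\textbf{Step 3: the perturbation terms.} For $\partial_{P_j}\phi$ we need a pointwise form of the centered derivative. We use the identity in the Remark after Proposition~\ref{prop:fdr-outer-parameter-dependence}, together with the bound \eqref{eq:fdr-inner-parameter-bound}, which gives $\|\partial\widetilde\phi_{\mathrm{in}}\|_{*,0}\le C\delta^{-1}K^{-1}$. The gradient term is bounded by $\|\phi_{\mathrm{in}}\|_*\le C\delta^{-4}K^{-11/3}$, and \eqref{eq:fdr-outer-total-parameter-bound} controls $\phi_{\mathrm{out}}$. Against the weight $V^4\widehat Z_\ell=O(\langle y\rangle^{-5})$, each of these contributes $o(1)$ as $K\to\infty$ for fixed $\delta$. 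The $\partial(V-U)$ term is handled by the appendix estimate $\|\partial_{P_j}(V-U)\|_\infty\le C\eps K^2$ and is also $o(1)$. Hence $M=M_0+o(1)$ with $M_0$ invertible, and $M$ is invertible for $K\ge K_0(\delta)$.

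\textbf{Step 4: conclusion.} At an interior critical point $A_*$, every $\partial_{P_j}\Phi$ vanishes, so $M\lambda=0$ and therefore $\lambda=0$.

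I expect Step 1 to be the main obstacle: verifying that the combined inner/outer correction yields an exact equation whose only error is $\sum_\ell\lambda_\ell\chi V^4\widehat Z_\ell$, with no leftover cutoff terms, and that the derivatives of $\phi$ exist in a sense strong enough to justify the integration by parts. The matrix invertibility in Steps 2 and 3 is comparatively routine.
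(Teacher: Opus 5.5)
Your proposal is correct and is essentially the paper's argument. The paper proves this lemma by deferring to \cite[Lemma~7.1]{SunWeiYangFull}, which is exactly your scheme: differentiate $\Phi_K$ using the exact identity $L\phi=-E-N(\phi)+\sum_j\lambda_j\chi V^4\widehat Z_j$, show that the resulting $6\times6$ matrix is a small perturbation of the nondegenerate Gram matrix $\int U^4Z_jZ_\ell$, and control the perturbation terms through Lemma~\ref{lem:appA-projection-error-derivative} and Propositions~\ref{prop:fdr-inner-parameter-dependence} and~\ref{prop:fdr-outer-parameter-dependence}.
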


\begin{proof}
The proof follows that of \cite[Lemma~7.1]{SunWeiYangFull},
using Lemma~\ref{lem:appA-projection-error-derivative}
and Propositions~\ref{prop:fdr-inner-parameter-dependence}
and~\ref{prop:fdr-outer-parameter-dependence}.
\end{proof}
We next show that the correction $\phi_A$ contributes only
a higher-order term to the reduced energy.
Recall that
\[
\phi(y)=\eps^{1/2}\phi_A(\eps y)
=\chi\phi_{\mathrm{in}}+\phi_{\mathrm{out}}.
\]
Expanding the energy and integrating by parts, we obtain
\begin{equation}
\label{eq:main-energy-correction-identity}
\begin{aligned}
J_{\Sigma_K}(PQ_A+\phi_A)-J_{\Sigma_K}(PQ_A)
={}&-\int_{\Omega_\eps}E\phi\,dy
-\frac12\int_{\Omega_\eps}\phi L\phi\,dy+O\!\left(\int_{\Omega_\eps}
\bigl(|V|^3|\phi|^3+|\phi|^6\bigr)\,dy\right).
\end{aligned}
\end{equation}
The inner and outer equations give
\[
L\phi=-E-N(\phi)
+\sum_{j=0}^5\lambda_j(A)\chi V^4\widehat Z_j.
\]
By Theorem~\ref{thm:fdr-outer},
\eqref{eq:fdr-nonlinear-estimate}, and
\eqref{eq:fdr-projection-bound}, we have
\[
\|\phi\|_\infty+\sum_{j=0}^5|\lambda_j(A)|
\leq C\delta^{-4}K^{-11/3}.
\]
The profile bounds also imply
$\int_{\Omega_\eps}(|U|^4+|V|^4)\,dy\leq C$.
Consequently,
\[
\begin{aligned}
\int_{\Omega_\eps}|E\phi|\,dy
\leq C\delta^{-13/2}K^{-23/3}, \quad
\sum_{j=0}^5|\lambda_j(A)|
\left|\int_{D_K}\chi V^4\widehat Z_j\phi\,dy\right|
\leq C\delta^{-8}K^{-22/3}.
\end{aligned}
\]
Moreover, Young's inequality and
$|\Omega_\eps|=4\pi/(3K\eps^3)$ yield
\[
\begin{aligned}
\int_{\Omega_\eps}
\bigl(|V|^3|\phi|^3+|\phi|^6\bigr)\,dy
\leq C\int_{\Omega_\eps}
\bigl(V^4\phi^2+|\phi|^6\bigr)\,dy
\leq C\delta^{-8}K^{-22/3}
\end{aligned}
\]
Since $|N(\phi)\phi|
\leq C(|V|^3|\phi|^3+|\phi|^6)$, substitution into
\eqref{eq:main-energy-correction-identity} gives
\begin{equation*}
\Phi_K(A)
=J_{\Sigma_K}(PQ_A)
+O\!\left(\delta^{-8}K^{-22/3}\right),
\end{equation*}
uniformly for $A\in\mathcal A_K$.
Combining this with
Proposition~\ref{prop:appB-projected-energy-expansion}, we conclude that
\begin{equation*}
\Phi_K(A)
=\frac13\int_{\mathbb R^3}q^6\,dx
+2\pi\Psi(A)
+\delta^{-8}o\!\left(\frac{K^{-20/3}}{\log K}\right),
\end{equation*}
where $\Psi(A)$ is given by \eqref{eq:appB-leading-energy}.

We now locate an interior maximum of the reduced energy
by comparing each boundary point with a suitable
admissible configuration. Before we start, first denote
\[
J_1:=\Phi_K(\eps,\xi,a,d,\alpha_b,\widehat\beta).
\] to simplify our notation.

\begin{theorem}
\label{thm:main-interior-maximum}
There exists a sufficiently small $\delta\in(0,1)$ such
that, for every sufficiently large even $K\geq K_0(\delta)$,
the maximum of $\Phi_K$ over the set $\mathcal A_K$
defined by \eqref{eq:outline-parameters} is attained at
an interior point. In particular, there exists
$A_K\in\operatorname{int}\mathcal A_K$ such that
\[
\Phi_K(A_K)
=\max_{A\in\mathcal A_K}J_{\Sigma_K}(PQ_A+\phi_A).
\]
\end{theorem}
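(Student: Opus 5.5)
The plan is to use the expansion
\[
\Phi_K(A)=\tfrac13\int_{\mathbb R^3}q^6\,dx+2\pi\Psi(A)+\delta^{-8}o\!\left(K^{-20/3}(\log K)^{-1}\right)
\]
established above, with $\Psi$ given by \eqref{eq:appB-leading-energy}. The whole argument reduces to the structure of the leading function $\Psi$ on the box $\mathcal A_K$. Since $\Phi_K$ is continuous on the compact set $\mathcal A_K$ (by the $C^1$ dependence in Propositions~\ref{prop:fdr-inner-parameter-dependence} and~\ref{prop:fdr-outer-parameter-dependence}), a maximum exists. It then suffices to show that for every boundary point $A\in\partial\mathcal A_K$ there is an admissible $\widetilde A$ with $\Phi_K(\widetilde A)>\Phi_K(A)$. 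Concretely, I would fix the reference configuration
\[
A^0=(\eps^0,\xi^0,0,d^0,0,0),
\]
where $(\eps^0K^3,d^0K^{2/3})$ is the nondegenerate critical point (a strict maximum) of the rescaled main part of $\Psi$, and $\xi^0$ maximizes the $\xi$-dependent coefficient on $\Gamma$. I then compare $\Phi_K(A)$ with $\Phi_K$ at a suitable point, usually $A^0$ or $A$ with one coordinate moved inward.

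I expect $\Psi$ to have the form
\[
\Psi(A)=K^{-20/3}\,F(\eps K^3,dK^{2/3},\xi)\;-\;c_1\,a^2(\cdots)\;-\;c_2\,\alpha_b^2(\cdots)\;-\;c_3\,\widehat\beta^2(\cdots)\;+\;\text{lower order}.
\]
Here $F$ comes from the bubble self-interaction with the spherical boundary (the Neumann Robin term at distance $d$), the interaction between neighbouring copies (the trigonometric sums in the appendix, scaling like $\eps K\log K$ type terms), and the $\mu\eps^2$-type mass contribution. The three quadratic terms penalize the normal displacement $a$, the angular offset $\alpha_b$, and the orientation correction $\widehat\beta$, with coefficients of size $K^{-20/3}\log K\cdot K^{14/3}$, $K^{-20/3}\log K\cdot K^{8/3}$, and $K^{-20/3}\log K\cdot K^{2/3}$ respectively. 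These sizes are matched so that at the boundary of the ranges in \eqref{eq:outline-parameters} each penalty equals $\delta^{-1}$ or $\delta^{-4}$ times $K^{-20/3}$, up to constants.

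The boundary cases are then handled one at a time. On the faces $|a|=\delta^{-2}K^{-7/3}(\log K)^{-1}$, $|\alpha_b|=\delta^{-1/2}K^{-4/3}(\log K)^{-1/2}$, or $|\widehat\beta|=\delta^{-1/2}K^{-1/3}(\log K)^{-1/2}$, the corresponding quadratic loss is at least $c\,\delta^{-1}K^{-20/3}$. This exceeds both the oscillation of $F$, which is $O(1)K^{-20/3}$ on the box, and the remainder $\delta^{-8}o(K^{-20/3}/\log K)$, since $K\to\infty$ after $\delta$ is fixed. Hence $\Phi_K(A)<\Phi_K(A^0)$ on those faces. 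On the faces $\eps K^3\in\{\delta,\delta^{-1}\}$ or $dK^{2/3}\in\{\delta,\delta^{-1}\}$, I would use the asymptotics of $F$: as $\eps K^3\to0$ or $dK^{2/3}\to0$ or $\to\infty$, the leading term tends to $0$ or $-\infty$ (the boundary-reflection term blows up like $\eps/d$ when $d$ is small). For $\delta$ small this gives $F\le F(A^0)-c_0$ with $c_0>0$ independent of $\delta$, and the error is again negligible. Since $\Gamma$ is compact without boundary, $\xi$ contributes no boundary.

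The main obstacle is to make sure that the cross terms in $\Psi$ between $(a,\alpha_b,\widehat\beta)$ and $(\eps,d)$ do not destroy the quadratic penalties. The same issue arises with the sign of the interaction term in $F$, which must reinforce, not cancel, so that $F$ has an interior maximum. Here the even reflections help, as noted in Section~\ref{sec:outline}. I would first read off from Proposition~\ref{prop:appB-projected-energy-expansion} and the trigonometric-sum appendix the explicit form of $F$ and the coefficient signs. If linear terms in $a$ or $\widehat\beta$ appear, I would absorb them by completing the square and check that the shifted optimum still lies well inside the box. Finally, $\delta$ is chosen small enough for the $(\eps,d)$ comparison, and then $K_0(\delta)$ is chosen large enough to absorb every remainder. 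This gives $A_K\in\operatorname{int}\mathcal A_K$.
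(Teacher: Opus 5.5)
\textbf{There is a gap in how you handle the $a$, $\alpha_b$, $\widehat\beta$ faces.} The overall strategy is sound: continuity on the compact box, then exhibit a better admissible point for each boundary point. The quantitative step fails, though.

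In the expansion \eqref{eq:main-boundary-energy-expansion} the quadratic coefficients are:
\begin{itemize}
\item for $a^2$: $\frac{5}{3\pi}\eps K\log K|\nabla q(\xi)|^2\sim K^{-2}\log K$, as you guessed;
\item for $\widehat\beta^2$ and $\widehat\alpha_b^2$: only of size $\eps^3K^3\sim K^{-6}$, since they come from $A_K^\gamma\sim K^3$, i.e.\ from the $S_3,S_5$ sums, with no logarithm.
\end{itemize}
On the faces of \eqref{eq:outline-parameters} the penalties are therefore:
\begin{itemize}
\item for $a$: $\frac{10}{3}\delta^{-4}\eps K^{-11/3}|\nabla q|^2/\log K$. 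After subtracting the linear term $-\frac1\pi\eps^2Kd^{-1}|\nabla q|^2a$, which is at most $2\delta^{-4}\eps K^{-11/3}|\nabla q|^2/\log K$, this leaves at least $c\,\delta^{-3}K^{-20/3}/\log K$.
\item for the angles: at least $c\,\delta^{2}K^{-20/3}/\log K$.
\end{itemize}
This is a factor $\log K$ below the $c\,\delta^{-1}K^{-20/3}$ you claim. For $a$ you dropped the $(\log K)^{-2}$ in $a^2$; for the angles your coefficients carry a spurious $\log K$. The box was sized so that these penalties sit just above the remainder $\delta^{-8}o\bigl(K^{-20/3}/\log K\bigr)$, and no further.

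Consequently the penalties do not dominate the variation of the rest of the energy over the box. The $(\eps,\xi)$-part $\mu\eps^2C_*(\xi)-b_0(\xi)\eps^3K^3$ varies at order $K^{-6}$. The $d$-part $-\eps^3\bigl(3b_0(\xi)dK^3+b_1(\xi)Kd^{-2}\bigr)$ varies at order $K^{-20/3}$. Your single profile $K^{-20/3}F(\eps K^3,dK^{2/3},\xi)$ conflates these two scales. Also, the small-$d$ blow-up is $b_1\eps^3Kd^{-2}$, not $\eps/d$: the monopole term is killed by $q(\widehat\xi)=O(a)$.

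So ``hence $\Phi_K(A)<\Phi_K(A^0)$ on those faces'' is not justified for a fixed reference $A^0$. For example, if $4\mu^3C_*(\xi)^3/(27b_0(\xi)^2)$ is not nondegenerately maximized on $\Gamma$, a face point with a different $\xi$ can beat $A^0$ by order $K^{-20/3}$. No angular penalty of size $K^{-20/3}/\log K$ can compensate for that.

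\textbf{The missing idea} is to compare each boundary point $A$ with a point that differs from $A$ only in the coordinate lying on the boundary, keeping $\xi$ and all other coordinates fixed:
\begin{itemize}
\item $\eps\mapsto\frac{2\mu C_*(\xi)}{3b_0(\xi)}K^{-3}$: the gap is $cK^{-6}$ against an $O(\delta^{-8}K^{-20/3})$ error.
\item $d\mapsto\bigl(2b_1(\xi)/(3b_0(\xi))\bigr)^{1/3}K^{-2/3}$: the gap is $c\,\delta^3K^{-20/3}$ against $O(\delta^{-8}K^{-20/3}/\log K)$. Here the linear $a$-term is only $O(\delta^{-5}K^{-20/3}/\log K)$.
\item $a\mapsto-3\eps/(10d\log K)$, which lies inside the box.
\item $(\alpha_b,\widehat\beta)\mapsto(0,0)$.
\end{itemize}
In the last two comparisons the $(\eps,d,\xi)$-part cancels exactly. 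A gap of size $c(\delta)K^{-20/3}/\log K$ then suffices, because $\delta$ is fixed before $K\to\infty$.
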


\begin{proof}
We use the boundary comparison argument.
The set $\mathcal A_K$ is compact and $\Phi_K$ is
continuous, so its maximum is attained.
We shall show that no boundary point can be a maximizer.
Throughout the proof, $\delta$ is first chosen sufficiently
small and then $K\geq K_0(\delta)$ is taken sufficiently
large. All remainder estimates are uniform for
$A\in\mathcal A_K$. Put
\[
C_q:=\frac13\int_{\mathbb R^3}q^6\,dx,
\qquad
b_0(\xi):=\frac{\zeta(3)}{4\pi^3}|\nabla q(\xi)|^2,
\qquad
b_1(\xi):=\frac1{4\pi}|\nabla q(\xi)|^2.
\]
Since $q(\xi)=0$ on $\Gamma$, the integral defining
$C_*(\xi)$ is convergent and positive.
The profile properties and compactness of $\Gamma$ give
\[
0<c\leq C_*(\xi),\ b_0(\xi),\ b_1(\xi)\leq C,
\qquad \xi\in\Gamma.
\]
Recall that $\widehat\alpha_b=K\alpha_b$ and
$\alpha_w=-\widehat\beta+O(a)$.
The estimates in Appendix~C give
\[
S_1^+(K)+S_1^+(K,d)
=\frac{10}{3\pi}K\log K+O(\delta^{-1}K),
\]
and
\[
A_K^\gamma
=K^3
\begin{pmatrix}
\dfrac{\zeta(3)}{\pi^3}&0\\[4pt]
0&\dfrac{93\zeta(5)}{8\pi^5}
\end{pmatrix}
+O(K^2).
\]
Using these estimates, we obtain
\begin{equation}
\label{eq:main-boundary-energy-expansion}
\begin{aligned}
\Phi_K(A)
={}&C_q+2\pi\bigg[
\mu\eps^2C_*(\xi)-b_0(\xi)\eps^3K^3\\
&\quad-3b_0(\xi)\eps^3dK^3-b_1(\xi)\eps^3Kd^{-2}\\
&\quad-\frac5{3\pi}\eps K\log K
|\nabla q(\xi)|^2a^2
-\frac1\pi\eps^2Kd^{-1}|\nabla q(\xi)|^2a\\
&\quad-\frac{\eps^3K^3|\nabla q(\xi)|^2}{8}
\left(\frac{\zeta(3)}{\pi^3}\widehat\beta^2
+\frac{93\zeta(5)}{8\pi^5}\widehat\alpha_b^2\right)
\bigg]+\delta^{-8}o\!\left(\frac{K^{-20/3}}{\log K}\right).
\end{aligned}
\end{equation}

Since $\Gamma$ has no boundary, it remains to exclude the
boundary values of $\eps$, $a$, $d$, $\alpha_b$, and
$\widehat\beta$.

\medskip
\noindent
(1) Suppose that $\eps=\delta K^{-3}$ or
$\eps=\delta^{-1}K^{-3}$.
Consider
\[
\eps_*(\xi):=\frac{2\mu C_*(\xi)}{3b_0(\xi)}K^{-3},
\qquad
A^{(\eps)}:=(\eps_*,\xi,a,d,\alpha_b,\widehat\beta).
\]
By the positive uniform bounds above, we may choose
$\delta$ so small that $\eps_*(\xi)$ lies strictly
between the two endpoints for every $\xi\in\Gamma$.
By \eqref{eq:main-boundary-energy-expansion},
\[
\begin{aligned}
J_1(\delta^{-1}K^{-3})
&=C_q+2\pi K^{-6}
\bigl[\mu C_*(\xi)\delta^{-2}-b_0(\xi)\delta^{-3}\bigr]+O(\delta^{-8}K^{-20/3}),\\
J_1(\delta K^{-3})
&=C_q+2\pi K^{-6}
\bigl[\mu C_*(\xi)\delta^2-b_0(\xi)\delta^3\bigr]+O(\delta^{-8}K^{-20/3}).
\end{aligned}
\]
At the comparison point, we have
\[
J_1(\eps_*)
=C_q+2\pi K^{-6}
\frac{4\mu^3C_*(\xi)^3}{27b_0(\xi)^2}
+O(\delta^{-8}K^{-20/3}).
\]
Shrinking $\delta$ if necessary, the uniform positive
bounds on $C_*(\xi)$ and $b_0(\xi)$ ensure that
\[
\begin{aligned}
\mu C_*(\xi)\delta^{-2}-b_0(\xi)\delta^{-3}
\leq0, \quad
\mu C_*(\xi)\delta^2-b_0(\xi)\delta^3
\leq\frac{2\mu^3C_*(\xi)^3}{27b_0(\xi)^2},
\end{aligned}
\]
Consequently,
\[
\begin{aligned}
&J_1(\eps_*)-
\max\bigl\{J_1(\delta^{-1}K^{-3}),J_1(\delta K^{-3})\bigr\}\geq c_\eps K^{-6}-C\delta^{-8}K^{-20/3}>0
\end{aligned}
\]
where
$c_\eps>0$ is independent of $\delta$, $K$, and $\xi$.

\medskip
\noindent
(2) Suppose that
$a=\pm\delta^{-2}K^{-7/3}(\log K)^{-1}$.
Consider
\[
a_*:=-\frac{3\eps}{10d\log K},
\qquad
A^{(a)}:=(\eps,\xi,a_*,d,\alpha_b,\widehat\beta).
\]
The parameter bounds give
\[
|a_*|\leq\frac3{10}\delta^{-2}K^{-7/3}(\log K)^{-1},
\]
so $a_*$ lies strictly between the two endpoints.
By \eqref{eq:main-boundary-energy-expansion},
\[
\begin{aligned}
J_1\!\left(\pm\delta^{-2}K^{-7/3}(\log K)^{-1}\right)
={}&J_1(0)
-\frac{10\delta^{-4}\eps K^{-11/3}|\nabla q(\xi)|^2}
{3\log K}\\
&\mp\frac{2\delta^{-2}\eps^2d^{-1}K^{-4/3}|\nabla q(\xi)|^2}
{\log K}
+\delta^{-8}o\!\left(\frac{K^{-20/3}}{\log K}\right).
\end{aligned}
\]
At the comparison point, we have
\[
J_1(a_*)
=J_1(0)
+\frac{3\eps^3K|\nabla q(\xi)|^2}{10d^2\log K}
+\delta^{-8}o\!\left(\frac{K^{-20/3}}{\log K}\right).
\]
We obtain
\[
\begin{aligned}
&J_1(a_*)-
\max_{\pm}J_1\!\left(\pm\delta^{-2}K^{-7/3}(\log K)^{-1}\right)\\
&\qquad\geq
\frac{4\delta^{-4}\eps K^{-11/3}|\nabla q(\xi)|^2}{3\log K}
+\delta^{-8}o\!\left(\frac{K^{-20/3}}{\log K}\right)\\
&\qquad\geq c_a\delta^{-3}\frac{K^{-20/3}}{\log K}
+\delta^{-8}o\!\left(\frac{K^{-20/3}}{\log K}\right)>0
\end{aligned}
\]
where
$c_a>0$ is independent of $\delta$, $K$, and $\xi$.

\medskip
\noindent
(3) Suppose that $d=\delta K^{-2/3}$ or
$d=\delta^{-1}K^{-2/3}$.
Consider
\[
d_*:=\left(\frac{2b_1(\xi)}{3b_0(\xi)}\right)^{1/3}K^{-2/3},
\qquad
A^{(d)}:=(\eps,\xi,a,d_*,\alpha_b,\widehat\beta).
\]
By the positive uniform bounds above, we may choose
$\delta$ so small that $d_*$ lies strictly
between the two endpoints for every $\xi\in\Gamma$.
The parameter bounds give
\[
2\eps^2Kd^{-1}|\nabla q(\xi)|^2|a|
\leq C\delta^{-5}\frac{K^{-20/3}}{\log K}.
\]
Using the admissible value $d=K^{-2/3}$ as a reference,
\eqref{eq:main-boundary-energy-expansion} therefore yields
\[
\begin{aligned}
J_1(\delta^{-1}K^{-2/3})
={}&J_1(K^{-2/3})
-2\pi\eps^3K^{7/3}
\bigl[3b_0(\xi)(\delta^{-1}-1)+b_1(\xi)(\delta^2-1)\bigr]+O\!\left(\delta^{-8}\frac{K^{-20/3}}{\log K}\right),\\
J_1(\delta K^{-2/3})
={}&J_1(K^{-2/3})
-2\pi\eps^3K^{7/3}
\bigl[3b_0(\xi)(\delta-1)+b_1(\xi)(\delta^{-2}-1)\bigr]+O\!\left(\delta^{-8}\frac{K^{-20/3}}{\log K}\right).
\end{aligned}
\]
At the comparison point, 
\[
\begin{aligned}
J_1(d_*)
={}&J_1(K^{-2/3})
-2\pi\eps^3K^{7/3}
\left[\frac92b_0(\xi)d_*K^{2/3}-3b_0(\xi)-b_1(\xi)\right]+O\!\left(\delta^{-8}\frac{K^{-20/3}}{\log K}\right).
\end{aligned}
\]
Shrinking $\delta$ if necessary, the uniform positive
bounds on $b_0(\xi)$ and $b_1(\xi)$ ensure that
\[
\begin{aligned}
&\min\bigl\{
3b_0(\xi)\delta^{-1}+b_1(\xi)\delta^2,
3b_0(\xi)\delta+b_1(\xi)\delta^{-2}
\bigr\}-\frac92b_0(\xi)d_*K^{2/3}\geq c_d>0,
\end{aligned}
\]
where $c_d$ is independent of $\delta$, $K$, and $\xi$.
Consequently,
\[
\begin{aligned}
&J_1(d_*)-
\max\bigl\{J_1(\delta^{-1}K^{-2/3}),J_1(\delta K^{-2/3})\bigr\}\\
&\qquad\geq 2\pi c_d\eps^3K^{7/3}
-C\delta^{-8}\frac{K^{-20/3}}{\log K}\\
&\qquad\geq 2\pi c_d\delta^3K^{-20/3}
-C\delta^{-8}\frac{K^{-20/3}}{\log K}>0
\end{aligned}
\]

\medskip
\noindent
(4) Suppose that
\[
|\widehat\beta|=\delta^{-1/2}K^{-1/3}(\log K)^{-1/2}
\quad\text{or}\quad
|\widehat\alpha_b|=\delta^{-1/2}K^{-1/3}(\log K)^{-1/2}.
\]
Take
\[
A^{(\mathrm{ang})}:=(\eps,\xi,a,d,0,0).
\]
Both angle coordinates of this point lie strictly inside
their allowed intervals.
By \eqref{eq:main-boundary-energy-expansion},
\[
\begin{aligned}
\Phi_K(A^{(\mathrm{ang})})-\Phi_K(A)
={}&\frac{\pi}{4}\eps^3K^3|\nabla q(\xi)|^2
\left(\frac{\zeta(3)}{\pi^3}\widehat\beta^2
+\frac{93\zeta(5)}{8\pi^5}\widehat\alpha_b^2\right)+\delta^{-8}o\!\left(\frac{K^{-20/3}}{\log K}\right)\\
\geq{}&c\delta^2\frac{K^{-20/3}}{\log K}
+\delta^{-8}o\!\left(\frac{K^{-20/3}}{\log K}\right)>0
\end{aligned}
\]

Through these discussions, the maximum
cannot be attained on $\partial\mathcal A_K$ and must
lie in $\operatorname{int}\mathcal A_K$.
\end{proof}

\begin{proof}[Proof of Theorem~1.1]
Fix $\mu>0$ and choose $\delta$ as in
Theorem~\ref{thm:main-interior-maximum}.
For every sufficiently large even $K$, this theorem
and Lemma~\ref{lem:main-critical-point} give
$A_K\in\operatorname{int}\mathcal A_K$ such that
$\lambda_j(A_K)=0$ for $j=0,\ldots,5$.
Consequently,
\[
v_K:=PQ_{A_K}+\phi_{A_K}
\]
solves the Neumann problem in $\Sigma_K$ and is even
in $x_3$. By the even reflection result in Section~2,
\[
u_K:=E_Kv_K\in\mathcal H_K
\]
is a bounded weak solution of the Neumann problem in
$B_1$. Standard elliptic regularity gives
$u_K\in C^2(\overline{B_1})$, and the required symmetries
follow from $u_K\in\mathcal H_K$.

We first verify that $u_K$ changes sign.
Write $\eps_K$, $b_K$, $\widehat\xi_K$, and $\beta_K$
for the quantities associated with $A_K$.
The projection estimate, Theorem~\ref{thm:fdr-outer},
and \eqref{eq:fdr-nonlinear-estimate} imply
\begin{equation}
\label{eq:main-sector-profile-approximation}
\eps_K^{1/2}\|v_K-Q_{A_K}\|_{L^\infty(\Sigma_K)}
\leq C\delta^{-5/2}K^{-4}
+C\delta^{-4}K^{-11/3}\longrightarrow0.
\end{equation}
Choose fixed points $p^+,p^-\in\mathbb R^3$ such that
$q(p^+)>0>q(p^-)$. The quantities $|p^\pm-\widehat\xi_K|$ are bounded
above and away from zero for large $K$.
Define
\[
x_K^\pm
:=b_K+\eps_K R_{\beta_K}^{T}
\frac{p^\pm-\widehat\xi_K}{|p^\pm-\widehat\xi_K|^2}.
\]
These points belong to $\Sigma_K$ for large $K$, since
$|x_K^\pm-b_K|\leq C\eps_K$ and
\eqref{eq:appA-profile-smallness} gives
$\operatorname{dist}(b_K,\partial\Sigma_K)>K^{-1}$.
The formula for $Q_{A_K}$ yields
\[
\eps_K^{1/2}Q_{A_K}(x_K^\pm)
=|p^\pm-\widehat\xi_K|\,q(p^\pm).
\]
Therefore, by \eqref{eq:main-sector-profile-approximation},
there exists $c>0$, independent of $K$, such that
\begin{equation}
\label{eq:main-opposite-sign-values}
\eps_K^{1/2}u_K(x_K^+)\geq c,
\qquad
\eps_K^{1/2}u_K(x_K^-)\leq-c
\end{equation}
for all sufficiently large $K$.
Thus $u_K$ is sign-changing.

To prove that $u_K$ is nonradial, set
\[
y_K:=\frac{|x_K^+|}{\sqrt2}(1,0,1).
\]
Then $y_K\in\Sigma_K$ and $|y_K|=|x_K^+|$.
Moreover, $|x_K^+|\to1$, while the third coordinate
of $b_K$ is zero, so
\[
|y_K-b_K|\geq\frac{|x_K^+|}{\sqrt2}\geq c>0.
\]
Since $q$ is bounded, the formula for $Q_{A_K}$ gives
\[
\eps_K^{1/2}|Q_{A_K}(y_K)|
\leq\frac{\eps_K\|q\|_\infty}{|y_K-b_K|}
\leq C\eps_K\longrightarrow0.
\]
Together with \eqref{eq:main-sector-profile-approximation},
this implies $\eps_K^{1/2}|u_K(y_K)|\to0$.
In view of \eqref{eq:main-opposite-sign-values},
$u_K(y_K)\neq u_K(x_K^+)$ for large $K$.
Hence it rules out the radial symmetry property.

Finally, we distinguish the solutions by their energies.
Since $B_1$ consists of $K$ congruent reflected sectors,
\eqref{eq:main-boundary-energy-expansion} and
\eqref{eq:outline-parameters} give
\[
\begin{aligned}
J_{B_1}(u_K)
&:=\frac12\int_{B_1}
\bigl(|\nabla u_K|^2+\mu u_K^2\bigr)\,dx
-\frac16\int_{B_1}|u_K|^6\,dx\\
&=KJ_{\Sigma_K}(v_K)=KC_q+O(\delta^{-8}K^{-5}).
\end{aligned}
\]
In particular,
\[
J_{B_1}(u_{K+2})-J_{B_1}(u_K)
=2C_q+O(\delta^{-8}K^{-5})>0
\]
for all sufficiently large even $K$.
This proves the theorem.
\end{proof}

\appendix
\section{Estimate of Approximate Solution}
\label{app:interactions}

In this appendix, we derive an expansion of the approximate solution
$P Q_A$ introduced in Section~2 and estimate its difference from $Q_A$.
In particular, we shall prove that
\[
\|P Q_A-Q_A\|_{L^\infty(\Sigma_K)}
\leq C\delta^{-1/2}\varepsilon^{3/2}K^2.
\]
The expansion will also be used in the energy estimates in Appendix~B.
Throughout this appendix, the parameters satisfy
\eqref{eq:outline-parameters}, and $K\geq K_0(\delta)$ is sufficiently
large and even.

Put
\begin{equation*}
\widehat\xi:=\xi+a\nu_q(\xi).
\end{equation*}
Since $q(\xi)=0$ on $\Gamma$, under
\eqref{eq:outline-parameters}, for sufficiently large $K$ we have
\begin{equation}
|q(\widehat\xi)|\leq C|a|,
\qquad
\operatorname{dist}(b,\partial\Sigma_K)>\frac1K.
\label{eq:appA-profile-smallness}
\end{equation}
We shall use the notation
\begin{equation*}
w:=R_\beta^T\nabla q(\widehat\xi),
\qquad
W:=R_\beta^T\nabla^2q(\widehat\xi)R_\beta.
\end{equation*}
Then
\begin{equation*}
|w|=|\nabla q(\widehat\xi)|\leq C,
\qquad
|W|=|\nabla^2q(\widehat\xi)|\leq C,
\end{equation*}
where $C$ depends only on the fixed profile $q$.
Since $q$ is even in its third coordinate, $w_3=0$, and we may write
$w=(|w|e^{i\alpha_w},0)^T$. By the definition of $\theta_*(\xi)$
and $\beta=\theta_*(\xi)+\widehat\beta$, we have
\begin{equation*}
\alpha_w=-\widehat\beta+O(a).
\end{equation*}
In particular, under \eqref{eq:outline-parameters},
\begin{equation*}
|\alpha_w|+|\widehat\alpha_b|
\leq C\delta^{-1/2}K^{-1/3}(\log K)^{-1/2},
\qquad \widehat\alpha_b:=K\alpha_b.
\end{equation*}

For the function $Q_A^e$ introduced in Section~2, we write
\begin{equation*}
Q_A^e=Q_A+T_A,
\end{equation*}
where $T_A$ is the sum of the other copies of $Q_A$.
To estimate their contribution in $\Sigma_K$, we use the kernel
$\gamma(x,p)$ defined in \eqref{eq:appA-gamma-definition}
in Appendix~C.
The following lemma gives a lower bound for each denominator.

\begin{lemma}
\label{lem:appA-image-separation}
Under \eqref{eq:outline-parameters}, for every $x\in\Sigma_K$,
\begin{equation*}
\begin{aligned}
|R_{4j\theta_0}x-b|
&\geq C\bigl(|x-b|+\sin(2j\theta_0)\bigr),
&&1\leq j\leq\frac K2-1,\\
|R_{4j\theta_0}S_Kx-b|
&\geq C\bigl(|x-b|+\sin((2j+1)\theta_0)\bigr),
&&0\leq j\leq\frac K2-1.
\end{aligned}
\end{equation*}
\end{lemma}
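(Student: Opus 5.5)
Write $g$ for either $R_{4j\theta_0}$ or $R_{4j\theta_0}S_K$. Both are isometries fixing the $x_3$-axis, so $|gx-b|=|x-g^{-1}b|$. The plan is therefore to bound $|x-g^{-1}b|$ from below by a triangle-inequality argument. We split it into two regimes according to whether $|x-b|$ dominates the separation $|b-g^{-1}b|$.

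\emph{Separation of the image centers.} Since $b=(\rho e^{i\alpha_b},0)$, we have $R_{4j\theta_0}^{-1}b=(\rho e^{i(\alpha_b-4j\theta_0)},0)$, hence
\[
|b-R_{4j\theta_0}^{-1}b|=2\rho\,|\sin(2j\theta_0)|.
\]
The map $S_K$ is an involution, and
\[
(R_{4j\theta_0}S_K)^{-1}b=S_KR_{-4j\theta_0}b=(\rho e^{i(2\theta_0+4j\theta_0-\alpha_b)},0).
\]
Its distance to $b$ is
\[
2\rho\,\bigl|\sin\bigl((2j+1)\theta_0-\alpha_b\bigr)\bigr|.
\]
By \eqref{eq:outline-parameters}, $\rho\to1$ and $|\alpha_b|\le \delta^{-1/2}K^{-4/3}\ll \theta_0=\pi/K$. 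For $0\le j\le K/2-1$, the angle $(2j+1)\theta_0$ lies in $[\theta_0,\pi-\theta_0]$. Together with the $1$-Lipschitz bound on $\sin$, this gives
\[
|\sin((2j+1)\theta_0-\alpha_b)|\ge \sin((2j+1)\theta_0)-|\alpha_b|\ge \tfrac12\sin((2j+1)\theta_0),
\]
because $\sin((2j+1)\theta_0)\ge \sin\theta_0\ge 2/K$. Thus $|b-g^{-1}b|\ge c\,s_j$ in both cases, where $s_j$ denotes the relevant sine: $\sin(2j\theta_0)$ for the rotations with $1\le j\le K/2-1$, and $\sin((2j+1)\theta_0)$ for the reflections. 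Note that $\sin(2j\theta_0)>0$ for $1\le j\le K/2-1$.

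\emph{Key geometric input.} We need $|x-g^{-1}b|\ge c\,|b-g^{-1}b|$ for every $x\in\Sigma_K$. The point $g^{-1}b$ lies in the reflected sector $g^{-1}\Sigma_K$, which is disjoint from $\Sigma_K$ for $g\ne\mathrm{id}$. So the segment from $x$ to $g^{-1}b$ crosses a flat face of $\Sigma_K$, namely a half-plane $\{\arg z=\pm\theta_0\}$. Hence
\[
|x-g^{-1}b|\ge \operatorname{dist}(g^{-1}b,\{\arg z=\pm\theta_0\}).
\]
The right-hand side can be computed in polar form: it equals $\rho\sin(\varphi)$, where $\varphi$ is the angular distance of $g^{-1}b$ to the nearer face of $\Sigma_K$ (or $\rho$ itself if $\varphi\ge\pi/2$). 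Writing $\psi$ for the angle of $g^{-1}b$ measured from the ray $\arg z=\alpha_b$, we have $\varphi\ge |\psi|-\theta_0-|\alpha_b|$. Moreover $|\psi|\ge 2\theta_0$, with $|\psi|-\theta_0\ge \tfrac13|\psi|$ once $|\psi|\ge 2\theta_0$ and $|\alpha_b|\le \theta_0/10$. This yields
\[
\operatorname{dist}(g^{-1}b,\text{faces})\ge c\rho\sin(|\psi|/3)\ge c\,s_j,
\]
using $\sin(t/3)\ge c\sin t$ on $[0,\pi]$. This is the step needing most care, especially the reflected images with $j=0$, where $|\psi|\approx 2\theta_0$ is smallest. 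There one could check directly that $S_K^{-1}b$ has angle $2\theta_0-\alpha_b$, so its distance to the face $\arg z=\theta_0$ is $\rho\sin(\theta_0-\alpha_b)\ge c\sin\theta_0$.

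\emph{Conclusion by cases.} Suppose first that $|x-b|\le 2|b-g^{-1}b|$. Then the key inequality gives
\[
|x-g^{-1}b|\ge c\,s_j\ge c'(|x-b|+s_j),
\]
since $|b-g^{-1}b|\le 2s_j$. Suppose instead that $|x-b|\ge 2|b-g^{-1}b|$. Then the triangle inequality gives
\[
|x-g^{-1}b|\ge |x-b|-|b-g^{-1}b|\ge \tfrac12|x-b|.
\]
In this regime $|x-b|\ge 2c\,s_j$, so $|x-b|\ge c''(|x-b|+s_j)$, and the lower bound again takes the required form. Combining the two cases gives the claimed estimate with a constant $C$ independent of $\delta$, $K$ and $j$.
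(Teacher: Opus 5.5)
Your proof is correct, and it takes a different route from the one the paper relies on.

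The paper gives no self-contained argument. It invokes the geometric argument of \cite[Lemma~3.1]{SunWeiYangFull}, using $\rho\ge 1/2$ and $|\alpha_b|\le\theta_0/2$. The cylindrical-coordinate identities it records later suggest a direct trigonometric comparison: see the formula for $s^2$ in the proof of Lemma~\ref{lem:appA-gamma-estimates}, and $B(R_{4j\theta_0}x,b)^2=(1-r\rho)^2+\rho^2x_3^2+4r\rho\sin^2\bigl(2j\theta_0+\tfrac{\varphi-\alpha_b}{2}\bigr)$ in Lemma~\ref{lem:appC-HN-estimates}. In that approach one expands the squared distances in $x=(re^{i\varphi},x_3)$ and bounds the sine from below using $|\varphi|<\theta_0$.

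You use only three facts: $g$ is an isometry, the sectors $g\Sigma_K$ tile (so the segment from $x$ to $g^{-1}b$ must meet a closed flat face), and the triangle inequality. This avoids the trigonometric case analysis and treats points near the $x_3$-axis uniformly. It also carries over verbatim to exterior centres such as $b^*=b/\rho^2$, because the faces are half-planes of the full wedge. The coordinate computation, in turn, yields exact squared distances, which are what the later diagonal expansions actually need.

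One step needs a small repair. The elementary inequality you quote, $\sin(t/3)\ge c\sin t$ on $[0,\pi]$, does not give $\operatorname{dist}\ge c\,s_j$. Your own chord formula gives $|b-g^{-1}b|=2\rho\sin(|\psi|/2)$, so $s_j$ is comparable to $\sin(|\psi|/2)$, not to $\sin|\psi|$. The latter vanishes at $|\psi|=\pi$, for instance for the rotation with $4j\theta_0=\pi$, where $s_j=1$.

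Use instead $\sin(t/3)\ge \tfrac{2t}{3\pi}\ge\tfrac{4}{3\pi}\sin(t/2)$ for $t\in[0,\pi]$. Together with $\varphi\ge|\psi|/3$ and $|\psi|/3\le\pi/3$, this gives $\operatorname{dist}(g^{-1}b,\text{faces})\ge\rho\sin(|\psi|/3)\ge c\,s_j$.

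Two harmless inaccuracies remain for the reflected images, both absorbed by $|\alpha_b|\le\theta_0/10$:
\begin{enumerate}
\item You have $|\psi|\ge 2\theta_0-2|\alpha_b|$ rather than $|\psi|\ge 2\theta_0$.
\item You have $|b-g^{-1}b|\le 3s_j$ rather than $|b-g^{-1}b|\le 2s_j$.
\end{enumerate}
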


\begin{proof}
The proof follows directly from the geometric arguments in
\cite[Lemma~3.1]{SunWeiYangFull},
using $\rho\geq1/2$ and $|\alpha_b|\leq\theta_0/2$ from
\eqref{eq:outline-parameters}.
\end{proof}

\begin{lemma}
\label{lem:appA-bubble-expansion}
Under \eqref{eq:outline-parameters}, for every $x\in\mathbb R^3$
satisfying $|x-b|\geq\varepsilon$, one has
\begin{equation}
\begin{aligned}
Q_A(x)={}&\frac{\varepsilon^{1/2}q(\widehat\xi)}{|x-b|}
+\varepsilon^{3/2}w_i
\left.\partial_{p_i}\frac1{|x-p|}\right|_{p=b}+\frac{\varepsilon^{5/2}}6W_{ij}
\left.\partial_{p_ip_j}^2\frac1{|x-p|}\right|_{p=b}
+R_A(x).
\end{aligned}
\label{eq:appA-bubble-expansion}
\end{equation}
Moreover, for $m=0,1,2,3$,
\begin{equation}
|D_x^mR_A(x)|
\leq C\Bigl(
\varepsilon^{5/2}|q(\widehat\xi)|^5|x-b|^{-3-m}
+\varepsilon^{7/2}|x-b|^{-4-m}
\Bigr).
\label{eq:appA-bubble-remainder}
\end{equation}
\end{lemma}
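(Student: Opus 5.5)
The plan is to Taylor expand the profile $q$ about $\widehat\xi$ in the inverted variable. Write $X:=x-b$ and $\zeta:=\varepsilon R_\beta X/|X|^2$, so that $|\zeta|=\varepsilon/|X|\leq1$ on the region $|X|\geq\varepsilon$. Then
\[
Q_A(x)=\frac{\varepsilon^{1/2}}{|X|}\,q(\widehat\xi+\zeta).
\]
Use Taylor's formula with integral remainder to third order:
\[
q(\widehat\xi+\zeta)=q(\widehat\xi)+\nabla q(\widehat\xi)\cdot\zeta+\tfrac12\zeta^T\nabla^2q(\widehat\xi)\zeta+r_3(\zeta),
\qquad |\nabla^m_\zeta r_3(\zeta)|\leq C|\zeta|^{3-m}.
\]
The bound on $r_3$ holds uniformly, since $\nabla^3 q$ is bounded on the bounded set $\{\widehat\xi+\zeta:|\zeta|\leq1\}$ by Lemma~\ref{lem:nodal-profile}(i).

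\textbf{Identifying the terms.} The zeroth-order term is exactly $\varepsilon^{1/2}q(\widehat\xi)/|x-b|$. For the first-order term, $\nabla q(\widehat\xi)\cdot R_\beta X=w\cdot X$ and $\partial_{p_i}|x-p|^{-1}|_{p=b}=X_i/|X|^3$, so
\[
\varepsilon^{3/2}\frac{w\cdot X}{|X|^3}=\varepsilon^{3/2}w_i\,\partial_{p_i}\frac1{|x-p|}\Big|_{p=b}.
\]
For the second-order term, $\tfrac12\varepsilon^{5/2}X^TWX/|X|^5$ appears. On the other hand,
\[
\partial^2_{p_ip_j}\frac1{|x-p|}\Big|_{p=b}=\frac{3X_iX_j-\delta_{ij}|X|^2}{|X|^5},
\qquad\text{so}\qquad
\frac{\varepsilon^{5/2}}6W_{ij}\partial^2_{p_ip_j}\frac1{|x-p|}\Big|_{p=b}=\frac{\varepsilon^{5/2}}{2}\frac{X^TWX}{|X|^5}-\frac{\varepsilon^{5/2}}6\frac{\operatorname{tr}W}{|X|^3}.
\]
The trace discrepancy is handled with the equation. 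Since $\Delta q=-|q|^4q$, we have $\operatorname{tr}W=\Delta q(\widehat\xi)=-|q(\widehat\xi)|^4q(\widehat\xi)$. The leftover term $-\tfrac{\varepsilon^{5/2}}6|q(\widehat\xi)|^4q(\widehat\xi)|X|^{-3}$ is therefore absorbed into $R_A$, and it is precisely the source of the first term in \eqref{eq:appA-bubble-remainder}. The remaining part of $R_A$ is $\varepsilon^{1/2}|X|^{-1}r_3(\zeta)$, which is bounded by $C\varepsilon^{7/2}|X|^{-4}$.

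\textbf{Derivative bounds.} For $m\leq3$, the trace term is an explicit homogeneous function of degree $-3$ in $X$, so its $m$-th derivative is bounded by $C\varepsilon^{5/2}|q(\widehat\xi)|^5|X|^{-3-m}$. For the Taylor remainder, differentiate the composition $|X|^{-1}r_3(\varepsilon R_\beta X/|X|^2)$. Each $x$-derivative falls either on $|X|^{-1}$, costing a factor $|X|^{-1}$, or on the inversion map, whose $k$-th derivative is of size $\varepsilon|X|^{-1-k}$. The chain rule (Faà di Bruno) then gives terms of size
\[
|X|^{-1-m_0}\,|\nabla^k r_3(\zeta)|\prod_{l=1}^{k}\varepsilon|X|^{-1-m_l},
\qquad m_0+m_1+\cdots+m_k=m.
\]
With $|\nabla^kr_3|\leq C|\zeta|^{3-k}=C(\varepsilon/|X|)^{3-k}$, every such term is $O(\varepsilon^3|X|^{-4-m})$; together with the prefactor $\varepsilon^{1/2}$ this gives $C\varepsilon^{7/2}|X|^{-4-m}$. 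For $k=3$ this needs $\nabla^3 r_3=\nabla^3q(\widehat\xi+\zeta)-\nabla^3q(\widehat\xi)$ to be bounded, which holds.

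\textbf{Main obstacle.} The delicate point is the bookkeeping showing that the remainder bounds hold uniformly all the way down to $|X|=\varepsilon$, which is where $|\zeta|=1$. This is handled by the global boundedness of derivatives of $q$ from Lemma~\ref{lem:nodal-profile}(i), together with the fact that $\widehat\xi$ ranges in a compact set.
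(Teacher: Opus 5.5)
Your proposal is correct and follows essentially the same route as the paper. The paper also Taylor expands $q$ at $\widehat\xi$ in the inverted variable, rewrites the first- and second-order terms via derivatives of the Newtonian kernel, absorbs the trace discrepancy using $\operatorname{tr}W=\Delta q(\widehat\xi)=-q(\widehat\xi)^5$, and bounds the remainder derivatives by the product and chain rules. One harmless slip: because the Taylor polynomial is quadratic, $\nabla^3 r_3=\nabla^3 q(\widehat\xi+\zeta)$ rather than a difference, but this is bounded either way.
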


\begin{proof}
Set
\[
s=|x-b|,
\qquad
h=\frac{\varepsilon R_\beta(x-b)}{s^2}.
\]
Then $Q_A(x)=\varepsilon^{1/2}s^{-1}q(\widehat\xi+h)$.
Since $s\geq\varepsilon$, we have $|h|=\varepsilon/s\leq1$.
Taylor expansion of $q$ at $\widehat\xi$ gives
\begin{equation*}
q(\widehat\xi+h)
=q(\widehat\xi)+\nabla q(\widehat\xi)\cdot h
+\frac12h^T\nabla^2q(\widehat\xi)h+R_3(h),
\end{equation*}
where
\begin{equation}
|D_h^\ell R_3(h)|\leq C|h|^{3-\ell},
\qquad \ell=0,1,2,3.
\label{eq:appA-taylor-remainder-bounds}
\end{equation}
Therefore, by the definitions of $w$ and $W$,
\begin{equation}
Q_A(x)
=\frac{\varepsilon^{1/2}}s q(\widehat\xi)
+\varepsilon^{3/2}w_i\frac{(x-b)_i}{s^3}
+\frac{\varepsilon^{5/2}}2W_{ij}
  \frac{(x-b)_i(x-b)_j}{s^5}
+\frac{\varepsilon^{1/2}}s R_3(h).
\label{eq:appA-bubble-taylor}
\end{equation}
We now rewrite the first- and second-order terms by derivatives of
the Newtonian kernel. Direct differentiation gives
\begin{equation*}
\partial_{p_i}\frac1{|x-p|}
=\frac{(x-p)_i}{|x-p|^3},
\end{equation*}
and
\[
\frac{(x-b)_i(x-b)_j}{s^5}
=\frac13\left[
\left.\partial_{p_ip_j}^2\frac1{|x-p|}\right|_{p=b}
+\frac{\delta_{ij}}{s^3}
\right].
\]
Substituting these identities into \eqref{eq:appA-bubble-taylor}
gives \eqref{eq:appA-bubble-expansion} with
\[
R_A=R_{A,1}+R_{A,2},
\qquad
R_{A,1}(x):=\frac{\varepsilon^{1/2}}sR_3(h),
\qquad
R_{A,2}(x):=\frac{\varepsilon^{5/2}}6
\frac{W_{ij}\delta_{ij}}{s^3}.
\]
Since $R_\beta$ is orthogonal and $\Delta q+q^5=0$,
\[
W_{ij}\delta_{ij}
=\operatorname{tr}W
=\operatorname{tr}\bigl(R_\beta^T\nabla^2q(\widehat\xi)R_\beta\bigr)
=\Delta q(\widehat\xi)
=-q(\widehat\xi)^5.
\]
Consequently,
\[
|R_{A,2}(x)|
\leq C\varepsilon^{5/2}|q(\widehat\xi)|^5s^{-3}.
\]
Thus \eqref{eq:appA-bubble-remainder} holds for $m=0$.

It remains to estimate the derivatives of the remainder.
First observe that, for $\ell=0,1,2,3$,
\begin{equation}
|D_x^\ell h|\leq C_\ell\varepsilon s^{-1-\ell},
\qquad
\left|D_x^\ell\frac1s\right|
\leq C_\ell s^{-1-\ell}.
\label{eq:appA-kelvin-factor-derivatives}
\end{equation}
For one derivative, the product and chain rules give
\[
|D_xR_{A,1}|
\leq C\varepsilon^{1/2}s^{-2}|h|^3
+C\varepsilon^{1/2}s^{-1}|h|^2|D_xh|
\leq C\varepsilon^{7/2}s^{-5}.
\]
For $m=2,3$, applying the same rules together with
\eqref{eq:appA-taylor-remainder-bounds} and
\eqref{eq:appA-kelvin-factor-derivatives} yields
\[
|D_x^mR_{A,1}(x)|
\leq C\varepsilon^{7/2}s^{-4-m}.
\]
Similarly,
\[
|D_x^mR_{A,2}(x)|
\leq C\varepsilon^{5/2}|q(\widehat\xi)|^5s^{-3-m},
\qquad m=0,1,2,3.
\]
Combining the two estimates proves
\eqref{eq:appA-bubble-remainder}.
\end{proof}

\begin{proposition}
\label{lem:appA-interaction-expansion}
Under \eqref{eq:outline-parameters}, for every $x\in\Sigma_K$,
one has
\begin{equation}
\begin{aligned}
T_A(x)={}&\varepsilon^{1/2}q(\widehat\xi)\gamma(x,b)
+\varepsilon^{3/2}
\left.w\cdot\nabla_p\gamma(x,p)\right|_{p=b}+\frac{\varepsilon^{5/2}}6W_{ij}
\left.\partial_{p_ip_j}^2\gamma(x,p)\right|_{p=b}
+E_T(x),
\end{aligned}
\label{eq:appA-interaction-expansion}
\end{equation}
where, for $m=0,1,2,3$,
\begin{equation}
\|D_x^mE_T\|_{L^\infty(\Sigma_K)}
\leq C\varepsilon^{7/2}K^{4+m}.
\label{eq:appA-interaction-remainder}
\end{equation}
In particular,
\begin{equation}
\|T_A\|_{L^\infty(\Sigma_K)}
\leq C\varepsilon^{3/2}K^2.
\label{eq:appA-interaction-sup-bound}
\end{equation}
\end{proposition}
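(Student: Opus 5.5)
We apply Lemma~\ref{lem:appA-bubble-expansion} to each reflected copy and sum. The kernel $\gamma(x,p)$ from Appendix~C is, by construction, the sum over the nontrivial elements of the reflection group of the Newtonian kernels centred at the images,
\[
\gamma(x,p)=\sum_{j=1}^{K/2-1}\frac1{|R_{4j\theta_0}x-p|}
+\sum_{j=0}^{K/2-1}\frac1{|R_{4j\theta_0}S_Kx-p|}.
\]
Each copy of $Q_A$ in $T_A$ has the form $Q_A(gx)$ with $g\ne\mathrm{id}$. Write $x'=gx$. By Lemma~\ref{lem:appA-image-separation},
\[
|x'-b|\geq C\bigl(|x-b|+\sin(k\theta_0)\bigr)\geq cK^{-1}\gg\eps ,
\]
so the hypothesis $|x'-b|\geq\eps$ of Lemma~\ref{lem:appA-bubble-expansion} holds. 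Substituting $x'$ into \eqref{eq:appA-bubble-expansion} and summing over $g$, the three explicit terms reproduce exactly the three terms of \eqref{eq:appA-interaction-expansion}. Here we use that $\partial_{p}$ commutes with the sum and that $\gamma$ and its $p$-derivatives are evaluated at $p=b$. The remainder is then
\[
E_T(x)=\sum_{g\neq\mathrm{id}}R_A(gx).
\]

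For the remainder bound, note that $g$ is an isometry, so $|D_x^m[R_A(gx)]|=|(D^mR_A)(gx)|$. Then \eqref{eq:appA-bubble-remainder} gives
\[
|D_x^mE_T(x)|\leq C\sum_{k=1}^{K-1}\Bigl(\eps^{5/2}|q(\widehat\xi)|^5\sin(k\theta_0)^{-3-m}+\eps^{7/2}\sin(k\theta_0)^{-4-m}\Bigr).
\]
We use the elementary sum bound $\sum_{k=1}^{K-1}\sin(k\pi/K)^{-p}\leq C_pK^{p}$ for $p>1$, which follows by symmetry about $k=K/2$ and $\sin t\geq 2t/\pi$. The second part is then at most $C\eps^{7/2}K^{4+m}$. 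For the first part we use $|q(\widehat\xi)|\leq C|a|$ from \eqref{eq:appA-profile-smallness} and $|a|\leq \delta^{-2}K^{-7/3}$, so it is at most $C\eps^{5/2}\delta^{-10}K^{-35/3}K^{3+m}$. Since $\eps\geq\delta K^{-3}$, this is dominated by $\eps^{7/2}K^{4+m}$ once $K\geq K_0(\delta)$, because $\eps^{-1}K^{-35/3-1}\leq\delta^{-1}K^{-26/3}\to 0$. This gives \eqref{eq:appA-interaction-remainder}.

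For \eqref{eq:appA-interaction-sup-bound}, we bound the three explicit terms the same way, using
\[
|\gamma|\leq C\sum_k\sin(k\theta_0)^{-1}\leq CK\log K,\qquad
|\nabla_p\gamma|\leq CK^2,\qquad
|\nabla_p^2\gamma|\leq CK^3 .
\]
This yields the bound
\[
C\bigl(\eps^{1/2}|a|K\log K+\eps^{3/2}K^2+\eps^{5/2}K^3+\eps^{7/2}K^4\bigr).
\]
Since $\eps K\to0$ and $|a|K\log K\leq \delta^{-2}K^{-4/3}\leq C\eps K^2$ for large $K$, every term is $O(\eps^{3/2}K^2)$.

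The main obstacle is verifying that the lower bound in Lemma~\ref{lem:appA-image-separation} is uniform, so that the near images ($k=1$ or $k=K-1$, at distance about $d+\theta_0\sim K^{-1}$ because $d\sim K^{-2/3}\gg K^{-1}$) still sit at distance $\gtrsim K^{-1}$. We also need the harmonic sum for $|\gamma|$, which carries the $\log K$ factor; there the smallness of $q(\widehat\xi)$ is essential.
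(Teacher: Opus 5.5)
Your proposal is correct and follows the paper's proof essentially line by line. You apply Lemma~\ref{lem:appA-bubble-expansion} at each image $gx$, which is legitimate since Lemma~\ref{lem:appA-image-separation} gives $|gx-b|\geq cK^{-1}>\eps$, and sum to obtain $\gamma$ and its $p$-derivatives. You then bound $E_T$ by the cosecant sums $\sum_{\ell}\sin^{-s}(\ell\theta_0)\leq C_sK^s$, absorbing the $|q(\widehat\xi)|^5$ term via $|q(\widehat\xi)|\leq C|a|$, and finally bound $T_A$ term by term.

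There are two harmless slips. The absorption ratio is $\delta^{-11}K^{-29/3}$, not $\delta^{-1}K^{-26/3}$. In your closing remark, the nearest lateral images lie at distance comparable to $\theta_0$, independently of $d$; the scale $d$ enters only the boundary images in $H_N^e$, not $\gamma$.
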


\begin{proof}
By Lemma~\ref{lem:appA-image-separation}, the arguments of all
the copies entering $T_A$ have distance at least $cK^{-1}$ from
$b$, which is larger than $\varepsilon$ for sufficiently large $K$.
Applying Lemma~\ref{lem:appA-bubble-expansion} to these copies
and summing gives \eqref{eq:appA-interaction-expansion}, with
\[
E_T(x)
=\sum_{j=1}^{K/2-1}R_A(R_{4j\theta_0}x)
+\sum_{j=0}^{K/2-1}R_A(R_{4j\theta_0}S_Kx).
\]

Since the transformations $R_{4j\theta_0}$ and
$R_{4j\theta_0}S_K$ are orthogonal, the remainder estimate in
Lemma~\ref{lem:appA-bubble-expansion}, together with
Lemma~\ref{lem:appA-image-separation} and
\[
\sum_{\ell=1}^{K-1}\frac1{\sin^s(\ell\theta_0)}
\leq C_sK^s,
\qquad s>1,
\]
yields, for $m=0,1,2,3$,
\[
\begin{aligned}
\|D_x^mE_T\|_{L^\infty(\Sigma_K)}
&\leq C\Bigl(
\varepsilon^{5/2}|q(\widehat\xi)|^5K^{3+m}
+\varepsilon^{7/2}K^{4+m}
\Bigr)\leq C\varepsilon^{7/2}K^{4+m}.
\end{aligned}
\]
This proves \eqref{eq:appA-interaction-remainder}.

Finally, using $|w|+|W|\leq C$ and the estimates for $\gamma$
and its derivatives in Lemma~\ref{lem:appA-gamma-estimates},
we obtain
\[
\begin{aligned}
\|T_A\|_{L^\infty(\Sigma_K)}
&\leq C\Bigl(
\varepsilon^{1/2}|q(\widehat\xi)|K\log K
+\varepsilon^{3/2}K^2
+\varepsilon^{5/2}K^3
+\varepsilon^{7/2}K^4
\Bigr)\leq C\varepsilon^{3/2}K^2,
\end{aligned}
\]
This completes the proof.
\end{proof}

Recall the kernels $H_N$ and $H_N^e$ defined in
\eqref{eq:appC-HN-explicit} and \eqref{eq:appC-HN-even}.
For each $p\in B_1$, a direct computation gives
\begin{equation}
\Delta_xH_N(x,p)=-3\quad\text{in }B_1,
\qquad
\partial_{\nu_x}H_N(x,p)
=\partial_{\nu_x}\frac1{|x-p|}
\quad\text{on }\partial B_1.
\label{eq:appA-HN-boundary-identities}
\end{equation}

\begin{proposition}
\label{prop:appA-boundary-correction}
Under \eqref{eq:outline-parameters}, define
\begin{equation}
\begin{aligned}
\eta_A(x):={}&\varepsilon^{1/2}q(\widehat\xi)H_N^e(x,b)
+\varepsilon^{3/2}
\left.w\cdot\nabla_pH_N^e(x,p)\right|_{p=b}+\frac{\varepsilon^{5/2}}6W_{ij}
\left.\partial_{p_ip_j}^2H_N^e(x,p)\right|_{p=b}.
\end{aligned}
\label{eq:appA-boundary-correction-definition}
\end{equation}
Then
\begin{equation}
\Delta\eta_A=-3K\varepsilon^{1/2}q(\widehat\xi)
\qquad\text{in }B_1.
\label{eq:appA-boundary-correction-equation}
\end{equation}
Moreover,
\begin{equation}
\|\eta_A\|_{L^\infty(B_1)}
\leq C\delta^{-4}\varepsilon^{3/2}Kd^{-1}.
\label{eq:appA-boundary-correction-bound}
\end{equation}
\end{proposition}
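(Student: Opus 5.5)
The plan is to handle the two assertions separately. The equation \eqref{eq:appA-boundary-correction-equation} is a direct computation. The bound \eqref{eq:appA-boundary-correction-bound} comes from pointwise estimates on $H_N$ and its $p$-derivatives, summed over the $K$ images generated by $G_K$.

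\emph{The equation.} I will use the definition \eqref{eq:appC-HN-even} of $H_N^e$ as the sum of $H_N$ over the $K$ elements of $G_K$. I expect it to be arranged as $H_N^e(x,p)=\sum_{g}H_N(gx,p)$, or equivalently $\sum_g H_N(x,g^{-1}p)$ by the rotational invariance of $H_N$. Each $g$ is orthogonal, so $\Delta_x[H_N(gx,p)]=(\Delta H_N)(gx,p)=-3$ by \eqref{eq:appA-HN-boundary-identities}, and hence $\Delta_xH_N^e(x,p)=-3K$ for every $p\in B_1$. This right-hand side does not depend on $p$. Since $H_N$ is smooth for $x,p\in B_1$, we may differentiate under $\Delta_x$, and the terms $\nabla_pH_N^e$ and $\partial^2_{p_ip_j}H_N^e$ are therefore harmonic in $x$. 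Only the zeroth-order term survives, which gives $\Delta\eta_A=-3K\eps^{1/2}q(\widehat\xi)$.

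\emph{The bound.} From the explicit formula \eqref{eq:appC-HN-explicit} I expect the Neumann regular part of the ball to have the structure
\[
H_N(x,p)=\frac{1}{|p|\,|x-p^*|}+\text{(logarithmic term in }1-x\cdot p+|x||p^*|\ldots)+\text{(smooth quadratic part)},
\qquad p^*=p/|p|^2.
\]
Its singular part is controlled by $|x-p^*|^{-1}$ up to logarithms. The key geometric input is that, for $x\in\overline{B_1}$ and the image centers $g b$, one has
\[
|x-(gb)^*|\ge c\bigl(d+|x-gb|\bigr)\ge c\bigl(d+\sin(\ell\theta_0)\bigr),
\]
where $\ell$ indexes the image, in the spirit of Lemma~\ref{lem:appA-image-separation}. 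This uses $|b^*|-1\approx 2d$ and $d\gg\theta_0$ is \emph{not} needed. From this I will derive
\[
|\nabla_p^mH_N(x,gb)|\le C\bigl(d+\sin(\ell\theta_0)\bigr)^{-1-m}+C|\log d|,\qquad m=0,1,2.
\]
Summing over the $K$ images then gives $\sum_g|H_N|\le C(d^{-1}+K\log K)$, $\sum_g|\nabla_pH_N|\le C(d^{-2}+Kd^{-1})\le CKd^{-1}\delta^{-1}$, and $\sum_g|\nabla_p^2H_N|\le C(d^{-3}+Kd^{-2})$. Here I use $d^{-1}\le\delta^{-1}K^{2/3}\le\delta^{-1}K$.

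Inserting these into \eqref{eq:appA-boundary-correction-definition} gives three contributions.
\begin{itemize}
\item The zeroth-order term is at most $\eps^{1/2}|q(\widehat\xi)|(d^{-1}+K\log K)$. By \eqref{eq:appA-profile-smallness}, $|q(\widehat\xi)|\le C|a|\le C\delta^{-2}K^{-7/3}$. Comparing with $\eps^{3/2}Kd^{-1}$ leaves a ratio of at most $C\delta^{-3}K^{-1/3}\log K\cdot Kd$, which is bounded by $C\delta^{-4}$.
\item The gradient term is bounded directly by $C\delta^{-1}\eps^{3/2}Kd^{-1}$, using $|w|\le C$.
\item The Hessian term is at most $\eps^{5/2}(d^{-3}+Kd^{-2})$. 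Relative to $\eps^{3/2}Kd^{-1}$ this gives a ratio of $\eps d^{-2}K^{-1}+\eps d^{-1}\le C\delta^{-3}K^{-8/3}$, which is negligible.
\end{itemize}
Collecting these yields \eqref{eq:appA-boundary-correction-bound}.

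\emph{Main obstacle.} The hardest step is the uniform pointwise estimate on $\nabla_p^mH_N(x,gb)$, including the logarithmic part of the Neumann kernel. Its $p$-derivatives must not produce worse singularities than $(d+|x-gb|)^{-1-m}$ when $x$ is near the spherical boundary. I would first treat the reflected-point term explicitly, and then bound the log term via $1-x\cdot p+|x-p^*||p|\gtrsim d+|x-p|^2$.
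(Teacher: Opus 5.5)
Your argument is essentially the paper's. The equation holds because $\Delta_xH_N^e(\cdot,p)=-3K$ does not depend on $p$. The bound comes from the image-summed kernel estimates $\|H_N^e(\cdot,b)\|_{L^\infty(B_1)}\le CK\log(C/d)$ and $\|\nabla_p^mH_N^e(\cdot,p)|_{p=b}\|_{L^\infty(B_1)}\le CKd^{-m}$, $m=1,2$. The paper takes these from Lemma~\ref{lem:appC-HN-estimates} together with $G_K$-invariance in $x$; you re-derive them image by image, which is equivalent. Your extra $d^{-1-m}$ from the direct term is already $\le Kd^{-m}$, since $Kd\ge1$.

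One step fails as written and needs fixing. In the zeroth-order term you replaced the constraint $|a|\le\delta^{-2}K^{-7/3}(\log K)^{-1}$ from \eqref{eq:outline-parameters} by $|a|\le C\delta^{-2}K^{-7/3}$. With $Kd\le\delta^{-1}K^{1/3}$, your ratio $C\delta^{-3}K^{-1/3}\log K\cdot Kd$ is then only bounded by $C\delta^{-4}\log K$, not $C\delta^{-4}$. Keep the factor $(\log K)^{-1}$: it cancels exactly the $\log K$ (or $\log(C/d)$) coming from the image sum. This term is precisely what produces the $\delta^{-4}$ in the stated bound.
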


\begin{proof}
Since the transformations in the definition of $H_N^e$ are
orthogonal, \eqref{eq:appA-HN-boundary-identities} gives
\[
\Delta_xH_N^e(x,p)=-3K.
\]
The right-hand side is independent of $p$. Differentiating in $p$
and using \eqref{eq:appA-boundary-correction-definition}, we obtain
\eqref{eq:appA-boundary-correction-equation}.

By Lemma~\ref{lem:appC-HN-estimates} and the symmetry of
$H_N^e$ in $x$,
\[
\|H_N^e(\cdot,b)\|_{L^\infty(B_1)}
\leq CK\log\frac Cd,
\qquad
\left\|
\left.\nabla_p^mH_N^e(\cdot,p)\right|_{p=b}
\right\|_{L^\infty(B_1)}
\leq CKd^{-m},
\quad m=1,2.
\]
Together with $|w|+|W|\leq C$, these estimates yield
\[
\|\eta_A\|_{L^\infty(B_1)}
\leq C\left(
\varepsilon^{1/2}|q(\widehat\xi)|K\log\frac Cd
+\varepsilon^{3/2}Kd^{-1}
+\varepsilon^{5/2}Kd^{-2}
\right).
\]
Consequently,
\[
\|\eta_A\|_{L^\infty(B_1)}
\leq C\delta^{-4}\varepsilon^{3/2}Kd^{-1}.
\]
This completes the proof.
\end{proof}

On $\partial B_1$, define
\begin{equation*}
F_A:=\partial_\nu(Q_A^e-\eta_A).
\end{equation*}
By Lemma~\ref{lem:appA-bubble-expansion} and
\eqref{eq:appA-HN-boundary-identities}, the normal derivatives
of the three leading terms cancel, and hence
\[
F_A(x)=\partial_{\nu_x}\sum_{g\in G_K}R_A(gx),
\qquad x\in\partial B_1.
\]
Here $R_A$ is the Taylor remainder in
\eqref{eq:appA-bubble-expansion}.
By Lemma~\ref{lem:appA-image-separation}, symmetry, and
$Kd\geq1$, we have
\[
\sup_{x\in\partial B_1}\sum_{g\in G_K}|gx-b|^{-s}
\leq C_sKd^{1-s},
\qquad s>1.
\]
Using \eqref{eq:appA-bubble-remainder} with $m=1$, we obtain
\begin{equation*}
\|F_A\|_{L^\infty(\partial B_1)}
\leq C\varepsilon^{7/2}Kd^{-4}.
\end{equation*}

Let $\mathcal R_A$ be the weak solution of
\begin{equation}
\begin{cases}
(\Delta-\mu)\mathcal R_A=0 &\text{in }B_1,\\
\partial_\nu\mathcal R_A=F_A &\text{on }\partial B_1.
\end{cases}
\label{eq:appA-neumann-correction-problem}
\end{equation}

\begin{proposition}
\label{prop:appA-neumann-correction}
Under \eqref{eq:outline-parameters},
\eqref{eq:appA-neumann-correction-problem} has a unique weak
solution $\mathcal R_A\in H^1(B_1)$, and
\begin{equation}
\|\mathcal R_A\|_{L^\infty(B_1)}
\leq C_\mu\|F_A\|_{L^\infty(\partial B_1)}
\leq C\varepsilon^{7/2}Kd^{-4}.
\label{eq:appA-neumann-correction-bound}
\end{equation}
Moreover, $\mathcal R_A\in C^\infty(\overline{B_1})$ is
$G_K$-invariant and even in $x_3$.
\end{proposition}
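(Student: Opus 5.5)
Existence and uniqueness come from Lax--Milgram applied to the coercive form $\int_{B_1}(\nabla\mathcal R\cdot\nabla\zeta+\mu\mathcal R\zeta)\,dx$ on $H^1(B_1)$, with right-hand side $\zeta\mapsto\int_{\partial B_1}F_A\zeta\,d\sigma$. This functional is bounded on $H^1(B_1)$ by the trace theorem, since $F_A\in L^\infty(\partial B_1)\subset L^2(\partial B_1)$. Coercivity holds because $\mu>0$ is fixed, so all constants depend only on $\mu$.

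For symmetry, note first that $Q_A^e$ is invariant under $G_K$ and even in $x_3$ by construction, and $\eta_A$ is as well, since $H_N^e$ is built by summing over the same group and $b$ lies in $\{x_3=0\}$. The normal derivative on the sphere commutes with orthogonal maps, so $F_A$ inherits both symmetries. For any $g\in G_K$, or for the reflection $x_3\mapsto-x_3$, the function $\mathcal R_A\circ g$ solves the same problem. Uniqueness then gives $\mathcal R_A\circ g=\mathcal R_A$.

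For regularity, the plan is to check that $F_A$ is the restriction to $\partial B_1$ of a smooth function near the sphere. Indeed, $Q_A^e$ and $\eta_A$ are smooth in a neighborhood of $\partial B_1$: the singular points of the copies lie at $gb$ with $|gb|=\rho<1$, and $H_N$ is smooth for $p$ in the interior. Standard Schauder theory for the Neumann problem (Agmon--Douglis--Nirenberg) then gives $\mathcal R_A\in C^{k,\alpha}(\overline{B_1})$ for every $k$, hence $C^\infty(\overline{B_1})$. These qualitative norms may depend on $K$; only the $L^\infty$ bound has to be uniform.

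The main point is the quantitative estimate $\|\mathcal R_A\|_\infty\le C_\mu\|F_A\|_{L^\infty(\partial B_1)}$ with $C_\mu$ independent of $K$ and $\delta$. I would obtain it by comparison with an explicit barrier. Let $M=\|F_A\|_{L^\infty(\partial B_1)}$ and take the radial function $\omega(x)=M\,\frac{I(|x|)}{I'(1)}$, where $I$ is the smooth positive radial solution of $(\Delta-\mu)I=0$ on $B_1$ (the modified spherical Bessel function $\sinh(\sqrt\mu r)/r$). This gives $(\Delta-\mu)\omega=0$ and $\partial_\nu\omega=M$ on $\partial B_1$. Since $I'(1)>0$, the barrier satisfies $\|\omega\|_\infty=M\,I(1)/I'(1)=:C_\mu M$. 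Then $\omega\mp\mathcal R_A$ satisfy $(\Delta-\mu)(\omega\mp\mathcal R_A)=0$ with $\partial_\nu(\omega\mp\mathcal R_A)\ge0$ on $\partial B_1$. The weak maximum principle for the Neumann problem applies: test with the negative part $(\omega\mp\mathcal R_A)^-$ to get $\int(|\nabla(\cdot)^-|^2+\mu((\cdot)^-)^2)\le0$. This yields $|\mathcal R_A|\le\omega\le C_\mu M$.

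Finally, insert the bound $\|F_A\|_{L^\infty(\partial B_1)}\le C\varepsilon^{7/2}Kd^{-4}$ derived just before the proposition. The only delicate step is to make sure the barrier argument is carried out in weak form, so that it requires only $F_A\in L^\infty$. The smoothness statement is separate and does not enter the constant.
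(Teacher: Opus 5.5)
Your proposal is correct and follows the paper's proof. The steps match: Lax--Milgram for existence and uniqueness, uniqueness together with the $G_K$- and $x_3$-symmetry of $F_A$ for invariance, elliptic regularity for smoothness, and a radial barrier with the weak Neumann comparison principle for the $L^\infty$ bound. The only difference is the barrier: the paper uses the polynomial supersolution $M(3/\mu+|x|^2/2)$, which gives $C_\mu=3/\mu+1/2$, whereas your exact radial solution $M\sinh(\sqrt\mu|x|)/(|x|\,I'(1))$ gives the sharp constant $I(1)/I'(1)$; either choice works.
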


\begin{proof}
Since $\mu>0$, existence and uniqueness follow from the
Lax--Milgram theorem. Smoothness follows from elliptic
regularity, since $F_A$ is smooth on $\partial B_1$.
Since $F_A$ is $G_K$-invariant and even in $x_3$, uniqueness
implies that $\mathcal R_A$ has the same symmetries.

Set
\[
M:=\|F_A\|_{L^\infty(\partial B_1)},
\qquad
\Psi(x):=M\left(\frac3\mu+\frac{|x|^2}{2}\right).
\]
Then
\[
(\Delta-\mu)\Psi
=-\frac{\mu M}{2}|x|^2\leq0
\quad\text{in }B_1,
\qquad
\partial_\nu\Psi=M\geq|F_A|
\quad\text{on }\partial B_1.
\]
The weak comparison principle applied to
$\Psi+\mathcal R_A$ and $\Psi-\mathcal R_A$ gives
\[
\|\mathcal R_A\|_{L^\infty(B_1)}
\leq\left(\frac3\mu+\frac12\right)
\|F_A\|_{L^\infty(\partial B_1)}
\leq C\varepsilon^{7/2}Kd^{-4}.
\]
This completes the proof.
\end{proof}

We now seek a function $w_A$ such that
\begin{equation}
P Q_A=Q_A^e-\eta_A-\mathcal R_A-w_A
=Q_A+T_A-\eta_A-\mathcal R_A-w_A
\label{eq:appA-projection-decomposition}
\end{equation}
satisfies \eqref{eq:outline-projection}.
Consequently, it suffices to choose $w_A$ as the solution of
\begin{equation}
\left\{
\begin{aligned}
(\Delta-\mu)w_A
&=3K\varepsilon^{1/2}q(\widehat\xi)
-\mu(Q_A+T_A-\eta_A)-\sum_{g\in G_K\setminus\{I\}}Q_A(gx)^5
&&\text{in }\Sigma_K,\\
\partial_\nu w_A&=0
&&\text{on }\partial\Sigma_K.
\end{aligned}
\right.
\label{eq:appA-sector-correction}
\end{equation}
Since $\mu>0$, the Lax--Milgram theorem gives a unique weak
solution $w_A\in H^1(\Sigma_K)$. By uniqueness, $w_A$ is even
in $x_3$. The function $E_Kw_A$ solves the corresponding equation in
$B_1$, with the right-hand side extended by $E_K$ and with
homogeneous Neumann boundary condition on $\partial B_1$.
The right-hand side of \eqref{eq:appA-sector-correction} is smooth
on $\overline{\Sigma_K}$, so its even extension belongs to
$C^{0,\alpha}(\overline{B_1})$ for every $0<\alpha<1$.
Standard elliptic regularity therefore gives
\[
E_Kw_A\in C^{2,\alpha}(\overline{B_1}),
\qquad
w_A\in C^{2,\alpha}(\overline{\Sigma_K}).
\]
The expression in \eqref{eq:appA-projection-decomposition}
therefore satisfies \eqref{eq:outline-projection} and coincides
with $P Q_A$ by uniqueness.

To estimate $w_A$, we shall use the identity for the Neumann problem
in the unit ball.

\begin{lemma}
\label{lem:appA-neumann-green-estimate}
Let $\mu>0$, and let $G_\mu^N(x,y)$ be the Neumann Green
function of $\Delta-\mu$ in $B_1$. Then
\begin{equation}
|G_\mu^N(x,y)|
\leq\frac{C_\mu}{|x-y|},
\qquad x,y\in B_1,\quad x\neq y.
\label{eq:appA-neumann-green-pointwise}
\end{equation}
Consequently, for every $f\in L^2(B_1)$, the weak solution of
\begin{equation*}
\begin{cases}
(\Delta-\mu)u=f &\text{in }B_1,\\
\partial_\nu u=0 &\text{on }\partial B_1
\end{cases}
\end{equation*}
satisfies
\begin{equation}
\|u\|_{L^\infty(B_1)}
\leq C_\mu\|f\|_{L^2(B_1)}.
\label{eq:appA-neumann-L2-Linfinity}
\end{equation}
\end{lemma}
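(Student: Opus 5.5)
To prove Lemma~\ref{lem:appA-neumann-green-estimate}, I would first obtain the pointwise bound \eqref{eq:appA-neumann-green-pointwise}. Then I would derive \eqref{eq:appA-neumann-L2-Linfinity} from it by Cauchy--Schwarz, since $|x-\cdot|^{-1}\in L^2(B_1)$ uniformly in $x\in B_1$.

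For the Green function bound, I would split $G_\mu^N(x,y)=-\frac{1}{4\pi|x-y|}+H(x,y)$ and construct the regular part explicitly. For the unit ball there is the classical Kelvin image $y^*=y/|y|^2$. The function $\frac{1}{|y|\,|x-y^*|}$ is harmonic in $x\in B_1$ and satisfies $|y|\,|x-y^*|\geq c|x-y|$ for $x,y\in B_1$; the right quantity to compare is $|\,|y|x-y/|y|\,|$. So I would follow the construction of $H_N$ in Appendix~C, formula \eqref{eq:appC-HN-explicit}, which already satisfies \eqref{eq:appA-HN-boundary-identities}. Set
\[
\Gamma_0(x,y):=-\frac{1}{4\pi}\Bigl(\frac1{|x-y|}+H_N(x,y)\Bigr),
\]
with $H_N$ bounded by $C|x-y|^{-1}$ plus a logarithm, as in Lemma~\ref{lem:appC-HN-estimates}. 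Then $\Gamma_0$ satisfies $\Delta_x\Gamma_0=\delta_y+\frac{3}{4\pi}$ and $\partial_\nu\Gamma_0=0$ on $\partial B_1$.

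Next I would write $G_\mu^N=\Gamma_0+g$, where $g$ solves
\[
(\Delta-\mu)g=\mu\Gamma_0-\tfrac{3}{4\pi}\quad\text{in }B_1,\qquad \partial_\nu g=0\quad\text{on }\partial B_1.
\]
The right-hand side is bounded by $C(1+|x-y|^{-1}+|\log|x-y||)$, so it lies in $L^p(B_1)$ for every $p<3$, uniformly in $y$. Since $\mu>0$, the operator is coercive and Lax--Milgram gives $g\in H^1$. The $W^{2,p}$ Neumann estimate with $p>3/2$ then gives $g\in L^\infty$ uniformly in $y$. Hence $|G_\mu^N(x,y)|\le C|x-y|^{-1}+C\le C_\mu|x-y|^{-1}$, using $|x-y|\le2$.

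For the consequence, $u(x)=\int_{B_1}G_\mu^N(x,y)f(y)\,dy$ for smooth $f$, and
\[
|u(x)|\le C_\mu\|f\|_{L^2}\Bigl(\int_{B_1}|x-y|^{-2}\,dy\Bigr)^{1/2}\le C_\mu\|f\|_{L^2}.
\]
Density in $L^2$ together with the $H^1$ continuity of the solution map extends this to all $f\in L^2(B_1)$.

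The main obstacle is the uniform control of the regular part up to the boundary, which fails when $x$ and $y$ both approach $\partial B_1$. This is exactly why I use the explicit Kelvin-type kernel $H_N$, so that only a bounded correction $g$ remains. If that route is awkward, the fallback is to invoke the Green function estimate of \cite{WangWeiYan2011}, already used in Lemma~\ref{lem:neumann-pointwise}.
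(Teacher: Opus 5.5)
Your argument works once one sign is fixed, and it differs from the paper only in how the pointwise bound is obtained.

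\textbf{The sign slip.} By \eqref{eq:appA-HN-boundary-identities}, $\partial_\nu H_N=\partial_\nu|x-y|^{-1}$ on $\partial B_1$. So your combination $|x-y|^{-1}+H_N$ has normal derivative $2\,\partial_\nu|x-y|^{-1}\neq0$, and your claim $\partial_\nu\Gamma_0=0$ fails as written. The Neumann-compatible choice is $\Gamma_0:=-\frac1{4\pi}\bigl(|x-y|^{-1}-H_N(x,y)\bigr)$. For this choice, $\Delta_x\Gamma_0=\delta_y-\frac{3}{4\pi}$ and $\partial_\nu\Gamma_0=0$. The corrector must then solve $(\Delta-\mu)g=\mu\Gamma_0+\frac{3}{4\pi}$.

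\textbf{The bound on $H_N$.} Do not cite Lemma~\ref{lem:appC-HN-estimates}; it concerns $H_N^e(\cdot,b)$ for the specific configuration. What you need is the identity $B(x,y)^2-|x-y|^2=(1-|x|^2)(1-|y|^2)\geq0$, together with $|x-y|\leq B\leq A+B\leq4$. These give $|H_N(x,y)|\leq C\bigl(|x-y|^{-1}+1+|\log|x-y||\bigr)\leq C|x-y|^{-1}$ for all $x,y\in B_1$. This identity is exactly what gives uniform control up to the boundary.

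\textbf{The corrector.} Taking $p=2$ is enough. Testing gives $\|g\|_{L^2}\leq\mu^{-1}\|\mu\Gamma_0+\tfrac{3}{4\pi}\|_{L^2}\leq C_\mu$. The $W^{2,2}$ Neumann estimate and $W^{2,2}\hookrightarrow C^{0,1/2}$ in three dimensions then give $\|g\|_\infty\leq C_\mu$ uniformly in $y$.

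\textbf{Comparison with the paper.} The second half of your argument is exactly the paper's: Cauchy--Schwarz with $\sup_x\int_{B_1}|x-y|^{-2}\,dy\leq\int_{B_2}|z|^{-2}\,dz=8\pi$. For the pointwise bound, the paper simply cites the Neumann Green function estimate of \cite{WangWeiYan2011}, noting that the sign convention of the operator does not matter.
\begin{itemize}
\item Your route is self-contained for the ball and reuses the explicit kernel from Appendix~C. It also gives more information: $G_\mu^N-\Gamma_0$ is bounded, indeed H\"older, uniformly in $y$.
\item The citation is shorter and covers general smooth domains, but it outsources the boundary analysis that you correctly identify as the only delicate point.
\item Your density argument extending the bound to all $f\in L^2(B_1)$ adds a small piece of care that the paper leaves implicit.
\end{itemize}
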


\begin{proof}
The bound \eqref{eq:appA-neumann-green-pointwise} is the
three-dimensional form of the standard Neumann Green-function
estimate recalled in
\cite[Appendix~A, p.~1115]{WangWeiYan2011}.
The opposite sign convention for the operator does not affect
this absolute-value estimate.
By the Green representation formula and the Cauchy--Schwarz
inequality,
\[
\begin{aligned}
|u(x)|=\left|\int_{B_1}G_\mu^N(x,y)f(y)\,dy\right|\leq C_\mu
\left(\int_{B_1}\frac{dy}{|x-y|^2}\right)^{1/2}
\|f\|_{L^2(B_1)}.
\end{aligned}
\]
Since $B_1-x\subset B_2$, in dimension three we have
\[
\sup_{x\in B_1}
\int_{B_1}\frac{dy}{|x-y|^2}
\leq\int_{B_2}\frac{dz}{|z|^2}
=4\pi\int_0^2dr=8\pi.
\]
Taking the supremum over $x\in B_1$ gives
\eqref{eq:appA-neumann-L2-Linfinity}.
\end{proof}

We can now complete the estimate of the approximate solution.

\begin{proposition}
\label{prop:appA-projection-estimate}
Under \eqref{eq:outline-parameters}, the solution of
\eqref{eq:outline-projection} can be written as
\begin{equation}
P Q_A=Q_A+\widetilde T_A,
\qquad
\widetilde T_A:=T_A-\eta_A-\mathcal R_A-w_A,
\label{eq:appA-projection-remainder-definition}
\end{equation}
where
\begin{equation}
\|\widetilde T_A\|_{L^\infty(\Sigma_K)}
\leq C\delta^{-1/2}\varepsilon^{3/2}K^2.
\label{eq:appA-projection-error-bound}
\end{equation}
Moreover, for each fixed $K$, $\widetilde T_A$ depends $C^1$
on the parameters in $A$.
\end{proposition}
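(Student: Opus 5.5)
The decomposition itself is already in place: \eqref{eq:appA-projection-decomposition} together with the uniqueness of weak solutions of \eqref{eq:outline-projection} gives $PQ_A=Q_A+T_A-\eta_A-\mathcal R_A-w_A$. So the plan is to bound each of the four pieces of $\widetilde T_A$ in $L^\infty(\Sigma_K)$ and then check $C^1$ dependence. The first three are already estimated:
\[
\|T_A\|_\infty\le C\eps^{3/2}K^2,\qquad
\|\eta_A\|_\infty\le C\delta^{-4}\eps^{3/2}Kd^{-1},\qquad
\|\mathcal R_A\|_\infty\le C\eps^{7/2}Kd^{-4},
\]
by \eqref{eq:appA-interaction-sup-bound}, \eqref{eq:appA-boundary-correction-bound} and \eqref{eq:appA-neumann-correction-bound}. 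With $d\sim K^{-2/3}$ and $\eps\sim K^{-3}$, the bound for $\eta_A$ is $O(\delta^{-5}\eps^{3/2}K^{5/3})$ and the bound for $\mathcal R_A$ is much smaller. For $K\ge K_0(\delta)$ the factor $K^{-1/3}$ absorbs the power of $\delta$, so both are at most $\eps^{3/2}K^2$.

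The remaining work is to bound $w_A$, the solution of \eqref{eq:appA-sector-correction}. I will pass to the whole ball via $E_Kw_A$ and apply Lemma~\ref{lem:appA-neumann-green-estimate}, splitting the right-hand side into three groups.

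The first group is $-\mu(T_A-\eta_A)+3K\eps^{1/2}q(\widehat\xi)$. This is bounded in sup norm by $C\delta^{-5}\eps^{3/2}K^{5/3}+C\eps^{3/2}K^2+CK\eps^{1/2}|a|$. Here $K\eps^{1/2}|a|\le C\delta^{-2}\eps^{1/2}K^{-4/3}/\log K$, and since $\eps^{1/2}\sim\delta^{\pm1/2}K^{-3/2}$ this is of order $\eps^{3/2}K^{2}$ up to a $\delta$-power and a $(\log K)^{-1}$ factor. Using the $L^2\to L^\infty$ bound (or the maximum principle, since the source is bounded), this group contributes $O(\eps^{3/2}K^2)$ once $K$ is large.

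The second group is $-\mu Q_A$. I will estimate it through the Green kernel bound \eqref{eq:appA-neumann-green-pointwise} rather than in $L^2$. Summed over the $K$ images, the source is pointwise at most $C\eps^{1/2}\sum_g|gx-b|^{-1}$ wherever $|gx-b|\ge\eps$. Convolving $|x-y|^{-1}$ with $\sum_g|y-gb|^{-1}$ over $B_1$ gives at most $CK$. This yields a contribution $C\mu\eps^{1/2}K\sim \eps^{3/2}K^{2}\cdot(\eps K)^{-1}$. That is \emph{too large}, so this naive approach fails.

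I therefore expect $Q_A$ to be the main obstacle. The remedy I would try first is not to put $\mu Q_A$ into $w_A$ as a crude source. Instead I would use the cancellation with the $3K\eps^{1/2}q(\widehat\xi)$ term together with the far-field multipole structure. The leading part of $Q_A^e$ is $\eps^{1/2}q(\widehat\xi)\sum_g|x-gb|^{-1}$, which carries the small factor $|q(\widehat\xi)|\le C|a|$. The dipole part $\eps^{3/2}w\cdot\nabla_p\sum_g|x-gb|^{-1}$ has Green convolution bounded by $C\eps^{3/2}\sum_g\int|x-y|^{-1}|y-gb|^{-2}\,dy\le C\eps^{3/2}K\log K$.

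Concretely, I would expand $Q_A$ for $|x-b|\ge\eps$ by Lemma~\ref{lem:appA-bubble-expansion} and treat the three regions separately:
\begin{itemize}
\item in the core $|x-b|\le\eps$, use $|Q_A|\le C\eps^{-1/2}$, which integrates to $O(\eps^{5/2})$;
\item for the monopole, use the factor $|q(\widehat\xi)|\le C|a|$;
\item for the dipole and quadrupole terms, Green-convolve the kernels directly.
\end{itemize}
The resulting bounds are $C\eps^{1/2}|a|K+C\eps^{3/2}K\log K+C\eps^{5/2}K^2$. All of these are at most $\delta^{-1/2}\eps^{3/2}K^2$, which is where the factor $\delta^{-1/2}$ in the statement should come from, via $\eps^{1/2}|a|K/(\eps^{3/2}K^2)\lesssim\delta^{-1/2}$.

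The nonlinear image sum $\sum_{g\neq I}Q_A(gx)^5$ is pointwise at most $C\eps^{5/2}\sum_g|gx-b|^{-5}$. Using \eqref{eq:appA-neumann-L2-Linfinity} or Green convolution again, this gives $O(\eps^{5/2}K^{3})$, which is negligible.

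Finally, the $C^1$ dependence on $A$ for fixed $K$ follows piece by piece. The functions $T_A$ and $\eta_A$ are explicit and smooth in $A$. The functions $\mathcal R_A$ and $w_A$ solve linear Neumann problems whose data depend $C^1$ on $A$ in $L^\infty$. Hence Lax--Milgram combined with the $L^2\to L^\infty$ estimate shows that the parameter derivatives exist and are continuous.
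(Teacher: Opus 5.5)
Your proof is correct, and its skeleton is the paper's: you bound $T_A$, $\eta_A$ and $\mathcal R_A$ by the earlier estimates, split the source in \eqref{eq:appA-sector-correction}, and use the maximum principle for the bounded pieces. The difference is in how you handle the term $-\mu Q_A$.

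The paper uses the pointwise bound \eqref{eq:appA-bubble-pointwise} only to get $\|E_KQ_A\|_{L^2(B_1)}\le C\eps K^{1/2}$, and then applies \eqref{eq:appA-neumann-L2-Linfinity}. Since $\eps K^{1/2}=(\eps K^3)^{-1/2}\eps^{3/2}K^2\le\delta^{-1/2}\eps^{3/2}K^2$, this step is exactly where the factor $\delta^{-1/2}$ in the statement comes from.

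You instead convolve the same pointwise bound directly against the kernel $|x-y|^{-1}$ from \eqref{eq:appA-neumann-green-pointwise}. This gives $C(\eps^{1/2}|a|K+\eps^{3/2}K\log K+\eps^{3/2})$, which is sharper than the paper's bound by roughly a factor $K/\log K$ in the dipole part. Your route therefore yields $\|\widetilde T_A\|_{L^\infty(\Sigma_K)}\le C\eps^{3/2}K^2$ with no $\delta$-loss once $K\ge K_0(\delta)$. The price is a somewhat longer computation than the paper's one-line $L^2$ estimate, which only needs $\int_{\Sigma_K}Q_A^2\le C\eps^2$.

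Your explanation of the $\delta^{-1/2}$ is mistaken, however. The ratio you cite is $\eps^{1/2}|a|K/(\eps^{3/2}K^2)=|a|/(\eps K)\le\delta^{-3}K^{-1/3}(\log K)^{-1}$, and this tends to $0$.

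Two smaller points:
\begin{enumerate}
\item The core $\{|x-b|\le\eps\}$ contributes $C\eps^{3/2}$ to the supremum of the Green convolution, not $\eps^{5/2}$; the latter is its $L^1$ mass. The contribution is still negligible.
\item For $\sum_{g\ne I}Q_A(g\cdot)^5$, your crude bound $C\eps^{5/2}|gx-b|^{-5}$ is adequate only because you integrate it. Its supremum is of size $\eps^{5/2}K^5\sim\delta^{-1}\eps^{3/2}K^2$, which would exceed the target. The paper avoids this by using the refined bound carrying the factor $|q(\widehat\xi)|^5$, which works directly in sup norm.
\end{enumerate}
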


\begin{proof}
It remains to estimate $w_A$ in
\eqref{eq:appA-projection-decomposition}.
The decay of $q$ and Taylor's theorem give, for
$x\in\mathbb R^3$ and $s=|x-b|$,
\begin{equation}
|Q_A(x)|\leq
\begin{cases}
C\varepsilon^{-1/2}, &s\leq\varepsilon,\\[3pt]
C\left(\displaystyle\frac{\varepsilon^{1/2}|q(\widehat\xi)|}{s}
+\displaystyle\frac{\varepsilon^{3/2}}{s^2}\right),
&s\geq\varepsilon.
\end{cases}
\label{eq:appA-bubble-pointwise}
\end{equation}

Consequently,
\begin{equation*}
\|E_KQ_A\|_{L^2(B_1)}
=K^{1/2}\|Q_A\|_{L^2(\Sigma_K)}
\leq C\varepsilon K^{1/2}.
\end{equation*}

For the other copies, Lemma~\ref{lem:appA-image-separation}
gives $|gx-b|\geq cK^{-1}>\varepsilon$.
Thus \eqref{eq:appA-bubble-pointwise} implies
\[
\begin{aligned}
\sup_{x\in\Sigma_K}
\sum_{g\in G_K\setminus\{I\}}|Q_A(gx)|^5
&\leq C\Bigl(
\varepsilon^{5/2}|q(\widehat\xi)|^5K^5
+\varepsilon^{15/2}K^{10}\Bigr)
\leq C\varepsilon^{3/2}K^2.
\end{aligned}
\]
Moreover, \eqref{eq:appA-interaction-sup-bound} and
\eqref{eq:appA-boundary-correction-bound} give
\[
\|T_A\|_{L^\infty(\Sigma_K)}
+\|\eta_A\|_{L^\infty(\Sigma_K)}
\leq C\varepsilon^{3/2}K^2.
\]
The weak maximum principle gives an $L^\infty$ operator norm
at most $\mu^{-1}$ for the Neumann inverse of $\Delta-\mu$.
By linearity and Lemma~\ref{lem:appA-neumann-green-estimate}
applied after even reflection, equation
\eqref{eq:appA-sector-correction} yields
\[
\begin{aligned}
\|w_A\|_{L^\infty(\Sigma_K)}
\leq{}&C\Bigl(
K\varepsilon^{1/2}|q(\widehat\xi)|
+\|E_KQ_A\|_{L^2(B_1)}
+\|T_A\|_{L^\infty(\Sigma_K)}\\
&\qquad+\|\eta_A\|_{L^\infty(\Sigma_K)}
+\sup_{x\in\Sigma_K}
\sum_{g\in G_K\setminus\{I\}}|Q_A(gx)|^5
\Bigr).
\end{aligned}
\]

Thus
\begin{equation*}
\|w_A\|_{L^\infty(\Sigma_K)}
\leq C\delta^{-1/2}\varepsilon^{3/2}K^2.
\end{equation*}

Combining these estimates with
\eqref{eq:appA-projection-decomposition} proves
\eqref{eq:appA-projection-error-bound}.
The $C^1$ dependence follows from the smooth parameter
dependence of $Q_A$ and the bounded invertibility of the
Neumann operator $-\Delta+\mu$.
\end{proof}

We also care about the magnitude of the derivatives with respect to parameters. Next, we derive some estimates on them. We use the family $\mathcal U_{\mathcal P}$ and its projection
$V_{\mathcal P}$ introduced in Section~\ref{subsec:outer}.
Throughout next two lemmas, $\eps$, $K$, and $r$ are fixed
when differentiating with respect to $\mathcal P$.

\begin{lemma}
\label{lem:appA-parameter-projection}
Under \eqref{eq:outline-parameters}, the map
$\mathcal P\mapsto V_{\mathcal P}$ is $C^1$ near
$\mathcal P_A$, with values in
$H^1(\Omega_\eps)\cap L^\infty(\Omega_\eps)$.
At $\mathcal P=\mathcal P_A$, uniformly for
$y\in\Omega_\eps$, we have
\begin{equation}
\label{eq:appA-parameter-projection-pointwise}
\begin{aligned}
|\partial_{P_j}\mathcal U_{\mathcal P}(y)|
\leq C\langle y-b_\eps\rangle^{-1}, \quad
|\partial_{P_j}V_{\mathcal P}(y)|
\leq C\left(
\langle y-b_\eps\rangle^{-1}+\eps K^2
\right).
\end{aligned}
\end{equation}
\end{lemma}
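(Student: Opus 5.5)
The plan is to differentiate the explicit formula for $\mathcal U_{\mathcal P}$ directly, and to handle $V_{\mathcal P}$ by differentiating the decomposition of Proposition~\ref{prop:appA-projection-estimate} term by term. For the profile, each $\partial_{P_j}\mathcal U_{\mathcal P}$ at $\mathcal P_A$ is (up to the Kelvin-type factor and rotation) one of the functions $z_0,\dots,z_5$ of Lemma~\ref{lem:nodal-profile}, composed with the inversion $y\mapsto\tau+R_\beta(y-b_\eps)/|y-b_\eps|^2$ and multiplied by $|y-b_\eps|^{-1}$. Concretely: $\partial_\Lambda$ produces $\tfrac12 q+p\cdot\nabla q$; $\partial_\vartheta$ produces the rotation generator; $\partial_{\tau_i}$ produces $\partial_i q$ evaluated at the inverted point; and $\partial_{c_i}$ produces the conformal generators $z_4,z_5$ after the inversion. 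In each case the $\langle p\rangle^{-1}$ bound on $z_j$ (plus smoothness near $p$ bounded, including the continuation at $y=b_\eps$ via the Kelvin symmetry $q(y)=|y|^{-1}q(y/|y|^2)$) gives $|\partial_{P_j}\mathcal U_{\mathcal P}(y)|\le C\langle y-b_\eps\rangle^{-1}$. Alternatively, one can directly use the chain rule with $|\nabla^\ell q(p)|\le C\langle p\rangle^{-1-\ell}$, splitting into $|y-b_\eps|\le1$ and $\ge1$.

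For the projection, I will rescale back: $V_{\mathcal P}(y)=\eps^{1/2}PQ(\eps y)$, where $Q$ is the profile with center $\eps c$, scale $\eps\Lambda$, etc. By \eqref{eq:appA-projection-remainder-definition}, we have $V_{\mathcal P}=\mathcal U_{\mathcal P}+\eps^{1/2}\widetilde T(\eps\,\cdot)$. $C^1$ dependence with values in $H^1\cap L^\infty$ follows from the $C^1$ statement in Proposition~\ref{prop:appA-projection-estimate} (Lax--Milgram inverse applied to a right-hand side $\mathcal U_{\mathcal P}^5$ that is $C^1$ in $\mathcal P$ into $L^2\cap L^\infty$ away from the moving center; near the center one uses the smooth continuation). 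It thus remains to show $\eps^{1/2}\|\partial_{P_j}\widetilde T\|_\infty\le C\eps K^2$. I will differentiate each piece $T_A,\eta_A,\mathcal R_A,w_A$ in $\mathcal P$. A rescaled parameter derivative costs, relative to the original scale, a factor $\eps$ in the center direction (since $c=b/\eps$), and is $O(1)$ in $\Lambda,\vartheta,\tau$.

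The key steps are as follows. (i) For $T_A$, differentiate the expansion \eqref{eq:appA-interaction-expansion}; the $\tau$, $\Lambda$, $\vartheta$ derivatives just replace $q(\widehat\xi),w,W$ by derivatives of bounded size (and the $\Lambda$-derivative reproduces the same scaling), giving $O(\eps^{3/2}K^2)$ once $q(\widehat\xi)$ is replaced by $\nabla q$. Here the term $\eps^{1/2}\partial_\tau q\,\gamma\sim\eps^{1/2}K\log K$ must be examined; with the factor $\eps^{1/2}$ converting to the rescaled norm, it gives $\eps K\log K\le C\eps K^2$. The $c$-derivative brings $\eps\nabla_p\gamma\lesssim\eps K^2$. (ii) Similarly for $\eta_A$, using Lemma~\ref{lem:appC-HN-estimates} with $d^{-1}\le CK^{2/3}\delta^{-1}$, the resulting bounds are $\le C\eps K^2$. (iii) For $\mathcal R_A$ and $w_A$, differentiate their equations and reapply the comparison argument of Proposition~\ref{prop:appA-neumann-correction} and Lemma~\ref{lem:appA-neumann-green-estimate}, with the differentiated sources estimated exactly as before.

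The main obstacle is step (i) for the $\tau$-derivative of the leading term $\eps^{1/2}q(\widehat\xi)\gamma(x,b)$, together with the $L^2$ term $\partial_{P}(\mu E_KQ_A)$ in $w_A$. The former loses the smallness of $q(\widehat\xi)$, so I would check carefully that $\eps^{1/2}\cdot\eps^{1/2}K\log K$ is still dominated by $\eps K^2$; it is, with room to spare. The latter is handled by the $L^2\to L^\infty$ bound, since $\partial_P Q_A$ has the same $L^2$ size as $Q_A$.
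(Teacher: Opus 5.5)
Your bound for $\partial_{P_j}\mathcal U_{\mathcal P}$ follows the paper's argument. For $V_{\mathcal P}$, however, you take a genuinely different and considerably heavier route.

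The paper does not use the decomposition of Proposition~\ref{prop:appA-projection-estimate} at all. It differentiates \eqref{eq:fdr-parameter-projection} itself, so that $\partial_{P_j}V_{\mathcal P}$ solves the Neumann problem for $\Delta-\mu\eps^2$ with source $-5\mathcal U_{\mathcal P}^4\partial_{P_j}\mathcal U_{\mathcal P}=O(\langle y-b_\eps\rangle^{-5})$. Three ingredients then finish the proof in a few lines: the reflected Green bound of Lemma~\ref{lem:neumann-pointwise}, the convolution estimate $\int_{\mathbb R^3}\langle z\rangle^{-5}|Y-z|^{-1}\,dz\le C\langle Y\rangle^{-1}$, and the image separation of Lemma~\ref{lem:appA-image-separation}. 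Together they give $|\partial_{P_j}V_{\mathcal P}(y)|\le C\sum_{g\in G_K}\langle y-gb_\eps\rangle^{-1}\le C(\langle y-b_\eps\rangle^{-1}+\eps K\log K)$. That argument uses only the decay of the differentiated source. It is therefore insensitive to the nodal smallness of $q(\widehat\xi)$ and needs no parameter-derivative information on $T_A$, $\eta_A$, $\mathcal R_A$ or $w_A$.

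Your route proves more, namely $\|\partial_{P_j}(V_{\mathcal P}-\mathcal U_{\mathcal P})\|_\infty\le C\eps K^2$. This is the content of Lemma~\ref{lem:appA-projection-error-derivative}, which the paper obtains separately by comparing $\partial_{P_j}V_{\mathcal P}$ with $\chi Z_j$. Your route also shows that the dominant contributions, of size $\eps K\log K$ in rescaled units, come from the $\tau$-derivatives of the monopole terms $\eps^{1/2}q(\widehat\xi)\gamma$ and $\eps^{1/2}q(\widehat\xi)H_N^e$, which lose the nodal smallness. The price is that you must derive parameter-derivative bounds for the Taylor remainders in $E_T$ and $F_A$, which the paper never records. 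So ``estimated exactly as before'' hides real work.

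One correction to your final remark: $\partial_\tau Q_A$ does not have the $L^2$ size of $Q_A$, because the $\tau$-derivative destroys the smallness of $q(\widehat\xi)$. Away from $b$ one has $|\partial_\tau Q_A|\sim\eps^{1/2}|x-b|^{-1}$, hence $\|\mathcal E_K\partial_\tau Q_A\|_{L^2(B_1)}\le C\eps^{1/2}(\log K)^{1/2}$ rather than $C\eps K^{1/2}$. This is still far below your target $\eps^{1/2}K^2$ in original units, so your conclusion is unaffected.
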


\begin{proof}
The Kelvin invariance of $q$ gives a smooth continuation of
$\mathcal U_{\mathcal P}$ across $y=c$, jointly in $y$ and
$\mathcal P$. Its first parameter derivatives are therefore
uniformly bounded near $c$. Away from $c$, direct
differentiation of its defining formula gives
\[
|\mathcal U_{\mathcal P}(y)|
+|\partial_{P_j}\mathcal U_{\mathcal P}(y)|
\leq C\langle y-c\rangle^{-1}.
\]
Since the Neumann inverse on $\Omega_\eps$ is
fixed, differentiation of
\eqref{eq:fdr-parameter-projection} proves the $C^1$ dependence. At $\mathcal P=\mathcal P_A$, the differentiated source satisfies
\[
|\mathcal U_{\mathcal P}^4
\partial_{P_j}\mathcal U_{\mathcal P}|
\leq C\langle y-b_\eps\rangle^{-5}.
\]
Using the rescaled Green function estimate
\eqref{eq:neumann-pointwise}, we obtain
\[
\begin{aligned}
|\partial_{P_j}V_{\mathcal P}(y)|
&\leq C\sum_{g\in G_K}\int_{\Omega_\eps}
\frac{\langle z-b_\eps\rangle^{-5}}{|y-gz|}\,dz\leq C\sum_{g\in G_K}
\langle y-gb_\eps\rangle^{-1}\leq C(\langle y-b_\eps\rangle^{-1}+\eps K^2).
\end{aligned}
\]
In the last step, we used the elementary convolution estimate
\[
\int_{\mathbb R^3}\frac{\langle z\rangle^{-5}}{|Y-z|}\,dz
\leq C\langle Y\rangle^{-1}.
\]
This proves \eqref{eq:appA-parameter-projection-pointwise}.
\end{proof}

We next estimate the derivatives of the projection error.

\begin{lemma}
\label{lem:appA-projection-error-derivative}
Under \eqref{eq:outline-parameters}, at
$\mathcal P=\mathcal P_A$ we have
\begin{equation}
\label{eq:appA-projection-error-derivative}
\left\|\left.\partial_{P_j}
\bigl(V_{\mathcal P}-\mathcal U_{\mathcal P}\bigr)
\right|_{\mathcal P_A}\right\|_{L^\infty(\Omega_\eps)}
\leq C\eps K^2,
\qquad j=0,\ldots,5.
\end{equation}
\end{lemma}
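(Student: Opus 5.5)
The natural route is to repeat, for the parameter derivative, the decomposition used for the projection error in Proposition~\ref{prop:appA-projection-estimate}. Rescaling, $V_{\mathcal P}-\mathcal U_{\mathcal P}=\eps^{1/2}\widetilde T_{A(\mathcal P)}(\eps\,\cdot)$, where $A(\mathcal P)$ is the configuration whose profile is $\mathcal U_{\mathcal P}$. The decomposition $\widetilde T_A=T_A-\eta_A-\mathcal R_A-w_A$ is linear in the profile, and $\partial_{P_j}$ commutes with the reflection sum, the Taylor structure and the Neumann inverses. So it suffices to differentiate each of the four pieces in $P_j$ and bound each in $L^\infty(\Sigma_K)$ by $C\eps^{1/2}K^2$, which rescales to $C\eps K^2$.

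\textbf{Key observation.} Each parameter derivative of the data keeps the form of the original data. By the Kelvin structure, $\partial_{P_j}\mathcal U_{\mathcal P}$ is again of the shape $\eps^{1/2}|x-b|^{-1}\tilde q(\cdots)$ with $\tilde q\in\{z_0,\dots,z_5\}$, up to rotation and rescaling. Lemma~\ref{lem:nodal-profile}(iii) gives $|\nabla^\ell z_j|\le C\langle y\rangle^{-1-\ell}$.

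\textbf{Step 1: the derivative has a multipole expansion losing at most one power.} Repeat the Taylor argument of Lemma~\ref{lem:appA-bubble-expansion} for $\partial_{P_j}Q_A$. The center derivatives $\partial_{c_i}$ in rescaled variables equal $\eps\partial_{b_i}$ in original ones. Differentiating the multipole coefficients $q(\widehat\xi)$, $w$, $W$ in $\tau,\vartheta,\Lambda$ produces bounded coefficients, but the monopole derivative $\partial_\tau q(\widehat\xi)=\nabla q(\widehat\xi)=O(1)$ is no longer small. The expansion of $\partial_{P_j}Q_A$ away from $b$ is then:
\begin{itemize}
\item a monopole $\eps^{1/2}\kappa_j/|x-b|$ with $|\kappa_j|\le C$;
\item a dipole term with coefficient $O(\eps^{3/2})$, including $\eps\,\partial_b$ of the leading monopole;
\item a remainder obeying \eqref{eq:appA-bubble-remainder}.
\end{itemize}

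\textbf{Step 2: the monopole is the delicate term.} Summing over images with Lemma~\ref{lem:appA-image-separation}, the dipole and higher terms give $\partial T_A$ of size $C\eps^{3/2}K^2$. The boundary corrector $\partial\eta_A$ is bounded via Lemma~\ref{lem:appC-HN-estimates} by $C\eps^{3/2}Kd^{-1}\le C\eps^{3/2}K^2$, using $Kd\ge1$. The normal derivative remainder gives $\partial\mathcal R_A=O(\eps^{7/2}Kd^{-4})$ by Proposition~\ref{prop:appA-neumann-correction}. The monopole term with $\kappa_j=O(1)$ is what needs care. I would not estimate $\eps^{1/2}\kappa_j\gamma$ and $\eps^{1/2}\kappa_j H_N^e$ separately, since each is of size $\eps^{1/2}K\log K$. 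Instead I would treat the monopole exactly as in the original construction. Its contribution to $\partial(T_A-\eta_A)$, together with the source $3K\eps^{1/2}\kappa_j$ in the $w_A$-equation, is the Neumann projection of $\eps^{1/2}\kappa_j\sum_g|gx-b|^{-1}$. That projection equals $\eps^{1/2}\kappa_j$ times a $\mu$-dependent smooth corrector solving $(\Delta-\mu)u=$ const. It is bounded in sup norm via the weak maximum principle and Lemma~\ref{lem:appA-neumann-green-estimate}, with $L^2$ norm of $E_K$ of the monopole being $O(\eps^{1/2}K^{1/2}\cdot K^{1/2})$. The required bound holds provided the combination is $O(\eps^{1/2}K^2)$ in original variables, which is exactly the claimed $C\eps K^2$ after rescaling. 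An alternative to this direct computation is the Green-kernel argument already used in Lemma~\ref{lem:appA-parameter-projection}, now applied to $\partial(V-\mathcal U)$, which solves $(\Delta-\mu\eps^2)\partial(V-\mathcal U)=\mu\eps^2\partial\mathcal U$ plus boundary flux. This is the main obstacle: the $O(1)$ monopole coefficient kills the smallness of $q(\widehat\xi)$ exploited before. I would handle it through the $(\Delta-\mu\eps^2)$ equation with the kernel bound \eqref{eq:neumann-pointwise}, summing $\sum_g\langle y-gb_\eps\rangle^{-1}\le C(1+\eps K^2)$ and controlling the $\mu\eps^2$ source via the maximum principle.

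\textbf{Step 3: the sector correction.} For $\partial w_A$, the right-hand side of \eqref{eq:appA-sector-correction} is differentiated term by term. $\partial\sum_{g\ne I}Q_A(gx)^5$ is bounded by $C\eps^{5/2}K^5+\dots\le C\eps^{3/2}K^2$, using the $O(1)$ coefficient in $|Q_A|^4|\partial Q_A|$. The $\mu(Q_A+T_A-\eta_A)$ term is handled in $L^2$ after reflection as before. Combining the three steps, rescaling and taking the supremum over $j$ yields \eqref{eq:appA-projection-error-derivative}.
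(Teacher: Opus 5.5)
Your decomposition route is workable, and your bounds for the dipole and higher multipoles, for $\partial_{P_j}\mathcal R_A$, and for $\partial_{P_j}\sum_{g\ne I}Q_A(g\cdot)^5$ are fine. The gap is the step you yourself call the main obstacle: it is never proved. You say the monopole pieces, together with the source $3K\eps^{1/2}\kappa_j$, form a Neumann projection equal to $\eps^{1/2}\kappa_j$ times a corrector solving $(\Delta-\mu)u=\mathrm{const}$. That identification is not correct, because the differentiated $w_A$ equation also carries the non-constant sources $-\mu\,\partial_{P_j}(Q_A+T_A-\eta_A)$. You then only say the estimate holds ``provided the combination is $O(\eps^{1/2}K^2)$'', which is exactly what has to be shown.

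The fallback does not close the step either. First, $\partial_{P_j}(V_{\mathcal P}-\mathcal U_{\mathcal P})$ has the inhomogeneous Neumann data $-\partial_\nu Z_j$ on $\partial\Omega_\eps$, flat faces included, since $Z_j$ is not reflection-invariant; \eqref{eq:neumann-pointwise} does not cover this. Second, the maximum principle only yields $\|T_\eps(\mu\eps^2Z_j)\|_\infty\le\|Z_j\|_\infty=O(1)$. Third, $\sum_g\langle y-gb_\eps\rangle^{-1}$ contains the $O(1)$ term $g=I$, so it bounds $\partial_{P_j}V_{\mathcal P}$, not the difference.

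In fact the obstacle is illusory: no cancellation is needed, because $K\log K\le K^2$. By Lemma~\ref{lem:appA-gamma-estimates} and Lemma~\ref{lem:appC-HN-estimates} (with $|x-b^*|\ge d_*\ge K^{-1}$),
\[
\eps^{1/2}|\kappa_j|\bigl(\|\gamma(\cdot,b)\|_{L^\infty(\Sigma_K)}+\|H_N^e(\cdot,b)\|_{L^\infty(\Sigma_K)}\bigr)\le C\eps^{1/2}K\log K,
\]
which is already below your target $C\eps^{1/2}K^2$. In the differentiated $w_A$ equation, the maximum principle shows that the constant source contributes at most $3K\eps^{1/2}|\kappa_j|/\mu$ and that $\mu\,\partial_{P_j}(T_A-\eta_A)$ contributes $O(\eps^{1/2}K\log K)$. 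The term $\mu\,\partial_{P_j}Q_A$ is handled in $L^2$ after even reflection via Lemma~\ref{lem:appA-neumann-green-estimate}, as in Proposition~\ref{prop:appA-projection-estimate}. So estimating each piece separately closes your argument.

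The paper takes a shorter path that bypasses the multipole bookkeeping. It sets $H_j=\partial_{P_j}V_{\mathcal P}|_{\mathcal P_A}-\chi Z_j$, which has homogeneous Neumann data because $\chi Z_j$ vanishes near $\partial\Omega_\eps$. It satisfies $(\Delta-\mu\eps^2)H_j=-5(1-\chi)U^4Z_j+\mu\eps^2\chi Z_j-\mathcal C(Z_j)$. The three Newton potentials are $O(\bar\eps^3)$, $O(\eps^2\bar\eps^{-1})$ and $O(\bar\eps)$; summing over the $K$ copies in \eqref{eq:neumann-pointwise} gives $C\eps K^2$, and $\|(1-\chi)Z_j\|_\infty\le C\bar\eps$ finishes. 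The cutoff is exactly what removes the boundary flux and the $O(1)$ core that defeat your fallback.
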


\begin{proof}
Let $Z_j$ be defined by \eqref{eq:fdr-kernel-functions}.
The profile formula gives
\[
|Z_j(y)|+\langle y-b_\eps\rangle|\nabla Z_j(y)|
\leq C\langle y-b_\eps\rangle^{-1},
\qquad
\Delta Z_j+5U^4Z_j=0.
\]
Set
\[
H_j=\left.\partial_{P_j}V_{\mathcal P}\right|_{\mathcal P_A}
-\chi Z_j.
\]
Since $\chi Z_j$ vanishes near $\partial\Omega_\eps$,
$H_j$ satisfies homogeneous Neumann boundary conditions
and
\[
(\Delta-\mu\eps^2)H_j
=-5(1-\chi)U^4Z_j+\mu\eps^2\chi Z_j-\mathcal C(Z_j).
\]
Writing $\rho=|y-b_\eps|$, the source is bounded by
\[
C\left(
\mathbf 1_{\{\rho\geq 2r/\bar\eps\}}\langle\rho\rangle^{-5}
+\eps^2\mathbf 1_{\{\rho\leq 4r/\bar\eps\}}
 \langle\rho\rangle^{-1}
+\bar\eps^3
 \mathbf 1_{\{2r/\bar\eps\leq\rho\leq 4r/\bar\eps\}}
\right).
\]
The Newton potentials of these three terms have uniform
norms bounded by $C\bar\eps^3$, $C\eps^2\bar\eps^{-1}$,
and $C\bar\eps$, respectively.
Thus \eqref{eq:neumann-pointwise}, summed over the $K$
reflected copies, gives
\[
\|H_j\|_\infty
\leq CK\bigl(\bar\eps^3+\eps^2\bar\eps^{-1}+\bar\eps\bigr)
\leq C\eps K^2.
\]
Since $\|(1-\chi)Z_j\|_\infty\leq C\bar\eps$, we obtain
\eqref{eq:appA-projection-error-derivative}.
\end{proof}

\begin{lemma}
\label{lem:appA-projection-error-gradient}
Under \eqref{eq:outline-parameters}, for
$K\geq K_0(\delta)$ sufficiently large, the profiles
$U$ and $V$ defined in \eqref{eq:fdr-profiles} satisfy
\begin{equation*}
\|\nabla(V-U)\|_{L^\infty(D_K)}
\leq C\delta^{-5/2}K^{-4},
\end{equation*}
\end{lemma}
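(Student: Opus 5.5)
Since $V-U=\eps^{1/2}\widetilde T_A(\eps\,\cdot)$ by \eqref{eq:appA-projection-remainder-definition}, the plan is to show $\|\nabla_x\widetilde T_A\|_{L^\infty(\eps D_K)}\leq C\delta^{-5/2}\eps^{-3/2}K^{-4}$. This is a bound with one extra factor $\eps$ relative to the sup bound: recall $\eps^{1/2}\cdot\eps^{3/2}K^2\sim\delta^{-2}K^{-4}$. The set $\eps D_K=B_{4r/K}(b)$ lies well inside $\Sigma_K$ at distance $\gtrsim K^{-1}$ from the flat faces and $\gtrsim d$ from the sphere. I will therefore split $\widetilde T_A=T_A-\eta_A-\mathcal R_A-w_A$ and estimate the $x$-gradient of each piece separately on this ball.

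For $T_A$ I use the expansion \eqref{eq:appA-interaction-expansion}. Differentiating the three explicit $\gamma$-terms and using the derivative bounds of Lemma~\ref{lem:appA-gamma-estimates} together with Lemma~\ref{lem:appA-image-separation} gives one additional factor $K$ per $x$-derivative. This yields $|\nabla T_A|\leq C(\eps^{1/2}|q(\widehat\xi)|K^2+\eps^{3/2}K^3+\eps^{5/2}K^4)$, and the $m=1$ remainder contributes $C\eps^{7/2}K^5$. For $\eta_A$ I differentiate \eqref{eq:appA-boundary-correction-definition} in $x$ via Lemma~\ref{lem:appC-HN-estimates}; $H_N^e(\cdot,b)$ is smooth on $B_{4r/K}(b)$, with each $x$-derivative costing at most $d^{-1}\lesssim K$ there. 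For $\mathcal R_A$, interior gradient estimates for $(\Delta-\mu)\mathcal R_A=0$ give $|\nabla\mathcal R_A|\leq C\|\mathcal R_A\|_\infty$, which is $\leq C\eps^{7/2}Kd^{-4}$.

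The main step is $w_A$. Its right-hand side in \eqref{eq:appA-sector-correction} contains $-\mu Q_A$, which is large near $b$ (of order $\eps^{-1/2}$ on $B_\eps(b)$), so a crude gradient bound is not enough. I plan to use the Green representation after even reflection. I would need the gradient analogue $|\nabla_xG_\mu^N(x,y)|\leq C_\mu|x-y|^{-2}$ of Lemma~\ref{lem:appA-neumann-green-estimate}, which I expect to come from the same reference, or else from splitting off the singular part $-\frac{1}{4\pi|x-y|}$ and using the smoothness of the regular part for $x$ in the interior. With it, the contribution of $\mu Q_A$ is $\int|x-y|^{-2}|Q_A(y)|\,dy$. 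By \eqref{eq:appA-bubble-pointwise} this is bounded by $C\eps^{1/2}|q(\widehat\xi)|\log K+C\eps^{3/2}\int|x-y|^{-2}\langle (y-b)/\eps\rangle^{-2}\eps^{-2}\,dy\lesssim \eps^{1/2}$ uniformly, summed over the $K$ reflected copies with the separation from Lemma~\ref{lem:appA-image-separation}. The bounded terms ($3K\eps^{1/2}q(\widehat\xi)$, $\mu T_A$, $\mu\eta_A$, and the $Q_A(gx)^5$ sum) are handled by Lemma~\ref{lem:neumann-pointwise}-type kernel integration with $|x-y|^{-2}$, giving $O(\eps^{3/2}K^2)$. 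As a fallback if the Green gradient bound is unavailable, I would use interior Schauder estimates on balls of radius $\sim K^{-1}$ around points of $B_{4r/K}(b)$. This gives $|\nabla w_A|\leq C(K\|w_A\|_\infty+K^{-1}\|f\|_\infty)$, but the singular piece $\mu Q_A$ must still be treated by its explicit Newton potential.

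Collecting terms: on $B_{4r/K}(b)$ we have $|\nabla\widetilde T_A|\leq C\delta^{-1/2}\eps^{3/2}K^3+C\eps^{1/2}$. Multiplying by $\eps^{3/2}$ (one $\eps^{1/2}$ from the rescaling of $V-U$ and one $\eps$ from the chain rule) gives $C\delta^{-1/2}\eps^3K^3+C\eps^2\leq C\delta^{-5/2}K^{-4}$, using $\eps\leq\delta^{-1}K^{-3}$ and $|q(\widehat\xi)|\leq C|a|$. I expect the main obstacle to be the $w_A$ term, specifically controlling the gradient of the Newton potential of the concentrated source $\mu Q_A$ uniformly near $b$.
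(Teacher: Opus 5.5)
Your argument works, but it takes a much longer route than the paper's, and the obstacle you single out is created by your decomposition.

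\textbf{The paper's route.} The paper never splits $\widetilde T_A$. Since $U$ is an exact entire solution and $V$ solves the projected problem, one has $\Delta(V-U)=\mu\eps^2V$ in $\Omega_\eps$. The right-hand side is at most $C\eps^2$, because $\|V\|_\infty\le\|U\|_\infty+\|V-U\|_\infty\le C$. Since $B_2(y)\subset\Omega_\eps$ for $y\in D_K$, the interior gradient estimate on unit balls gives $|\nabla(V-U)(y)|\le C(\|V-U\|_\infty+\eps^2\|V\|_\infty)$. Then \eqref{eq:fdr-projection-bound} finishes the proof.

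\textbf{Why your obstacle disappears.} In your original variables this is the interior estimate on balls of radius $\sim\eps$ for $\Delta_x\widetilde T_A=\mu PQ_A$. The factor $\eps^{-1}$ multiplying the sup bound $\delta^{-1/2}\eps^{3/2}K^2$ is exactly what the statement allows. The concentrated source contributes only $\eps\cdot\eps^{-1/2}$. So the large $-\mu Q_A$ term in the $w_A$ equation, which you call the main obstacle, is an artifact of splitting $\widetilde T_A$. Your Schauder fallback fails only because you chose balls of radius $K^{-1}$ instead of $\eps$.

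\textbf{What your route gains and costs.} It gives a sharper result. Collecting your bounds yields $\|\nabla(V-U)\|_{L^\infty(D_K)}\le C(\delta^{-1/2}\eps^3K^3+\eps^2)=O(\delta^{-7/2}K^{-6})$. This reflects that $\widetilde T_A$ varies on the scale $K^{-1}$ rather than $\eps$. However, only the weaker bound is used later, in Proposition~\ref{prop:fdr-inner-parameter-dependence}. The price is input the paper does not prove:
\begin{itemize}
\item a Neumann Green gradient bound $|\nabla_xG^N_\mu(x,y)|\le C|x-y|^{-2}$. This is standard, for instance from the image structure and $|x|\,|y-x^*|\ge|x-y|$. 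Your alternative of using the smooth regular part is not uniform as stated, since $x$ is only at distance $\approx d$ from $\partial B_1$.
\item mixed-derivative versions of Lemmas~\ref{lem:appA-gamma-estimates} and~\ref{lem:appC-HN-estimates}.
\item a correction for $\mathcal R_A$. For the same distance reason, interior estimates give only $|\nabla\mathcal R_A|\le Cd^{-1}\|\mathcal R_A\|_\infty$, not $C\|\mathcal R_A\|_\infty$. This is still negligible.
\end{itemize}
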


\begin{proof}
The equations for $U$ and $V$ give
\[
\Delta(V-U)=\mu\eps^2V
\qquad\text{in }\Omega_\eps.
\]
Since $B_2(y)\subset\Omega_\eps$ for every $y\in D_K$.
Applying the interior gradient estimate on $B_2(y)$,
we obtain
\[
\begin{aligned}
|\nabla(V-U)(y)|\leq C\Bigl(
\|V-U\|_{L^\infty(\Omega_\eps)}
+\eps^2\|V\|_{L^\infty(\Omega_\eps)}
\Bigr)
\leq C\delta^{-5/2}K^{-4}.
\end{aligned}
\]
\end{proof}
\section{Energy Expansion}
\label{app:energy}

In this appendix, we derive an asymptotic expansion of the energy
$J(P Q_A)$ under \eqref{eq:outline-parameters} and identify its
leading-order dependence on the parameters.
We first collect the following estimates.

\begin{lemma}
\label{lem:appB-bubble-integrals}
Under \eqref{eq:outline-parameters}, we have
\begin{equation*}
\int_{\Sigma_K}|Q_A|^\ell\,dx
\leq C
\begin{cases}
\delta^{-3}\varepsilon^{3/2}K^{-1/3}(\log K)^{-1},
&\ell=1,\\[3pt]
\varepsilon^{3-\ell/2},
&\ell=2,3,4,5,
\end{cases}
\end{equation*}
\begin{equation}
\int_{\Sigma_K}Q_A^6\,dx
=\int_{\mathbb R^3}q(x)^6\,dx
+O\!\left(\varepsilon^9K^9\right),
\label{eq:appB-bubble-sixth-moment}
\end{equation}
\begin{equation}
\int_{\Sigma_K}Q_A^5\,dx
=4\pi\varepsilon^{1/2}q(\widehat\xi)
+O\!\left(\varepsilon^{15/2}K^7\right),
\label{eq:appB-bubble-fifth-moment}
\end{equation}
\begin{equation*}
\int_{\Sigma_K}Q_A(x)^5(x-b)\,dx
=4\pi\varepsilon^{3/2}w
+O\!\left(\varepsilon^{15/2}K^6\right),
\end{equation*}
and
\begin{equation*}
\int_{\Sigma_K}|Q_A|^5|x-b|^2\,dx
=O\!\left(\varepsilon^{5/2}\right).
\end{equation*}
\end{lemma}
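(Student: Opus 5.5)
The plan is to change variables $x=b+\varepsilon y$ and use the Kelvin structure of $Q_A$. Substituting $y=\varepsilon^{-1}(x-b)$ gives $Q_A(x)=\varepsilon^{-1/2}|y|^{-1}q(\widehat\xi+R_\beta y/|y|^2)$. Applying the further inversion $z=R_\beta y/|y|^2$ together with the Kelvin invariance in Lemma~\ref{lem:nodal-profile}(i), I would write
\[
Q_A(b+\varepsilon y)=\varepsilon^{-1/2}\,\widetilde q(R_\beta y),
\qquad
\widetilde q(y'):=\frac1{|y'|}\,q\!\left(\widehat\xi+\frac{y'}{|y'|^2}\right),
\]
where $\widetilde q$ is again an entire finite-energy solution, namely the Kelvin transform of the translate $q(\cdot+\widehat\xi)$. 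Expanding $q(\widehat\xi+h)$ with $|h|$ small shows
\[
\widetilde q(y')=\frac{q(\widehat\xi)}{|y'|}+\frac{\nabla q(\widehat\xi)\cdot y'}{|y'|^3}+O(|y'|^{-3})
\quad\text{for }|y'|\ge1,
\]
and $\widetilde q$ is bounded for $|y'|\le1$. Hence $|Q_A(x)|\le C\varepsilon^{-1/2}$ for $|x-b|\le\varepsilon$, while for $s=|x-b|\ge\varepsilon$ we have \eqref{eq:appA-bubble-pointwise}. Every integral in the statement is then split into the region $|x-b|\le\varepsilon$ and its complement. The domain $\Sigma_K$ is contained in a sector whose angular width is of order $K^{-1}$. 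Since $\operatorname{dist}(b,\partial\Sigma_K)>K^{-1}$ by \eqref{eq:appA-profile-smallness}, $\Sigma_K$ contains $B_{cK^{-1}}(b)$, and its complement is where all the error terms come from.

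\emph{The $L^\ell$ bounds.} For $\ell\ge2$ I use \eqref{eq:appA-bubble-pointwise}. The near region contributes $\varepsilon^{3-\ell/2}$. The $\varepsilon^{3/2}s^{-2}$ tail contributes $\varepsilon^{3\ell/2}\int_\varepsilon^\infty s^{2-2\ell}\,ds\sim\varepsilon^{3-\ell/2}$ for $\ell\ge2$. The $q(\widehat\xi)$ tail, with $|q(\widehat\xi)|\le C|a|$, is at most $\varepsilon^{\ell/2}|a|^\ell\int_{\varepsilon}^{C}s^{2-\ell}\,ds$. This is of lower order: the worst case is $\ell=2$, where it is $\varepsilon a^2$, and this is much smaller than $\varepsilon^2$ because $a^2\ll\varepsilon$ under \eqref{eq:outline-parameters}. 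For $\ell=1$ the $s^{-1}$ tail is not integrable near infinity, but $\Sigma_K$ has volume $O(K^{-1})$. Its contribution is therefore at most $C\varepsilon^{1/2}|a|\int_{\Sigma_K}|x-b|^{-1}\,dx\le C\varepsilon^{1/2}|a|K^{-1/3}$ (or a comparable bound). Inserting $|a|\le\delta^{-2}K^{-7/3}(\log K)^{-1}$ and $\varepsilon\sim K^{-3}$, and comparing with the $\varepsilon^{5/2}$ and $\varepsilon^{3/2}\log$ pieces, gives the stated $\delta^{-3}\varepsilon^{3/2}K^{-1/3}(\log K)^{-1}$ after a crude bookkeeping of the $\delta$-powers.

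\emph{The moment identities.} After the substitution $x=b+\varepsilon R_\beta^{T}y'$ I obtain
\[
\int_{\Sigma_K}Q_A^6\,dx=\int_{\Omega'}\widetilde q^{\,6},\qquad
\int_{\Sigma_K}Q_A^5\,dx=\varepsilon^{1/2}\int_{\Omega'}\widetilde q^{\,5},\qquad
\int_{\Sigma_K}Q_A^5(x-b)\,dx=\varepsilon^{3/2}R_\beta^{T}\int_{\Omega'}\widetilde q^{\,5}y',
\]
where $\Omega'\supset B_{c/(\varepsilon K)}$. On all of $\mathbb R^3$ these integrals are exact. Inversion invariance of $\int u^6$ gives $\int\widetilde q^{\,6}=\int q^6$. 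For the fifth moment, $\int_{\mathbb R^3}\widetilde q^{\,5}=-\int\Delta\widetilde q=4\pi q(\widehat\xi)$, by the flux of the leading term $q(\widehat\xi)/|y'|$ through large spheres. Similarly, $\int\widetilde q^{\,5}y'_i=-\int y'_i\Delta\widetilde q$; integrating by parts on $B_R$ and letting $R\to\infty$ with the dipole term $\nabla q(\widehat\xi)\cdot y'/|y'|^3$ gives $4\pi\,\partial_iq(\widehat\xi)$. Rotating back turns this into $4\pi w$. The errors are the integrals over $|y'|\ge c/\bar\varepsilon$, where $|\widetilde q|\le C|y'|^{-1}$:
\[
\int_{|y'|>c/\bar\varepsilon}|y'|^{-6}\sim\bar\varepsilon^{3},\qquad
\int_{|y'|>c/\bar\varepsilon}|y'|^{-5}\sim\bar\varepsilon^{2},
\]
and the first moment similarly. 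Including the part of $\Omega'$ lying outside the ball, these tails must be converted to the stated orders $\varepsilon^9K^9$, $\varepsilon^{15/2}K^7$, and so on. I expect the stated rates to require sharper decay in the far region: the tail is only $\ge c/\bar\varepsilon$ away, and with $|\widetilde q|\sim|y'|^{-1}$ one gets $\bar\varepsilon^{3}$, not $\bar\varepsilon^{9}$. I would first try the improved bound $|\widetilde q(y')|\le C(|a||y'|^{-1}+|y'|^{-2})$ for the regime where $q(\widehat\xi)$ is tiny, and obtain extra powers from the smallness of $a$.

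\emph{The last bound.} For $\int|Q_A|^5|x-b|^2$, the near region gives $\varepsilon^{-5/2}\varepsilon^{5}=\varepsilon^{5/2}$. The far region gives $\varepsilon^{15/2}\int_\varepsilon s^{-10+2+2}\,ds\sim\varepsilon^{5/2}$, together with a negligible $a^5$ term.

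\emph{Main obstacle.} The hardest part is matching the precise remainder exponents in \eqref{eq:appB-bubble-sixth-moment} and \eqref{eq:appB-bubble-fifth-moment}. These depend on a careful far-field decomposition of $\widetilde q$ into monopole (size $|a|$), dipole and higher terms, each integrated over $\mathbb R^3\setminus\Omega'$. I would handle this by inserting the expansion of Lemma~\ref{lem:appA-bubble-expansion} in the exterior region $|x-b|\ge cK^{-1}$ and estimating term by term.
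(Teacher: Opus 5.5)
Your overall route is sound: the substitution $x=b+\eps R_\beta^{T}y'$ and the whole-space identities all check out. Those identities are $\int\widetilde q^{\,6}=\int q^6$, $\int\widetilde q^{\,5}=4\pi q(\widehat\xi)$ and $\int\widetilde q^{\,5}y'=4\pi\nabla q(\widehat\xi)$, obtained from the monopole and dipole fluxes. Your bounds for $\ell=2,\dots,5$ and for $\int|Q_A|^5|x-b|^2$ are also correct. The paper itself gives no argument and only defers to \cite{SunWeiYangFull}.

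\textbf{Moment remainders.} The genuine gap is that you never prove the remainders in the three moment identities, and these are the quantitative content of the lemma. You estimate the tail over $\{|y'|\geq c/\bar\eps\}$ with $|\widetilde q|\le C|y'|^{-1}$. That only gives errors of order $\bar\eps^{3}$, $\eps^{1/2}\bar\eps^{2}$ and $\eps^{3/2}\bar\eps$, and you leave the improvement as something to try.

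No far-field expansion term by term is needed. You already have $|\widetilde q(y')|\le C(|q(\widehat\xi)||y'|^{-1}+|y'|^{-2})$ for $|y'|\ge1$. Under \eqref{eq:outline-parameters}, $|q(\widehat\xi)|\le C|a|\le C\delta^{-2}K^{-7/3}(\log K)^{-1}\le C\bar\eps$ once $K\ge K_0(\delta)$, since $\bar\eps\ge\delta K^{-2}$. With $R=c/\bar\eps$ this gives
\[
\begin{aligned}
\int_{|y'|>R}\widetilde q^{\,6}\,dy'&\le C\bigl(|a|^6R^{-3}+R^{-9}\bigr)\le C\bar\eps^{9},\\
\int_{|y'|>R}|\widetilde q|^5\,dy'&\le C\bigl(|a|^5R^{-2}+R^{-7}\bigr)\le C\bar\eps^{7},\\
\int_{|y'|>R}|\widetilde q|^5|y'|\,dy'&\le C\bigl(|a|^5R^{-1}+R^{-6}\bigr)\le C\bar\eps^{6}.
\end{aligned}
\]
Multiplying by $1$, $\eps^{1/2}$ and $\eps^{3/2}$ gives exactly $\eps^9K^9$, $\eps^{15/2}K^7$ and $\eps^{15/2}K^6$. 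The mechanism is $|a|\le C\eps K$: because of it, the monopole part of each tail integral is dominated by the dipole part. This is the smallness of $a$ you suspected, but it has to be carried out.

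\textbf{The case $\ell=1$.} This estimate also does not close as written. Your bound $\int_{\Sigma_K}|x-b|^{-1}\,dx\le CK^{-1/3}$ loses a factor $K^{2/3}$ against the claim. You need the thin-sector estimate $\int_{\Sigma_K}F(|x-b|)\,dx\le CK^{-1}\int_0^{C_0}F(t)\,t\,dt$ for nonincreasing $F$, as used in the proof of Lemma~\ref{lem:appA-gamma-estimates}. Since $|Q_A|\le C\bigl(\eps^{1/2}|a|s^{-1}+\eps^{3/2}\min\{s^{-2},\eps^{-2}\}\bigr)$ with $s=|x-b|$, this estimate gives $\int_{\Sigma_K}|x-b|^{-1}dx\le CK^{-1}$. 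It also gives $\eps^{3/2}\int_{\Sigma_K}\min\{|x-b|^{-2},\eps^{-2}\}\,dx\le C\eps^{3/2}K^{-1}\log K$ for the dipole tail; the whole-space bound $C\eps^{3/2}$ would be too large.

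The monopole tail is then $C\eps^{1/2}|a|K^{-1}\le C\delta^{-2}\eps^{1/2}K^{-10/3}(\log K)^{-1}$. Using $\eps^{-1}\le\delta^{-1}K^{3}$, this is at most $C\delta^{-3}\eps^{3/2}K^{-1/3}(\log K)^{-1}$. This term dominates and is exactly the stated bound.
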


\begin{lemma}
\label{lem:appB-boundary-interaction}
Under \eqref{eq:outline-parameters}, we have
\begin{equation*}
\begin{aligned}
\frac1{4\pi}\int_{\Sigma_K}Q_A^5\eta_A\,dx
={}&\varepsilon[q(\widehat\xi)]^2H_N^e(b,b)\\
&+\varepsilon^2q(\widehat\xi)w\cdot
\bigl[\nabla_xH_N^e(b,b)+\nabla_pH_N^e(b,b)\bigr]\\
&+\varepsilon^3w^T\nabla_{x,p}^2H_N^e(b,b)w
+O\!\left(\varepsilon^4K^4\right).
\end{aligned}
\end{equation*}
\end{lemma}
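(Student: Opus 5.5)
The identity will come from expanding $\eta_A$ around the concentration point $b$ and integrating term by term against $Q_A^5$, using the moment identities of Lemma~\ref{lem:appB-bubble-integrals}. Recall that $\eta_A$ in \eqref{eq:appA-boundary-correction-definition} is $\varepsilon^{1/2}q(\widehat\xi)H_N^e(\cdot,b)+\varepsilon^{3/2}w\cdot\nabla_pH_N^e(\cdot,b)+\frac{\varepsilon^{5/2}}6W_{ij}\partial^2_{p_ip_j}H_N^e(\cdot,b)$. The kernel $H_N^e(x,p)$ is smooth in $x$ near $b$, because all singularities sit at the reflected image points, which lie at distance $\gtrsim d$ or $\gtrsim K^{-1}$ from $b$. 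I will therefore Taylor expand each of these three functions in $x$ at $x=b$ on a ball $|x-b|\le \kappa d$, and treat the complement separately.

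First key step: obtain derivative bounds for $H_N^e$ near the diagonal. I plan to invoke Lemma~\ref{lem:appC-HN-estimates} to get
\[
|\nabla_x^m\nabla_p^nH_N^e(x,p)|\le CKd^{-m-n},\qquad m+n\ge1,
\]
for $x,p$ within $\kappa d$ of $b$, possibly with $\log$ corrections when $m+n=0$. Expanding to first order gives
\[
H_N^e(x,b)=H_N^e(b,b)+\nabla_xH_N^e(b,b)\cdot(x-b)+O(Kd^{-2}|x-b|^2).
\]
The same expansion applied to $\nabla_pH_N^e$ gives $\nabla_pH_N^e(b,b)+\nabla^2_{x,p}H_N^e(b,b)(x-b)+O(Kd^{-3}|x-b|^2)$. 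For the third term in $\eta_A$, I only keep the zeroth-order part.

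Second key step: integrate against $Q_A^5$. I will use
\[
\int Q_A^5=4\pi\varepsilon^{1/2}q(\widehat\xi)+\cdots,\qquad
\int Q_A^5(x-b)=4\pi\varepsilon^{3/2}w+\cdots,\qquad
\int|Q_A|^5|x-b|^2=O(\varepsilon^{5/2}),
\]
all from Lemma~\ref{lem:appB-bubble-integrals}. These produce exactly the three displayed terms $\varepsilon q^2H_N^e$, $\varepsilon^2 qw\cdot(\nabla_x+\nabla_p)H_N^e$, and $\varepsilon^3 w^T\nabla^2_{x,p}H_N^e w$. The two mixed terms combine because $\nabla_xH_N^e(b,b)$ multiplies $\varepsilon^{1/2}q\cdot\varepsilon^{3/2}w$, and so does $\nabla_pH_N^e(b,b)$ times $\varepsilon^{3/2}w\cdot\varepsilon^{1/2}q$.

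Third key step: show that everything left over is $O(\varepsilon^4K^4)$. This covers the following pieces.
\begin{enumerate}
\item The $W$-term, of size $\varepsilon^{5/2}\cdot Kd^{-2}\cdot\varepsilon^{1/2}|q(\widehat\xi)|$. Using $|q(\widehat\xi)|\le C|a|$ and the bound on $a$, this is far below $\varepsilon^4K^4$.
\item The quadratic Taylor remainders, bounded by $\varepsilon^{1/2}|q|\,Kd^{-2}\varepsilon^{5/2}$ and $\varepsilon^{3/2}Kd^{-3}\varepsilon^{5/2}$. With $d\sim K^{-2/3}$, the latter is $\varepsilon^4K^3$.
\item The error terms in the moment identities, multiplied by $\|\eta_A\|$-type sizes.
\item The region $|x-b|\ge\kappa d$. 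There $|Q_A|^5\lesssim\varepsilon^{15/2}|x-b|^{-10}$, so $\int|Q_A|^5\lesssim\varepsilon^{15/2}d^{-7}$. Multiplying by $\|\eta_A\|_\infty\le C\delta^{-4}\varepsilon^{3/2}Kd^{-1}$ from Proposition~\ref{prop:appA-boundary-correction} gives a negligible contribution.
\end{enumerate}

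The main obstacle is the bookkeeping of the error terms rather than the structure of the argument. Two points need care. One is the $\log(C/d)$ growth of $H_N^e$ itself; it enters only when multiplied by $q(\widehat\xi)$ squared. The other is whether the remainder is really $\varepsilon^4K^4$, uniformly in $\delta$, rather than something like $\varepsilon^4Kd^{-4}$. Since $Kd^{-4}\sim K^{11/3}\le K^4$, the stated bound is consistent with this, and I would verify each remainder exponent against the regime $\varepsilon\sim K^{-3}$, $d\sim K^{-2/3}$.
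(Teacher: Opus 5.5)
Your proposal is correct and matches the paper's argument, which defers to the Taylor-expansion proof of \cite[Lemma~4.2]{SunWeiYangFull}: expand $\eta_A$ at $x=b$, integrate against $Q_A^5$ using the moments in Lemma~\ref{lem:appB-bubble-integrals}, and absorb remainders such as $\varepsilon^4Kd^{-3}\le\delta^{-3}\varepsilon^4K^3$ and $\varepsilon^3|q(\widehat\xi)|Kd^{-2}\le C\delta^{-4}\varepsilon^3(\log K)^{-1}$ into $O(\varepsilon^4K^4)$ once $K\ge K_0(\delta)$. Your split at $|x-b|=\kappa d$ is unnecessary, because $\Sigma_K$ is convex and the sum bounds behind Lemma~\ref{lem:appC-HN-estimates} give $CKd^{-m}$ for the needed $x$- and mixed derivatives on all of $\Sigma_K$; if you keep the split, your tail bound for $|Q_A|^5$ must also include the term $\varepsilon^{5/2}|q(\widehat\xi)|^5|x-b|^{-5}$, which is harmless.
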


\begin{lemma}
\label{lem:appB-image-interaction}
Under \eqref{eq:outline-parameters}, we have
\begin{equation*}
\begin{aligned}
\frac1{4\pi}\int_{\Sigma_K}Q_A^5T_A\,dx
={}&\varepsilon[q(\widehat\xi)]^2\gamma(b,b)\\
&+\varepsilon^2q(\widehat\xi)w\cdot
\bigl[\nabla_x\gamma(b,b)+\nabla_p\gamma(b,b)\bigr]\\
&+\varepsilon^3w^T\nabla_{x,p}^2\gamma(b,b)w
+O\!\left(\varepsilon^4K^4\right).
\end{aligned}
\end{equation*}
\end{lemma}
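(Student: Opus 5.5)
The plan is to substitute the decomposition $T_A=\sum_{g\neq I}Q_A\circ g$ into $\int_{\Sigma_K}Q_A^5T_A\,dx$ and use the expansion of Proposition~\ref{lem:appA-interaction-expansion}:
\[
T_A=\varepsilon^{1/2}q(\widehat\xi)\gamma(x,b)+\varepsilon^{3/2}w\cdot\nabla_p\gamma(x,b)+\tfrac{\varepsilon^{5/2}}6W_{ij}\partial^2_{p_ip_j}\gamma(x,b)+E_T .
\]
We then integrate each term against $Q_A^5$. The remainder is immediate: $|\int Q_A^5E_T|\leq\|E_T\|_\infty\int|Q_A|^5\leq C\varepsilon^{7/2}K^4\cdot\varepsilon^{1/2}=C\varepsilon^4K^4$ by \eqref{eq:appA-interaction-remainder} and Lemma~\ref{lem:appB-bubble-integrals}. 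For the three smooth terms, the function $x\mapsto\gamma(x,p)$ (and its $p$-derivatives) is smooth on $\Sigma_K$ with $|D_x^m\gamma|\lesssim K^{1+m}$ up to logarithms, by Lemma~\ref{lem:appA-gamma-estimates}. We therefore Taylor expand each in $x$ around $b$ to second order and use the moment identities of Lemma~\ref{lem:appB-bubble-integrals}:
\[
\int Q_A^5=4\pi\varepsilon^{1/2}q(\widehat\xi)+\dots,\qquad \int Q_A^5(x-b)=4\pi\varepsilon^{3/2}w+\dots,\qquad \int|Q_A|^5|x-b|^2=O(\varepsilon^{5/2}).
\]

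Carrying this out, the term $\varepsilon^{1/2}q(\widehat\xi)\gamma(x,b)$ produces:
\begin{itemize}
\item $4\pi\varepsilon q(\widehat\xi)^2\gamma(b,b)$ from the zeroth moment;
\item $4\pi\varepsilon^2q(\widehat\xi)w\cdot\nabla_x\gamma(b,b)$ from the first moment;
\item a second-order error bounded by $\varepsilon^{1/2}|q(\widehat\xi)|\,\|D^2_x\gamma\|_\infty\,\varepsilon^{5/2}\lesssim|a|\varepsilon^3K^3$, which is $o(\varepsilon^4K^4)$ since $|a|\ll\varepsilon K$ under \eqref{eq:outline-parameters}.
\end{itemize}
The term $\varepsilon^{3/2}w\cdot\nabla_p\gamma(x,b)$ gives:
\begin{itemize}
\item $4\pi\varepsilon^2q(\widehat\xi)w\cdot\nabla_p\gamma(b,b)$ from the zeroth moment;
\item $4\pi\varepsilon^3w^T\nabla^2_{x,p}\gamma(b,b)w$ from the first moment;
\item an error $\varepsilon^{3/2}K^3\varepsilon^{5/2}=\varepsilon^4K^3$.
\end{itemize}
The Hessian term $\varepsilon^{5/2}W_{ij}\partial^2_p\gamma$ contributes only through its zeroth moment, $\varepsilon^{5/2}\cdot\varepsilon^{1/2}|q(\widehat\xi)|K^3\lesssim|a|\varepsilon^3K^3$, and its first moment, $\varepsilon^{5/2}K^4\varepsilon^{3/2}=\varepsilon^4K^4$. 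Both fit the error.

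The errors in the moment identities must also be absorbed. For example, the zeroth-moment error $\varepsilon^{15/2}K^7$ multiplies $\varepsilon^{1/2}|q|K\log K$, and the first-moment error $\varepsilon^{15/2}K^6$ multiplies $\|\nabla_x\gamma\|\lesssim K^2$. All of these are far below $\varepsilon^4K^4$ because $\varepsilon K^3\sim1$. Dividing by $4\pi$ gives the stated formula, and collecting yields the lemma.

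The main obstacle is controlling the Taylor remainders uniformly. The expansion of $\gamma(\cdot,p)$ around $b$ is valid only where the images $gb$ stay at distance $\gtrsim K^{-1}$, but $\Sigma_K$ contains points far from $b$ where $|x-b|$ is not small. I would split $\Sigma_K$ into $\{|x-b|\leq cK^{-1}\}$, where the Taylor expansion applies with the derivative bounds of Lemma~\ref{lem:appA-gamma-estimates}, and its complement, where $|Q_A|^5\lesssim\varepsilon^{5/2}|a|^5s^{-5}+\varepsilon^{15/2}s^{-10}$ by \eqref{eq:appA-bubble-pointwise}. On the complement we use the crude bound $|\gamma|\lesssim K\log K$ to show the tail contributes $O(\varepsilon^{15/2}K^{8}\log K)$, which is negligible. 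On the inner region, the moment identities are then adjusted by the same tail estimates.
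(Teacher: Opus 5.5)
Your proposal is correct and follows essentially the route the paper takes. The paper's proof defers to \cite[Lemma~4.3]{SunWeiYangFull}, which amounts to inserting the expansion of Proposition~\ref{lem:appA-interaction-expansion}, Taylor expanding the kernel terms about $b$ with the bounds of Lemma~\ref{lem:appA-gamma-estimates}, and applying the moment identities of Lemma~\ref{lem:appB-bubble-integrals}. Two minor remarks: the second-order Taylor remainder of the $\varepsilon^{3/2}w\cdot\nabla_p\gamma$ term involves third derivatives of $\gamma$, so it is $O(\varepsilon^4K^4)$ rather than $O(\varepsilon^4K^3)$, which still fits the claimed error. Also, your inner/outer splitting is not needed, because $|D_x^\alpha D_p^\beta\gamma(x,b)|\leq CK^{|\alpha|+|\beta|+1}$ for $|\alpha|+|\beta|\geq1$ holds uniformly on the convex set $\Sigma_K$, so the Taylor expansion with remainder is valid on all of $\Sigma_K$.
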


\begin{lemma}
\label{lem:appB-linear-correction}
Under \eqref{eq:outline-parameters}, let $w_{A,1}\in H^1(\Sigma_K)$
be the unique weak solution of
\begin{equation}
\begin{cases}
(\Delta-\mu)w_{A,1}=-\mu Q_A &\text{in }\Sigma_K,\\
\partial_\nu w_{A,1}=0 &\text{on }\partial\Sigma_K.
\end{cases}
\label{eq:appB-linear-correction-problem}
\end{equation}
Then
\begin{equation*}
\frac1{4\pi}\int_{\Sigma_K}Q_A^5w_{A,1}\,dx
=\frac\mu{4\pi}\int_{\Sigma_K}Q_A^2\,dx
+O\!\left(
\delta^{-7/2}\varepsilon^3K^{5/3}(\log K)^{-1}
\right).
\end{equation*}
\end{lemma}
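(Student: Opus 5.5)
The plan is to test the equation for $w_{A,1}$ against a function whose Laplacian reproduces $Q_A^5$. Since $\Delta Q_A+Q_A^5=0$ in $\mathbb R^3$, we have $Q_A^5=-\Delta Q_A$ pointwise in $\Sigma_K$. Hence
\[
\int_{\Sigma_K}Q_A^5w_{A,1}\,dx=-\int_{\Sigma_K}w_{A,1}\Delta Q_A\,dx .
\]
I would integrate by parts twice and use $\partial_\nu w_{A,1}=0$ together with $(\Delta-\mu)w_{A,1}=-\mu Q_A$:
\[
-\int_{\Sigma_K}w_{A,1}\Delta Q_A
=-\int_{\Sigma_K}Q_A\Delta w_{A,1}-\int_{\partial\Sigma_K}w_{A,1}\partial_\nu Q_A
=\mu\int_{\Sigma_K}Q_A^2-\mu\int_{\Sigma_K}Q_Aw_{A,1}-\int_{\partial\Sigma_K}w_{A,1}\partial_\nu Q_A .
\]
The leading term $\frac{\mu}{4\pi}\int Q_A^2$ is exactly the one claimed. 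It remains to show that the last two terms are $O(\delta^{-7/2}\varepsilon^3K^{5/3}(\log K)^{-1})$.

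\textbf{Step 1: bound $w_{A,1}$.} After even reflection $E_K$, the function $E_Kw_{A,1}$ solves the Neumann problem in $B_1$ with right-hand side $-\mu E_KQ_A$. Lemma~\ref{lem:appA-neumann-green-estimate} then gives
\[
\|w_{A,1}\|_\infty\le C\|E_KQ_A\|_{L^2(B_1)}\le C\varepsilon K^{1/2},
\]
as in the proof of Proposition~\ref{prop:appA-projection-estimate}. A sharper pointwise bound comes from the Green-function kernel $|x-y|^{-1}$ and \eqref{eq:appA-bubble-pointwise}: near $b$ we get $|w_{A,1}|\lesssim \varepsilon^{1/2}|q(\widehat\xi)|+\varepsilon^{3/2}\log(1/\varepsilon)+\varepsilon^{1/2}\cdot(\text{copies})$. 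The crude global bound is $\|w_{A,1}\|_\infty\lesssim\varepsilon\,K^{1/2}$.

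\textbf{Step 2: the volume term.} Lemma~\ref{lem:appB-bubble-integrals} with $\ell=1$ gives
\[
\Bigl|\mu\int_{\Sigma_K} Q_Aw_{A,1}\Bigr|\le C\|w_{A,1}\|_\infty\,\delta^{-3}\varepsilon^{3/2}K^{-1/3}(\log K)^{-1}.
\]
With $\|w_{A,1}\|_\infty\lesssim\varepsilon^{3/2}K^{2}$, this is of order $\delta^{-3}\varepsilon^3K^{5/3}(\log K)^{-1}$. That bound for $w_{A,1}$ is what I must establish. The route is to use the $L^1$ mass of $Q_A$ in the representation formula, with source $\mu Q_A$: away from $b$, $|x-y|^{-1}\lesssim K$ summed over the copies, and the singular part near $b$ is handled by \eqref{eq:appA-bubble-pointwise}. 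The extra $\delta^{-1/2}$ absorbs the constants coming from $|q(\widehat\xi)|\le C|a|$ and from $\varepsilon K^3\ge\delta$.

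\textbf{Step 3: the boundary term.} On the flat faces, the even-reflection structure makes the boundary contributions of $Q_A$ pair with those of its images. I would therefore rewrite the flat-face integral using $E_KQ_A$, or equivalently control $\partial_\nu Q_A$ through Lemma~\ref{lem:appA-image-separation}. On the flat faces and on the spherical part, $|x-b|\gtrsim \max(K^{-1},d)$, so \eqref{eq:appA-bubble-expansion} gives
\[
|\partial_\nu Q_A|\lesssim \varepsilon^{1/2}|a|\,|x-b|^{-2}+\varepsilon^{3/2}|x-b|^{-3}.
\]
Integrating over $\partial\Sigma_K$ (area $\sim K^{-1}$ spherical plus flat faces) against $\|w_{A,1}\|_\infty$ yields a bound of order $\varepsilon^{3/2}\cdot\varepsilon^{3/2}K^2\cdot(d^{-1}K^{-1}+K)$. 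Using $dK^{2/3}\ge\delta$, this fits the target $\varepsilon^3K^{5/3}(\log K)^{-1}$ only if a cancellation or a factor $|a|$ is exploited.

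\textbf{Main obstacle.} The genuine difficulty is this boundary term, namely the factor $(\log K)^{-1}$ and the right power $K^{5/3}$. To handle it, I would first try to avoid boundary terms altogether. Instead of $Q_A$, test against the Neumann-compatible function $PQ_A^{(0)}:=Q_A^e-\eta_A-\mathcal R_A$ from Appendix~\ref{app:interactions}, whose normal derivative vanishes on $\partial B_1$ and which is symmetric across the flat faces. Then
\[
\int Q_A^5w_{A,1}=\int(-\Delta PQ_A^{(0)}-\text{copies}^5-3K\varepsilon^{1/2}q(\widehat\xi))\,w_{A,1}
\]
produces no boundary terms, and the errors are controlled by \eqref{eq:appA-interaction-sup-bound}, \eqref{eq:appA-boundary-correction-bound}, \eqref{eq:appA-neumann-correction-bound}, and Lemma~\ref{lem:appB-bubble-integrals}. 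The remaining error $\mu\int (T_A-\eta_A)\,Q_A$ is again bounded by the $\ell=1$ estimate times $\varepsilon^{3/2}K^2\delta^{-1/2}$, which gives exactly $\delta^{-7/2}\varepsilon^3K^{5/3}(\log K)^{-1}$.
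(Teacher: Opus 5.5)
Your diagnosis of the direct route is correct. Bounding $\int_{\partial\Sigma_K}w_{A,1}\partial_\nu Q_A$ by $\|w_{A,1}\|_\infty\int_{\partial\Sigma_K}|\partial_\nu Q_A|$ gives order $\varepsilon^3K^3$, which is too large. Testing against the Neumann-compatible function $PQ_A^{(0)}=Q_A^e-\eta_A-\mathcal R_A$ is a sound remedy. It is really the same computation: $\partial_\nu Q_A=\partial_\nu(\eta_A+\mathcal R_A-T_A)$ on all of $\partial\Sigma_K$, so the boundary term can be pushed back into the interior.

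Step~2 needs nothing sharper than Step~1. Since $\varepsilon K^3\geq\delta$, you have $\|w_{A,1}\|_\infty\le C\varepsilon K^{1/2}\le C\delta^{-1/2}\varepsilon^{3/2}K^2$, and $\|w_{A,1}\|_\infty\|Q_A\|_{L^1(\Sigma_K)}$ is exactly the stated error.

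The gap is in the bookkeeping of your final route. The correct formula is $\Delta PQ_A^{(0)}=-\sum_{g\in G_K}Q_A(g\,\cdot)^5+3K\varepsilon^{1/2}q(\widehat\xi)-\mu\mathcal R_A$; your display has the wrong sign on the constant and omits $\mu\mathcal R_A$. Test the weak equation for $w_{A,1}$ with $PQ_A^{(0)}$ and write $PQ_A^{(0)}=Q_A+T_A-\eta_A-\mathcal R_A$ on $\Sigma_K$. This gives
\begin{align*}
\int_{\Sigma_K}Q_A^5w_{A,1}
={}&\mu\int_{\Sigma_K}Q_A^2-\mu\int_{\Sigma_K}Q_Aw_{A,1}
+\mu\int_{\Sigma_K}(T_A-\eta_A-\mathcal R_A)(Q_A-w_{A,1})\\
&-\int_{\Sigma_K}\Bigl[\sum_{g\in G_K\setminus\{I\}}Q_A(g\,\cdot)^5-3K\varepsilon^{1/2}q(\widehat\xi)+\mu\mathcal R_A\Bigr]w_{A,1}.
\end{align*}
You only treat $\mu\int(T_A-\eta_A)Q_A$. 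The terms pairing $w_{A,1}$ with $T_A-\eta_A-\mathcal R_A$, or with the constant $3K\varepsilon^{1/2}q(\widehat\xi)$, are not addressed. The only estimate you have for $w_{A,1}$, the $L^\infty$ bound, is far too weak for them:
\begin{itemize}
\item For the first, $\|T_A-\eta_A\|_\infty|\Sigma_K|\|w_{A,1}\|_\infty\sim\varepsilon^{5/2}K^{3/2}$. This is the same order as the main term $\mu\varepsilon^2C_*(\xi)$.
\item For the constant term, the resulting bound exceeds the target by a factor of order $K$.
\end{itemize}

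The missing ingredient is an $L^1$ bound on $w_{A,1}$. Test its equation against truncations of $\operatorname{sgn}(w_{A,1})$; the gradient term is nonnegative, so $\|w_{A,1}\|_{L^1(\Sigma_K)}\le\|Q_A\|_{L^1(\Sigma_K)}$. Equivalently, by self-adjointness, $\int fw_{A,1}=\int Q_A\,\mu(\mu-\Delta)^{-1}f$ with $\|\mu(\mu-\Delta)^{-1}f\|_\infty\le\|f\|_\infty$.

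Combine this with $\|T_A\|_\infty+\|\eta_A\|_\infty\le C\varepsilon^{3/2}K^2$, $|q(\widehat\xi)|\le C|a|$ and \eqref{eq:appA-neumann-correction-bound}. Then every remaining term is at most $C\varepsilon^{3/2}K^2\|Q_A\|_{L^1(\Sigma_K)}\le C\delta^{-3}\varepsilon^3K^{5/3}(\log K)^{-1}$, and the lemma follows.

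The paper's proof only refers to the integration-by-parts argument of \cite{SunWeiYangFull}. It does not spell out how the Neumann boundary contribution is controlled. Your test-function device, completed by this $L^1$ estimate, supplies that step.
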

\begin{lemma}
\label{lem:appB-bubble-L2-expansion}
Under \eqref{eq:outline-parameters}, we have
\begin{equation*}
\frac1{4\pi}\int_{\Sigma_K}Q_A^2\,dx
=\varepsilon^2\operatorname{P.V.}\!\int_{\mathbb R^3}
\frac{[q(x+\xi)]^2}{4\pi|x|^4}\,dx
+\delta^{-5}o\!\left(\frac{K^{-20/3}}{\log K}\right),
\end{equation*}
\end{lemma}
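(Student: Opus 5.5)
The plan is to work in the Kelvin-inverted variables in which $Q_A$ is an explicit rescaled copy of $q$. By Lemma~\ref{lem:nodal-profile}(i) we can write $Q_A(x)=\eps^{1/2}|x-b|^{-1}q(\widehat\xi+h)$ with $h=\eps R_\beta(x-b)/|x-b|^2$. Hence
\[
Q_A(x)^2=\eps\,|x-b|^{-2}q(\widehat\xi+h)^2 .
\]
Change variables $y=h$, so $x-b=\eps R_\beta^T y/|y|^2$ and $dx=\eps^3|y|^{-6}\,dy$. Then $|x-b|^{-2}=\eps^{-2}|y|^2$, and formally
\[
\int Q_A^2\,dx=\eps^2\int \frac{q(\widehat\xi+y)^2}{|y|^4}\,dy .
\]
The region $\Sigma_K$ corresponds to $y$ outside an inverted image of size about $\eps/\operatorname{dist}(b,\partial\Sigma_K)$. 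By \eqref{eq:appA-profile-smallness}, this is a small set around $y=0$ of radius $\lesssim \eps K\sim\delta^{-1}K^{-2}$. Since $q(\widehat\xi)\neq0$ in general, the integrand near $y=0$ behaves like $q(\widehat\xi)^2|y|^{-4}+2q(\widehat\xi)\nabla q(\widehat\xi)\cdot y\,|y|^{-4}+O(|y|^{-2})$. This is why a principal value, i.e.\ a symmetric excision, is the right normalization.

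\textbf{Step 1: replace $\widehat\xi$ by $\xi$.} I write $q(\widehat\xi+y)^2-q(\xi+y)^2$ and estimate its contribution. Away from $0$, the difference is $O(|a|)$ times an integrable weight, giving $\eps^2|a|\lesssim \delta^{-2}\eps^2K^{-7/3}(\log K)^{-1}$. Against $\eps^2\sim K^{-6}$ this is $\delta^{-4}K^{-25/3}/\log K$, which is $o(K^{-20/3}/\log K)$.

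\textbf{Step 2: pass from the image of $\Sigma_K$ to the principal value.} Let $\Omega'$ be the excised set near $0$. The images of the flat faces and of the sphere are spheres or planes through $0$ after inversion. So the complement of $\Sigma_K$ in $y$-space is contained in $B_{C\eps K}(0)$, and it contains a ball $B_{c\eps K}(0)$. I split the integral over $\mathbb R^3\setminus\Omega'$ into three parts:
\begin{itemize}
\item the P.V.\ over $|y|>\rho_0:=c\eps K$;
\item the exact annular region between the ball $B_{\rho_0}$ and $\Omega'$;
\item the large-$|y|$ region, which is absent because $x$ near $b$ maps to $y$ large and $q^2|y|^{-4}$ is integrable at infinity.
\end{itemize}
On the annular region $\rho_0\le|y|\le C\eps K$, I use $q(\xi+y)^2=q(\xi)^2+2q(\xi)\nabla q\cdot y+O(|y|^2)=O(|y|^2)$, since $q(\xi)=0$ on $\Gamma$. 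The integrand is then $O(|y|^{-2})$, and the region contributes $O(\eps K)$. Multiplied by $\eps^2$ this gives $\eps^3K\sim\delta^{-3}K^{-8}$, again $o(K^{-20/3}/\log K)$.

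If one keeps $q(\widehat\xi)\neq0$, the term $q(\widehat\xi)^2\int_{\mathrm{annulus}}|y|^{-4}$ is of size $a^2/(\eps K)$. After multiplying by $\eps^2$ this is $\eps a^2/K\lesssim\delta^{-5}K^{-32/3}$, which is fine. The linear term $q(\widehat\xi)\nabla q\cdot y|y|^{-4}$ gives $|a|\log$ bounded by $\eps^2|a|\log K$, also fine. The P.V.\ in the statement then converges because $q(\xi)=0$ kills the $|y|^{-4}$ singularity and the odd term cancels symmetrically. The residual ball $|y|<\rho_0$ in the P.V.\ is $O(\rho_0)$ by the same expansion.

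\textbf{Main obstacle.} The delicate point is Step~2's geometric description of the inverted sector. Controlling the inner and outer radii of the excised set uniformly needs Lemma~\ref{lem:appA-image-separation} and $d\gtrsim\delta K^{-2/3}$, so that the distance from $b$ to the spherical boundary is at least $d/2$. With that distance, the excised radius in $y$ is really $\lesssim\eps/d\sim\delta^{-2}K^{-7/3}$ rather than $\eps K$. Redoing the bound, the annulus gives $\eps^2\cdot\eps/d\sim\delta^{-3}K^{-25/3}$, still $o(K^{-20/3}/\log K)$ with the stated $\delta^{-5}$ prefactor. Collecting the three error terms and dividing by $4\pi$ yields the lemma.
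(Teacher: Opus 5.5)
Your route is the standard argument that the paper defers to \cite{SunWeiYangFull} for, and the conclusion is right. You use the Kelvin change of variables $y=\eps R_\beta(x-b)/|x-b|^2$, which turns $\int_{\Sigma_K}Q_A^2$ into $\eps^2\int q(\widehat\xi+y)^2|y|^{-4}\,dy$ over the image of $\Sigma_K$. You then replace $\widehat\xi$ by $\xi$ and control the excised set $\Omega'$. However, your description of $\Omega'$ is wrong in two places, and your bound on what is actually the largest error term depends on it.

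\textbf{The inner containment is false.} $\Omega'$ does \emph{not} contain $B_{c\eps K}(0)$. The segment $\{tb:0<t<1\}$ lies in $\Sigma_K$, and for small $t$ it is at distance about $\rho$ from $b$. Hence the image of $\Sigma_K$ reaches $|y|\approx\eps$. The only inner bound available is $B_{\eps/2}(0)\subset\Omega'$, which comes from $\Sigma_K\subset B_1$. Your estimate $a^2/(\eps K)$ for the $q(\widehat\xi)^2|y|^{-4}$ piece is therefore unjustified. The honest bound is $a^2/\eps$, which gives an error
\[
\eps a^2\lesssim\delta^{-5}K^{-23/3}(\log K)^{-2}.
\]
This is still $\delta^{-5}o(K^{-20/3}/\log K)$, and it is the dominant error, matching the $\delta^{-5}$ in the statement. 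The same near-zero pieces $a^2|y|^{-4}$ and $|a||y|^{-3}$ are what your Step~1 hides under ``away from $0$.''

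\textbf{The ``refinement'' to radius $\eps/d$ is false.} The nearest part of $\partial\Sigma_K$ is the pair of flat faces, at distance about $\pi\rho/K\ll d$. So $\Omega'$ genuinely extends to $|y|\sim\eps K$. Your first bound $\Omega'\subset B_{\eps K}(0)$, which follows from $\operatorname{dist}(b,\partial\Sigma_K)>1/K$, is the correct one and is all you need. Also, $\partial B_1$ does not pass through $b$, so its image is a sphere not through $0$.

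\textbf{Corrected bookkeeping.} You only need the two true containments $B_{\eps/2}(0)\subset\Omega'\subset B_{\eps K}(0)$.

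For the main term: since $q(\xi)=0$, you have $q(\xi+y)^2|y|^{-4}\le C\min\{|y|^{-2},|y|^{-6}\}$. So the target integral converges absolutely and the P.V.\ is redundant. A symmetric excision would not tame a $q(\widehat\xi)^2|y|^{-4}$ singularity anyway. It follows that
\[
\left|\int_{\mathbb R^3\setminus\Omega'}\frac{q(\xi+y)^2}{|y|^4}\,dy-\int_{\mathbb R^3}\frac{q(\xi+y)^2}{|y|^4}\,dy\right|
\le C\int_{B_{\eps K}(0)}\frac{dy}{|y|^2}\le C\eps K .
\]

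For the replacement of $\widehat\xi$ by $\xi$, work on $\{|y|\ge\eps/2\}$. There
\[
|q(\widehat\xi+y)^2-q(\xi+y)^2|\le
\begin{cases}
C|a|(|a|+|y|), & |y|\le1,\\
C|a|\,|y|^{-3}, & |y|\ge1.
\end{cases}
\]
Integrating against $|y|^{-4}$ gives $C\bigl(a^2/\eps+|a|\log K\bigr)$.

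After multiplying by $\eps^2$, the total error is $O(\eps a^2+\eps^2|a|\log K+\eps^3K)$. This lies within $\delta^{-5}o(K^{-20/3}/\log K)$ under \eqref{eq:outline-parameters}.
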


\begin{proof}[Proof of Lemmas~\ref{lem:appB-bubble-integrals}--\ref{lem:appB-bubble-L2-expansion}]
The proofs follow the arguments of
\cite[Lemmas~4.1--4.5]{SunWeiYangFull}, respectively, adapted
to the present Neumann setting and the parameter bounds
\eqref{eq:outline-parameters}.
We replace $H_0^e$ by $H_N^e$ and use the estimates established
in Appendices~A and~C; for Lemma~\ref{lem:appB-linear-correction},
the integration-by-parts argument is applied to $w_1$ defined in
\eqref{eq:appB-linear-correction-problem}.
\end{proof}

We next estimate the contribution of $\mathcal R_A$ to the energy.

\begin{lemma}
\label{lem:appB-neumann-correction-energy}
Under \eqref{eq:outline-parameters}, we have
\begin{equation*}
\left|\int_{\Sigma_K}Q_A^5\mathcal R_A\,dx\right|
\leq C\varepsilon^4Kd^{-4}.
\end{equation*}
\end{lemma}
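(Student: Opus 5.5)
The plan is to combine the $L^1$ bound on $Q_A^5$ over $\Sigma_K$ with the $L^\infty$ bound on $\mathcal R_A$ from Proposition~\ref{prop:appA-neumann-correction}. H\"older's inequality gives
\[
\left|\int_{\Sigma_K}Q_A^5\mathcal R_A\,dx\right|
\leq \|\mathcal R_A\|_{L^\infty(B_1)}\int_{\Sigma_K}|Q_A|^5\,dx .
\]
By \eqref{eq:appA-neumann-correction-bound}, $\|\mathcal R_A\|_{L^\infty(B_1)}\leq C\varepsilon^{7/2}Kd^{-4}$. The remaining task is to show $\int_{\Sigma_K}|Q_A|^5\,dx\leq C\varepsilon^{1/2}$; multiplying the two bounds then gives exactly $C\varepsilon^4Kd^{-4}$.

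For the $L^5$ integral, I would use Lemma~\ref{lem:appB-bubble-integrals} with $\ell=5$, which gives $\int_{\Sigma_K}|Q_A|^5\leq C\varepsilon^{3-5/2}=C\varepsilon^{1/2}$. This can also be checked directly. The change of variables $x=b+\varepsilon R_\beta^T(y-\widehat\xi)/|y-\widehat\xi|^2$, the Kelvin inversion centered at $b$, gives
\[
\int_{\mathbb R^3}|Q_A|^5\,dx=\varepsilon^{1/2}\int_{\mathbb R^3}\frac{|q(y)|^5}{|y-\widehat\xi|}\,dy .
\]
The last integral is finite and bounded uniformly in $\widehat\xi$ near $\Gamma$. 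This holds because $|q(y)|^5\leq C\langle y\rangle^{-5}$ by Lemma~\ref{lem:nodal-profile}, and $|y-\widehat\xi|^{-1}$ is locally integrable. Restricting to $\Sigma_K$ only decreases the integral.

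A refinement via the pointwise bound \eqref{eq:appA-bubble-pointwise} is not needed, since the target estimate carries no extra smallness beyond this product. The only point to check is that the bound on $\mathcal R_A$ holds on all of $B_1\supset\Sigma_K$, which Proposition~\ref{prop:appA-neumann-correction} states. There is no real obstacle here; the step that needs the most care is the uniformity of the Kelvin-transformed integral in $\widehat\xi$. That uniformity follows from the compactness of $\Gamma$ and the smallness of $a$.
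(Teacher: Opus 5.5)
Your proof is correct and is the paper's own argument: bound $\|\mathcal R_A\|_{L^\infty}$ by \eqref{eq:appA-neumann-correction-bound}, bound $\int_{\Sigma_K}|Q_A|^5\,dx\le C\varepsilon^{1/2}$ by the case $\ell=5$ of Lemma~\ref{lem:appB-bubble-integrals}, and multiply. Your Kelvin-inversion identity $\int_{\mathbb R^3}|Q_A|^5\,dx=\varepsilon^{1/2}\int_{\mathbb R^3}|q(y)|^5|y-\widehat\xi|^{-1}\,dy$ is also correct, and it gives a self-contained check of the $\ell=5$ bound that the paper simply cites.
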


\begin{proof}
By \eqref{eq:appA-neumann-correction-bound} and
Lemma~\ref{lem:appB-bubble-integrals}, we obtain
\[
\begin{aligned}
\left|\int_{\Sigma_K}Q_A^5\mathcal R_A\,dx\right|
&\leq\|\mathcal R_A\|_{L^\infty(\Sigma_K)}
\int_{\Sigma_K}|Q_A|^5\,dx\leq C\varepsilon^4Kd^{-4}.
\end{aligned}
\]
\end{proof}

We now turn to the contribution of $w_A$ to the energy and
first consider its constant part.

\begin{lemma}
\label{lem:appB-constant-correction-energy}
Under \eqref{eq:outline-parameters}, let
\begin{equation}
w_{A,0}:=-\frac{3K\varepsilon^{1/2}q(\widehat\xi)}{\mu}.
\label{eq:appB-constant-correction-definition}
\end{equation}
Then
\begin{equation*}
\frac1{4\pi}\int_{\Sigma_K}Q_A^5w_{A,0}\,dx
=-\frac3\mu\varepsilon K[q(\widehat\xi)]^2
+O\!\left(\varepsilon^4K^4\right).
\end{equation*}
\end{lemma}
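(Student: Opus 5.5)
Since $w_{A,0}$ is constant, the statement reduces to the fifth-moment formula \eqref{eq:appB-bubble-fifth-moment} of Lemma~\ref{lem:appB-bubble-integrals}. I would pull the constant out of the integral:
\[
\frac1{4\pi}\int_{\Sigma_K}Q_A^5w_{A,0}\,dx
=-\frac{3K\varepsilon^{1/2}q(\widehat\xi)}{4\pi\mu}\int_{\Sigma_K}Q_A^5\,dx .
\]
Substituting $\int_{\Sigma_K}Q_A^5\,dx=4\pi\varepsilon^{1/2}q(\widehat\xi)+O(\varepsilon^{15/2}K^7)$ gives the main term
\[
-\frac{3K\varepsilon^{1/2}q(\widehat\xi)}{4\pi\mu}\cdot4\pi\varepsilon^{1/2}q(\widehat\xi)
=-\frac3\mu\varepsilon K[q(\widehat\xi)]^2,
\]
which is exactly the leading term in the statement.

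The remainder is
\[
\frac{3K\varepsilon^{1/2}|q(\widehat\xi)|}{4\pi\mu}\,O(\varepsilon^{15/2}K^7)
=O\bigl(\varepsilon^{8}K^{8}|q(\widehat\xi)|\bigr).
\]
To control it I would use $|q(\widehat\xi)|\le C|a|$ from \eqref{eq:appA-profile-smallness}. Then \eqref{eq:outline-parameters}, which gives $\varepsilon\le\delta^{-1}K^{-3}$ and $|a|\le\delta^{-2}K^{-7/3}$, shows
\[
\varepsilon^8K^8|q(\widehat\xi)|
\le C\varepsilon^4K^4\cdot(\varepsilon K)^4|a|
\le C\varepsilon^4K^4\cdot\delta^{-6}K^{-8-7/3}.
\]
This is $O(\varepsilon^4K^4)$ once $K\ge K_0(\delta)$. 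Even without using the smallness of $a$, one has $(\varepsilon K)^4\to0$ and $|q|\le C$, so the bound $O(\varepsilon^4K^4)$ holds anyway.

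The only step needing real care is therefore the fifth-moment formula itself, which I take from Lemma~\ref{lem:appB-bubble-integrals}. For completeness, its proof would go as follows. Change variables to $y=x/\varepsilon$ and use the Kelvin invariance of $q$ to write $\int_{\mathbb R^3}Q_A^5\,dx=\varepsilon^{1/2}\int_{\mathbb R^3}q^5(\cdots)\,dy$. Integrating $\Delta q+q^5=0$ against the Kelvin weight identifies this whole-space integral as $4\pi\varepsilon^{1/2}q(\widehat\xi)$, since the $-1/|x-b|$ far field of $Q_A^5$ is generated by the point mass $4\pi q(\widehat\xi)\varepsilon^{1/2}$. The region outside $\Sigma_K$ lies at distance at least $K^{-1}$ from $b$, and there $|Q_A|^5\lesssim\varepsilon^{15/2}|x-b|^{-10}$ by \eqref{eq:appA-bubble-pointwise}. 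Integrating this tail gives $O(\varepsilon^{15/2}K^7)$, which is the stated error.
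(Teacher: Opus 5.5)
Your proposal is correct and is exactly the argument the paper intends; the paper omits the proof as following directly from \eqref{eq:appB-bubble-fifth-moment} and \eqref{eq:outline-parameters}, namely pulling out the constant $w_{A,0}$, inserting the fifth-moment formula, and absorbing the remainder $O(\varepsilon^{8}K^{8}|q(\widehat\xi)|)$ via $\varepsilon K\to0$. One small slip in your optional sketch of the fifth-moment formula: the pointwise bound \eqref{eq:appA-bubble-pointwise} also contains the term $\varepsilon^{1/2}|q(\widehat\xi)|/|x-b|$, whose fifth power contributes $\varepsilon^{5/2}|q(\widehat\xi)|^5K^2$ to the tail, and this is absorbed into $O(\varepsilon^{15/2}K^7)$ only after using $|q(\widehat\xi)|\le C|a|$ together with the parameter bounds.
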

We omit the proof, which follows directly from
\eqref{eq:appB-bubble-fifth-moment} and
\eqref{eq:outline-parameters}.
Since the contribution of $w_{A,1}$ has already been estimated
in Lemma~\ref{lem:appB-linear-correction}, it remains to estimate
the contribution of $w_{A,2}$, where
\begin{equation}
w_{A,2}:=w_A-w_{A,0}-w_{A,1}.
\label{eq:appB-correction-splitting}
\end{equation}
By \eqref{eq:appA-sector-correction},
\eqref{eq:appB-constant-correction-definition}, and
\eqref{eq:appB-linear-correction-problem}, we have
\begin{equation}
\left\{
\begin{aligned}
(\Delta-\mu)w_{A,2}
&=-\mu(T_A-\eta_A)
-\sum_{g\in G_K\setminus\{I\}}Q_A(gx)^5
&&\text{in }\Sigma_K,\\
\partial_\nu w_{A,2}&=0
&&\text{on }\partial\Sigma_K.
\end{aligned}
\right.
\label{eq:appB-remainder-correction-problem}
\end{equation}
After even reflection, this becomes a Neumann problem in $B_1$.
We shall use the following estimate.

\begin{lemma}
\label{lem:appB-neumann-W24-estimate}
Let $\mu>0$ and $f\in L^4(B_1)$, and suppose that
$u\in H^1(B_1)$ is a weak solution of
\begin{equation}
\begin{cases}
(\Delta-\mu)u=f &\text{in }B_1,\\
\partial_\nu u=0 &\text{on }\partial B_1.
\end{cases}
\label{eq:appB-neumann-L4-problem}
\end{equation}
Then $u\in W^{2,4}(B_1)$ and
\begin{equation}
\|u\|_{L^\infty(B_1)}
+\|\nabla u\|_{L^\infty(B_1)}
\leq C_\mu\|f\|_{L^4(B_1)},
\label{eq:appB-neumann-W1infty-bound}
\end{equation}
where $C_\mu>0$ depends only on $\mu$.
\end{lemma}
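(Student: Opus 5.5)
The plan is the standard chain: existence and uniqueness, then $L^p$ elliptic regularity up to the boundary, then Sobolev--Morrey embedding in dimension three. Since $\mu>0$, the bilinear form $\int_{B_1}(\nabla u\cdot\nabla\varphi+\mu u\varphi)$ is coercive on $H^1(B_1)$. For $f\in L^4(B_1)\subset L^2(B_1)$, Lax--Milgram gives a unique weak solution, and testing the equation with $u$ yields
\[
\|u\|_{H^1(B_1)}\leq C_\mu\|f\|_{L^2(B_1)}\leq C_\mu\|f\|_{L^4(B_1)}.
\]

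Next I raise the integrability using the Agmon--Douglis--Nirenberg $L^p$ theory for the Neumann problem on the smooth domain $B_1$. Write the equation as $-\Delta u+u=g$ with $g:=(1-\mu)u-f$. The $H^2$ regularity of the Neumann problem gives $u\in H^2(B_1)$ with $\|u\|_{H^2}\leq C(\|g\|_{L^2}+\|u\|_{L^2})$. Since $H^2(B_1)\hookrightarrow L^\infty(B_1)$ in dimension three, we get $g\in L^4$. The $W^{2,p}$ estimate for the Neumann problem with $p=4$ then gives
\[
\|u\|_{W^{2,4}(B_1)}\leq C\bigl(\|g\|_{L^4}+\|u\|_{L^4}\bigr)\leq C_\mu\|f\|_{L^4(B_1)},
\]
where the last step uses the $H^1$/$H^2$ bounds already obtained. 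Alternatively, uniqueness of the Neumann problem for $\Delta-\mu$, $\mu>0$, plus a compactness argument removes the lower-order term and gives the estimate directly with $\mu$-dependent constant.

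Finally, Morrey's embedding $W^{2,4}(B_1)\hookrightarrow C^{1,1/4}(\overline{B_1})$ holds because $4>3$. It yields $\|u\|_{L^\infty}+\|\nabla u\|_{L^\infty}\leq C\|u\|_{W^{2,4}}$, which proves \eqref{eq:appB-neumann-W1infty-bound}.

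The only delicate point is justifying the boundary $W^{2,p}$ estimate for weak $H^1$ solutions, as opposed to strong ones. I would handle it by approximation. Take $f_n\in C^\infty(\overline{B_1})$ with $f_n\to f$ in $L^4$. The corresponding solutions $u_n$ are smooth up to the boundary by Schauder theory, and they satisfy the a priori bound uniformly in $n$. The $H^1$ estimate shows $u_n\to u$ in $H^1$, and the uniform $W^{2,4}$ bound together with weak compactness then transfers the estimate to $u$.
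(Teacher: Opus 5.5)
Your proposal is correct and follows the same chain as the paper: the energy estimate from testing with $u$, the global $W^{2,4}$ Neumann estimate with the lower-order term controlled through $H^1(B_1)\hookrightarrow L^4(B_1)$, and Morrey's embedding $W^{2,4}(B_1)\hookrightarrow C^{1,1/4}(\overline{B_1})$. Your intermediate $H^2$ step is unnecessary, since in dimension three $H^1$ already embeds into $L^4$ and the paper uses exactly that, while your approximation argument makes explicit the passage from the a priori estimate to weak solutions, which the paper leaves implicit.
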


\begin{proof}
Testing \eqref{eq:appB-neumann-L4-problem} by $u$ gives
\[
\int_{B_1}|\nabla u|^2\,dx
+\mu\int_{B_1}u^2\,dx
=-\int_{B_1}fu\,dx
\leq\|f\|_{L^2(B_1)}\|u\|_{L^2(B_1)}.
\]
Consequently,
\[
\|u\|_{L^2(B_1)}\leq\mu^{-1}\|f\|_{L^2(B_1)},
\qquad
\|\nabla u\|_{L^2(B_1)}
\leq\mu^{-1/2}\|f\|_{L^2(B_1)}.
\]
The Sobolev embedding $H^1(B_1)\hookrightarrow L^4(B_1)$
and the boundedness of $B_1$ therefore yield
\[
\|u\|_{L^4(B_1)}
\leq C\|u\|_{H^1(B_1)}
\leq C_\mu\|f\|_{L^2(B_1)}
\leq C_\mu\|f\|_{L^4(B_1)}.
\]
Global $W^{2,4}$ regularity for the homogeneous Neumann problem
on the smooth domain $B_1$ now gives
\begin{equation*}
\|u\|_{W^{2,4}(B_1)}
\leq C_\mu\bigl(\|f\|_{L^4(B_1)}+\|u\|_{L^4(B_1)}\bigr)
\leq C_\mu\|f\|_{L^4(B_1)}.
\end{equation*}
Finally, the Sobolev--Morrey embedding
$W^{2,4}(B_1)\hookrightarrow C^{1,1/4}(\overline{B_1})$
implies
\[
\|u\|_{L^\infty(B_1)}
+\|\nabla u\|_{L^\infty(B_1)}
\leq C\|u\|_{W^{2,4}(B_1)}
\leq C_\mu\|f\|_{L^4(B_1)}.
\]
This proves \eqref{eq:appB-neumann-W1infty-bound}.
\end{proof}

\begin{lemma}
\label{lem:appB-remainder-correction-energy}
Under \eqref{eq:outline-parameters}, we have
\begin{equation}
\frac1{4\pi}\int_{\Sigma_K}Q_A^5w_{A,2}\,dx
=\delta^{-5/2}o\!\left(\frac{K^{-20/3}}{\log K}\right).
\label{eq:appB-remainder-correction-energy}
\end{equation}
\end{lemma}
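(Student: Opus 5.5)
We estimate $\int Q_A^5 w_{A,2}$ by splitting $w_{A,2}$ according to the three sources in \eqref{eq:appB-remainder-correction-problem}. By linearity we write $w_{A,2}=w^{(1)}+w^{(2)}$. Here $w^{(1)}$ solves the Neumann problem with source $-\mu(T_A-\eta_A)$, and $w^{(2)}$ solves it with source $-\sum_{g\neq I}Q_A(gx)^5$. Since $\int_{\Sigma_K}|Q_A|^5\,dx\le C\varepsilon^{5/2}$ by Lemma~\ref{lem:appB-bubble-integrals}, it suffices to show that $\|w^{(i)}\|_{L^\infty(\Sigma_K)}$ is much smaller than $\varepsilon^{-5/2}\delta^{-5/2}K^{-20/3}(\log K)^{-1}$. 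Under \eqref{eq:outline-parameters} this threshold is of size $\delta^{0}K^{5/6}(\log K)^{-1}$, up to powers of $\delta$. This is far weaker than what crude bounds give, so a plain $L^\infty$ estimate should already suffice.

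\textbf{The term $w^{(1)}$.} We extend by even reflection to $B_1$ and apply the weak maximum principle. This gives $\|w^{(1)}\|_\infty\le \|T_A\|_\infty+\|\eta_A\|_\infty$. By \eqref{eq:appA-interaction-sup-bound} and \eqref{eq:appA-boundary-correction-bound}, this is at most $C\delta^{-4}\varepsilon^{3/2}K^2$ (using $Kd^{-1}\le CK^{5/3}\delta^{-1}$). Multiplying by $\varepsilon^{5/2}$ yields $C\delta^{-4}\varepsilon^4K^2=O(\delta^{-8}K^{-10})$. This is $o(K^{-20/3}/\log K)$, with room to spare.

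\textbf{The term $w^{(2)}$.} The pointwise bound in the proof of Proposition~\ref{prop:appA-projection-estimate} gives $\sup_{\Sigma_K}\sum_{g\ne I}|Q_A(gx)|^5\le C(\varepsilon^{5/2}|q(\widehat\xi)|^5K^5+\varepsilon^{15/2}K^{10})$. The maximum principle then bounds $\|w^{(2)}\|_\infty$ by the same quantity. With $|q(\widehat\xi)|\le C|a|$, both terms are tiny. After multiplying by $\varepsilon^{5/2}$ we get $O(\varepsilon^{10}K^{10})=O(K^{-20})$, together with an even smaller term carrying $a^5$.

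Combining the two bounds gives \eqref{eq:appB-remainder-correction-energy}, in fact with a much better remainder. The only step needing care is checking that the $\eta_A$ bound with its $d^{-1}$ factor and the $\delta$-powers stay below the stated threshold. I expect no real obstacle: Lemma~\ref{lem:appB-neumann-W24-estimate} is only needed if one wanted a sharper gradient-based expansion, and it can serve as a fallback via $\|f\|_{L^4}\le C\|f\|_\infty$.
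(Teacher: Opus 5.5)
Your argument breaks at its starting estimate. You use $\int_{\Sigma_K}|Q_A|^5\,dx\le C\varepsilon^{5/2}$, but Lemma~\ref{lem:appB-bubble-integrals} gives $\int_{\Sigma_K}|Q_A|^\ell\,dx\le C\varepsilon^{3-\ell/2}$. For $\ell=5$ this is $C\varepsilon^{1/2}$. This order is sharp: $|Q_A|$ has size $\varepsilon^{-1/2}$ on a set of measure comparable to $\varepsilon^3$ near $b$. Since you integrate $|Q_A|^5$, the sign change of $q$ produces no cancellation. With the correct weight, your maximum-principle bound $\|w^{(1)}\|_\infty\le C\varepsilon^{3/2}K^2$ only yields $\bigl|\int_{\Sigma_K}Q_A^5w^{(1)}\,dx\bigr|\le C\varepsilon^2K^2$. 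Up to powers of $\delta$, this is of order $K^{-4}$, far above $K^{-20/3}(\log K)^{-1}$. Your estimate for $w^{(2)}$ does survive the correction. A sharper sup-norm bound does not rescue this route either. Even the bound $\|w_{A,2}\|_\infty\le C\delta^{-5/2}K^{-17/6}(\log K)^{-1}$ that the paper obtains gives only $O(K^{-13/3}(\log K)^{-1})$ after multiplication by $\varepsilon^{1/2}$.

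The missing idea is the cancellation in the \emph{signed} fifth moment. The paper writes $\int Q_A^5w_{A,2}=w_{A,2}(b)\int Q_A^5+\int Q_A^5\bigl(w_{A,2}-w_{A,2}(b)\bigr)$. For the first piece it uses $\int_{\Sigma_K}Q_A^5\,dx=4\pi\varepsilon^{1/2}q(\widehat\xi)+O(\varepsilon^{15/2}K^7)$ together with $|q(\widehat\xi)|\le C|a|$; the latter holds because the profile is centered at a point of the nodal curve $\Gamma$. The second piece is bounded by $C\varepsilon^{3/2}\|\nabla w_{A,2}\|_\infty$, using $\int_{\Sigma_K}|Q_A|^5|x-b|\,dx\le C\varepsilon^{3/2}$. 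This requires a Lipschitz bound on $w_{A,2}$, which is exactly what Lemma~\ref{lem:appB-neumann-W24-estimate} supplies. The lemma must be fed the $L^4(B_1)$ norm of the reflected source, of order $\delta^{-5/2}K^{-17/6}(\log K)^{-1}$, not its sup norm. With your fallback $\|f\|_{L^4}\le C\|f\|_\infty\le C\varepsilon^{3/2}K^2$, the constant term is only bounded by $C\varepsilon^2K^2|a|$. At the edge of the $a$-range this is $O(K^{-19/3}(\log K)^{-1})$ up to powers of $\delta$, which is still too large. So the $W^{2,4}$ estimate, combined with the $L^4$ bound on the source, is the key step rather than an optional refinement.
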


\begin{proof}
We first estimate the right-hand side of
\eqref{eq:appB-remainder-correction-problem} in $L^4$ after
even reflection. The pointwise estimates in
Lemmas~\ref{lem:appA-gamma-estimates} and~\ref{lem:appC-HN-estimates},
together with integration over the sector, give
\begin{equation*}
\begin{aligned}
\|E_K\gamma(\cdot,b)\|_{L^4(B_1)}
+\|E_KH_N^e(\cdot,b)\|_{L^4(B_1)}
&\leq CK,\\
\left\|E_K\!\left[
\left.D_p^m\gamma(\cdot,p)\right|_{p=b}
\right]\right\|_{L^4(B_1)}
&\leq CK^{m+1/2},\\
\left\|E_K\!\left[
\left.D_p^mH_N^e(\cdot,p)\right|_{p=b}
\right]\right\|_{L^4(B_1)}
&\leq CKd^{1/2-m},
\qquad m=1,2.
\end{aligned}
\end{equation*}

Using \eqref{eq:appA-interaction-expansion},
\eqref{eq:appA-interaction-remainder}, and
\eqref{eq:appA-boundary-correction-definition}, we obtain
\[
\begin{aligned}
\|E_K(T_A-\eta_A)\|_{L^4(B_1)}
\leq C\Bigl(&\varepsilon^{1/2}|q(\widehat\xi)|K
+\varepsilon^{3/2}K^{3/2}+\varepsilon^{5/2}K^{5/2}
+\varepsilon^{7/2}K^4\Bigr).
\end{aligned}
\]
Moreover, the separation of the other copies and
\eqref{eq:appA-bubble-pointwise} imply
\[
\left\|E_K\!\left[
\sum_{g\in G_K\setminus\{I\}}Q_A(g\cdot)^5
\right]\right\|_{L^4(B_1)}
\leq C\Bigl(
\varepsilon^{5/2}|q(\widehat\xi)|^5K^5
+\varepsilon^{15/2}K^{10}\Bigr).
\]
Consequently, \eqref{eq:outline-parameters} give
\[
\left\|E_K\!\left[
\mu(T_A-\eta_A)
+\sum_{g\in G_K\setminus\{I\}}Q_A(g\cdot)^5
\right]\right\|_{L^4(B_1)}
\leq C\delta^{-5/2}\frac{K^{-17/6}}{\log K}.
\]
Applying Lemma~\ref{lem:appB-neumann-W24-estimate} to
\eqref{eq:appB-remainder-correction-problem} after even reflection,
we conclude that
\begin{equation}
\|E_Kw_{A,2}\|_{L^\infty(B_1)}
+\|\nabla E_Kw_{A,2}\|_{L^\infty(B_1)}
\leq C\delta^{-5/2}\frac{K^{-17/6}}{\log K}.
\label{eq:appB-remainder-correction-regularity}
\end{equation}

We now estimate the energy contribution. By Cauchy--Schwarz and
Lemma~\ref{lem:appB-bubble-integrals},
\begin{equation*}
\begin{aligned}
\int_{\Sigma_K}|Q_A|^5|x-b|\,dx
&\leq
\left(\int_{\Sigma_K}|Q_A|^5\,dx\right)^{1/2}
\left(\int_{\Sigma_K}|Q_A|^5|x-b|^2\,dx\right)^{1/2}\leq C\varepsilon^{3/2}.
\end{aligned}
\end{equation*}
Since $B_1$ is convex, the gradient bound in
\eqref{eq:appB-remainder-correction-regularity} yields
\[
|w_{A,2}(x)-w_{A,2}(b)|
\leq C\delta^{-5/2}\frac{K^{-17/6}}{\log K}|x-b|,
\qquad x\in\Sigma_K.
\]
Thus, writing
\[
\int_{\Sigma_K}Q_A^5w_{A,2}\,dx
=w_{A,2}(b)\int_{\Sigma_K}Q_A^5\,dx
+\int_{\Sigma_K}Q_A^5\bigl(w_{A,2}(x)-w_{A,2}(b)\bigr)\,dx,
\]
and using \eqref{eq:appB-bubble-fifth-moment}, we obtain
\[
\begin{aligned}
\left|\frac1{4\pi}\int_{\Sigma_K}Q_A^5w_{A,2}\,dx\right|
&\leq C\delta^{-5/2}\frac{K^{-17/6}}{\log K}
\Bigl(\varepsilon^{1/2}|q(\widehat\xi)|
+\varepsilon^{3/2}+\varepsilon^{15/2}K^7\Bigr)
\end{aligned}
\]
This proves \eqref{eq:appB-remainder-correction-energy}.
\end{proof}

We now combine the preceding estimates to obtain the energy
expansion of $P Q_A$. Here we set
\begin{equation*}
C_*(\xi):=\operatorname{P.V.}\!\int_{\mathbb R^3}
\frac{[q(x+\xi)]^2}{4\pi|x|^4}\,dx.
\end{equation*}

\begin{proposition}
\label{prop:appB-projected-energy-expansion}
Under \eqref{eq:outline-parameters}, we have
\begin{equation}
J(P Q_A)
=\frac13\int_{\mathbb R^3}q(x)^6\,dx
+2\pi\Psi(A)
+\delta^{-8}o\!\left(\frac{K^{-20/3}}{\log K}\right),
\label{eq:appB-projected-energy-expansion}
\end{equation}
where
\begin{equation}
\begin{aligned}
\Psi(A)={}&\varepsilon[q(\widehat\xi)]^2
\left[H_N^e(b,b)-\gamma(b,b)-\frac{3K}{\mu}\right]\\
&+\varepsilon^2q(\widehat\xi)w\cdot
\left[\nabla_x(H_N^e-\gamma)(b,b)
+\nabla_p(H_N^e-\gamma)(b,b)\right]\\
&+\varepsilon^3w^T\nabla_{x,p}^2(H_N^e-\gamma)(b,b)w
+\mu\varepsilon^2C_*(\xi).
\end{aligned}
\label{eq:appB-leading-energy}
\end{equation}
\end{proposition}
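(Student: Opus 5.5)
We expand $J(PQ_A)$ by using the decomposition \eqref{eq:appA-projection-decomposition}, $PQ_A=Q_A+\widetilde T_A$ with $\widetilde T_A=T_A-\eta_A-\mathcal R_A-w_A$, and the weak equation \eqref{eq:outline-projection}. Testing \eqref{eq:outline-projection} with $PQ_A$ gives
\[
\int_{\Sigma_K}\bigl(|\nabla PQ_A|^2+\mu(PQ_A)^2\bigr)\,dx=\int_{\Sigma_K}Q_A^5\,PQ_A\,dx,
\]
so that
\[
J(PQ_A)=\frac12\int_{\Sigma_K}Q_A^5PQ_A\,dx-\frac16\int_{\Sigma_K}|PQ_A|^6\,dx.
\]
Expand $|Q_A+\widetilde T_A|^6=Q_A^6+6Q_A^5\widetilde T_A+O(Q_A^4\widetilde T_A^2+\widetilde T_A^6)$. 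Then
\[
J(PQ_A)=\frac13\int_{\Sigma_K}Q_A^6\,dx-\frac12\int_{\Sigma_K}Q_A^5\widetilde T_A\,dx+O\Bigl(\int_{\Sigma_K}Q_A^4\widetilde T_A^2+|\widetilde T_A|^6\Bigr).
\]
By \eqref{eq:appB-bubble-sixth-moment}, the first term is $\frac13\int_{\mathbb R^3}q^6+O(\eps^9K^9)$. By \eqref{eq:appA-projection-error-bound} and $\int|Q_A|^4\le C\eps$ from Lemma~\ref{lem:appB-bubble-integrals}, the quadratic remainder is $O(\delta^{-1}\eps^4K^4)+O(\delta^{-3}\eps^9K^{12}|\Sigma_K|)$. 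With $\eps\sim K^{-3}$ these are $O(\delta^{-5}K^{-8})$, well inside $\delta^{-8}o(K^{-20/3}/\log K)$.

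The main part is then
\[
-\frac12\int Q_A^5\widetilde T_A=-\frac12\int Q_A^5T_A+\frac12\int Q_A^5\eta_A+\frac12\int Q_A^5\mathcal R_A+\frac12\int Q_A^5(w_{A,0}+w_{A,1}+w_{A,2}),
\]
using the splitting \eqref{eq:appB-correction-splitting}. Multiplying each lemma by $4\pi$ and halving gives the factor $2\pi$:
\begin{enumerate}
\item Lemma~\ref{lem:appB-image-interaction} supplies the $-\gamma$ terms.
\item Lemma~\ref{lem:appB-boundary-interaction} supplies the $H_N^e$ terms, with the sign fixed by $-\eta_A$ in $\widetilde T_A$.
\item Lemma~\ref{lem:appB-constant-correction-energy} supplies $-\frac{3K}{\mu}\eps q(\widehat\xi)^2$.
\item Lemmas~\ref{lem:appB-linear-correction} and~\ref{lem:appB-bubble-L2-expansion} together give $\mu\eps^2C_*(\xi)$.
\end{enumerate}
The signs need care: $w_A$ enters $PQ_A$ with a minus sign, and the factor $-\frac12$ must combine to produce $+\mu\eps^2C_*$ and $-3K/\mu$. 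We would verify this by a consistency check against the corresponding identity in \cite{SunWeiYangFull}. The remaining pieces are lower order. Lemma~\ref{lem:appB-neumann-correction-energy} gives $\eps^4Kd^{-4}\sim\delta^{-4}K^{-23/3}$. Lemma~\ref{lem:appB-remainder-correction-energy} gives $\delta^{-5/2}o(K^{-20/3}/\log K)$.

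Finally, we collect the error terms against the target $\delta^{-8}o(K^{-20/3}/\log K)$:
\begin{enumerate}
\item The $O(\eps^4K^4)$ terms are $O(\delta^{-4}K^{-8})$.
\item The error in Lemma~\ref{lem:appB-linear-correction} is $\delta^{-7/2}\eps^3K^{5/3}(\log K)^{-1}\le\delta^{-13/2}K^{-22/3}/\log K$.
\item The other errors are already of the stated form.
\end{enumerate}
The main obstacle is this bookkeeping of signs and powers, especially confirming that each term discarded from $\Psi$ is genuinely $o(K^{-20/3}/\log K)$ uniformly on $\mathcal A_K$. In particular, the $a$-dependent terms $\eps q(\widehat\xi)^2K\log K$ with $|q(\widehat\xi)|\le C|a|$ must be kept inside $\Psi$ rather than discarded, since they are of exactly the critical order.
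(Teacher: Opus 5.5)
Your proposal is correct and follows the paper's proof essentially step for step. You test \eqref{eq:outline-projection} with $PQ_A$, expand the sixth power to reach $\frac13\int Q_A^6-\frac12\int Q_A^5\widetilde T_A+O(\delta^{-1}\eps^4K^4)$, substitute $\widetilde T_A=T_A-\eta_A-\mathcal R_A-w_{A,0}-w_{A,1}-w_{A,2}$, and apply Lemmas~\ref{lem:appB-bubble-integrals}--\ref{lem:appB-remainder-correction-energy}. The signs you were unsure about do come out right: $+\frac12\int Q_A^5w_{A,0}$ yields the $-\frac{3K}{\mu}$ term and $+\frac12\int Q_A^5w_{A,1}$ yields $+\mu\eps^2C_*(\xi)$. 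The only slip is the power count in Lemma~\ref{lem:appB-neumann-correction-energy}: the uniform bound is $\eps^4Kd^{-4}\le\delta^{-8}K^{-25/3}$ rather than $\delta^{-4}K^{-23/3}$, which is still inside $\delta^{-8}o(K^{-20/3}/\log K)$.
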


\begin{proof}
Taking $P Q_A$ as a test function in
\eqref{eq:outline-projection}, we obtain
\[
\int_{\Sigma_K}\bigl(|\nabla P Q_A|^2+\mu(P Q_A)^2\bigr)\,dx
=\int_{\Sigma_K}Q_A^5P Q_A\,dx.
\]
Consequently,
\begin{equation*}
J(P Q_A)
=\frac12\int_{\Sigma_K}Q_A^5P Q_A\,dx
-\frac16\int_{\Sigma_K}(P Q_A)^6\,dx.
\end{equation*}

Since
\[
\left|(Q_A+\widetilde T_A)^6-Q_A^6
-6Q_A^5\widetilde T_A\right|
\leq C\left(|Q_A|^4|\widetilde T_A|^2
+|\widetilde T_A|^6\right).
\]
By \eqref{eq:appA-projection-error-bound},
Lemma~\ref{lem:appB-bubble-integrals}, it follows that
\[
\begin{aligned}
\int_{\Sigma_K}
\left(|Q_A|^4|\widetilde T_A|^2
+|\widetilde T_A|^6\right)\,dx
\leq C\delta^{-1}\varepsilon^4K^4,
\end{aligned}
\]
Therefore,
\begin{equation}
J(P Q_A)
=\frac13\int_{\Sigma_K}Q_A^6\,dx
-\frac12\int_{\Sigma_K}Q_A^5\widetilde T_A\,dx
+O\!\left(\delta^{-1}\varepsilon^4K^4\right).
\label{eq:appB-energy-reduction}
\end{equation}
By \eqref{eq:appA-projection-remainder-definition} and
\eqref{eq:appB-correction-splitting},
\[
\widetilde T_A
=T_A-\eta_A-\mathcal R_A-w_{A,0}-w_{A,1}-w_{A,2}.
\]
Hence \eqref{eq:appB-energy-reduction} becomes
\begin{equation*}
\begin{aligned}
J(P Q_A)={}&\frac13\int_{\Sigma_K}Q_A^6\,dx\\
&+\frac12\int_{\Sigma_K}Q_A^5
\left(\eta_A-T_A+\mathcal R_A
+w_{A,0}+w_{A,1}+w_{A,2}\right)\,dx+O\!\left(\delta^{-1}\varepsilon^4K^4\right).
\end{aligned}
\end{equation*}
Substituting the estimates of
Lemmas~\ref{lem:appB-boundary-interaction}--\ref{lem:appB-constant-correction-energy}
and Lemma~\ref{lem:appB-remainder-correction-energy}, and using
\eqref{eq:appB-bubble-sixth-moment}, we obtain
\[
\begin{aligned}
J(P Q_A)={}&\frac13\int_{\mathbb R^3}q(x)^6\,dx
+2\pi\Psi(A)\\
&+O\!\left(
\delta^{-1}\varepsilon^4K^4
+\varepsilon^4Kd^{-4}
+\delta^{-7/2}\varepsilon^3K^{5/3}(\log K)^{-1}
\right)+\delta^{-5}o\!\left(\frac{K^{-20/3}}{\log K}\right).
\end{aligned}
\]
This proves \eqref{eq:appB-projected-energy-expansion}.
\end{proof}
\section{Trigonometric Sums}
\label{app:sums}

In this section, we prove some results involving trigonometric sums.
Throughout this appendix, $K$ is a sufficiently large even integer and
$\theta_0=\pi/K$. We write $\gamma_E$ for the Euler constant and
$\zeta$ for the Riemann zeta function. 

\subsection{Existing Results}
For a positive integer $m$, define
\begin{align}
S_m^o(K)
&:=\sum_{j=0}^{K/2-1}\frac{1}{\sin^m((2j+1)\theta_0)},
\notag\\
S_m^e(K)
&:=\sum_{j=1}^{K/2-1}\frac{1}{\sin^m(2j\theta_0)},
\notag\\
S_m^+(K)
&:=S_m^o(K)+S_m^e(K)
=\sum_{\ell=1}^{K-1}\frac{1}{\sin^m(\ell\theta_0)}.
\label{eq:appC-full-sum}
\end{align}
For $d>0$, we also define
\begin{equation}
S_m^+(K,d)
:=\sum_{\ell=0}^{K-1}
\bigl(d^2+\sin^2(\ell\theta_0)\bigr)^{-m/2}.
\label{eq:appC-d-sum}
\end{equation}

The sums independent of $d$ satisfy
\begin{equation}
S_m^e(K)=S_m^+\!\left(\frac K2\right),
\qquad
S_m^o(K)=S_m^+(K)-S_m^+\!\left(\frac K2\right),
\label{eq:appC-sum-relations}
\end{equation}
where the full sum $S_m^+(N)$ is defined by the last expression in
\eqref{eq:appC-full-sum} for every integer $N\geq2$.
Table~\ref{tab:appC-cosecant-sums} records the cases used in the
interaction estimates.
The formulas for $m=1$ follow from
\cite[Theorem~7]{BlagouchineMoreau2024} and
\eqref{eq:appC-sum-relations}. For $m=3,5$, the odd sums and the
even sum with $m=3$ are given in \cite[Lemma~9.1]{SunWeiYangFull};
the other entries follow from the same endpoint expansion and
\eqref{eq:appC-sum-relations}. The even sum in that reference is
denoted by $\widehat S_m^e$.

\begin{table}[htbp]
\centering
\small
\setlength{\tabcolsep}{5pt}
\renewcommand{\arraystretch}{2.1}
\caption{Finite cosecant sums as $K\to\infty$.}
\label{tab:appC-cosecant-sums}
\smallskip
\begin{tabular}{@{}cccc@{}}
\toprule
$m$ & $S_m^o(K)$ & $S_m^e(K)$ & $S_m^+(K)$\\
\midrule
$1$
& $\begin{gathered}
\frac K\pi\left(\log\frac{4K}{\pi}+\gamma_E\right)+O(K^{-1})
\end{gathered}$
& $\begin{gathered}
\frac K\pi\left(\log\frac K\pi+\gamma_E\right)
+O(K^{-1})
\end{gathered}$
& $\begin{gathered}
\frac{2K}{\pi}\left(\log\frac{2K}{\pi}+\gamma_E\right)
+O(K^{-1})
\end{gathered}$\\[4pt]
$3$
& $\dfrac{7\zeta(3)}{4\pi^3}K^3+O(K\log K)$
& $\dfrac{\zeta(3)}{4\pi^3}K^3+O(K\log K)$
& $\dfrac{2\zeta(3)}{\pi^3}K^3+O(K\log K)$\\[4pt]
$5$
& $\dfrac{31\zeta(5)}{16\pi^5}K^5+O(K^3)$
& $\dfrac{\zeta(5)}{16\pi^5}K^5+O(K^3)$
& $\dfrac{2\zeta(5)}{\pi^5}K^5+O(K^3)$\\
\bottomrule
\end{tabular}
\end{table}

For the $d$-dependent sums defined in \eqref{eq:appC-d-sum},
the expansions used in the interaction estimates are collected in
Table~\ref{tab:appC-d-sums}.

\begin{table}[htbp]
\centering
\small
\setlength{\tabcolsep}{12pt}
\renewcommand{\arraystretch}{2.2}
\caption{Asymptotic expansions of the $d$-dependent full sums.}
\label{tab:appC-d-sums}
\smallskip
\begin{tabular}{@{}cc@{}}
\toprule
$m$ & Expansion of $S_m^+(K,d)$ as $K\to\infty$\\
\midrule
$1$
& $\displaystyle
\frac{2K}{\pi}\log\frac4d
+O\!\left(Kd^2\log\frac2d\right)$\\[4pt]
$3$
& $\displaystyle
\frac{2K}{\pi d^2}
+O\!\left(K\log\frac2d\right)$\\[4pt]
$5$
& $\displaystyle
\frac{4K}{3\pi d^4}
+O\!\left(\frac K{d^2}\right)$\\
\bottomrule
\end{tabular}
\end{table}

\begin{lemma}
\label{lem:appC-d-sum-expansions}
For every fixed $\delta\in(0,1)$ and every sufficiently large even
$K\geq K_0(\delta)$, the expansions in Table~\ref{tab:appC-d-sums}
hold uniformly for $d$ satisfying \eqref{eq:outline-parameters}.
The constants in the $O$-terms are independent of $K,d$ and $\delta$.
\end{lemma}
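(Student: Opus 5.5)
The plan is to compare each sum $S_m^+(K,d)$ with a Riemann-type integral in the variable $t=\ell\theta_0$, and to split the index set into a near part and a far part. Set $f_d(t)=(d^2+\sin^2 t)^{-m/2}$. It is $\pi$-periodic and even about $t=0$ and $t=\pi$. So
\[
S_m^+(K,d)=\sum_{\ell=0}^{K-1}f_d(\ell\theta_0)=2\sum_{0\le \ell\le K/2}{}' f_d(\ell\theta_0),
\]
where the prime means the endpoints $\ell=0$ and $\ell=K/2$ are taken with weight $1/2$. By \eqref{eq:outline-parameters}, $d\in[\delta K^{-2/3},\delta^{-1}K^{-2/3}]$. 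Hence $d\to0$ and $dK\to\infty$ as $K\to\infty$, and the grid spacing $\theta_0=\pi/K$ is much smaller than $d$. This separation of scales is what makes the leading terms independent of $\delta$, and it is why the remainders can be taken uniform.

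First step: I would use the trapezoidal rule, or Euler--Maclaurin with one correction term, on $[0,\pi/2]$. The endpoint derivatives $f_d'(0)$ and $f_d'(\pi/2)$ both vanish, so this gives
\[
\theta_0\sum{}' f_d(\ell\theta_0)=\int_0^{\pi/2}f_d(t)\,dt+O\!\Bigl(\theta_0^2\int_0^{\pi/2}|f_d''|\,dt\Bigr).
\]
A direct bound gives $\int_0^{\pi/2}|f_d''|\,dt\le Cd^{-m-2}$, with the main contribution coming from $t\lesssim d$. The discretization error in $S_m^+$ is therefore $O(K^{-1}d^{-m-2})$. Since $dK\ge\delta K^{1/3}$, this is smaller than the claimed remainder once $K\ge K_0(\delta)$, and this is exactly where $K_0$ depends on $\delta$. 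The claimed remainders themselves have $\delta$-free constants. This gives
\[
S_m^+(K,d)=\frac{2K}{\pi}\int_0^{\pi/2}(d^2+\sin^2t)^{-m/2}\,dt+O(K^{-1}d^{-m-2}).
\]

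Second step: expand the integral $I_m(d)=\int_0^{\pi/2}(d^2+\sin^2t)^{-m/2}\,dt$ as $d\to0$. Write $\sin^2t=t^2(1-t^2/3+\cdots)$ near $0$ and substitute $t=ds$.

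For $m=3$, the model integral is $\int_0^\infty (1+s^2)^{-3/2}\,ds=1$. This gives $I_3=d^{-2}+O(\log(2/d))$, and hence the leading term $\frac{2K}{\pi d^2}$.

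For $m=5$, the model integral is $\int_0^\infty(1+s^2)^{-5/2}\,ds=2/3$. This gives $I_5=\frac{2}{3}d^{-4}+O(d^{-2})$, and hence $\frac{4K}{3\pi d^4}$. In both cases the correction from $\sin^2t-t^2=O(t^4)$ produces the stated $O$-terms.

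For $m=1$, I would use the exact identity
\[
\int_0^{\pi/2}\frac{dt}{\sqrt{d^2+\sin^2t}}=\frac{1}{\sqrt{1+d^2}}\,\mathbf K\!\Bigl(\frac{1}{\sqrt{1+d^2}}\Bigr),
\]
where $\mathbf K$ is the complete elliptic integral. Its standard expansion $\mathbf K(k)=\log(4/k')+O(k'^2\log(1/k'))$, with $k'=d/\sqrt{1+d^2}$, gives $I_1=\log(4/d)+O(d^2\log(2/d))$. This matches the table.

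Main obstacle: the $m=1$ row. There the claimed remainder $O(Kd^2\log(2/d))$ is very small, of order $K^{-1/3}\log K$. The discretization error $O(K^{-1}d^{-3})$ is of order $K$, which is far larger, so the crude second-derivative bound is not enough. To handle this I would not apply Euler--Maclaurin naively near $t=0$. Instead I would compare $\sum f_d(\ell\theta_0)$ with the exactly summable model $\sum_\ell (d^2+\ell^2\theta_0^2)^{-1/2}$. The relevant fact is that the lattice sum of this model minus its integral is exponentially small in $dK$; this follows from Poisson summation, since the Fourier transform of $(d^2+t^2)^{-1/2}$ is a Bessel $K_0$ function, which decays like $e^{-2\pi d|n|K/\pi}$. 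The remaining smooth difference $f_d(t)-(d^2+t^2)^{-1/2}$ has second derivative bounded by $C$ uniformly in $d$, so Euler--Maclaurin handles it with error $O(K^{-1})$.

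The same Poisson-summation device also repairs the $m=3$ and $m=5$ rows. There the naive discretization error $K^{-1}d^{-m-2}$ can exceed the stated $O(K\log(2/d))$ and $O(Kd^{-2})$. With the $\theta_0$-periodic model kernel subtracted, the lattice error becomes $O(e^{-cdK}d^{-m}K)$ plus $O(K^{-1})$-type smooth terms, both uniform in $d$ and $\delta$ once $dK\ge\delta K^{1/3}$ is large. That closes the lemma.
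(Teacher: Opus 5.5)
Your final argument can be made to work, but it reaches the discretization estimate by a detour that the paper avoids.

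\textbf{The paper's route.} The paper never folds the sum. $S_m^+(K,d)=\sum_{\ell=0}^{K-1}f_d(\ell\theta_0)$ is already the $K$-point trapezoidal rule over a full period of the $\pi$-periodic function $f_d$. This function is analytic in $|\operatorname{Im}t|<\tfrac12\operatorname{arsinh}d$ and bounded there by $Cd^{-m}$. Exponential convergence of the periodic trapezoidal rule then gives
\[
S_m^+(K,d)=\tfrac K\pi\int_0^\pi f_d\,dt+O(Kd^{-m}e^{-cKd})
\]
at once, with no endpoint terms and no model kernel. Equivalently, this is Poisson summation applied to $f_d$ itself, whose Fourier coefficients decay like $e^{-cd|n|}$.

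\textbf{Your first step.} By folding onto $[0,\pi/2]$ and invoking Euler--Maclaurin you throw this periodicity away. As you partly concede at the end, your first step then fails for every row, not only $m=1$. When $d\sim K^{-2/3}$, the sharp bound $K^{-1}d^{-m-1}$ is of order $K^{1/3}$, $K^{5/3}$, $K^{3}$. The stated remainders are only of order $K^{-1/3}\log K$, $K\log K$, $K^{7/3}$.

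\textbf{Your repair.} Comparing with the Euclidean model $(d^2+t^2)^{-m/2}$ does close the argument, but three claims need adjusting.
\begin{itemize}
\item For $m=1$ the full-line model lattice sum diverges. Poisson summation must therefore be used in truncated or regularized form, which leaves a harmless $O(K^{-1})$ endpoint term at $t=\pi/2$.
\item The difference $f_d(t)-(d^2+t^2)^{-1/2}$ is, for small $t$, $d\,\Phi(t/d)$ plus smoother terms, with $\Phi(s)=s^4/(6(1+s^2)^{3/2})$. So its second derivative is of size $d^{-1}$ near $t\sim d$, not uniformly bounded. Only its $L^1$ norm is $O(1)$, which is all Euler--Maclaurin needs.
\item For $m=3,5$ the difference terms contribute $O(K^{-1}d^{1-m})$, not $O(K^{-1})$. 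This is still admissible, because its ratio to the stated remainders is at most $(Kd)^{-2}$.
\end{itemize}

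\textbf{The integrals.} Your elliptic-integral expansion for $m=1$ and the scaling $t=ds$ for $m=3,5$ are equivalent to the paper's substitution $x=\sin t$ with $\int_0^1h(x)x^{-1}\,dx=\log2$.

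\textbf{Comparison.} The paper's route gives an exponentially small, $\delta$-uniform discretization error from one cited theorem. Yours is more hands-on but needs the extra bookkeeping above. Applying your own Poisson-summation idea directly to the periodic $f_d$ would recover the paper's argument.
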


\begin{proof}
We take $K_0(\delta)$ large enough that $d\leq1/2$ and $Kd\geq1$.
For $m=1,3,5$, set
\[
I_m(d):=\int_0^\pi(d^2+\sin^2 t)^{-m/2}\,dt.
\]
The function $(d^2+\sin^2 t)^{-m/2}$ is $\pi$-periodic and analytic
in $|\operatorname{Im}t|<\tfrac12\operatorname{arsinh}d$, where it is
bounded by $Cd^{-m}$. Since the width of this strip is comparable to
$d$ for $0<d\leq1$, the periodic trapezoidal rule
\cite[Theorem~3.2]{TrefethenWeideman2014} gives, when $Kd\geq1$,
\begin{equation*}
S_m^+(K,d)=\frac K\pi I_m(d)
+O\!\left(Kd^{-m}e^{-cKd}\right).
\end{equation*}

It remains to estimate $I_m(d)$. By symmetry and the substitution
$x=\sin t$,
\[
I_m(d)=2\int_0^1
\frac{(d^2+x^2)^{-m/2}}{\sqrt{1-x^2}}\,dx.
\]
Write $h(x)=(1-x^2)^{-1/2}-1$. Then $h(x)=O(x^2)$ near zero,
$h\in L^1(0,1)$, and
\[
\int_0^1\frac{h(x)}x\,dx
=\int_0^{\pi/2}\tan\frac t2\,dt=\log2.
\]
For $m=1$, it follows that
\begin{align*}
I_1(d)
&=2\operatorname{arsinh}(d^{-1})
  +2\int_0^1\frac{h(x)}{\sqrt{d^2+x^2}}\,dx\\
&=2\log\frac2d+2\log2
  +O\!\left(d^2\log\frac2d\right)\\
&=2\log\frac4d+O\!\left(d^2\log\frac2d\right).
\end{align*}
Here the error estimate follows by splitting the integral at $x=d$
and $x=1/2$ and using
\[
0\leq\frac1x-\frac1{\sqrt{d^2+x^2}}
\leq\min\left\{\frac1x,\frac{d^2}{2x^3}\right\}.
\]

For $m=3,5$, the same decomposition gives
\[
\left|I_m(d)-2\int_0^\infty(d^2+x^2)^{-m/2}\,dx\right|
\leq C\left(1+\int_0^{1/2}
\frac{x^2}{(d^2+x^2)^{m/2}}\,dx\right).
\]
Using $x=ds$, together with
\[
\int_0^\infty(1+s^2)^{-3/2}\,ds=1,
\qquad
\int_0^\infty(1+s^2)^{-5/2}\,ds=\frac23,
\]
we obtain
\[
I_3(d)=\frac2{d^2}+O\!\left(\log\frac2d\right),
\qquad
I_5(d)=\frac4{3d^4}+O(d^{-2}).
\]
Substituting the three integral estimates proves the lemma.
\end{proof}

\subsection{Estimation on Type I}
We first establish the basic estimates for the interaction kernel
associated with the even extension. Recall that
\begin{equation}
\gamma(x,p)
:=\sum_{j=1}^{K/2-1}\frac{1}{|R_{4j\theta_0}x-p|}
 +\sum_{j=0}^{K/2-1}\frac{1}{|R_{4j\theta_0}S_Kx-p|},
\qquad x,p\in\Sigma_K.
\label{eq:appA-gamma-definition}
\end{equation}

\begin{lemma}
\label{lem:appA-gamma-estimates}
Under \eqref{eq:outline-parameters}, for any $x\in\Sigma_K$, one has
\begin{equation*}
|\gamma(x,b)|\leq CK\min\bigl\{\log K,\,1+|\log|x-b||\bigr\}.
\end{equation*}
Moreover, for $m=1,2,3,4$,
\begin{equation*}
|\nabla^m\gamma(x,b)|
\leq C\min\bigl\{K^{m+1},\,K|x-b|^{-m}\bigr\}.
\end{equation*}
Moreover,
\begin{equation*}
\int_{\Sigma_K}|\gamma(x,b)|^2\,dx\leq CK,
\qquad
\int_{\Sigma_K}|\nabla\gamma(x,b)|^2\,dx\leq CK\log K.
\end{equation*}
\end{lemma}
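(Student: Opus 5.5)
The plan is to reduce every estimate to the geometric separation of Lemma~\ref{lem:appA-image-separation} and the cosecant sums of Table~\ref{tab:appC-cosecant-sums}. We write $\gamma(x,b)=\sum_{g\in G_K\setminus\{I\}}|gx-b|^{-1}$. Each image is labeled by $\ell\in\{1,\dots,K-1\}$: even $\ell=2j$ corresponds to $R_{4j\theta_0}$, and odd $\ell=2j+1$ to $R_{4j\theta_0}S_K$. Lemma~\ref{lem:appA-image-separation} then gives
\[
|gx-b|\geq C\bigl(|x-b|+\sin(\ell\theta_0)\bigr).
\]
Since $R_\theta$ and $S_K$ are orthogonal, derivatives in $x$ are controlled by $|\nabla_x^m|gx-b|^{-1}|\leq C_m|gx-b|^{-1-m}$. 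Hence
\[
|\nabla^m\gamma(x,b)|\leq C\sum_{\ell=1}^{K-1}\bigl(|x-b|+\sin(\ell\theta_0)\bigr)^{-1-m}.
\]

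\textbf{Pointwise bounds.} For $m\geq1$ we drop $|x-b|$ and use $S^+_{m+1}(K)\leq CK^{m+1}$ from Table~\ref{tab:appC-cosecant-sums} (or the elementary bound $\sum_\ell\sin^{-s}(\ell\theta_0)\leq C_sK^s$). This gives the first alternative. For the second, we compare with an integral using $\sin(\ell\theta_0)\geq c\ell/K$ for $\ell\leq K/2$ (and symmetrically otherwise). Setting $t=|x-b|$,
\[
\sum_{\ell}(t+\ell/K)^{-1-m}\leq CK\int_0^\infty (t+s)^{-1-m}\,ds+Ct^{-1-m}\leq CKt^{-m}+Ct^{-1-m}.
\]
The stray single term $t^{-1-m}$ only appears when $t\lesssim K^{-1}$, where the bound $K^{m+1}$ is already smaller. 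Taking the minimum yields $C\min\{K^{m+1},K|x-b|^{-m}\}$. For $m=0$ the same comparison gives $\sum_\ell(t+\ell/K)^{-1}\leq CK\log\bigl(1+\tfrac{1}{Kt}\bigr)+CK$. This is bounded by $CK\log K$ (using $t\geq0$ and $S_1^+(K)\leq CK\log K$) and by $CK(1+|\log t|)$ when $t\geq K^{-1}$. For $t<K^{-1}$, $\log K\leq|\log t|$ holds anyway, so the minimum bound follows.

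\textbf{Integral bounds.} We integrate the pointwise bounds over $\Sigma_K$, whose volume is $\leq CK^{-1}$ and whose geometry around $b$ is contained in a ball of radius $C$. For $\gamma^2$ we split at $|x-b|=K^{-1}$. On the inner ball $|x-b|<K^{-1}$ the contribution is $CK^2\log^2K\cdot K^{-3}$, which is negligible. On the outer region $\Sigma_K\cap\{|x-b|\geq K^{-1}\}$ we use $CK^2(1+|\log|x-b||)^2$ on a set of measure $CK^{-1}$, whose logarithmic weight is integrable. This gives $CK$. For $|\nabla\gamma|^2$ we use $\min\{K^4,K^2|x-b|^{-2}\}$. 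The region $|x-b|<K^{-1}$ contributes $K^4\cdot K^{-3}=K$. The outer region needs more care, since the sector has angular width $\sim K^{-1}$: in coordinates around $b$ it is a slab of thickness $\sim K^{-1}$ in the angular direction and of size $O(1)$ in the other two. Integrating $K^2|x-b|^{-2}$ over such a slab gives $K^2\cdot K^{-1}\int_{K^{-1}}^{C}r^{-2}\,r\,dr\leq CK\log K$.

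The main obstacle is this last slab computation. A crude spherical integration would give $K^2$ rather than $K\log K$, so the thinness of $\Sigma_K$ must be used. We do this by bounding the $x$-integration in the direction transverse to the flat faces by $CK^{-1}$ and integrating the remaining two-dimensional radial weight.
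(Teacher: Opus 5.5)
Your argument follows the paper's proof. The separation lemma reduces everything to cosecant-type sums, these are compared with integrals, and the $L^2$ bounds come from integrating the pointwise bounds over the thin slab containing $\Sigma_K$. Two steps need repair.

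\textbf{The $m=0$ sum.} The displayed estimate $\sum_\ell(t+\ell/K)^{-1}\leq CK\log(1+\tfrac{1}{Kt})+CK$ is false. At $t=K^{-1/2}$ the left side is about $\tfrac12K\log K$, while the right side is $O(K)$. Comparing instead with $\int_0^{K/2}(t+cu/K)^{-1}\,du$ gives the correct bound $CK\log(1+\tfrac1t)\leq CK(1+|\log t|)$. This is the bound you actually use afterwards, so your conclusion survives once the display is corrected.

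\textbf{The estimate of $\int_{\Sigma_K}\gamma^2$.} On $\{|x-b|\geq K^{-1}\}$, the bound $|\Sigma_K|\leq CK^{-1}$ together with the pointwise bound does not give $CK$. The integrand reaches $K^2\log^2K$ near $|x-b|\sim K^{-1}$. Knowing only the total measure, the set could be a ball of radius $K^{-1/3}$ about $b$, and then you would get only $CK\log^2K$. What you need is the same slab estimate you invoke for the gradient:
\[
\int_{\Sigma_K}F(|x-b|)\,dx\leq\frac CK\int_0^{C_0}F(t)\,t\,dt
\quad\text{for nonincreasing } F\geq0.
\]
Apply it with $F(t)=K^2\bigl(1+\log\tfrac{C}{t}\bigr)^2$, which is nonincreasing on $(0,C_0]$ when $C\geq C_0$. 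This gives $CK$.

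The paper applies this slab estimate to both integrals. It uses the single pointwise bound $CK\log\frac{C}{|x-b|+K^{-1}}$ (and $\frac{CK}{|x-b|+K^{-1}}$ for the gradient) rather than splitting at $|x-b|=K^{-1}$.
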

\begin{proof}
Write $x=(re^{i\varphi},x_3)$ and set $s=|x-b|$. Then
\[
s^2=(r-\rho)^2+x_3^2+4r\rho\sin^2\frac{\varphi-\alpha_b}{2}.
\]
If $r<1/4$, in this region all denominators are uniformly bounded
away from zero, and therefore
\[
|\gamma(x,b)|\leq CK.
\]
We therefore assume from now on that $r\geq1/4$.
We proved those properties before
\begin{align*}
|R_{4j\theta_0}x-b|^2
&\geq C\bigl(s^2+\sin^2(2j\theta_0)\bigr),
&&1\leq j\leq\frac K2-1,
\\
|R_{4j\theta_0}S_Kx-b|^2
&\geq C\bigl(s^2+\sin^2((2j+1)\theta_0)\bigr),
&&0\leq j\leq\frac K2-1.
\end{align*}
Consequently, since the even and odd indices together cover
$1,\ldots,K-1$,
\begin{align*}
\gamma(x,b)
&\leq C\sum_{j=1}^{K/2-1}
\frac{1}{\bigl(s^2+\sin^2(2j\theta_0)\bigr)^{1/2}}
+C\sum_{j=0}^{K/2-1}
\frac{1}{\bigl(s^2+\sin^2((2j+1)\theta_0)\bigr)^{1/2}}
\notag\\
&=C\sum_{\ell=1}^{K-1}
\frac{1}{\bigl(s^2+\sin^2(\ell\theta_0)\bigr)^{1/2}}.
\end{align*}

We first assume that $s\geq\frac1K$. If $s\leq1$, an elementary
comparison with the corresponding integral gives
\[
\sum_{\ell=1}^{K-1}
\frac{1}{\bigl(s^2+\sin^2(\ell\theta_0)\bigr)^{1/2}}
\leq CK(1+|\log s|).
\]
If $s\geq1$, the same conclusion follows immediately since
$s^{-1}\leq1$ and there are $O(K)$ terms. It remains to consider
the case when
\[
0<s<\frac1K.
\]
Note that
\[
\gamma(x,b)
\leq C\sum_{\ell=1}^{K-1}\frac1{\sin(\ell\theta_0)}
\leq C\sum_{\ell=1}^{K/2}\frac1{\sin(\ell\pi/K)}
\leq CK\sum_{\ell=1}^{K/2}\frac1\ell
\leq CK\log K.
\]
Combining these cases, we obtain
\[
|\gamma(x,b)|
\leq CK\min\bigl\{\log K,\,1+|\log|x-b||\bigr\}.
\]

We next estimate the derivatives. For every orthogonal transformation
$R$ and every multi-indices $\alpha,\beta$ with
\[
|\alpha|+|\beta|=m,
\]
the homogeneity of the Newtonian kernel gives
\begin{equation*}
\left|D_x^\alpha D_p^\beta\frac1{|Rx-p|}\right|
\leq\frac{C_m}{|Rx-p|^{m+1}}.
\end{equation*}
Therefore,
\begin{equation*}
\left.\left|D_x^\alpha D_p^\beta\gamma(x,p)\right|\right|_{p=b}
\leq C\sum_{\ell=1}^{K-1}
\frac{1}{\bigl(s^2+\sin^2(\ell\theta_0)\bigr)^{(m+1)/2}}.
\end{equation*}
Suppose first that $s\geq\frac1K$. By the same elementary finite-sum
estimate, now with exponent $k=m+1\geq2$,
\begin{equation*}
\sum_{\ell=1}^{K-1}
\frac{1}{\bigl(s^2+\sin^2(\ell\theta_0)\bigr)^{(m+1)/2}}
\leq CKs^{-m}.
\end{equation*}
If instead $s<\frac1K$, then
\begin{equation*}
\left.\left|D_x^\alpha D_p^\beta\gamma(x,p)\right|\right|_{p=b}
\leq C\sum_{\ell=1}^{K-1}\csc^{m+1}(\ell\theta_0)
\leq CK^{m+1}\sum_{\ell=1}^{K/2}\ell^{-(m+1)}
\leq CK^{m+1}.
\end{equation*}
Combining these cases, we obtain
\[
\left.\left|D_x^\alpha D_p^\beta\gamma(x,p)\right|\right|_{p=b}
\leq C\min\bigl\{K^{m+1},\,K|x-b|^{-m}\bigr\}.
\]

It remains to prove the square-integral estimates. We first record
a simple geometric observation. After translating by $b$, the sector
$\Sigma_K$ is contained in a slab whose tangential width is bounded
by $C/K$. Consequently, for every nonnegative nonincreasing function
$F=F(t)$, one has
\begin{equation*}
\int_{\Sigma_K}F(|x-b|)\,dx
\leq\frac CK\int_0^{C_0}F(t)\,t\,dt,
\end{equation*}
where $C_0>0$ is independent of $K$. We first consider $\gamma$.
From the pointwise estimate obtained above,
\[
|\gamma(x,b)|\leq CK\log\frac{C}{|x-b|+K^{-1}}.
\]
we obtain
\begin{align*}
\int_{\Sigma_K}|\gamma(x,b)|^2\,dx
&\leq CK\int_0^{C_0}t\log^2\frac{C}{t+K^{-1}}\,dt
\\
&\leq CK\int_0^{C_0}(t+K^{-1})
\log^2\frac{C}{t+K^{-1}}\,dt
\leq CK.
\end{align*}
Next, from the gradient estimate
\[
|\nabla_x\gamma(x,b)|\leq\frac{CK}{|x-b|+K^{-1}},
\]
Consequently,
\begin{align*}
\int_{\Sigma_K}|\nabla_x\gamma(x,b)|^2\,dx
&\leq CK\int_0^{C_0}\frac{t}{(t+K^{-1})^2}\,dt
\notag\\
&=CK\left[\log(t+K^{-1})
+\frac{K^{-1}}{t+K^{-1}}\right]_0^{C_0}
\leq CK\log K.
\end{align*}
The proof is complete.
\end{proof}

In this particular case $x=b$, we have a more precise estimate.

\begin{lemma}
\label{lem:appA-gamma-diagonal}
Under \eqref{eq:outline-parameters}, one has
\[
\gamma(b,b)=\frac{1}{2\rho}
\bigl[S_1^+(K)+O(K^3\alpha_b^2)\bigr],
\]
where $S_1^+(K)$ is defined in Table~\ref{tab:appC-cosecant-sums}.
\end{lemma}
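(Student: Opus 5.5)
We compute each term of \eqref{eq:appA-gamma-definition} at $x=p=b$ exactly, then expand in $\alpha_b$. Write $b=(\rho e^{i\alpha_b},0)$. Then
\[
R_{4j\theta_0}b=(\rho e^{i(\alpha_b+4j\theta_0)},0),\qquad
R_{4j\theta_0}S_Kb=(\rho e^{i(2\theta_0-\alpha_b+4j\theta_0)},0).
\]
For two points on the circle of radius $\rho$ in the plane $x_3=0$ with angular separation $\phi$, the distance is $2\rho|\sin(\phi/2)|$. This gives
\[
|R_{4j\theta_0}b-b|=2\rho\,|\sin(2j\theta_0)|,
\qquad
|R_{4j\theta_0}S_Kb-b|=2\rho\,|\sin((2j+1)\theta_0-\alpha_b)|.
\]
The rotation terms contribute exactly $\frac1{2\rho}S_1^e(K)$. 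All dependence on $\alpha_b$ sits in the reflected terms, which equal
\[
\frac1{2\rho}\sum_{j=0}^{K/2-1}\frac1{\sin((2j+1)\theta_0-\alpha_b)}.
\]
These sines are positive, since $|\alpha_b|\le\theta_0/2$ under \eqref{eq:outline-parameters} for large $K$.

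It remains to show that this sum equals $S_1^o(K)+O(K^3\alpha_b^2)$. Set $f(t)=\csc t$ and $t_j=(2j+1)\theta_0$. Taylor expansion to second order gives
\[
f(t_j-\alpha_b)=f(t_j)-\alpha_bf'(t_j)+\tfrac12\alpha_b^2f''(\tau_j),
\qquad \tau_j\in[t_j-\theta_0/2,\,t_j+\theta_0/2].
\]
The key point is that the first-order term cancels. The set $\{t_j\}$ is invariant under $t\mapsto\pi-t$, because $\pi-(2j+1)\theta_0=(2(K/2-1-j)+1)\theta_0$. Moreover $f'(t)=-\csc t\cot t$ is odd about $\pi/2$, so $\sum_j f'(t_j)=0$.

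For the second-order remainder, use $|f''(t)|\le C\csc^3 t$. Since $\tau_j$ lies within $\theta_0/2$ of $t_j$, we have $\sin\tau_j\ge c\sin t_j$. Hence
\[
\Bigl|\sum_j f''(\tau_j)\Bigr|\le C\,S_3^o(K)\le CK^3
\]
by Table~\ref{tab:appC-cosecant-sums}. Combining the rotation and reflection contributions gives
\[
\gamma(b,b)=\frac1{2\rho}\bigl[S_1^e(K)+S_1^o(K)+O(K^3\alpha_b^2)\bigr],
\]
and $S_1^e(K)+S_1^o(K)=S_1^+(K)$ by \eqref{eq:appC-full-sum}.

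The only step needing care is the cancellation of the linear term, since without it the error would be $O(\alpha_bK^2)$. That cancellation comes from the reflection symmetry $j\mapsto K/2-1-j$, which uses that $K$ is even. The uniform comparison $\sin\tau_j\gtrsim\sin t_j$ near the endpoints $j=0$ and $j=K/2-1$ follows from $|\alpha_b|\le\theta_0/2$.
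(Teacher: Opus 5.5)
Your proof is correct and follows essentially the same approach as the paper, which defers to the direct computation of \cite[Lemma~8.2]{SunWeiYangFull} with the sign of the reflected terms adapted to the even extension. You carry out that computation yourself: exact chordal distances $2\rho\sin(2j\theta_0)$ and $2\rho\sin((2j+1)\theta_0-\alpha_b)$, then a second-order expansion in $\alpha_b$ whose linear term cancels under $j\mapsto K/2-1-j$ and whose remainder is bounded by $C\alpha_b^2S_3^o(K)=O(K^3\alpha_b^2)$.
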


\begin{lemma}
\label{lem:appA-gamma-gradient-diagonal}
Under \eqref{eq:outline-parameters}, write
$w=(|w|e^{i\alpha_w},0)^T$. Then
\[
w\cdot\nabla_x\gamma(b,b)
=\frac{|w|}{4\rho^2}
\Bigl[-S_1^+(K)
+O\bigl(K(\log K)(\alpha_w-\alpha_b)^2
+K^3|(\alpha_w-\alpha_b)\alpha_b|+K^3\alpha_b^2\bigr)\Bigr]
\]
and the same expansion applies to $w\cdot\nabla_p\gamma(b,b)$.
\end{lemma}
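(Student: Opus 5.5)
We compute $\nabla_x\gamma(b,b)$ term by term from the definition \eqref{eq:appA-gamma-definition}, in the spirit of Lemma~\ref{lem:appA-gamma-diagonal}. For an orthogonal map $g\in G_K\setminus\{I\}$ we have $\nabla_x|gx-p|^{-1}=-g^T(gx-p)/|gx-p|^3$. At $x=p=b$ this equals $-(b-g^Tb)/|gb-b|^3$, using $|g^T b-b|=|gb-b|$. Similarly $\nabla_p|gx-p|^{-1}=(gb-b)/|gb-b|^3$. So both gradients are sums over the $K-1$ images of $b$, and the claim that the same expansion holds for $\nabla_p$ reduces to the same computation with $g$ replaced by $g^T$, which permutes the index set (rotations $R_{4j\theta_0}\leftrightarrow R_{-4j\theta_0}$, reflections self-inverse).

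For the computation, write $b=\rho e^{i\alpha_b}$. The rotation images are $\rho e^{i(\alpha_b+4j\theta_0)}$. The reflection images $R_{4j\theta_0}S_K b$ equal $\rho e^{i((4j+2)\theta_0-\alpha_b)}$. For an image at angle $\alpha_b+\psi$, the difference $b-gb$ has length $2\rho|\sin(\psi/2)|$ and direction $i e^{i(\alpha_b+\psi/2)}\cdot(-\operatorname{sgn})$, i.e.\ it is $-2i\rho\sin(\psi/2)e^{i(\alpha_b+\psi/2)}$. Its component along $e^{i\alpha_w}$ is $2\rho\sin(\psi/2)\sin(\alpha_b-\alpha_w+\psi/2)$.

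Dividing by $8\rho^3|\sin(\psi/2)|^3$ gives a contribution
\[
\frac{1}{4\rho^2}\,\frac{\sin(\psi/2)\sin(\psi/2+\alpha_b-\alpha_w)}{|\sin(\psi/2)|^3}.
\]
Expanding $\sin(\psi/2+\theta)=\sin(\psi/2)\cos\theta+\cos(\psi/2)\sin\theta$ with $\theta=\alpha_b-\alpha_w$ splits this into two parts. The first is $\cos\theta/|\sin(\psi/2)|$. The second, $\sin\theta\,\cos(\psi/2)\operatorname{sgn}/\sin^2(\psi/2)$, is odd.

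For rotations, $\psi/2=2j\theta_0$. For reflections, $\psi/2=(2j+1)\theta_0-\alpha_b$. So the even part sums to $\cos\theta\,[S_1^e(K)+S_1^o(K)]$ plus the error from the shift by $\alpha_b$. The overall sign $-1$ comes from the $-g^T$ factor and gives $-S_1^+(K)$. The factor $\cos\theta=1+O(\theta^2)$ multiplies $S_1^+=O(K\log K)$, producing $K\log K(\alpha_w-\alpha_b)^2$.

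The main work is controlling the perturbations. For the even part in the reflection sum, Taylor expand $1/\sin((2j+1)\theta_0-\alpha_b)$ in $\alpha_b$. The first-order term $\alpha_b\sum\cos/\sin^2$ cancels by the symmetry $j\mapsto K/2-1-j$, which maps $(2j+1)\theta_0\mapsto\pi-(2j+1)\theta_0$. The second-order term is bounded by $\alpha_b^2\sum\csc^3\le C K^3\alpha_b^2$ (Table~\ref{tab:appC-cosecant-sums}). This uses $|\alpha_b|\ll\theta_0$, guaranteed by \eqref{eq:outline-parameters}.

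For the odd part, the rotation sum vanishes exactly by $j\mapsto K/2-j$. The reflection sum vanishes at $\alpha_b=0$ by the same antisymmetry. Its first derivative in $\alpha_b$ is $O(\sum\csc^3)=O(K^3)$, so this part is $O(|\theta|K^3|\alpha_b|)$, giving the term $K^3|(\alpha_w-\alpha_b)\alpha_b|$. The $\sin\theta$ versus $\theta$ discrepancy is absorbed there.

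The symmetric cancellations are the delicate step. I would verify them carefully by pairing indices explicitly and checking that $\rho$ enters only through the prefactor $1/(4\rho^2)$. Finally, $|w|$ factors out since $w_3=0$.
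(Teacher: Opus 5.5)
Your proposal is correct and follows essentially the same route as the paper, which simply defers to the direct image-sum computation of \cite{SunWeiYangFull} with every reflected copy counted with a plus sign. The parity pairings $j\mapsto K/2-j$ and $j\mapsto K/2-1-j$, the bound $|\alpha_b|\ll\theta_0$, and your observation that $\nabla_p\gamma(b,b)=\nabla_x\gamma(b,b)$ exactly (the image set is closed under $g\mapsto g^{T}$) are what that computation needs, and they produce precisely the stated error terms.
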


\begin{lemma}
\label{lem:appA-gamma-hessian-diagonal}
Under \eqref{eq:outline-parameters}, we have
\[
w^T\nabla_{x,p}^2\gamma(b,b)w
=\frac{|w|^2}{8\rho^3}
\Bigl[S_1^+(K)+S_3^+(K)
+\vartheta A_K^\gamma\vartheta^T
+O(K^3|\vartheta|^4)\Bigr],
\]
where we use the notation in Table~\ref{tab:appC-cosecant-sums} and
\[
\vartheta=(\alpha_w,\widehat\alpha_b),
\qquad
\widehat\alpha_b=K\alpha_b,
\]
\[
A_K^\gamma=
\begin{pmatrix}
-3S_3^e(K)+S_3^o(K)-2S_1^o(K)
&\dfrac{3}{K}\bigl[S_3^e(K)+S_1^o(K)-S_3^o(K)\bigr]
\\[6pt]
\dfrac{3}{K}\bigl[S_3^e(K)+S_1^o(K)-S_3^o(K)\bigr]
&\dfrac{1}{K^2}\bigl[6S_5^o(K)+\tfrac32S_3^o(K)
-\tfrac92S_1^o(K)-3S_3^e(K)\bigr]
\end{pmatrix}.
\]
\end{lemma}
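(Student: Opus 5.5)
The plan is to compute $w^T\nabla^2_{x,p}\gamma(b,b)w$ term by term from \eqref{eq:appA-gamma-definition} and then Taylor expand in the small angles $\alpha_w$ and $\alpha_b$.

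\textbf{Step 1 (the single-term Hessian).} Each image term has the form $|Mx-p|^{-1}$ with $M$ orthogonal. For such a term,
\[
\partial_{x_i}\partial_{p_k}|Mx-p|^{-1}
=\frac{M_{ji}\delta_{jk}}{|Mx-p|^3}-\frac{3(Mx-p)_k\,(Mx-p)_jM_{ji}}{|Mx-p|^5}.
\]
Evaluating at $x=p=b$ and pairing with $w$ on both sides gives
\[
\frac{(Mw)\cdot w}{|Mb-b|^3}-\frac{3\,[(Mb-b)\cdot w]\,[(Mb-b)\cdot Mw]}{|Mb-b|^5}.
\]
I will write $b=\rho e^{i\alpha_b}$ and $w=|w|e^{i\alpha_w}$ in the plane, so that all $x_3$-components vanish.

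For the rotations $M=R_{2\ell\theta_0}$ with $\ell=2j$ even, set $\psi=\ell\theta_0$. Then
\[
|Mb-b|=2\rho\sin\psi,\qquad Mw\cdot w=|w|^2\cos 2\psi,
\]
and $Mb-b=2\rho\sin\psi\, i e^{i(\alpha_b+\psi)}$. The two dot products are then $-2\rho|w|\sin\psi\,\sin(\alpha_w-\alpha_b-\psi)$ and $2\rho|w|\sin\psi\,\sin(\psi+\alpha_b-\alpha_w)$ (up to sign), and the term becomes $\frac{|w|^2}{8\rho^3}$ times an explicit trigonometric expression in $\psi$ and $\alpha_w-\alpha_b$.

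For the reflections $M=R_{4j\theta_0}S_K$, $Mz=e^{i\ell\theta_0}\bar z$ with $\ell=2j+1$ odd. Here
\[
|Mb-b|=2\rho\,\bigl|\sin(\ell\theta_0/2-\alpha_b)\bigr|,\qquad Mw\cdot w=|w|^2\cos(\ell\theta_0-2\alpha_w).
\]
So the dependence on $\alpha_b$ enters through the distance itself. This is why a $K\alpha_b=\widehat\alpha_b$ scaling appears and why $S_5^o$ shows up in the $\widehat\alpha_b^2$ entry.

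Note on the sum index: I must recheck that the even-indexed rotations sample angles $\psi=2j\theta_0$ and the odd-indexed reflections give $\sin((2j+1)\theta_0)$, consistent with Lemma~\ref{lem:appA-image-separation}. I will normalize the trigonometric identities using $\sin\psi$ at the half-angle so that they match the denominators $\sin^m(\ell\theta_0)$ in Table~\ref{tab:appC-cosecant-sums}.

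\textbf{Step 2 (zeroth order).} At $\vartheta=0$, reduce each term using $\cos2\psi=1-2\sin^2\psi$ and similar identities. The result splits into a $\csc^3$ part and a $\csc$ part, whose sums over $\ell$ give $S_3^+(K)+S_1^+(K)$ after merging even and odd indices.

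\textbf{Step 3 (second order).} Expand each term to second order in $(\alpha_w,\alpha_b)$. First-order terms cancel pairwise under $\ell\mapsto K-\ell$ (equivalently $\psi\mapsto\pi-\psi$), together with the symmetry of the reflected family. This is the step I expect to be the main obstacle.

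The quadratic coefficients are rational functions of $\sin$ and $\cos$. Rewriting $\cos^2=1-\sin^2$, they become combinations of $\csc^5$, $\csc^3$ and $\csc$; they are then summed separately over odd and even $\ell$. Tracking the factor $1/K$ per power of $\alpha_b$ via $\widehat\alpha_b=K\alpha_b$ should reproduce the stated matrix $A_K^\gamma$.

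\textbf{Step 4 (remainder).} Bound the Taylor remainder by the fourth derivative in the angles. Each term is $\lesssim\csc^3(\ell\theta_0)\,|\vartheta|^4$, where the reflection terms carry the scaling $\alpha_b\sim\widehat\alpha_b/K$. Summing with $S_3^+\sim K^3$ gives $O(K^3|\vartheta|^4)$. The small-angle bounds from \eqref{eq:outline-parameters} keep $|\alpha_b|\ll\theta_0$, so the denominators stay comparable to $\sin(\ell\theta_0)$. The third-order terms also cancel by the same symmetry.
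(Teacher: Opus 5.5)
Your strategy is the same as the paper's, which defers to the term-by-term computation of \cite[Lemmas~8.2--8.4]{SunWeiYangFull} adapted to the even extension: compute each image term's mixed Hessian in closed form, expand in the small angles, cancel odd orders by $\ell\mapsto K-\ell$, and sum with the cosecant sums. However, two of your Step~1 formulas are wrong, and as written they would not produce the stated expansion.

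\emph{The reflection formula.} One has $R_{4j\theta_0}S_Kz=e^{4ij\theta_0}e^{2i\theta_0}\overline z=e^{2i\ell\theta_0}\overline z$ with $\ell=2j+1$, not $e^{i\ell\theta_0}\overline z$. So, with $\psi=\ell\theta_0$,
\[
|Mb-b|=2\rho\sin(\psi-\alpha_b),\qquad Mw\cdot w=|w|^2\cos(2\psi-2\alpha_w).
\]
You sensed an index problem, but renormalizing half-angles does not cure it. With your formula the reflected zeroth-order sum would be $\sum_j\csc^m\bigl((2j+1)\pi/(2K)\bigr)$, not $S_m^o(K)$. Moreover, all the angles $\ell\theta_0/2$ lie in $(0,\pi/2)$, so the $\psi\mapsto\pi-\psi$ cancellation in Step~3 would be unavailable.

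\emph{The rotation dot product.} For the rotations, $(Mb-b)\cdot Mw=2\rho|w|\sin\psi\,\sin(\psi+\alpha_w-\alpha_b)$, not $\sin(\psi+\alpha_b-\alpha_w)$; this is not an overall-sign ambiguity. Write $\alpha=\alpha_w-\alpha_b$. In your version the product of the two dot products is proportional to $\sin^2(\psi-\alpha)$, which is not even in $\alpha$. It gives $\pm3\cos2\psi\,\csc^3\psi$ instead of $-3\csc^3\psi$ as the $\alpha^2$-coefficient. Evenness is in fact forced: conjugating by the reflection fixing $b$ sends $(M,\alpha)$ to $(M^{-1},-\alpha)$, and $M\mapsto M^{-1}$ only transposes the mixed Hessian.

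\emph{The corrected computation.} With both fixes, each term equals $|w|^2/(8\rho^3)$ times an explicit factor. For the rotations ($\psi=2j\theta_0$) and the reflections ($\psi=(2j+1)\theta_0$) respectively, the factors are
\[
\frac{1+\sin^2\psi-3\sin^2(\alpha_w-\alpha_b)}{\sin^3\psi},
\qquad
F(\psi,\alpha_w,\alpha_b):=\frac{1+\sin^2(\psi-\alpha_w)}{\sin^3(\psi-\alpha_b)}.
\]

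Inside the bracket, the rotations contribute $S_3^e+S_1^e-3S_3^e(\alpha_w-\alpha_b)^2+O(K^3|\vartheta|^4)$, with no odd terms at all.

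For the reflections, $F(\pi-\psi,-\alpha_w,-\alpha_b)=F(\psi,\alpha_w,\alpha_b)$, and the odd indices are invariant under $\ell\mapsto K-\ell$. Hence all odd orders cancel. The quadratic coefficients of $F$ are:
\begin{enumerate}
\item $\csc^3\psi-2\csc\psi$ for $\alpha_w^2$;
\item $6\csc\psi-6\csc^3\psi$ for $\alpha_w\alpha_b$;
\item $6\csc^5\psi+\tfrac32\csc^3\psi-\tfrac92\csc\psi$ for $\alpha_b^2$.
\end{enumerate}
Summing and substituting $\alpha_b=\widehat\alpha_b/K$ reproduces $S_1^+(K)+S_3^+(K)$ and every entry of $A_K^\gamma$.

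Your Step~4 then works as stated. Each $\alpha_b$-derivative costs one extra factor $\csc(\ell\theta_0)\le K$, which is absorbed by the $K^{-1}$ in $\alpha_b=\widehat\alpha_b/K$. Also, $|\alpha_b|\ll\theta_0$ keeps $\sin(\psi-t\alpha_b)$ comparable to $\sin\psi$ along the Taylor segment.
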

\begin{proof}[Proof of Lemmas~\ref{lem:appA-gamma-diagonal},
\ref{lem:appA-gamma-gradient-diagonal}
and~\ref{lem:appA-gamma-hessian-diagonal}]
The proofs follow from the computations in
\cite[Lemmas~8.2--8.4]{SunWeiYangFull}, with the signs of the
rotational terms changed according to the even extension.
\end{proof}

\subsection{Estimation on Type II}
In this subsection, we prove the corresponding estimates for $H_N^e$.
Since most computations are similar to those for $\gamma(x,p)$,
we only record the main differences. Recall
\begin{equation}
H_N(x,p)
=-\frac{1}{B(x,p)}
-\log\frac{2}{B(x,p)+A(x,p)}
-\frac12\bigl(|x|^2+|p|^2\bigr)+\frac{14}{5},
\label{eq:appC-HN-explicit}
\end{equation}
where
\begin{equation*}
B(x,p)=\bigl(1-2x\cdot p+|x|^2|p|^2\bigr)^{1/2},
\qquad
A(x,p)=1-x\cdot p.
\end{equation*}
We define
\begin{equation}
H_N^e(x,p)
=\sum_{j=0}^{K/2-1}
\bigl[H_N(R_{4j\theta_0}x,p)
+H_N(R_{4j\theta_0}S_Kx,p)\bigr].
\label{eq:appC-HN-even}
\end{equation}

\begin{lemma}
\label{lem:appC-HN-estimates}
Under \eqref{eq:outline-parameters}, let $b^*=b/\rho^2$.
Then, for every $x\in\Sigma_K$,
\begin{equation*}
|H_N^e(x,b)|\leq CK\log\frac{C}{|x-b^*|}.
\end{equation*}
Moreover, for $m=1,2,3,4$,
\begin{equation}
|\nabla^mH_N^e(x,b)|
\leq CK|x-b^*|^{-m}
\leq CK(1-\rho|x|)^{-m}.
\label{eq:appC-HN-derivatives}
\end{equation}
Moreover,
\begin{equation*}
\int_{\Sigma_K}|H_N^e(x,b)|^2\,dx\leq CK,
\qquad
\int_{\Sigma_K}|\nabla H_N^e(x,b)|^2\,dx
\leq CK\log K.
\end{equation*}
\end{lemma}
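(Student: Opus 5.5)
The plan is to mirror the proof of Lemma~\ref{lem:appA-gamma-estimates}, replacing the image separation of Lemma~\ref{lem:appA-image-separation} by a separation from the Kelvin images $gb^*$. First I will record the pointwise behaviour of a single $H_N(y,p)$. For $y,p\in B_1$ one has $B(y,p)=|p|\,|y-p/|p|^2|$, and by a direct computation $B(y,p)\geq c(1-|y||p|)$. Also $A(y,p)\geq 1-|y||p|$, and $B+A\geq B$. Hence
\[
|H_N(y,b)|\leq \frac{C}{|y-b^*|}+C\log\frac{C}{|y-b^*|}+C,
\]
and the homogeneity of $B$ gives $|\nabla_y^mH_N(y,b)|\leq C_m|y-b^*|^{-m-1}$, because $|\nabla^m B^{-1}|\lesssim B^{-1-m}$ and the logarithmic term is milder. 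The leading singular term $-1/B$ is the one that has to be summed.

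Second, I will sum over $g\in G_K$, using the fact that $H_N$ is invariant under simultaneous rotations, so that $H_N(gx,b)=H_N(x,g^{-1}b)$. The key geometric input is the analogue of Lemma~\ref{lem:appA-image-separation} for $b^*=b/\rho^2$, which sits outside $B_1$ at distance $\sim d$ from the sphere. For $x\in\Sigma_K$ and $g\neq I$ I expect
\[
|x-g^{-1}b^*|\geq c\bigl(|x-b^*|+\sin(\ell_g\theta_0)\bigr),
\]
with $\ell_g$ running over $1,\dots,K-1$ as $g$ varies. Together with $|x-b^*|\geq c\,d$, this reduces all derivative sums to
\[
\sum_{\ell=0}^{K-1}\bigl(s^2+\sin^2(\ell\theta_0)\bigr)^{-(m+1)/2},\qquad s=|x-b^*|\geq cd\geq cK^{-1},
\]
using $Kd\geq1$ from \eqref{eq:outline-parameters}. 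Comparison with an integral, exactly as in the $s\geq1/K$ case of Lemma~\ref{lem:appA-gamma-estimates} or via Table~\ref{tab:appC-d-sums}, bounds this by $CKs^{-m}$. This gives \eqref{eq:appC-HN-derivatives}. The second inequality there follows from $|x-b^*|\geq |b^*|-|x|=\rho^{-1}(1-\rho|x|)\ge 1-\rho|x|$.

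For $m=0$ the same argument with exponent $1$ gives $CK(1+|\log s|)$, which is $\leq CK\log(C/s)$ since $s\le C$. For points with $|x|<1/4$, all the $|gx-b^*|$ are bounded below, and the bound $CK$ is immediate as before.

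The $L^2$ bounds follow from the slab observation used for $\gamma$: the sector has tangential width $C/K$ near $b$, so
\[
\int_{\Sigma_K}F(|x-b^*|)\,dx\leq \frac CK\int_{cd}^{C_0}F(t)\,t\,dt.
\]
With $F=K^2\log^2(C/t)$ this gives $CK$. With $F=K^2t^{-2}$ it gives $CK\log(C/d)\leq CK\log K$, since $d\geq\delta K^{-2/3}$ and $K\ge K_0(\delta)$.

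The main obstacle is the separation estimate for the Kelvin images, uniformly in $x$ near the spherical boundary. I will prove it by the same planar computation as in Lemma~\ref{lem:appA-image-separation}. Writing $x=(re^{i\varphi},x_3)$, one has
\[
|x-g^{-1}b^*|^2=(r-\rho^{-1})^2+x_3^2+4r\rho^{-1}\sin^2\frac{\varphi-\varphi_g}{2},
\]
where $|\varphi-\varphi_g|\geq c\,\ell_g\theta_0$ because $|\alpha_b|\le\theta_0/2$ and $|\varphi|<\theta_0$. It is also important that the bound be stated with $b^*$ rather than $b$: this avoids the Kelvin singularity inside the sector and gives constants independent of $\delta$.
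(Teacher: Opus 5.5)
Your proposal is correct and follows essentially the same route as the paper. You use the identity $B(gx,b)=\rho\,|x-g^{-1}b^*|$, which the paper writes out as the explicit formula for $B(R_{4j\theta_0}x,b)^2$, and the same image-separation lower bound reducing everything to $\sum_{\ell}(s_*^2+\sin^2(\ell\theta_0))^{-k/2}$ with $s_*\geq d\geq K^{-1}$; then, as in the paper, the elementary integral comparison, the inequality $|x-b^*|\geq\rho^{-1}-|x|$, and the slab integration give the pointwise and $L^2$ bounds, and phrasing the separation through orthogonal invariance of $H_N$ and the Kelvin images $g^{-1}b^*$ changes nothing beyond the factor $\rho^2$.
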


\begin{proof}
Write $x=(re^{i\varphi},x_3)$ and $s_*=|x-b^*|$.
By \eqref{eq:outline-parameters}, for sufficiently large $K$,
\[
s_*\geq d_*:=|b^*|-1
=\frac{1-\rho}{\rho}
=\frac{2d}{1+\rho}
\geq\delta K^{-2/3}\geq K^{-1}.
\]
By the definition of $B$,
\[
B(x,b)^2=1-2x\cdot b+|x|^2\rho^2
=\rho^2|x-b^*|^2.
\]
For the rotated terms, direct computation gives
\begin{align*}
B(R_{4j\theta_0}x,b)^2
&=(1-r\rho)^2+\rho^2x_3^2
+4r\rho\sin^2\left(2j\theta_0
+\frac{\varphi-\alpha_b}{2}\right),
\\
B(R_{4j\theta_0}S_Kx,b)^2
&=(1-r\rho)^2+\rho^2x_3^2
+4r\rho\sin^2\left((2j+1)\theta_0
-\frac{\varphi+\alpha_b}{2}\right).
\end{align*}
By the same geometric estimates used in Type I,
\[
B(R_{4j\theta_0}x,b)^2
\geq C\bigl(s_*^2+\sin^2(2j\theta_0)\bigr),
\]
and
\[
B(R_{4j\theta_0}S_Kx,b)^2
\geq C\bigl(s_*^2+\sin^2((2j+1)\theta_0)\bigr).
\]

We first estimate the Newtonian part of $H_N$.
Since the even and odd indices together cover $0,\ldots,K-1$,
\begin{equation*}
\sum_{j=0}^{K/2-1}
\left[\frac{1}{B(R_{4j\theta_0}x,b)}
+\frac{1}{B(R_{4j\theta_0}S_Kx,b)}\right]
\leq\frac{C}{s_*}
+C\sum_{\ell=1}^{K-1}
\frac{1}{\bigl(s_*^2+\sin^2(\ell\theta_0)\bigr)^{1/2}}.
\end{equation*}
Since $s_*\geq K^{-1}$, the elementary finite-sum estimate gives
\[
\sum_{\ell=1}^{K-1}
\frac{1}{\bigl(s_*^2+\sin^2(\ell\theta_0)\bigr)^{1/2}}
\leq CK(1+|\log s_*|).
\]

For the logarithmic part, since $x,p\in B_1$,
\[
A(x,p)=1-x\cdot p>0,
\qquad
B(x,p)\leq B(x,p)+A(x,p)\leq C.
\]
Hence
\[
\left|\log\frac{2}{B(x,p)+A(x,p)}\right|
\leq C\bigl(1+|\log B(x,p)|\bigr).
\]
Using the lower bounds for $B$ above, we obtain
\[
\sum_{j=0}^{K/2-1}
\left|\log\frac{2}
{B(R_{4j\theta_0}x,b)+A(R_{4j\theta_0}x,b)}\right|
\leq CK\log\frac{C}{s_*},
\]
and the same estimate holds for the reflected terms.
The polynomial term is uniformly bounded. Combining the three parts of $H_N$, we obtain
\[
|H_N^e(x,b)|\leq CK\log\frac{C}{|x-b^*|}.
\]

We next estimate the derivatives. Direct differentiation gives
\[
|\nabla^m B(x,p)^{-1}|
\leq CB(x,p)^{-m-1},
\qquad m=1,2,3,4,
\]
while
\[
|\nabla^m\log(B(x,p)+A(x,p))|
\leq CB(x,p)^{-m},
\qquad m=1,2,3,4.
\]
Therefore,
\begin{equation}
|\nabla^mH_N^e(x,b)|
\leq C\sum_{\ell=0}^{K-1}
\frac{1}{\bigl(s_*^2+\sin^2(\ell\theta_0)\bigr)^{(m+1)/2}}
+C\sum_{\ell=0}^{K-1}
\frac{1}{\bigl(s_*^2+\sin^2(\ell\theta_0)\bigr)^{m/2}}
+CK.
\label{eq:appC-HN-derivative-sum}
\end{equation}
By the same elementary finite-sum estimate with $k=m+1$,
\[
\sum_{\ell=0}^{K-1}
\frac{1}{\bigl(s_*^2+\sin^2(\ell\theta_0)\bigr)^{(m+1)/2}}
\leq CKs_*^{-m}.
\]
The remaining terms in \eqref{eq:appC-HN-derivative-sum}
are of lower order. Consequently,
\[
|\nabla^mH_N^e(x,b)|
\leq CKs_*^{-m}=CK|x-b^*|^{-m}.
\]
Since
\[
|x-b^*|\geq\rho^{-1}-|x|
=\rho^{-1}(1-\rho|x|)\geq1-\rho|x|,
\]
the second inequality in \eqref{eq:appC-HN-derivatives} follows.

It remains to prove the square-integral estimates.
Using the same slab integration estimate as in Type I,
\begin{equation*}
\int_{\Sigma_K}|H_N^e(x,b)|^2\,dx
\leq CK\int_{d_*}^{C_0}t\log^2\frac Ct\,dt
\leq CK.
\end{equation*}
Similarly,
\begin{equation*}
\int_{\Sigma_K}|\nabla H_N^e(x,b)|^2\,dx
\leq CK\int_{d_*}^{C_0}\frac{dt}{t}
\leq CK\log K.
\end{equation*}
Here $C_0>0$ is independent of $K$.
The proof is complete.
\end{proof}

To get the precise expansion, we write
\[
K_0(x,p)=\frac{1}{B(x,p)},
\qquad
L(x,p)=\log\frac{2}{B(x,p)+A(x,p)},
\qquad
Q_0(x,p)=\frac12\bigl(|x|^2+|p|^2\bigr).
\]
Let $K_0^e,L^e$ and $Q_0^e$ denote their even extensions in the
first variable, as in \eqref{eq:appC-HN-even}. Then
\begin{equation}
H_N^e=-K_0^e-L^e-Q_0^e+\frac{14K}{5}.
\label{eq:appC-HN-even-decomposition}
\end{equation}

\begin{lemma}
\label{lem:appC-HN-diagonal}
Under \eqref{eq:outline-parameters}, one has
\[
H_N^e(b,b)
=-\frac{1}{2\rho}\bigl[S_1^+(K,d)+O(K)\bigr].
\]
\end{lemma}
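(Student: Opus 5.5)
The plan is to evaluate the decomposition \eqref{eq:appC-HN-even-decomposition} at $x=p=b$ and treat the three pieces $K_0^e$, $L^e$, $Q_0^e$ separately. The bound to aim for is that only $K_0^e(b,b)$ contributes at order $S_1^+(K,d)$, while $L^e(b,b)$, $Q_0^e(b,b)$ and the constant $14K/5$ are all $O(K)$ after the factor $\frac1{2\rho}$ is pulled out. Since $\rho\to1$, this factor is harmless: $\frac1{2\rho}O(K)=O(K)$.

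The polynomial piece is immediate. Each of the $K$ terms equals $\frac12(|gb|^2+|b|^2)=\rho^2$, so $Q_0^e(b,b)=K\rho^2=O(K)$.

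For the Newtonian piece, use the explicit formulas for $B$ in the proof of Lemma~\ref{lem:appC-HN-estimates} with $x=b$, so that $r=\rho$, $x_3=0$ and $\varphi=\alpha_b$. This gives
\[
B(R_{4j\theta_0}b,b)^2=(1-\rho^2)^2+4\rho^2\sin^2(2j\theta_0),
\qquad
B(R_{4j\theta_0}S_Kb,b)^2=(1-\rho^2)^2+4\rho^2\sin^2\bigl((2j+1)\theta_0-\alpha_b\bigr).
\]
Since $1-\rho^2=2\rho d$, we have $B=2\rho\,(d^2+\sin^2(\cdot))^{1/2}$. The rotated terms therefore give exactly $\frac1{2\rho}$ times the even-index part of $S_1^+(K,d)$, and this part includes $\ell=0$, which contributes $d^{-1}$. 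The reflected terms give the odd-index part with arguments shifted by $\alpha_b$.

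To remove the shift, use the mean value theorem. The derivative of $(d^2+\sin^2 t)^{-1/2}$ is bounded by $C|\sin t|^{-2}$. We have $|\alpha_b|\ll\theta_0$, so each shifted sine stays comparable to $\sin((2j+1)\theta_0)$. The total error is therefore at most
\[
C|\alpha_b|\,S_2^o(K)\le C|\alpha_b|K^2\le C\delta^{-1/2}K^{2/3}(\log K)^{-1/2}=O(K).
\]
Hence $K_0^e(b,b)=\frac1{2\rho}\bigl[S_1^+(K,d)+O(K)\bigr]$.

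For the logarithmic piece, the issue is to confirm that it is $O(K)$ rather than of order $K\log(1/d)$; this is where I expect the only real care to be needed. First note that $A(gb,b)=1-\rho^2\cos(\cdot)\ge 1-\rho^2>0$. Moreover $B+A$ is comparable to $B+2\rho d$, and the term $2\rho d$ is negligible against $B$. This gives
\[
\sum_g\log\frac{2}{B+A}\le\sum_{\ell}\log\frac{C}{d+\sin(\ell\theta_0)}.
\]
Comparing the sum with the integral $\frac K\pi\int_0^\pi\log\frac{C}{d+\sin t}\,dt$ bounds it by $CK$, since $\log(1/\sin t)$ is integrable. The $\ell=0$ term, $\log(C/d)=O(\log K)$, is absorbed. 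For a lower bound, use $B+A\le C$, so each term is bounded below by $-C$, again giving $O(K)$. Altogether $|L^e(b,b)|\le CK$.

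Collecting the three pieces,
\[
H_N^e(b,b)=-\frac1{2\rho}S_1^+(K,d)+O(K)=-\frac1{2\rho}\bigl[S_1^+(K,d)+O(K)\bigr],
\]
which is the claimed expansion. The constants are uniform in $\delta$, because the $\alpha_b$-error is $O(K^{2/3})$, which is $o(K)$ for $K\ge K_0(\delta)$.
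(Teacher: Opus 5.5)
Your proposal is correct and follows essentially the same route as the paper: the decomposition \eqref{eq:appC-HN-even-decomposition}, an explicit evaluation of $K_0^e(b,b)$ through $B=2\rho\,(d^2+\sin^2(\cdot))^{1/2}$, and an $O(K)$ bound for $L^e(b,b)$, $Q_0^e(b,b)$ and the constant term. The differences are only in the details. You treat the $\alpha_b$-shift by a mean value bound $O(|\alpha_b|K^2)=o(K)$, where the paper quotes the quadratic error $O(Kd^{-2}\alpha_b^2)$ from \cite[Lemma~8.6]{SunWeiYangFull}. You also bound the logarithmic sum by comparison with $\int_0^\pi\log(1/\sin t)\,dt$, where the paper uses the product formula $\prod_{\ell=1}^{K-1}\sin(\ell\theta_0)=K/2^{K-1}$.
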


\begin{proof}
Repeating the computation in \cite[Lemma~8.6]{SunWeiYangFull},
with the signs of the reflected terms changed according to the
even extension, gives
\[
K_0^e(b,b)
=\frac{1}{2\rho}
\bigl[S_1^+(K,d)+O(Kd^{-2}\alpha_b^2)\bigr].
\]

For the logarithmic part, the formulas for $B$ and the bound
$|\alpha_b|\leq\theta_0/2$ give
\begin{align}
B(R_{4j\theta_0}b,b)^2
&\geq c\bigl(d^2+\sin^2(2j\theta_0)\bigr),
\notag\\
B(R_{4j\theta_0}S_Kb,b)^2
&\geq c\bigl(d^2+\sin^2((2j+1)\theta_0)\bigr),
\qquad 0\leq j\leq K/2-1.
\label{eq:appC-HN-diagonal-B-bounds}
\end{align}
Since $A(x,p)>0$ and $B(x,p)\leq A(x,p)+B(x,p)\leq C$,
\[
|L(x,p)|\leq C\bigl(1+|\log B(x,p)|\bigr).
\]
Consequently,
\[
|L^e(b,b)|
\leq CK+C\log\frac2d
+C\sum_{\ell=1}^{K-1}\log\frac1{\sin(\ell\theta_0)}
\leq CK,
\]
where we used $d\geq K^{-1}$ and
\[
\prod_{\ell=1}^{K-1}\sin(\ell\theta_0)=\frac{K}{2^{K-1}}.
\]
The polynomial and constant terms are both $O(K)$, so
\eqref{eq:appC-HN-even-decomposition} yields the conclusion.
\end{proof}

\begin{lemma}
\label{lem:appC-HN-gradient-diagonal}
Under \eqref{eq:outline-parameters}, write
$w=(|w|e^{i\alpha_w},0)^T$ and $\widehat\alpha_b=K\alpha_b$.
One has
\[
\begin{aligned}
-w\cdot\nabla_xH_N^e(b,b)
={}&\frac{|w|}{4\rho^2}
\bigl[-S_1^+(K,d)+d\sqrt{1+d^2}\,S_3^+(K,d)\bigr]+O\!\left(|w|\left[
Kd^{-2}\alpha_w^2+K^{-1}d^{-4}\widehat\alpha_b^2
+K\log\frac2d\right]\right),
\end{aligned}
\]
and the same expansion applies to $-w\cdot\nabla_pH_N^e(b,b)$.
\end{lemma}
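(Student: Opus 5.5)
We use the decomposition \eqref{eq:appC-HN-even-decomposition}, $H_N^e=-K_0^e-L^e-Q_0^e+\tfrac{14K}{5}$, and treat each piece separately. The constant contributes nothing to the gradient. The polynomial part gives $\nabla_xQ_0^e(b,b)=O(K)$, which is absorbed into $O(|w|K\log(2/d))$. So the work is in $K_0^e$ and $L^e$.

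\textbf{Step 1: the Newtonian part $K_0^e=\sum_g B(gx,b)^{-1}$.} We follow the computation of \cite[Lemma~8.7]{SunWeiYangFull} (the Dirichlet analogue), changing the signs of the reflected terms as dictated by the even extension. Explicitly, $-\nabla_x B(gx,b)^{-1}=B^{-3}g^T(|p|^2gx-p)$ with $p=b$. At $x=b=(\rho e^{i\alpha_b},0)$ we have $|p|^2gb-b=\rho(\rho^2 e^{i\ell\theta}-1)\cdot\ldots$ in complex notation. Pairing with $w=(|w|e^{i\alpha_w},0)$, each summand splits into two parts:
\begin{itemize}
\item a radial part proportional to $(1-\rho^2)\cos(\cdot)$;
\item a tangential part proportional to $\sin(\ell\theta_0\pm\ldots)$.
\end{itemize}
Using $B(gb,b)^2=(1-\rho^2)^2+4\rho^2\sin^2(\cdots)$ and $1-\rho^2=2\rho d$, we expand around $\alpha_w=\alpha_b=0$. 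The zeroth-order terms give
\[
\frac{|w|}{4\rho^2}\bigl[-S_1^+(K,d)+d\sqrt{1+d^2}\,S_3^+(K,d)\bigr],
\]
where the $S_3$ term comes from writing $\cos$-weighted sums as $S_1^+-d^2S_3^+$-type combinations and using $\rho^{-1}=\sqrt{1+d^2}+d$.

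The first-order terms in $\alpha_w$ and $\alpha_b$ are odd under $\ell\mapsto K-\ell$ paired with the reflection $S_K$. This is exactly the sign change relative to the odd case, and these terms cancel. The quadratic remainders are bounded via Table~\ref{tab:appC-d-sums}:
\begin{itemize}
\item $\alpha_w^2 S_3^+(K,d)\lesssim Kd^{-2}\alpha_w^2$;
\item $\alpha_b^2 S_5^+(K,d)\lesssim Kd^{-4}\alpha_b^2=K^{-1}d^{-4}\widehat\alpha_b^2$;
\item the mixed terms follow by Cauchy--Schwarz.
\end{itemize}

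\textbf{Step 2: the logarithmic part $L^e$.} We have $|\nabla_x L|\le C B^{-1}$. Using the lower bounds \eqref{eq:appC-HN-diagonal-B-bounds},
\[
|\nabla_xL^e(b,b)|\le C\sum_{\ell=0}^{K-1}\bigl(d^2+\sin^2\ell\theta_0\bigr)^{-1/2}=C\,S_1^+(K,d)=O\bigl(K\log(2/d)\bigr),
\]
again by Table~\ref{tab:appC-d-sums}. This term goes into the error.

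\textbf{Step 3: the $p$-derivative.} The statement for $-w\cdot\nabla_pH_N^e(b,b)$ follows from the symmetry $H_N(x,p)=H_N(p,x)$, which holds since $B$, $A$, $Q_0$ are symmetric, together with the orthogonality of $g$. Concretely, $\nabla_p H_N(gx,p)|_{x=p=b}$ equals $\nabla_x H_N(g^{-1}x,b)$ at $x=b$. Since $G_K$ is closed under inverses, the sum over $g$ is unchanged.

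\textbf{Main obstacle.} The hardest part is Step 1. We must track exactly which first-order angular terms cancel under the even extension, and verify that the zeroth-order coefficient is precisely $d\sqrt{1+d^2}$ rather than some other function of $\rho$. For this we would first redo the $\alpha_w=\alpha_b=0$ computation exactly term by term, and only then Taylor-expand in the angles with an explicit Lagrange remainder.
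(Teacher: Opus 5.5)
Your proposal is correct and follows essentially the same route as the paper. It uses the splitting \eqref{eq:appC-HN-even-decomposition}, treats the Newtonian part $K_0^e$ by redoing \cite[Lemma~8.7]{SunWeiYangFull} with the reflected signs flipped, sums $|\nabla L|\le C/B$ via \eqref{eq:appC-HN-diagonal-B-bounds} to $C|w|S_1^+(K,d)=O(|w|K\log(2/d))$, and takes $O(|w|K)$ from $Q_0^e$. Two differences are cosmetic. You get the $\nabla_p$ statement from the exact identity $\nabla_pH_N^e(b,b)=\nabla_xH_N^e(b,b)$, using the symmetry and orthogonal invariance of $H_N$ together with $G_K=G_K^{-1}$; this is valid and slightly cleaner than the paper's bare ``the same expansion holds''. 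Also, the first-order angular terms vanish because of the $\ell\mapsto K-\ell$ symmetry within the rotation family and within the reflection family separately. They do not vanish because of the even sign change, since they cancel in the odd case as well.
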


\begin{proof}
Repeating the computation in \cite[Lemma~8.7]{SunWeiYangFull},
with the signs of the reflected terms changed according to the
even extension, gives
\[
\begin{aligned}
w\cdot\nabla_xK_0^e(b,b)
={}&\frac{|w|}{4\rho^2}
\bigl[-S_1^+(K,d)+d\sqrt{1+d^2}\,S_3^+(K,d)\bigr]+O\!\left(|w|\left[
Kd^{-2}\alpha_w^2+K^{-1}d^{-4}\widehat\alpha_b^2
\right]\right),
\end{aligned}
\]
and the same expansion holds for $w\cdot\nabla_pK_0^e(b,b)$.

For the logarithmic part, $|\nabla_xB|=|p|$ and
$|\nabla_pB|=|x|$, and hence
\[
|\nabla_xL(x,p)|+|\nabla_pL(x,p)|
\leq\frac{|\nabla_xB|+|\nabla_pB|+|p|+|x|}{A+B}
\leq\frac{C}{B(x,p)}.
\]
Using \eqref{eq:appC-HN-diagonal-B-bounds} and
Table~\ref{tab:appC-d-sums}, we obtain
\begin{align*}
|w\cdot\nabla_xL^e(b,b)|+|w\cdot\nabla_pL^e(b,b)|
&\leq C|w|\sum_{\ell=0}^{K-1}
\frac{1}{\bigl(d^2+\sin^2(\ell\theta_0)\bigr)^{1/2}}=C|w|S_1^+(K,d)
\leq C|w|K\log\frac2d.
\end{align*}
The polynomial term contributes $O(|w|K)$, while the constant
term has zero derivative. Combining these estimates with
\eqref{eq:appC-HN-even-decomposition} proves the expansion.
\end{proof}

\begin{lemma}
\label{lem:appC-HN-hessian-diagonal}
Under \eqref{eq:outline-parameters}, one has
\[
\begin{aligned}
w^T\nabla_{x,p}^2H_N^e(b,b)w
={}&-\frac{|w|^2}{8\rho^3}
\bigl[S_1^+-(d+\sqrt{1+d^2})^2S_3^+
+3(d^2+d^4)S_5^+\bigr]\\
&+O\!\left(|w|^2\left[
Kd^{-2}\alpha_w^2+K^{-1}d^{-4}\widehat\alpha_b^2
+Kd^{-1}\right]\right),
\end{aligned}
\]
where $S_i^+=S_i^+(K,d)$ for $i=1,3,5$ and
$\widehat\alpha_b=K\alpha_b$. In particular,
\[
w^T\nabla_{x,p}^2H_N^e(b,b)w
=-\frac{K|w|^2}{4\pi\rho^3d^2}
+O\!\left(\delta^{-4}|w|^2Kd^{-1}\right).
\]
\end{lemma}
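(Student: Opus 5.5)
We follow the pattern of Lemmas~\ref{lem:appC-HN-diagonal} and~\ref{lem:appC-HN-gradient-diagonal}, splitting $H_N^e=-K_0^e-L^e-Q_0^e+\frac{14K}{5}$ by \eqref{eq:appC-HN-even-decomposition} and treating each term separately.

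\textbf{Newtonian part.} The main contribution is $-w^T\nabla^2_{x,p}K_0^e(b,b)w$. We redo the computation of \cite[Lemma~8.8]{SunWeiYangFull}, flipping the signs of the reflected terms to match the even extension. For each $g\in G_K$, $w^T\nabla_{x,p}^2 B(gx,p)^{-1}w$ is a second derivative of the Newtonian-type kernel evaluated at $x=p=b$. Write
\[
B(gb,b)^2=(1-\rho^2)^2+4\rho^2\sin^2(\ell\theta_0+O(\alpha_b)).
\]
Since $1-\rho^2=2\rho d$, the leading factor is $4\rho^2(d^2+\sin^2(\ell\theta_0))$ up to the angular perturbation. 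Expanding the second derivatives into powers $B^{-3}$ and $B^{-5}$, with numerators built from $w$ and the difference $gb-b^*$-type vectors, produces the combination $S_1^+-(d+\sqrt{1+d^2})^2S_3^++3(d^2+d^4)S_5^+$ with prefactor $-|w|^2/(8\rho^3)$. The factors $d\pm\sqrt{1+d^2}$ come from $\rho^{-1}=\sqrt{1+d^2}+d$.

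We then Taylor expand in $\alpha_w$ and $\alpha_b$ to second order. As in the previous lemmas, the quadratic terms are controlled by $S_3^+$ and $S_5^+$ weighted by $\alpha_w^2$ and $\alpha_b^2$. Table~\ref{tab:appC-d-sums} bounds these by $Kd^{-2}\alpha_w^2$ and $K\alpha_b^2d^{-4}=K^{-1}d^{-4}\widehat\alpha_b^2$.

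\textbf{Logarithmic and polynomial parts.} For $L$, direct differentiation gives $|\nabla_x\nabla_pL|\le C(A+B)^{-2}(|\nabla B|^2+1)+C(A+B)^{-1}|\nabla^2B|$. Since $|\nabla_x\nabla_pB|\le CB^{-1}$, we get $|\nabla_{x,p}^2L|\le CB^{-2}$. Summing with \eqref{eq:appC-HN-diagonal-B-bounds} gives $C\sum_\ell(d^2+\sin^2\ell\theta_0)^{-1}\le CKd^{-1}$, by comparing with $\int(d^2+t^2)^{-1}dt\sim\pi/d$. This matches the stated error $Kd^{-1}$. The polynomial term $Q_0^e$ has zero mixed derivative in $(x,p)$, and the constant term contributes nothing.

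\textbf{The ``in particular''.} We insert Table~\ref{tab:appC-d-sums}. The combination $-(d+\sqrt{1+d^2})^2S_3^++3(d^2+d^4)S_5^+$ equals $-\frac{2K}{\pi d^2}(1+O(d))+3d^2\cdot\frac{4K}{3\pi d^4}+O(K\log\frac2d+K)$, which is $\frac{2K}{\pi d^2}+O(Kd^{-1})$. Next, $S_1^+=O(K\log\frac2d)=O(Kd^{-1})$. Multiplying by $-|w|^2/(8\rho^3)$ gives the leading term $-K|w|^2/(4\pi\rho^3d^2)$.

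For the angular errors we use \eqref{eq:outline-parameters}. The first gives $Kd^{-2}\alpha_w^2\le CKd^{-2}\delta^{-1}K^{-2/3}\le C\delta^{-4}Kd^{-1}$, using $d^{-1}K^{-2/3}\le\delta^{-1}$. The second gives $K^{-1}d^{-4}\widehat\alpha_b^2\le C\delta^{-1}K^{-5/3}d^{-4}\le C\delta^{-4}Kd^{-1}$, using $d^{-3}\le\delta^{-3}K^2$.

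\textbf{Main obstacle.} The hard step is the exact second-order sum computation for $K_0^e$: tracking the sign changes from even reflections and the precise coefficients $(d+\sqrt{1+d^2})^2$ and $3(d^2+d^4)$. We would first verify this on the unreflected term $g=I$, which alone gives the $\ell=0$ contributions $d^{-1},d^{-3},d^{-5}$. We would then check the general $\ell$ term by writing $gb-b$ in complex notation.
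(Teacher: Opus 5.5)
Your proposal is correct and follows the paper's proof essentially step by step. It uses the same splitting $H_N^e=-K_0^e-L^e-Q_0^e+\frac{14K}{5}$ and obtains the Newtonian part by redoing \cite[Lemma~8.8]{SunWeiYangFull} with the reflected signs flipped. It then bounds $|\nabla_{x,p}^2L|\le CB^{-2}$ and sums via \eqref{eq:appC-HN-diagonal-B-bounds} to $CKd^{-1}$; note that absorbing the $\ell=0$ term $d^{-2}$ there needs $Kd\ge1$. Finally, it uses that $Q_0^e$ has zero mixed derivative and inserts Table~\ref{tab:appC-d-sums} for the ``in particular''. Your explicit check that $Kd^{-2}\alpha_w^2$ and $K^{-1}d^{-4}\widehat\alpha_b^2$ are absorbed into $O(\delta^{-4}Kd^{-1})$ fills in a step the paper leaves implicit.
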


\begin{proof}
Repeating the computation in \cite[Lemma~8.8]{SunWeiYangFull},
with the signs of the reflected terms changed according to the
even extension, gives
\[
\begin{aligned}
w^T\nabla_{x,p}^2K_0^e(b,b)w
={}&\frac{|w|^2}{8\rho^3}
\bigl[S_1^+-(d+\sqrt{1+d^2})^2S_3^+
+3(d^2+d^4)S_5^+\bigr]\\
&+O\!\left(|w|^2\left[
Kd^{-2}\alpha_w^2+K^{-1}d^{-4}\widehat\alpha_b^2
\right]\right).
\end{aligned}
\]

For the logarithmic part, set $D=A+B$. Direct differentiation gives
\[
\nabla_{x,p}^2L
=-\frac{\nabla_{x,p}^2D}{D}
+\frac{\nabla_xD\otimes\nabla_pD}{D^2}.
\]
Since $D\geq B$, $|\nabla_xD|+|\nabla_pD|\leq C$ and
$|\nabla_{x,p}^2D|\leq CB^{-1}$, it follows that
\[
|\nabla_{x,p}^2L(x,p)|\leq CB(x,p)^{-2}.
\]
Therefore, by \eqref{eq:appC-HN-diagonal-B-bounds},
\begin{align*}
|w^T\nabla_{x,p}^2L^e(b,b)w|
&\leq C|w|^2\sum_{\ell=0}^{K-1}
\frac{1}{d^2+\sin^2(\ell\theta_0)}\\
&\leq C|w|^2\left(d^{-2}
+K\int_0^1\frac{dt}{d^2+t^2}\right)
\leq C|w|^2Kd^{-1},
\end{align*}
The polynomial and constant terms have zero mixed derivative,
so \eqref{eq:appC-HN-even-decomposition} gives the first expansion.

Finally, Table~\ref{tab:appC-d-sums} gives
\[
S_1^+-(d+\sqrt{1+d^2})^2S_3^+
+3(d^2+d^4)S_5^+
=\frac{2K}{\pi d^2}+O(Kd^{-1}).
\]
The last assertion follows.
\end{proof}

\section*{Acknowledgements}

The author is grateful to Wen Yang and Min Gao of the University
of Macau, and to Hong-Ge Chen of Central China Normal University,
for valuable insights that helped shape the ideas and approach
of this paper. The author also acknowledges the assistance of
OpenAI's ChatGPT in improving the exposition, locating relevant
literature, and checking calculations, particularly in  improving the rigor of the proof of
Proposition~\ref{prop:fdr-inner-parameter-dependence}.


\begin{thebibliography}{99}

\bibitem{BlagouchineMoreau2024}
I.~V.~Blagouchine and E.~Moreau,
\emph{On a finite sum of cosecants appearing in various problems},
J. Math. Anal. Appl. \textbf{539} (2024), no.~1, Art.~128515.
\href{https://doi.org/10.1016/j.jmaa.2024.128515}
{doi:10.1016/j.jmaa.2024.128515}.

\bibitem{Brezis2023}
H.~Brezis,
\emph{Some of my favorite open problems},
Rend. Lincei Mat. Appl. \textbf{34} (2023), no.~2, 307--335.
\href{https://doi.org/10.4171/RLM/1008}{doi:10.4171/RLM/1008}.

\bibitem{BrezisNirenberg1983}
H.~Brezis and L.~Nirenberg,
\emph{Positive solutions of nonlinear elliptic equations involving
critical Sobolev exponents},
Comm. Pure Appl. Math. \textbf{36} (1983), no.~4, 437--477.
\href{https://doi.org/10.1002/cpa.3160360405}
{doi:10.1002/cpa.3160360405}.

\bibitem{CaffarelliGidasSpruck1989}
L.~A.~Caffarelli, B.~Gidas, and J.~Spruck,
\emph{Asymptotic symmetry and local behavior of semilinear elliptic
equations with critical Sobolev growth},
Comm. Pure Appl. Math. \textbf{42} (1989), no.~3, 271--297.
\href{https://doi.org/10.1002/cpa.3160420304}
{doi:10.1002/cpa.3160420304}.

\bibitem{ChenLi1991}
W.~Chen and C.~Li,
\emph{Classification of solutions of some nonlinear elliptic equations},
Duke Math. J. \textbf{63} (1991), no.~3, 615--622.
\href{https://doi.org/10.1215/S0012-7094-91-06325-8}
{doi:10.1215/S0012-7094-91-06325-8}.

\bibitem{DelPinoMussoPacardPistoia2011}
M.~del Pino, M.~Musso, F.~Pacard, and A.~Pistoia,
\emph{Large energy entire solutions for the Yamabe equation},
J. Differential Equations \textbf{251} (2011), no.~9, 2568--2597.
\href{https://doi.org/10.1016/j.jde.2011.03.008}
{doi:10.1016/j.jde.2011.03.008}.

\bibitem{DelPinoMussoRomanWei2019}
M.~del Pino, M.~Musso, C.~Rom\'an, and J.~Wei,
\emph{Interior bubbling solutions for the critical Lin--Ni--Takagi
problem in dimension 3},
J. Anal. Math. \textbf{137} (2019), no.~2, 813--843.
\href{https://doi.org/10.1007/s11854-019-0008-8}
{doi:10.1007/s11854-019-0008-8}.

\bibitem{DevillanovaSolimini2002}
G.~Devillanova and S.~Solimini,
\emph{Concentration estimates and multiple solutions to elliptic
problems at critical growth},
Adv. Differential Equations \textbf{7} (2002), no.~10, 1257--1280.
\href{https://doi.org/10.57262/ade/1356651637}
{doi:10.57262/ade/1356651637}.

\bibitem{Ding1986}
W.~Ding,
\emph{On a conformally invariant elliptic equation on $\mathbb{R}^n$},
Comm. Math. Phys. \textbf{107} (1986), no.~2, 331--335.
\href{https://doi.org/10.1007/BF01209398}{doi:10.1007/BF01209398}.

\bibitem{Druet2002}
O.~Druet,
\emph{Elliptic equations with critical Sobolev exponents in dimension 3},
Ann. Inst. H. Poincar\'e C Anal. Non Lin\'eaire
\textbf{19} (2002), no.~2, 125--142.
\href{https://doi.org/10.1016/S0294-1449(02)00095-1}
{doi:10.1016/S0294-1449(02)00095-1}.

\bibitem{GuiWei1999}
C.~Gui and J.~Wei,
\emph{Multiple interior peak solutions for some singularly perturbed
Neumann problems},
J. Differential Equations \textbf{158} (1999), no.~1, 1--27.
\href{https://doi.org/10.1016/S0022-0396(99)80016-3}
{doi:10.1016/S0022-0396(99)80016-3}.

\bibitem{LinNi1988}
C.-S.~Lin and W.-M.~Ni,
\emph{On the diffusion coefficient of a semilinear Neumann problem},
in: S.~Hildebrandt, D.~Kinderlehrer, and M.~Miranda (eds.),
\emph{Calculus of Variations and Partial Differential Equations},
Lecture Notes in Mathematics, vol.~1340,
Springer, Berlin, 1988, pp.~160--174.
\href{https://doi.org/10.1007/BFb0082894}{doi:10.1007/BFb0082894}.

\bibitem{LinNiTakagi1988}
C.-S.~Lin, W.-M.~Ni, and I.~Takagi,
\emph{Large amplitude stationary solutions to a chemotaxis system},
J. Differential Equations \textbf{72} (1988), no.~1, 1--27.
\href{https://doi.org/10.1016/0022-0396(88)90147-7}
{doi:10.1016/0022-0396(88)90147-7}.

\bibitem{MussoSalazar2018}
M.~Musso and D.~Salazar,
\emph{Multispike solutions for the Brezis--Nirenberg problem
in dimension three},
J. Differential Equations \textbf{264} (2018), no.~11, 6663--6709.
\href{https://doi.org/10.1016/j.jde.2018.01.046}
{doi:10.1016/j.jde.2018.01.046}.

\bibitem{MussoWei2015}
M.~Musso and J.~Wei,
\emph{Nondegeneracy of nodal solutions to the critical Yamabe problem},
Comm. Math. Phys. \textbf{340} (2015), no.~3, 1049--1107.
\href{https://doi.org/10.1007/s00220-015-2462-1}
{doi:10.1007/s00220-015-2462-1}.

\bibitem{NiPanTakagi1992}
W.-M.~Ni, X.-B.~Pan, and I.~Takagi,
\emph{Singular behavior of least-energy solutions of a semilinear
Neumann problem involving critical Sobolev exponents},
Duke Math. J. \textbf{67} (1992), no.~1, 1--20.
\href{https://doi.org/10.1215/S0012-7094-92-06701-9}
{doi:10.1215/S0012-7094-92-06701-9}.

\bibitem{Rey1990}
O.~Rey,
\emph{The role of the Green's function in a non-linear elliptic equation
involving the critical Sobolev exponent},
J. Funct. Anal. \textbf{89} (1990), no.~1, 1--52.
\href{https://doi.org/10.1016/0022-1236(90)90002-3}
{doi:10.1016/0022-1236(90)90002-3}.

\bibitem{ReyWei2005}
O.~Rey and J.~Wei,
\emph{Arbitrary number of positive solutions for an elliptic problem
with critical nonlinearity},
J. Eur. Math. Soc. \textbf{7} (2005), no.~4, 449--476.
\href{https://doi.org/10.4171/JEMS/35}{doi:10.4171/JEMS/35}.

\bibitem{Salazar2019}
D.~Salazar,
\emph{Sign changing bubbling solutions for a critical Neumann problem
in 3D},
Nonlinear Anal. \textbf{188} (2019), 500--539.
\href{https://doi.org/10.1016/j.na.2019.06.018}
{doi:10.1016/j.na.2019.06.018}.

\bibitem{SchechterZou2010}
M.~Schechter and W.~Zou,
\emph{On the Br\'ezis--Nirenberg problem},
Arch. Ration. Mech. Anal. \textbf{197} (2010), no.~1, 337--356.
\href{https://doi.org/10.1007/s00205-009-0288-8}
{doi:10.1007/s00205-009-0288-8}.

\bibitem{SunWeiYangFull}
L.~Sun, J.-C.~Wei, and W.~Yang,
\emph{On Brezis' first open problem},
preprint, manuscript dated June~16, 2026.
An earlier version is available as
\emph{On Brezis' first open problem: A complete solution},
\href{https://arxiv.org/abs/2503.06904}{arXiv:2503.06904} (2025).
\href{https://doi.org/10.48550/arXiv.2503.06904}
{doi:10.48550/arXiv.2503.06904}.

\bibitem{SunWeiYangSmall}
L.~Sun, J.-C.~Wei, and W.~Yang,
\emph{On the 3-D Brezis--Nirenberg problem with small parameter},
Commun. Contemp. Math. \textbf{28} (2026), no.~5, Art.~2640003.
\href{https://doi.org/10.1142/S0219199726400031}
{doi:10.1142/S0219199726400031}.

\bibitem{TrefethenWeideman2014}
L.~N.~Trefethen and J.~A.~C.~Weideman,
\emph{The exponentially convergent trapezoidal rule},
SIAM Rev. \textbf{56} (2014), no.~3, 385--458.
\href{https://doi.org/10.1137/130932132}{doi:10.1137/130932132}.

\bibitem{WangWeiYan2010}
L.~Wang, J.~Wei, and S.~Yan,
\emph{A Neumann problem with critical exponent in nonconvex domains
and Lin--Ni's conjecture},
Trans. Amer. Math. Soc. \textbf{362} (2010), no.~9, 4581--4615.
\href{https://doi.org/10.1090/S0002-9947-10-04955-X}
{doi:10.1090/S0002-9947-10-04955-X}.

\bibitem{WangWeiYan2011}
L.~Wang, J.~Wei, and S.~Yan,
\emph{On Lin--Ni's conjecture in convex domains},
Proc. Lond. Math. Soc. (3) \textbf{102} (2011), no.~6, 1099--1126.
\href{https://doi.org/10.1112/plms/pdq051}{doi:10.1112/plms/pdq051}.

\bibitem{WeiXu2005}
J.~Wei and X.~Xu,
\emph{Uniqueness and a priori estimates for some nonlinear elliptic
Neumann equations in $\mathbb{R}^3$},
Pacific J. Math. \textbf{221} (2005), no.~1, 159--165.
\href{https://doi.org/10.2140/pjm.2005.221.159}
{doi:10.2140/pjm.2005.221.159}.

\bibitem{WeiYan2010}
J.~Wei and S.~Yan,
\emph{Infinitely many solutions for the prescribed scalar curvature
problem on $\mathbb{S}^N$},
J. Funct. Anal. \textbf{258} (2010), no.~9, 3048--3081.
\href{https://doi.org/10.1016/j.jfa.2009.12.008}
{doi:10.1016/j.jfa.2009.12.008}.

\bibitem{Zhu1999}
M.~Zhu,
\emph{Uniqueness results through a priori estimates.
I. A three-dimensional Neumann problem},
J. Differential Equations \textbf{154} (1999), no.~2, 284--317.
\href{https://doi.org/10.1006/jdeq.1998.3529}
{doi:10.1006/jdeq.1998.3529}.

\end{thebibliography}
\end{document}